\documentclass{article}
\usepackage[
style=alphabetic,
mincitenames=4,
maxcitenames=99,
maxbibnames=99,
maxalphanames=99,
doi=false,isbn=false,url=false,eprint=false
]{biblatex}
\addbibresource{references.bib}

\usepackage{pdflscape}
\usepackage{graphicx}
\usepackage{amsmath,amsthm,amsfonts,amscd}
\usepackage{eucal} 	 	
\usepackage{verbatim}      	
\usepackage{algorithm}
\usepackage{algorithmic}
\usepackage{xcolor}
\usepackage{markdown}
\usepackage{booktabs}
\usepackage{url}

\usepackage{multirow}
\usepackage{makecell}
\usepackage{tabularx}
\usepackage{bbm}
\usepackage{mathtools}
\usepackage{hyperref}
\hypersetup{
    colorlinks,
    linkcolor={black},
    citecolor={black},
    urlcolor={black}
}
\usepackage{tikz-cd}


%
%

\newtheorem{theorem}{Theorem}
\newtheorem{lemma}{Lemma}[section]

\theoremstyle{definition}
\newtheorem{definition}[lemma]{Definition}

\theoremstyle{remark}
\newtheorem*{remark}{Remark}

\numberwithin{figure}{section}

%

\DeclareMathOperator{\C}{\mathbb{C}}
\DeclareMathOperator{\R}{\mathbb{R}}

\DeclareMathOperator{\Z}{\mathbb{Z}}

\DeclareMathOperator{\Hom}{Hom}
\DeclareMathOperator{\Rep}{Rep}

\DeclareMathOperator{\Hit}{Hit}
\DeclareMathOperator{\hol}{hol}
\DeclareMathOperator{\supp}{supp}

\DeclareMathOperator{\SL}{SL}
\DeclareMathOperator{\PSL}{PSL}
\DeclareMathOperator{\del}{\partial}

\DeclareMathOperator{\im}{Im}
\DeclareMathOperator{\tr}{tr}

\setcounter{secnumdepth}{3}    
\setcounter{tocdepth}{1} 

\title{Boundary Currents of Hitchin Components}
\author{Charles Reid}

\begin{document}
\maketitle

\begin{abstract}
The space of Hitchin representations of the fundamental group of a closed surface $S$ into $\text{SL}_n\mathbb{R}$ embeds naturally in the space of projective oriented geodesic currents on $S$. We find that currents in the boundary have combinatorial restrictions on self-intersection which depend on $n$. We define a notion of dual space to an oriented geodesic current, and show that the dual space of a discrete boundary current of the $\text{SL}_n\mathbb{R}$ Hitchin component is a polyhedral complex of dimension at most $n-1$. 
For endpoints of cubic differential rays in the $\text{SL}_3\mathbb{R}$ Hitchin component, the dual space is the universal cover of $S$, equipped with an asymmetric Finsler metric which records growth rates of trace functions. 

\end{abstract}

\tableofcontents
\pagebreak

\section{Introduction}
Let $S$ be a closed oriented surface of genus at least $2$, and let $\mathcal{T}(S)$ be the Teichm\"uller space of hyperbolic metrics on $S$. Thurston defined a compactification \cite{Thurston88, FLP79} which makes the open ball $\mathcal{T}(S)$ into a closed ball whose boundary points parametrize measured laminations on $S$ \cite{Bonahon88}. 
Dually, boundary points give metric $\R$-trees with $\Gamma$ action, and every $\R$-tree with $\Gamma$ action, with virtually cyclic edge stabilizers, arises this way \cite{MorganShalen91, Skora96}. 

Let $\Gamma:=\pi_1 (S)$. A discrete, faithful representation $\Gamma\to \PSL_2\!\R$ gives rise to an oriented hyperbolic surface $\Gamma\backslash\mathbb{H}^2$ marked by $S$. In this way, $\mathcal{T}(S)$ is identified with a connected component of $\Rep(\Gamma,\PSL_2\!\R)$. 

One might wonder whether other spaces of representations of finitely generated groups $\Gamma$ into non-compact Lie groups $G$ have compactifications analogous to Thurston's, and whether boundary points of these compactifications parameterize geometric objects. In the case where $\Gamma$ is an arbitrary finitely generated group, and $G$ is a rank one lie group, one still finds trees with $\Gamma$ action at the boundary \cite{CullerShalen83, Bestvina88, Paulin88}. 

When $G$ is a semisimple Lie group with real rank $r > 2$, there is a an analogous compactification called the Weyl length compactification \cite{Parreau12} whose boundary points can be interpreted as $\Gamma$ actions on $r$-dimensional polyhedral complexes called affine buildings. The caveat is that there is a huge amount of choice involved in representing a boundary point as a building with $\Gamma$ action, so while actions on buildings do give geometric insight into the Weyl length compactification, this perspective does not realize the Weyl length compactification as a moduli space. 

In the last decade there has been an effort, for example \cite{parreau_invariant_2015, Le16, BIPP21}, to show that Weyl length boundaries of specific components of specific character varieties parameterize more tractable, canonically defined objects. 
The present paper is part of this effort, though in contrast we do not work directly with buildings and instead construct spaces from scratch, generalizing Morgan and Shalen's construction of $\R$-trees dual to measured laminations.

This paper concerns $\PSL_n\!\R$ Hitchin components: For $n\geq 2$, composition with the unique irreducible representation 
$\PSL_2\!\R \to \PSL_n\!\R$, gives an embedding 
\[\mathcal{T}(S)\to \Rep(\Gamma,\PSL_n\!\R)\] 
and $\Hit^n\!(S)$ is defined to be the component of $\Rep(\Gamma,\PSL_n\!\R)$ containing this copy of $\mathcal{T}(S)$. Hitchin \cite{Hitchin92} discovered, using the nonabelian Hodge correspondence, that this component has trivial topology:
\[\Hit^n(S)\simeq \R^{(2g-2)(n^2 - 1)}\]
Hitchin components are the archetypal examples of ``higher Teichm\"uller spaces": components of character varieties of surfaces consisting entirely of discrete and faithful representations \cite{fock_moduli_2006, Labourie_anosov}.  We will study a generalization of Thurston's compactification to $\Hit^n(S)$ called the spectral radius compactification. The spectral radius compactification coincides with the Weyl chamber length compactification for $n=3$ but is in general coarser.

\subsection{Spectral radius compactification}
The spectral radius of a matrix $M$, is $|\lambda_1(M)|$ where $\lambda_1(M)$ is the greatest magnitude eigenvalue of $M$. Sending a representation of $\Gamma$ to its complete list of spectral radii gives a map from $\Hit^n(S)$ to functions on the set $[\Gamma]$ of conjugacy classes of $\Gamma$. 
\[\Hit^n(S)\to \R^{[\Gamma]}\]
\[\rho \mapsto (l_\rho: \gamma\mapsto \log|\lambda_1(\rho(\gamma))|)\]
We call the class function $l_\rho$ the marked length spectrum of $\rho$, because for $n=2$, $2\log|\lambda_1(\rho(\gamma))|$ is the hyperbolic length of the geodesic in the homotopy class specified by $[\gamma]\in [\Gamma]$. Let $[l_\rho]$ be the projectivized marked length spectrum: the image of $l_\rho$ in $\mathbb{P}(\R^{[\Gamma]})$.
The map $\rho\mapsto [l_\rho]$ is an embedding of $\Hit^n(S)$ into $\mathbb{P}(\R^{[\Gamma]})$ and has compact closure. This closure is the spectral radius compactification, and we denote its boundary by $\del_{\lambda_1}\! \Hit^n(S)$.

Note that we get the same compactification if we use trace functions $\tr(\rho(\gamma))$ in place of spectral radii $\lambda_1(\rho(\gamma))$. This is sometimes desirable, as trace functions are algebraic, but in our context spectral radii are more natural.

For $n=2$, $\del_{\lambda_1}\! \Hit^2(S)$ is the Thurston boundary of Teichm\"uller space which consists of translation length spectra of actions on trees. For $n>2$, these length spectra corresponding to trees comprise a small part of the boundary, but the rest of the boundary length spectra are less understood. 

\subsection{Geodesic currents at infinity}
There is an embedding of $\Hit^n(S)$ into a different infinite dimensional space, namely the space of projective geodesic currents, which will give a slightly coarser compactification, but with more geometric understanding of boundary points. Bonahon introduced geodesic currents for exactly this purpose in the case $n=2$. In that case the compactifications exactly coincide. In \cite{Bonahon88}, Bonahon worked with unoriented currents whereas we will use oriented currents throughout. 

A geodesic current is an invariant measure on the space of geodesics in the universal cover of $S$. To make sense of ``geodesics" without choosing a metric on $S$, one uses the Gromov boundary $\del\Gamma$. Recall that $\del\Gamma$ is homeomorphic to a circle. A hyperbolic structure on $S$ gives an identification $\xi: \del\Gamma \to \R \mathbb{P}^1$ and any two such identifications differ by a H\"older homeomorphism. A hyperbolic structure also gives rise to an identification of the space of geodesics in $\tilde{S}$ with \[\mathcal{G} := \del\Gamma \times \del\Gamma \backslash \Delta,\] the space of pairs of distinct points in the Gromov boundary.

\begin{definition}
    A geodesic current on $S$ is a locally finite, $\Gamma$-invariant Borel measure on $\mathcal{G}$. The space of geodesic currents on $S$ with the weak topology is denoted $\mathcal{C}(S)$.
\end{definition}
Geodesic currents generalize closed curves. For any $\gamma\in \Gamma$ there is an attracting fixed point $\gamma^+\in \del\Gamma$ and a repelling fixed point $\gamma^-\in \del\Gamma$. The point $(\gamma^-,\gamma^+)\in \mathcal{G}$ is called the axis of $\gamma$. Let $\delta_\gamma$ denote the delta measure at $(\gamma^-,\gamma^+)$. If $C\in [\Gamma]$ is the conjugacy class of a primative element, define
\[\delta_{C} = \sum_{\gamma\in C} \delta_\gamma.\]
Delta currents are defined for non-primitive classes by requiring $\delta_{[\gamma^n]} = n\delta_{[\gamma]}$. These currents span a dense subspace of $\mathcal{C}(S)$.

Various natural geometric structures on $S$, most notably negatively curved metrics, and Anosov representations, give rise to geodesic currents. The space of projective geodesic currents $\mathbb{P}(\mathcal{C}(S))$ is compact, thus giving compactifications of moduli spaces of structures on $S$.

A natural embedding $\Hit^n(S)\to \mathcal{C}(S)$, introduced in \cite{Labourie07}, is defined using limit maps. A Hitchin representation $\rho:\Gamma\to \SL(V)$, with $V\simeq \R^n$ is in particular projective Anosov, meaning that there are continuous equivariant limit maps
\[\xi:\del\Gamma \to \mathbb{P}(V)\]
\[\xi^*:\del\Gamma\to \mathbb{P}(V^*)\]
such that $\xi(a^+)$ is the eigenline of top eigenvalue of $\rho(a)$, and $\xi^*(a^+)$ is the eigenline of top eigenvalue of $\rho(a^{-1})^*$, and $ \xi^*(x)$ contains $\xi(y)$ if and only if $x=y$. These limit maps define a measure $\mu_\rho\in \mathcal{C}(S)$ by
\[\mu_\rho([x_1,x_2]\times [y_1,y_2]) = \log|\frac{\langle \xi^*(x_1),\xi(y_1)\rangle\langle \xi^*(x_2),\xi(y_2)\rangle}{\langle \xi^*(x_1),\xi(y_2)\rangle\langle \xi^*(x_2),\xi(y_1)\rangle}|\] 
where $x_1,x_2,y_1,y_2\in \del\Gamma$ are cyclically ordered.  For $n=2$, $\xi$ and $\xi^*$ are both the usual identification of $\del\Gamma$ with the boundary of the hyperbolic plane, and $\mu_\rho$ is the Lioville measure for the hyperbolic metric. 



The closure of $\mathcal{T}(S)$ in $\mathbb{P}(\mathcal{C}(S))$ is also the Thurston compactification. Bonahon showed this using his intersection pairing on currents
\[i:\mathcal{C}(S)\times \mathcal{C}(S)\to \R\]
which extends geometric intersection number of unoriented closed curves. An essential formula is that intersecting a hyperbolic structure with a closed curve gives length:
\[i(\mu_\rho, \delta_{[\gamma]}) = \text{Len}_\rho(\gamma)\]
In other words, the marked length spectrum map 
$\mathcal{T}(S)\to \R^{[\Gamma]}$ 
factors through the embedding $\mathcal{C}(S)^{\Z/2}\to \R^{[\Gamma]}$ sending a symmetric current $\mu$ to the intersection function $i(\mu_\rho, -)$. 
\begin{center}
\begin{tikzpicture}

    \node (Q1) at (0,1.6) {$\mathcal{C}(S)^{\Z_2}$};
    \node (Q2) at (-1.6,0) {$\mathcal{T}(S)$};
    \node (Q3) at (1.6,0) {$\R^{[\Gamma]}$};

    \draw [->] (Q2)--(Q1);
    \draw [->] (Q1)--(Q3);
    \draw [->] (Q2)--(Q3);

    \end{tikzpicture}
\end{center}
Since the closure of $\mathcal{T}(S)$ is already compact in $\mathbb{P}(\mathcal{C}(S))$, one gets the same closure in $\R^{[\Gamma]}$. 

This strategy doesn't work beyond $n=2$ because $i(\mu_\rho, -)$ gives the symmetrized length length spectrum of $\rho$, so the diagram does not commute. There is however a natural embedding $\mathcal{C}(S) \to \R^{[\Gamma]}/\Hom(\Gamma,\R)$ which makes the following diagram commute.
\begin{center}
\begin{tikzpicture}

    \node (Q1) at (0,1.6) {$\mathcal{C}(S)$};
    \node (Q2) at (-1.6,0) {$\Hit^n(S)$};
    \node (Q3) at (1.6,0) {$\R^{[\Gamma]}/\Hom(\Gamma,\R)$};

    \draw [->] (Q2)--(Q1);
    \draw [->] (Q1)--(Q3);
    \draw [->] (Q2)--(Q3);

    \end{tikzpicture}
\end{center}
The map $\mathcal{C}(S)\to \R^{[\Gamma]}/\Hom(\Gamma,\R)$ will be defined in section \ref{equivariant bundles and geodesic currents} and will come from a correspondence
\begin{center}
\begin{tikzpicture}

    \node (Q1) at (0,1.6) {$\mathcal{A}(S)$};
    \node (Q2) at (-1.6,0) {$\mathcal{C}(S)$};
    \node (Q3) at (1.6,0) {$\R^{[\Gamma]}$};

    \draw [->] (Q1)--(Q2);
    \draw [->] (Q1)--(Q3);

\end{tikzpicture}
\end{center}
where we will define $\mathcal{A}(S)$ to be the set of $\Gamma$ equivariant principal $\R$ bundles with ``connection" on $\mathcal{G}$ of non-negative curvature. The left map takes a bundle to its curvature, which is a geodesic current, and the right map takes a bundle to its period spectrum. The key to showing this correspondence gives an embedding $\mathcal{C}(S)\to \R^{[\Gamma]}/\Hom(\Gamma,\R)$ is Lemma \ref{abc lemma} which gives an asymmetric counterpart to the classic formula \ref{hilbert length box} expressing symmetrized periods as a cross ratio. 

Unfortunately, some of the spectral radius compactification is collapsed under the quotient $\mathbb{P}(\R^{[\Gamma]}) \to \mathbb{P}(\R^{[\Gamma]}/\Hom(\Gamma,\R))$, so the compactification of $\Hit^n(S)$ in projective currents is a non-trivial quotient of the spectral radius compactification. However, we will see that this quotient map is bijective when restricted to cubic differential ray limit points in $\del_{\lambda_1}\Hit^3(S)$. Despite this difference, studying the current compactification is a helpful stepping stone to understanding the spectral radius compactification.



Currents in the boundary of $\mathcal{T}(S)$ are measured laminations, i.e. symmetric currents with no self intersection. This self intersection condition has a direct generalzation for $\SL_n\!\R$.
\begin{theorem}
\label{INTRO no n intersection}
    If $[\mu]\in \mathbb{P}(\mathcal{C}(S))$ is in the boundary of $\Hit^n(S)$, then there can not be $(x_1,y_1),...,(x_n,y_n)\in \supp(\mu)$, with $x_1 < \dots < x_n < y_1 <\dots  < y_n$.
\end{theorem}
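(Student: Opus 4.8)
The plan is to reduce Theorem~\ref{INTRO no n intersection} to a uniform estimate on the Liouville‑type currents $\mu_\rho$ of honest Hitchin representations, and then to deduce that estimate from the positivity (Frenet property) of the limit curves $\xi,\xi^*$.

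\emph{Step 1 (reduction to a statement about $\mu_\rho$).} Let $[\mu]$ lie in the boundary of $\Hit^n(S)$ inside $\mathbb P(\mathcal C(S))$ and choose $\rho_k\in\Hit^n(S)$ leaving every compact set with $[\mu_{\rho_k}]\to[\mu]$. Picking representatives, write $\mu_{\rho_k}/c_k\to\mu$ weakly; here $c_k\to+\infty$, since otherwise $\mu_{\rho_k}$ would subconverge in $\mathcal C(S)$ to a nonzero representative of $[\mu]$, and by properness of $\rho\mapsto\mu_\rho$ (boundedness of the full eigenvalue spectrum along the sequence) $\rho_k$ would subconverge in $\Hit^n(S)$, a contradiction. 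Now suppose toward a contradiction that $(x_i,y_i)\in\supp(\mu)$ for $i=1,\dots,n$ with $x_1<\dots<x_n<y_1<\dots<y_n$. Choose pairwise disjoint ``boxes'' $B_i=[x_i^-,x_i^+]\times[y_i^-,y_i^+]$, each a $\mu$-continuity set containing $(x_i,y_i)$ in its interior, whose $4n$ endpoints are cyclically ordered $x_1^-<x_1^+<x_2^-<\dots<x_n^-<x_n^+<y_1^-<y_1^+<\dots<y_n^-<y_n^+$. Since $\mu(B_i)>0$, weak convergence gives $\mu_{\rho_k}(B_i)=c_k\cdot(\mu_{\rho_k}/c_k)(B_i)\to+\infty$ for every $i$. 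Hence it suffices to prove: \textbf{for any pairwise disjoint boxes in this cyclic pattern, $\sup_{\rho\in\Hit^n(S)}\min_{1\le i\le n}\mu_\rho(B_i)<\infty$.}

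\emph{Step 2 (box measures as minor ratios of a totally positive matrix).} Cut $\del\Gamma$ at a point between $y_n^+$ and $x_1^-$; the complementary arc is contractible and carries all $4n$ corners in linear order, so we may choose continuous nonzero lifts $v(y)\in\xi(y)\subset V$ and $\varphi(x)\in V^*$ with $\ker\varphi(x)=\xi^*(x)$. Let $M=M_\rho$ be the $2n\times2n$ matrix with rows the $x$-corners, columns the $y$-corners, and $M_{xy}=\langle\varphi(x),v(y)\rangle$. By the defining cross‑ratio formula for $\mu_\rho$,
\[
\mu_\rho(B_i)=\log\frac{M_{x_i^-y_i^-}\,M_{x_i^+y_i^+}}{M_{x_i^-y_i^+}\,M_{x_i^+y_i^-}},
\]
the logarithm of the ratio of the product of the diagonal entries to the product of the anti‑diagonal entries of the $2\times2$ block of $M$ belonging to $B_i$. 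The Frenet/positivity property of Hitchin representations (Labourie; Fock--Goncharov positivity) makes $M$ \emph{totally positive} after a suitable choice of signs of the lifts: every minor of $M$ of size $\le n$ is strictly positive. Moreover $M=\Phi^{\top}V$ factors through $V\cong\R^n$, so $\operatorname{rank}M=n$ and every $(n+1)\times(n+1)$ minor vanishes. In particular each ratio $R_i:=e^{\mu_\rho(B_i)}$ exceeds $1$ (its numerator minus its denominator is the positive $2\times2$ block minor), and Step 1 becomes the purely linear‑algebraic claim: if $M$ is $2n\times2n$, totally nonnegative of rank $n$, and $B_1,\dots,B_n$ are the $n$ diagonal $2\times2$ blocks for a partition of the rows into $n$ consecutive pairs and of the columns into $n$ consecutive pairs, then $\min_iR_i$ is uniformly bounded.

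\emph{Step 3 (the core estimate, the role of $n$, and the obstacle).} I would argue by contradiction: if all $R_i\ge e^{T}$ with $T$ large, then for each $i$ the anti‑diagonal product $M_{x_i^-y_i^+}M_{x_i^+y_i^-}$ is a factor $e^{-T}$ smaller than the diagonal product, so some entry $M_{x_i^+y_i^-}$ or $M_{x_i^-y_i^+}$ is ``small'' relative to the normalization. Because $M$ is totally nonnegative of rank $n$, any $n+1$ of the columns satisfy a linear relation among the vectors $v(y)$ with \emph{strictly alternating} signs (the higher‑rank replacement of ``any three points of $\R\mathbb P^1$ come in order''); pairing these relations with the functionals $\varphi(x)$ and feeding in the $e^{-T}$ gaps propagates a collapse among the remaining flags that, for $T$ large, drives one of the $R_j$ toward $1$, contradicting $R_j\ge e^{T}$. (Equivalently, one is bounding the $n$‑fold self‑linking $\mu_\rho^{\,n}(\{\text{pairwise linked }n\text{-tuples}\})$ and passing to the limit by lower semicontinuity of $\mu\mapsto\mu^{n}(\text{open set})$.) For $n=2$ this is exactly the classical fact that a measured lamination has no self‑intersection: $R_1\gg1$ forces the developing‑map images of $x_1^+$ and $y_1^-$ in $\R\mathbb P^1$ to nearly coincide, which squeezes the images of $x_2^-,x_2^+$ together and sends $R_2\to1$. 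The number $n$ enters because the $n$ boxes consume all $2n$ rows and all $2n$ columns, so a rank‑$n$ matrix cannot carry $n$ ``independent'' large $2\times2$ cross‑ratios at once; with only $n-1$ boxes there is slack — consistent with sharpness, since $n-1$ pairwise linked pairs genuinely occur in the boundary (e.g.\ for the $\Hit^3$ cubic‑differential rays) — so $n$ diagonal blocks in a rank‑$n$ matrix is precisely the critical case. The main obstacle is making this propagation‑of‑collapse argument quantitative and uniform over all the matrices $M_\rho$ arising from $\Hit^n(S)$, independently of the mode of degeneration, and isolating the clean combinatorial reason that rank $n$ versus $n$ diagonal blocks is the threshold. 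The remaining ingredients — choosing the lift signs so that $M$ is genuinely totally positive (no monodromy obstruction, all corners lying on one arc), realizing the $B_i$ as nested $\mu$-continuity sets, the portmanteau step, and $c_k\to\infty$ — are routine.
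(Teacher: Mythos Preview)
Your Steps 1 and 2 are sound: the reduction to a uniform bound $\sup_{\rho}\min_i\mu_\rho(B_i)<\infty$ is correct, and the translation into a rank-$n$ totally positive matrix is standard. The genuine gap is Step 3. The ``propagation of collapse'' is not an argument; you yourself identify it as the main obstacle, and the alternating-sign relation among $n+1$ columns, while true, does not by itself yield the quantitative bound you need. The parenthetical about bounding $\mu_\rho^{\,n}$ on linked $n$-tuples is a different quantity and does not substitute for the box estimate.

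The paper avoids the uniform estimate entirely by passing to the tropical limit first. A boundary current $\mu$ is the curvature of a \emph{tropical rank $n$} holonomy function: for any potential $m$ and any $x_1<\dots<x_{n+1}<y_1<\dots<y_{n+1}$, at least two permutations achieve $\max_{\sigma\in S_{n+1}}\sum_i m(x_i,y_{\sigma(i)})$ (this is the tropicalization of the vanishing $(n+1)\times(n+1)$ minor). The key observation, which replaces your Step 3, is that the map $\sigma\mapsto\sum_i m(x_i,y_{\sigma(i)})$ is weakly monotone for the reverse Bruhat order on $S_{n+1}$, because swapping $\sigma(i)<\sigma(j)$ for $i<j$ subtracts the nonnegative box measure $\mu([x_i,x_j]\times[y_{\sigma(i)},y_{\sigma(j)}])$. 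Hence the identity permutation is always a maximizer, and the second maximizer, being covered by the identity in reverse Bruhat order, must be an adjacent transposition $(j,j+1)$; the tie then reads $\mu([x_j,x_{j+1}]\times[y_j,y_{j+1}])=0$.

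This same Bruhat monotonicity closes your gap directly, without tropicalizing. Take $n+1$ points $u_1<\dots<u_{n+1}$ separating the $x$-intervals and $v_1<\dots<v_{n+1}$ separating the $y$-intervals, so that $B_j\subset[u_j,u_{j+1}]\times[v_j,v_{j+1}]$. Set $N_{ij}=\langle\varphi(u_i),v(v_j)\rangle$; then $N$ is $(n+1)\times(n+1)$, totally positive, and $\det N=0$. Total positivity of the $2\times2$ minors gives $\prod_i N_{i\sigma(i)}\le\prod_i N_{ii}$ for every $\sigma$ (again Bruhat monotonicity, now multiplicatively). From $\det N=0$ and the triangle inequality,
\[
\prod_i N_{ii}\;\le\;\sum_{\sigma\ne\mathrm{id}}\prod_i N_{i\sigma(i)}\;\le\;\bigl((n+1)!-1\bigr)\max_{1\le j\le n}\prod_i N_{i,\tau_j(i)},
\]
where $\tau_j=(j,j+1)$; hence $\min_j e^{\mu_\rho([u_j,u_{j+1}]\times[v_j,v_{j+1}])}\le (n+1)!-1$, which dominates $\min_j e^{\mu_\rho(B_j)}$. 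That is your missing uniform bound, with explicit constant $\log((n+1)!-1)$. The ``clean combinatorial reason'' you were seeking for why $n$ diagonal blocks in rank $n$ is critical is precisely that the elements covered by the identity in $S_{n+1}$ are exactly the $n$ adjacent transpositions.
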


\tikzset{every picture/.style={line width=0.75pt}} 

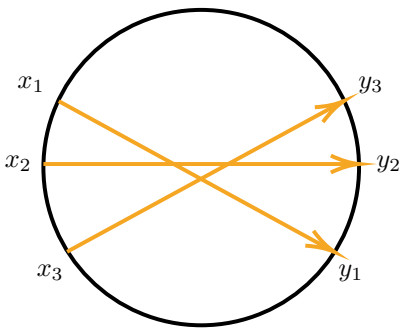
\begin{figure}
\centering

\tikzset{every picture/.style={line width=0.75pt}} 

\tikzset{every picture/.style={line width=0.75pt}} 

\tikzset{every picture/.style={line width=0.75pt}} 

\begin{tikzpicture}[x=0.75pt,y=0.75pt,yscale=-1,xscale=1]

\draw  [line width=1.5]  (37.51,172.4) .. controls (37.51,128.44) and (73.14,92.81) .. (117.1,92.81) .. controls (161.06,92.81) and (196.7,128.44) .. (196.7,172.4) .. controls (196.7,216.36) and (161.06,252) .. (117.1,252) .. controls (73.14,252) and (37.51,216.36) .. (37.51,172.4) -- cycle ;
\draw [color={rgb, 255:red, 245; green, 166; blue, 35 }  ,draw opacity=1 ][line width=1.5]    (49.47,214.89) -- (186.28,140.32) ;
\draw [shift={(188.92,138.89)}, rotate = 151.41] [color={rgb, 255:red, 245; green, 166; blue, 35 }  ,draw opacity=1 ][line width=1.5]    (14.21,-4.28) .. controls (9.04,-1.82) and (4.3,-0.39) .. (0,0) .. controls (4.3,0.39) and (9.04,1.82) .. (14.21,4.28)   ;
\draw [color={rgb, 255:red, 245; green, 166; blue, 35 }  ,draw opacity=1 ][line width=1.5]    (37.51,170.61) -- (193.7,170.61) ;
\draw [shift={(196.7,170.61)}, rotate = 180] [color={rgb, 255:red, 245; green, 166; blue, 35 }  ,draw opacity=1 ][line width=1.5]    (14.21,-4.28) .. controls (9.04,-1.82) and (4.3,-0.39) .. (0,0) .. controls (4.3,0.39) and (9.04,1.82) .. (14.21,4.28)   ;
\draw [color={rgb, 255:red, 245; green, 166; blue, 35 }  ,draw opacity=1 ][line width=1.5]    (45.29,138.89) -- (181.5,213.45) ;
\draw [shift={(184.13,214.89)}, rotate = 208.7] [color={rgb, 255:red, 245; green, 166; blue, 35 }  ,draw opacity=1 ][line width=1.5]    (14.21,-4.28) .. controls (9.04,-1.82) and (4.3,-0.39) .. (0,0) .. controls (4.3,0.39) and (9.04,1.82) .. (14.21,4.28)   ;

\draw (23,125) node [anchor=north west][inner sep=0.75pt]   [align=left] {$\displaystyle x_{1}$};
\draw (16.48,165) node [anchor=north west][inner sep=0.75pt]   [align=left] {$\displaystyle x_{2}$};
\draw (32.64,220) node [anchor=north west][inner sep=0.75pt]   [align=left] {$\displaystyle x_{3}$};
\draw (184.8,220) node [anchor=north west][inner sep=0.75pt]   [align=left] {$\displaystyle y_{1}$};
\draw (204,165) node [anchor=north west][inner sep=0.75pt]   [align=left] {$\displaystyle y_{2}$};
\draw (195,125) node [anchor=north west][inner sep=0.75pt]   [align=left] {$\displaystyle y_{3}$};

\end{tikzpicture}

\caption{A forbidden configuration for a current in the boundary of $\Hit^3(S)$}
\end{figure}

This will be proved in Lemma \ref{no n-intersection} using the tropicalization of Labourie's determinant relations \cite{Labourie07}. 


\subsection{Geometry at infinity}
There is another perspective on the Thurston compactification: every point in the Thurston boundary is dual to an $\R$-tree with $\Gamma$ action. Rational points correspond to genuine trees while irrational points are more exotic. The translation lengths of $\Gamma$ acting on this tree are renormalized limits of hyperbolic lengths. Generalizing this perspective to $\SL_n\!\R$ is the main goal of this paper. 

From a tropical rank $n$ geodesic current $\mu$ we define (Definition \ref{X definition}) a metric space $X_\mu$ with $\Gamma$ action, such that the translation length of $\gamma\in \Gamma$ is $l(\gamma) + l(\gamma^{-1})$, where $l\in \R^{[\Gamma]}$ is the length spectrum corresponding to $\mu$. We call $X_\mu$ the \textbf{universal asymmetric dual space} to $\mu$.

Of course, it would be better to have $l(\gamma)$ instead of $l(\gamma) + l(\gamma^{-1})$. We find a slightly more technical way to encode the actual asymptotic $\lambda_1$-spectrum geometrically. From a boundary point $l\in \R^{[\Gamma]}$ of the spectral radius compactification which maps to the tropical rank $n$ current $\mu$, we construct a principal $\R$-bundle $L$ over $X_\mu$ together with a ``reletive metric" $d:L\times L\to \R$ (Definition \ref{definition relative metric}), such that the translation length of $\gamma\in \Gamma$ acting on $L$ is $l(\gamma)$.

In fact, $X_\mu$ is defined for any geodesic current which is ``nullhomologous", a large class of currents containing symmetric currents, though $X_\mu$ is exceptionally nice when $\mu$ is a tropical rank $n$ current. First we state what we can prove about $X_\mu$, then comment on its definition.


It is still largely a mystery what $X_\mu$ can look like, but we know a few things. A current $\mu$ is called tropical rank $n$ (Definition \ref{tropical rank n current}) if it satisfies the tropicalization of Labourie's determinant relations \cite{Labourie07}. Currents in the boundary of $\Hit^n(S)$ are tropical rank $n$, but we do not know the converse.
\begin{theorem} [Lemma \ref{Xmu is dimension n-1}]
    If $\mu$ is a discrete tropical rank $n$ geodesic current, then $X_\mu$ is a polyhedral complex of dimension at most $n-1$.
\end{theorem}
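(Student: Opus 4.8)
\emph{Strategy.} Since $\mu$ is discrete we may write $\mu=\sum_{i\in I}a_i\,\delta_{g_i}$ with $a_i>0$ and $\{g_i\}$ a locally finite, $\Gamma$-invariant family of points of $\mathcal{G}$; fixing an auxiliary hyperbolic metric on $S$ realizes each $g_i$ as a complete oriented geodesic in $\mathbb{H}^2=\tilde S$, and the family $\{g_i\}$ is then locally finite in $\mathbb{H}^2$. Unwinding Definition \ref{X definition}, the pseudometric on $\mathbb{H}^2$ whose quotient (after the usual $\Gamma$-equivariant identifications) is $X_\mu$ is constant on each component of $\mathbb{H}^2\setminus\bigcup_i g_i$ and jumps by $a_i$ when one crosses $g_i$. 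I would therefore identify $X_\mu$, $\Gamma$-equivariantly, with the polyhedral complex dual to the system of walls $\{g_i\}$: its vertices are the coherent orientations of the walls (the complementary regions of $\bigcup_i g_i$ giving a cocompact set of them, the remaining ``ideal'' orientations being forced on by coherence), two vertices are joined by an edge of length $a_i$ when they differ exactly across $g_i$, and a cube $\prod_{j=1}^{k}[0,a_{i_j}]$ is glued in for every $k$-tuple of walls on which all $2^k$ orientations are coherent. Verifying that this reproduces $X_\mu$ with its $\Gamma$-action, and that the cubical translation lengths agree with those of Definition \ref{X definition} (so with $l(\gamma)+l(\gamma^{-1})$), is the soft part of the argument and is essentially contained in the discussion surrounding that definition; in particular it shows $X_\mu$ is a polyhedral complex.

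\emph{Reducing the dimension bound to a combinatorial statement.} Granting this model, the dimension of $X_\mu$ equals, as for any dual cube complex,
\[
\nu(\mu):=\sup\{\,k:\ \exists\, g_{i_1},\dots,g_{i_k}\in\supp(\mu)\ \text{pairwise crossing}\,\},
\]
where two geodesics ``cross'' when their endpoints link on $\partial\Gamma$ (equivalently, when they meet transversally in $\mathbb{H}^2$): $k$ pairwise-crossing walls force all $2^k$ orientations on them to be coherent and hence span a $k$-cube, and conversely a $k$-cube arises only from $k$ pairwise-crossing walls. So the theorem reduces to the assertion that a discrete tropical rank $n$ current has no $n$ pairwise-crossing geodesics in its support.

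\emph{Using the tropical rank $n$ condition.} Suppose $g_1,\dots,g_n\in\supp(\mu)$ pairwise cross. An elementary fact about chords of a circle—which I would prove by induction on $n$, peeling off one chord and noting that the remaining $n-1$ endpoints on its two sides must occur in matching orders—shows the $2n$ endpoints are distinct and can be labelled $p_1,\dots,p_{2n}$, consistently with the circular order, so that $g_j$ joins $p_j$ to $p_{j+n}$ for each $j$. Thus $\supp(\mu)$ contains exactly the kind of configuration appearing in Theorem \ref{INTRO no n intersection}. If the orientations the $g_j$ carry are already ``co-oriented'' (all running from the block $p_1,\dots,p_n$ to the block $p_{n+1},\dots,p_{2n}$ for some choice of cut point), this immediately contradicts Lemma \ref{no n-intersection}; for the remaining orientation patterns one invokes the further tropicalized Labourie determinant relations packaged into Definition \ref{tropical rank n current}, from which the same configuration is excluded. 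Either way $\nu(\mu)\le n-1$, and hence $\dim X_\mu\le n-1$.

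\emph{Main obstacle.} The crux is this last step. Lemma \ref{no n-intersection} as stated forbids only the co-oriented monotone configurations $x_1<\dots<x_n<y_1<\dots<y_n$, whereas a pairwise-crossing $n$-tuple in $\supp(\mu)$ may carry mixed orientations and (already for $n=3$) need not be re-cut into monotone form. One must therefore either extract from the full tropical rank $n$ condition that \emph{every} pairwise-crossing $n$-tuple is excluded, or—more likely the route the construction actually takes—check that Definition \ref{X definition} produces a $k$-cube only from a co-oriented crossing family in the first place, so that it is precisely the configurations of Theorem \ref{INTRO no n intersection} that govern the dimension. Pinning down which crossing families contribute cells of $X_\mu$, and matching that class to the configurations killed by the tropical rank $n$ condition, is where the real work lies; the identification of $X_\mu$ with a dual cube complex and the ``dimension $=$ crossing number'' principle are then routine.
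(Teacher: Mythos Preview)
Your identification of $X_\mu$ with a Sageev-style dual cube complex is where the argument goes wrong, and this is not a minor modelling issue: it is exactly the step that would have to absorb the ``main obstacle'' you flag at the end. Points of $X_\mu$ are not coherent orientations of the walls $\{g_i\}$; they are \emph{holonomy-zero lower submeasures} of $\mu$ (Definition \ref{X definition}). In the discrete case the set of \emph{all} lower submeasures is indeed a cube complex, with the cube through $\nu$ having dimension equal to the number of \emph{shared points} of the monotone partition $\mu=\nu+\bar\nu$ (those support points carrying mass from both $\nu$ and $\bar\nu$). The set $X_\mu$ is then the codimension-one linear slice cut out by the holonomy-zero condition, which is why the conclusion is ``polyhedral complex'' rather than ``cube complex''. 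So the combinatorial quantity controlling $\dim X_\mu$ is \emph{not} the maximal size of a pairwise-crossing family in $\supp(\mu)$, but the maximal number of shared points of a holonomy-zero monotone partition; and shared points are pairwise \emph{incomparable} in the order on $\mathcal{G}$, which is strictly weaker than pairwise crossing and has no a priori orientation constraint.

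The paper does not resolve your orientation problem by classifying which crossing families contribute cells. Instead it uses the holonomy-zero condition in an essential analytic way: to a holonomy-zero $\nu$ one associates the potential
\[
m_\nu(x,y) \;=\; -\,\nu(\mathcal{G}_{\ge(x,y)}) \;-\; \bar\nu(\mathcal{G}_{\le(x,y)}),
\]
which is everywhere $\le 0$ and vanishes exactly on points that are neither above $\bar\nu$ nor below $\nu$. If $\nu$ has $k$ shared points $(u_i,v_i)$, one interleaves them with points $(x_i,y_i)$ lying in the zero set of $m_\nu$; for $i\ne j$ the point $(x_i,y_j)$ is forced to lie above or below some shared point, hence $m_\nu(x_i,y_j)<0$. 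Thus the identity permutation \emph{uniquely} maximizes the $k\times k$ tropical determinant of $\bar m_\nu(x_i,y_j)$, so $\mu$ is not tropical rank $k-1$. Tropical rank $n$ therefore forces $k\le n$, and the holonomy-zero slice of a $k$-cube has dimension $k-1\le n-1$. The whole point is that $m_\nu$ exists only because $\nu$ is holonomy zero, and its sign/zero-set structure is what manufactures a configuration to which the tropical rank hypothesis applies directly---bypassing the orientation ambiguity you could not get past.
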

From the definition of $X_\mu$ this is far from obvious. The definition of $X_\mu$ makes sense for a broad class of geodesic currents, including those coming from negatively curved metrics and from Anosov representations, but usually $X_\mu$ will be infinite dimensional. We expect that for any tropical rank $n$ current, $X_\mu$ has dimension at most $n-1$, but we only know how to formulate and prove this for discrete currents.

For $n=2$ we recover the well known story that Thurston boundary points parametrize $\R$-trees with $\Gamma$-action. 
\begin{theorem} 
    Tropical rank $2$ currents are measured laminations, and if $\mu$ is a measured lamination $X_\mu$ is an $\R$-tree.
\end{theorem}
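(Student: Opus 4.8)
The statement splits into two claims, and the plan is to dispatch them in turn: (i) a tropical rank $2$ current is a measured lamination, and (ii) for a measured lamination $\mu$ the dual space $X_\mu$ of Definition \ref{X definition} is an $\R$-tree, recovering the Morgan--Shalen / Skora dual tree of a lamination \cite{MorganShalen91, Skora96, Bonahon88}.

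For (i) I would unwind Definition \ref{tropical rank n current} in the case $n=2$. The determinant relations underlying it are, for $\SL_2$, the linear dependence of any three vectors in $\R^2$, namely $\det(v,w)\,u - \det(u,w)\,v + \det(u,v)\,w = 0$; their tropicalization asserts that among the three resulting real quantities the maximum is attained at least twice. Two things fall out. First, exactly as in Lemma \ref{no n-intersection} with $n=2$: there are no $(x_1,y_1),(x_2,y_2)\in\supp(\mu)$ with $x_1<x_2<y_1<y_2$, i.e. no two oriented geodesics in $\supp(\mu)$ cross transversally, so $\supp(\mu)$, being closed and $\Gamma$-invariant, is the full preimage of a geodesic lamination $\Lambda$ on $S$. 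Second, the tropicalized relation is symmetric in its variables in the same way the alternating $\SL_2$-pairing makes the Liouville measure $\mu_\rho$ symmetric, so $\mu$ is invariant under the flip $(x,y)\mapsto (y,x)$; in particular the non-symmetric oriented-curve currents are excluded, which is what the claim requires. Finally, countable additivity of $\mu$ together with non-crossing lets one reorganize $\mu$ as the assignment to a transversal arc $k$ of the $\mu$-mass of leaves meeting $k$, and non-crossing makes this holonomy-invariant, so it is a transverse measure on $\Lambda$; hence $\mu$ is a measured lamination. The only nontrivial input is the symmetry extraction, but it is forced by the precise shape of the determinant relation, so this part should be routine once Definition \ref{tropical rank n current} is on the table.

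For (ii) I would specialize the construction of $X_\mu$. A measured lamination is symmetric, hence nullhomologous, so $X_\mu$ is defined. The expected concrete model is the classical one: on $\tilde S$ put the pseudometric $d_\mu(p,q) = \mu\{\text{leaves of }\supp(\mu)\text{ separating }p\text{ from }q\}$ (atoms counted with their weight, the diffuse part integrated against the transverse measure), and let $X_\mu$ be the metric space obtained by collapsing $d_\mu=0$ and completing. First I would check Gromov's four-point inequality $d(x,y)+d(z,w)\le \max\big(d(x,z)+d(y,w),\ d(x,w)+d(y,z)\big)$: for four points in $\tilde S$ the leaves separating the various pairs are pairwise non-crossing, hence nest in a tree-like way, so the separating masses distribute exactly as distances in a tree do; this is the single place where the lamination property (equivalently, tropical rank $2$) enters the second half. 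A complete geodesic $0$-hyperbolic space is an $\R$-tree, and $d_\mu$ induces a geodesic metric because geodesics are realized by concatenating sub-arcs of leaves with paths across complementary regions, so $X_\mu$ is an $\R$-tree. I would then record that $d_\mu$ is $\Gamma$-equivariant and that the translation length of $\gamma$ equals $i(\mu,\delta_{[\gamma]})$, consistent with the general formula $l(\gamma)+l(\gamma^{-1})$ for $X_\mu$ since $l(\gamma)=l(\gamma^{-1})=\tfrac12 i(\mu,\delta_{[\gamma]})$ for the symmetric current $\mu$.

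The main obstacle I anticipate is not either classical computation but identifying this concrete dual tree with the abstract $X_\mu$ of Definition \ref{X definition}, which is phrased through the equivariant $\R$-bundle and ``relative metric'' formalism of Section \ref{equivariant bundles and geodesic currents} and is built for arbitrary nullhomologous currents. One must see that when $\mu$ is a measured lamination the ``connection'' is flat across complementary regions and along leaves, so that the induced pseudometric on the dual collapses precisely to $d_\mu$; concretely this amounts to checking that the transverse measure is the curvature of the tautological bundle and that parallel transport around the relevant loops is trivial. A secondary technicality is the irrational (non-discrete) case, where ``mass of separating leaves'' becomes an integral against a non-atomic transverse measure and one must still argue local finiteness of $d_\mu$ and completeness of the quotient; here I would lean on the treatments in \cite{Bonahon88, Skora96} rather than redo them.
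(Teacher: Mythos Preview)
Your proposal for part (i) correctly invokes Lemma \ref{no n-intersection} for non-crossing, but the symmetry argument is not a proof. Saying ``the tropicalized relation is symmetric in its variables'' does not by itself force $\tau_*\mu = \mu$. The paper's mechanism is concrete: for distinct $x,y,z\in\del\Gamma^\circ$, the $3\times 3$ tropical determinant of $\bar m$ has $-\infty$ on the diagonal, so only the two $3$-cycles give finite sums and these must agree, yielding $m(x,y)+m(y,z)+m(z,x) = m(x,z)+m(y,x)+m(z,y)$; i.e.\ all \emph{triple ratios} $h([x,y,z])$ vanish. One then needs a separate, non-obvious step (Lemma \ref{vanishing triple ratios implies symmetry}): the taxi-cycle identity $[x,x',y'] - [x,x',y] = [x,x';y,y'] - [y,y';x,x']$ shows that vanishing triple ratios implies $h$ is symmetric, hence $\mu$ is. This is the bridge between the determinant relation and symmetry, and your sketch does not supply it.

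For part (ii) your route genuinely diverges from the paper's, and the obstacle you flag is real and unresolved. You build the classical Morgan--Shalen tree on $\tilde S/{\sim}$ and must then identify it with the abstract $X_\mu$ of Definition \ref{X definition}; the paper never makes this identification and never uses points of $\tilde S$. Instead it works intrinsically with $X_\mu$: it first proves that for a measured lamination the holonomy-zero lower submeasures are exactly the \emph{symmetric} ones ($\tau_*\bar\nu = \nu$), a structural fact your outline misses entirely. From this the metric simplifies to $d(\nu_1,\nu_2) = |(\nu_2-\nu_1)^+|$, and one checks $\supp((\nu_2-\nu_1)^+)$ is totally ordered in $\mathcal{G}$, giving unique geodesics. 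Zero-hyperbolicity is then proved directly: for $\nu_1,\nu_2,\nu_3 \in X_\mu$ the Hahn decompositions produce an explicit median
\[
\nu_0 = \nu_1\cap\nu_2\cap\nu_3 + \bar\nu_1\cap\nu_2\cap\nu_3 + \nu_1\cap\bar\nu_2\cap\nu_3 + \nu_1\cap\nu_2\cap\bar\nu_3
\]
lying on all three pairwise geodesics (Lemma \ref{measured lamination implies R tree}). Your approach could in principle be completed, but the identification step is substantial: you would have to show every holonomy-zero lower submeasure arises from (a limit of) points of $\tilde S$, and that the $\sup-\inf$ metric of $X_\mu$ matches your separating-mass pseudometric. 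The paper's intrinsic argument sidesteps all of this.
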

Lemma \ref{no n-intersection} shows that a tropical rank $2$ current $\mu$ has no self-intersection, Lemma \ref{vanishing triple ratios implies symmetry} shows $\mu$ is symmetric, thus a measured lamination, and Lemma \ref{measured lamination implies R tree} shows that $X_\mu$ is an $\R$-tree.

For $n=3$, there are certain boundary points for which $X_\mu$ turns out to be naturally in bijection with $\tilde{S}$, namely endpoints of cubic differential rays. If we equip $S$ with a complex structure and a non-zero holomorphic cubic differential $\alpha$ we can define a certain Higgs bundle $(E,\phi_\alpha)$.
This Higgs bundle lies in the Hitchin section, thus by solving Hitchin's equation we get a representation in $\Hit^3(S)$. In fact, $\Hit^3(S)$ is parametrized in this way by pairs of complex structure and cubic differential on $S$ \cite{labourie_flat_2006, Loftin01}. Paths in $\Hit^3(S)$ coming from a ray of cubic differentials $\{R\alpha : R>0 \}$ are called cubic differential rays. 
\begin{theorem}
\label{cubic ray endpoint}
    Let $\alpha$ be a non-zero cubic differential on $S$. For $R>0$, let $\mu(R)$ be the geodesic current coming from the Hitchin representation corresponding to the Higgs bundle $(E,\phi_{R\alpha})$. Then as $R$ goes to infinity, $\mu(R)/R$ converges to the current of real trajectories of $\alpha$.
\end{theorem}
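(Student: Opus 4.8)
The plan is to identify the cubic ray with the flow of the $\mathbb{C}^{\times}$-action on Higgs bundles, use the known large-scale behaviour of the associated harmonic maps and flat connections to compute the limit of the rescaled cross-ratios, and then upgrade this to weak-$*$ convergence of $\mu(R)/R$; controlling the degeneration near the zeros of $\alpha$ is the delicate point.

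First I would normalize the ray. Conjugating $\phi_{R\alpha}$ by a constant diagonal automorphism of $E=K\oplus\mathcal{O}\oplus K^{-1}$ identifies $(E,\phi_{R\alpha})$ with $(E,t\phi_\alpha)$, with $t$ equal to a fixed power of $R$, so the cubic ray is (a reparametrization of) the $\mathbb{C}^{\times}$-flow ray through $(E,\phi_\alpha)$ and $\rho_R=\hol\bigl(\nabla_{h_R}+t\phi_\alpha+t\phi_\alpha^{*h_R}\bigr)$, where $h_R$ is the harmonic metric, which for this cyclic Higgs bundle is diagonal and governed by the affine-sphere (Tzitz\'eica) equation. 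The analytic input I would invoke — either the general large-scale Higgs-bundle asymptotics, or in this cyclic case the maximum-principle estimates of Loftin and Dumas--Wolf together with the non-abelian Hodge degeneration — is: on compact subsets of $\tilde S$ away from the lift $\tilde Z$ of the zero set $Z$ of $\alpha$, the metrics $h_R$ converge, after the natural normalization, to the flat limiting metric that diagonalizes $\phi_\alpha$ in the cubic coordinate $w=\int\alpha^{1/3}$; consequently the parallel transport of $\nabla_{h_R}+t\phi_\alpha+t\phi_\alpha^{*h_R}$ along a path $\eta\subset\tilde S\setminus\tilde Z$ acts on the three $\phi_\alpha$-eigenlines by the scalars $\exp\!\bigl(2\,\mathrm{Re}(\zeta^{j}\!\int_\eta\alpha_R^{1/3})\bigr)$, $j=0,1,2$, $\zeta=e^{2\pi i/3}$, with $\alpha_R$ the cubic differential at parameter $R$, up to a multiplicative error bounded uniformly on compacts of $\tilde S\setminus\tilde Z$. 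Equivalently the rescaled equivariant harmonic maps converge on $\tilde S\setminus\tilde Z$ to the developing map of the flat cone metric of $\alpha$ with its parallel framing, which is also the source of the identification $X_{\mu_\alpha}\cong\tilde S$.

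Granting this, I would identify the limiting current box by box. For a box $B=[x_1,x_2]\times[y_1,y_2]$ whose four boundary geodesics are realized by $\alpha$-flat geodesics missing $\tilde Z$, the quantity $\log\bigl|\tfrac{\langle\xi_R^*(x_1),\xi_R(y_1)\rangle\langle\xi_R^*(x_2),\xi_R(y_2)\rangle}{\langle\xi_R^*(x_1),\xi_R(y_2)\rangle\langle\xi_R^*(x_2),\xi_R(y_1)\rangle}\bigr|$, which equals $\mu(R)(B)$, can be estimated directly from the flat model, and after the rescaling in the statement it converges to the total real-trajectory transverse measure that $\alpha$ assigns to $B$, i.e.\ to $\mu_\alpha(B)$ with $\mu_\alpha$ the current of real trajectories. (This is the cubic analogue of the Liouville cross-ratio computing the hyperbolic length of the dual segment: the four boundary geodesics degenerate onto flat geodesics and the cross-ratio exponentiates, at the stated rate, the signed transverse measure swept between them; equivalently, the rescaled $\lambda_1$- and $\lambda_3$-periods converge to the displacement functions of $\Gamma$ acting on $\tilde S$ by the asymmetric Finsler metric $v\mapsto 2\max_{j}\mathrm{Re}(\zeta^{j}\alpha^{1/3}(v))$, which is why it ``records growth rates of trace functions''.) Running this over a family of such boxes that separates points of $\mathcal{C}(S)$, together with an a-priori bound on the mass of the rescaled $\mu(R)$ over a fundamental domain — from the same asymptotics, or from the boundedness of the rescaled $\ell_{\rho_R}$ on a fixed filling multicurve via Lemma \ref{abc lemma} — shows that the rescaled $\mu(R)$ is precompact and that every subsequential limit agrees with $\mu_\alpha$; hence $\mu(R)/R\to\mu_\alpha$.

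The main obstacle is the behaviour at the zeros of $\alpha$, where the convergence above is only local on $\tilde S\setminus\tilde Z$, the flat metric is cone-singular, and the real trajectories have prong singularities. Two things need to be controlled: (i) no mass of the rescaled $\mu(R)$ concentrates on $\tilde Z$; I would get this as a no-loss-of-mass statement by comparing the mass of the rescaled $\mu(R)$ over a fundamental domain with the rescaled $\ell_{\rho_R}$-length of a fixed filling multicurve, whose limit is its $\mu_\alpha$-length by the box estimate, the two being forced to agree. (ii) A geodesic segment threading a small disc around a zero of order $k$ picks up a transverse measure that is $O(1)$ in terms of the segment's $\alpha$-length (because the cone angle is $2\pi(1+k/3)$), hence negligible after rescaling, so these contributions disappear in the limit while the prong structure at the zeros is exactly what lets $\mu_\alpha$ carry the nonzero self-intersection permitted by Theorem \ref{INTRO no n intersection} for $n=3$. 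A final bookkeeping step is to match normalizations — that the current produced by the limiting cross-ratio is literally the current of real trajectories of $\alpha$ as defined earlier, including the constant relating $\mathrm{Re}(\zeta^{j}\!\int\alpha^{1/3})$ to the fixed normalizations of the Hitchin parametrization and of the real-trajectory transverse measure — which is routine once (i) and (ii) are in place.
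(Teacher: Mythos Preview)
Your route is genuinely different from the paper's, and there is a real gap at the step ``the quantity \ldots\ can be estimated directly from the flat model.'' The cross-ratio $\mu(R)(B)$ is built from the Anosov limit lines $\xi_R(y_j)\in\mathbb P(V)$ and $\xi_R^*(x_i)\in\mathbb P(V^*)$, not from parallel transport along a compact path in $\tilde S$. Convergence of the harmonic metric or the flat connection on compacta of $\tilde S\setminus\tilde Z$ does not by itself control $\xi_R(y)$: any geodesic ray to $y\in\partial\Gamma$ threads infinitely many zero-discs, and the $O(1)$ correction at each one does not obviously wash out for the \emph{direction} of the dominant line (it does for the \emph{modulus} of the dominant eigenvalue along a closed loop, which is why the results you cite prove period asymptotics). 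Upgrading the Loftin/Dumas--Wolf estimates to a statement about limit maps is essentially what \cite{loftin_limits_2022} does, and it is strictly more than what you have written down; your ``equivalently, the rescaled $\lambda_1$- and $\lambda_3$-periods converge'' is the statement actually available, not an equivalent reformulation of the box computation.

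The paper sidesteps this entirely. It takes from \cite{Reid2023} only the period statement $\log|\lambda_1(\rho_R(\gamma))|/R\to F^\Delta_\alpha(\gamma)$ and then invokes the machinery of Sections~2--3: an equivariant bundle with positive taxi connection is determined by its period spectrum (Lemma~\ref{holonomy determines spectrum} together with Lemma~\ref{abc lemma} and the limit argument that follows it). Hence any subsequential weak limit of $\mu(R)/R$ must be the curvature of the generalized geodesic-flow bundle $U_{F^\Delta_\alpha}$ of Section~\ref{sec:currents from metrics}. The remaining work is then purely Finsler-geometric, with no further Higgs-bundle input: Lemmas~\ref{cubic current corridor} and~\ref{cubic current support} identify the Liouville current of $F^\Delta_\alpha$ with the current of descending real trajectories of~$\alpha$. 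In particular the zeros of~$\alpha$ are dealt with exactly once, inside the cited period estimate, and no separate no-mass-concentration argument is needed. Your approach could probably be completed by proving limit-map asymptotics, but the paper's point is that period-spectrum injectivity makes that upgrade unnecessary.
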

Theorem \ref{cubic ray endpoint} is mostly a corollary of Theorem B from \cite{Reid2023}, (see also \cite{loftin_limits_2022}) where it was shown that the endpoint in the spectral radius compactification of the ray specified by $\alpha$ is the length spectrum of an explicit Finsler metric $F^\Delta_\alpha$ with triangular unit balls, which is flat on the compliment of the zeros of $\alpha$. 
In Lemma \ref{cubic current corridor} and Lemma \ref{cubic current support} it is shown that the Lioville current of $F^\Delta_\alpha$ is the current of real trajectories of $\alpha$.  Finally, we show that $X_\mu$ is very nice when $\mu$ is a cubic differential current.
\begin{theorem}
    If $\mu$ is the current of real trajectories of $\alpha$, then there is a $\Gamma$ equivariant map $\tilde{S}\to X_\mu$ which is an isometry for the symmetrized metric $F^\Delta_\alpha + F^\Delta_{-\alpha}$.
\end{theorem}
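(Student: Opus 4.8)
The plan is to unwind Definition~\ref{X definition} and reduce the statement to a computation in the flat geometry of $\alpha$. Recall that for a nullhomologous current $\nu$ the space $X_\nu$ is produced from $\tilde S$ --- after fixing an auxiliary hyperbolic metric to identify $\tilde S$ with $\mathbb{H}^2$ and its space of geodesics with $\mathcal G$ --- by quotienting by the $\Gamma$-invariant pseudometric
\[
  d_\nu(p,q) \;=\; \nu\big(\{\, g : g \text{ separates } p \text{ from } q \,\}\big)
\]
and then completing; here a geodesic $g$ separates $p$ from $q$ exactly when the segment $[p,q]$ crosses it, which happens at most once. (This is the same construction that produces the dual $\R$-tree of a measured lamination, cf.\ Lemma~\ref{measured lamination implies R tree}.) First I would check that $\mu$, the current of real trajectories of $\alpha$, is nullhomologous: on a flat $\alpha$-chart $\mu$ is a sum of the transverse measures of the three trajectory foliations, whose oriented transverse directions sit at $120^\circ$ and hence sum to zero, so the Ruelle--Sullivan class of $\mu$ vanishes. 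Thus $X_\mu$ is defined, there is a tautological $\Gamma$-equivariant map $\pi\colon\tilde S\to X_\mu$, and the content of the theorem is exactly the identity of (pseudo)metrics $d_\mu = F^\Delta_\alpha + F^\Delta_{-\alpha}$ on $\tilde S$. Granting this, $d_\mu$ is a genuine metric, so $\pi$ is injective; it is isometric onto its image by construction of $X_\mu$; and since $(\tilde S,\,F^\Delta_\alpha+F^\Delta_{-\alpha})$ is a complete, proper, flat metric with cone points of angle $\ge 2\pi$ at the zeros of $\alpha$, the image of $\pi$ is closed and dense, hence equals $X_\mu$.

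To establish $d_\mu = F^\Delta_\alpha + F^\Delta_{-\alpha}$ I would work on flat charts. Away from the finitely many $\Gamma$-orbits of zeros of $\alpha$ a natural coordinate $z$ gives $\alpha=dz^3$, and by \cite{Reid2023} the metric $F^\Delta_\alpha$ is there the translation-invariant Finsler metric whose unit ball is a fixed (equilateral) triangle $T$; hence $F^\Delta_\alpha+F^\Delta_{-\alpha}$ is the translation-invariant norm $v\mapsto g_T(v)+g_T(-v)$, and a short computation rewrites this, for three suitable covectors $m_1,m_2,m_3$ spaced $60^\circ$ apart, as $c_1\sum_{k}|\langle v,m_k\rangle|$, a norm with regular hexagonal unit ball. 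On the other hand, by Lemma~\ref{cubic current corridor} and Lemma~\ref{cubic current support} the support of $\mu$ on such a chart is the union of the straight lines in the three trajectory directions, and as a measure $\mu$ is the sum over those directions of the corresponding transverse Lebesgue measures, with some density $c_2$. Hence for $p,q$ in one chart the set of geodesics separating $p$ from $q$ is, off a $\mu$-null set, the set of trajectory lines meeting the straight segment $[p,q]$, and
\[
  d_\mu(p,q) \;=\; c_2\sum_{k}|\langle\, q-p,\ m_k\,\rangle| ,
\]
the sum of the three ``widths'' of the segment transverse to the trajectory directions, because the number of trajectory lines in a fixed direction crossing a segment is proportional to its Euclidean width in that direction. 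It then remains to see that the densities agree, $c_2=c_1$, which is the normalization one must pin down.

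Next I would globalize. The quantity $\mu(\{g : g \text{ crosses } c\})$ is additive when a straight segment $c$ is subdivided into straight subsegments (a geodesic crossing $[p,q]$ crosses it once, hence crosses exactly one piece of any subdivision), and for a straight segment it equals $d_\mu$ of its endpoints; the $F^\Delta_\alpha+F^\Delta_{-\alpha}$-length is likewise additive along straight segments. So the chart computation propagates to any straight segment of $\tilde S$ subdivided at the finitely many cone points and chart boundaries it meets --- a segment through a zero of $\alpha$ being handled by subdividing there, while trajectories actually entering cone points form a lower-dimensional, $\mu$-null family. Since straight segments realize distance in the flat metric $F^\Delta_\alpha+F^\Delta_{-\alpha}$ (part of the geometry of $F^\Delta_\alpha$ worked out in \cite{Reid2023}, the cone angles being $\ge 2\pi$), this gives $d_\mu(p,q)=(F^\Delta_\alpha+F^\Delta_{-\alpha})(p,q)$ for all $p,q\in\tilde S$. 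Non-degeneracy of $d_\mu$, needed so that $X_\mu$ is not a proper quotient, holds because the three transverse foliations fill $\tilde S$, so distinct points are separated by a positive-$\mu$-mass family of trajectories.

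The \textbf{main obstacle} is the local step: identifying the global ``separates in $\tilde S$'' relation with ``crosses a straight segment in a chart'', controlling trajectories at the zeros of $\alpha$, and above all pinning the density so that the three directional widths add to exactly $F^\Delta_\alpha+F^\Delta_{-\alpha}$ and not a multiple of it --- which entails tracking the orientation conventions in $\mathcal G$ and the normalization in the $l(\gamma)+l(\gamma^{-1})$ statement for $X_\mu$. Everything else (nullhomology of $\mu$, $\Gamma$-equivariance, additivity, completeness, surjectivity) is routine once the flat chart model is in hand.
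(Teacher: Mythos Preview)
Your proposal rests on a misreading of Definition~\ref{X definition}. The space $X_\mu$ is \emph{not} obtained by quotienting $\tilde S$ by a separation pseudometric and completing; it is defined as the set of admissible holonomy-zero lower submeasures of $\mu$, equipped with the metric $d(\nu_1,\nu_2)=\sup f_{\nu_1,\nu_2}-\inf f_{\nu_1,\nu_2}$, where $f_{\nu_1,\nu_2}(x)=\int_{(x_0,x]}(\pi_1)_*(\nu_2-\nu_1)$. There is no tautological map $\pi:\tilde S\to X_\mu$; the map has to be \emph{constructed}, by sending $p$ to the lower submeasure $\nu(p)$ of trajectories passing counterclockwise around $p$, and one must check that $\nu(p)$ is admissible and holonomy zero. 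The metric you compute --- total $\mu$-mass of geodesics separating $p$ from $q$ --- is neither the paper's $d$ nor the alternative $d'(\nu_1,\nu_2)=|(\nu_2-\nu_1)^+|$ mentioned just before the definition of $d$; your quantity is the total variation $|(\nu(q)-\nu(p))^+|+|(\nu(q)-\nu(p))^-|$. The normalization anxiety you flag as the ``main obstacle'' is a symptom of this mismatch: with the correct definitions there is no floating constant to pin down.

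The most serious gap is surjectivity. Your argument ``the image is closed and dense in the completion'' has no force, because $X_\mu$ is not defined as a completion of anything; one must show that \emph{every} holonomy-zero admissible lower submeasure of $\mu$ equals $\nu(p)$ for some $p\in\tilde S$. This is the content of Lemma~\ref{holonomy zero means comes from point}, which in turn rests on the structural Lemma~\ref{lower submeasure of cubic differential currents}: any admissible lower submeasure of a cubic-differential current has finitely many maximal trajectories bounding a polygon with consistently oriented sides, and the holonomy is shown to equal the perimeter of that polygon, so holonomy zero forces all sides to have length zero and all maximal trajectories to pass through a common point $p$. Nothing in your outline rules out exotic holonomy-zero lower submeasures unrelated to points of $\tilde S$. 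The paper then establishes the isometry by lifting $p\mapsto\nu(p)$ to the relative-metric bundle $L_U$ (Lemma~\ref{horofunctional legendrian from point} and the subsequent lemma), identifying $f_{\mathcal L(p),\mathcal L(q)}$ with $g(q)-g(p)$ for minus-horofunctions $g$, whose supremum is exactly $d_{F^\Delta_\alpha}(p,q)$; symmetrizing gives the hexagonal metric on $X_\mu$.
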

The map is shown to be a bijection in Lemma \ref{holonomy zero means comes from point}, and subsequently shown to be an isometry.

The idea that convex projective structures on $S$ degenerate to flat structures on $S$ with singularities has some history. 
Parreau used Fock-Goncharov coordinates to show that certain paths to infinity limit to spaces that have 1 and 2 dimensional parts \cite{parreau_invariant_2015}. 
In \cite{OuyangTamburelli21} it is shown that, along cubic differential rays, the Blaschke metric limits to a flat Riemannian metric with cone singularities. 
In fact, they compactify $\Hit^3(S)$ by embedding into $\mathbb{P}(\R^{[\Gamma]})$ by taking length spectra of Blaschke metrics. 
They show that cone metrics comprise an open dense subset of the boundary, and describe the rest of the boundary as certain mixed structures. 

In \cite{Reid2023} we showed that length spectra of triangular Finsler metrics arise in the spectral radius compactification. Marked length spectrum rigidity is less understood for of Finsler metrics than for Riemannian metrics, so the possibility remained that different triangular Finsler metrics have the same length spectra. The current paper eliminates this possibility, thus showing that a subset of the spectral radius boundary of $\Hit^3(S)$ is the space of triangular Finsler metrics up to scale. By analogy with \cite{OuyangTamburelli21}, we conjecture that this subset is open and dense.   

\subsection{Heuristic description of $X_\mu$}
Now we give an impressionistic definition of $X_\mu$; see Definition \ref{X definition} for the actual definition. The definition is similar in spirit to the definition of the dual space to a geodesic current in \cite{BIPP21} and further studied in \cite{LucaDidac2023dualspacesgeodesiccurrents} but importantly $X_\mu$ does not depend on a choice of background hyperbolic metric. To start, suppose  $\mu$ is symmetric. Choosing a hyperbolic metric $g$ on $S$, each support point of $\mu$ becomes an unoriented geodesic in $\tilde{S}$. Let $C_\mu(g)$ be the set of complementary regions of the union of all such geodesics. Let the distance between two regions be the measure of the set of geodesics separating them. With this metric, $C_\mu(g)$ is more or less the same as the dual space from \cite{BIPP21}. 

\begin{figure}[h]
    \centering
    \includegraphics[width=0.8\linewidth]{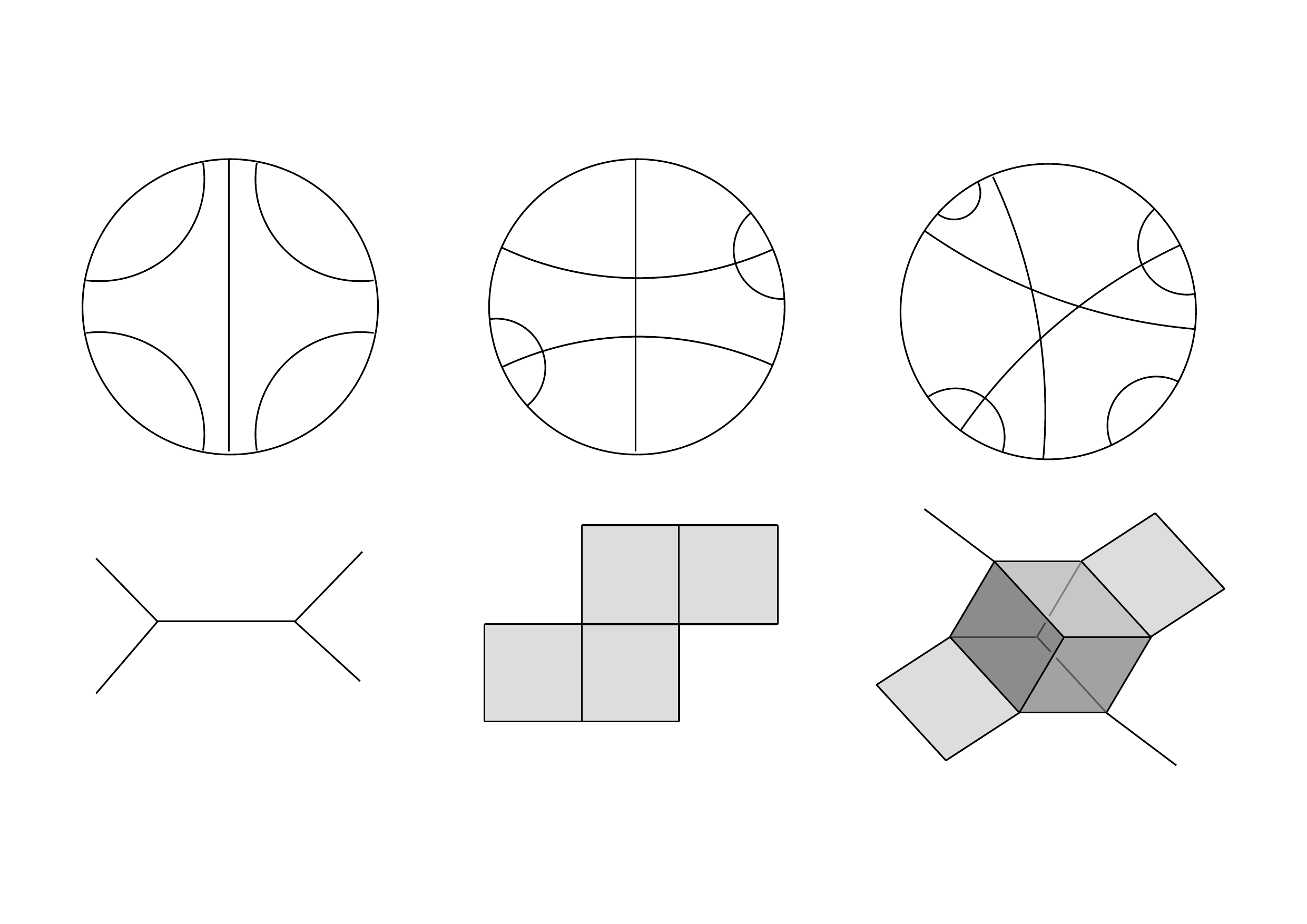}
    \caption{Examples of universal symmetric dual spaces to some unoriented geodesic currents on the disk. The first two examples are no different from the dual spaces defined in \cite{BIPP21}, but there is a difference in the third example. }
\end{figure}

As $g$ varies, complementary regions can appear and disappear. Let $C_\mu$ denote the set of all complementary regions which could possibly appear after drawing an unoriented chord for each support point of $\mu$ without any two chords bounding a bigon. It still makes sense to measure how many geodesics pass between two elements of $C_\mu(g)$. Now, allow even more ``complementary regions" by allowing chords to be arbitrarily split, i.e. a chord carrying measure $a+b$ can be split into parallel chords carrying measures $a$ and $b$ with the same endpoints, call this metric space of complementary regions $X_\mu^{sym}$. For symmetric currents, $X_\mu^{sym}$ would be a good alternative to $X_\mu$. We call $X_\mu^{sym}$ the \textbf{universal symmetric dual space} to $\mu$. The dual spaces of \cite{BIPP21}, for various hyperbolic metrics, are various isometrically embedded slices of $X_\mu^{sym}$. 

It would be quite interesting to look at $X_\mu^{sym}$ for symmetric currents arising from limits of representations in $SO(n,n)$ and $Sp(2n,\R)$. In particular, Theorem \ref{INTRO no n intersection} implies that if $\mu$ is a discrete boundary current of the $Sp(4,\R)$ Hitchin component, then $X^{sym}_\mu$ is a 2 dimensional cube complex, and in fact coincides with the BIPP dual space which in this special case doesn't depend on the choice of hyperbolic metric.

Now allow $\mu$ to be asymmetric, and imagine drawing an oriented chord for every support point of $\mu$, allowing for splittings, and only forbidding bigons which have both sides oriented parallelly, and let $X'_\mu$ denote the set of all complementary regions which can arise. If it is possible to find a $\Gamma$-invariant function $h:X'_\mu\to \R$ such that the change in $h$ from one region to the next is the signed measure of chords passing between them, then $\mu$ is called nullhomologous, and $h$ is called a holonomy function. From a holonomy function, one can construct a class in $H^2(S,\R)$ and there is a unique $h$ for which this class is zero. Let $X_\mu$ be the subset of $X'_\mu$ on which $h=0$. This $X_\mu$ is the universal asymmetric dual space to $\mu$. 

\begin{figure}[h]
    \centering
    \includegraphics[width=0.8\linewidth]{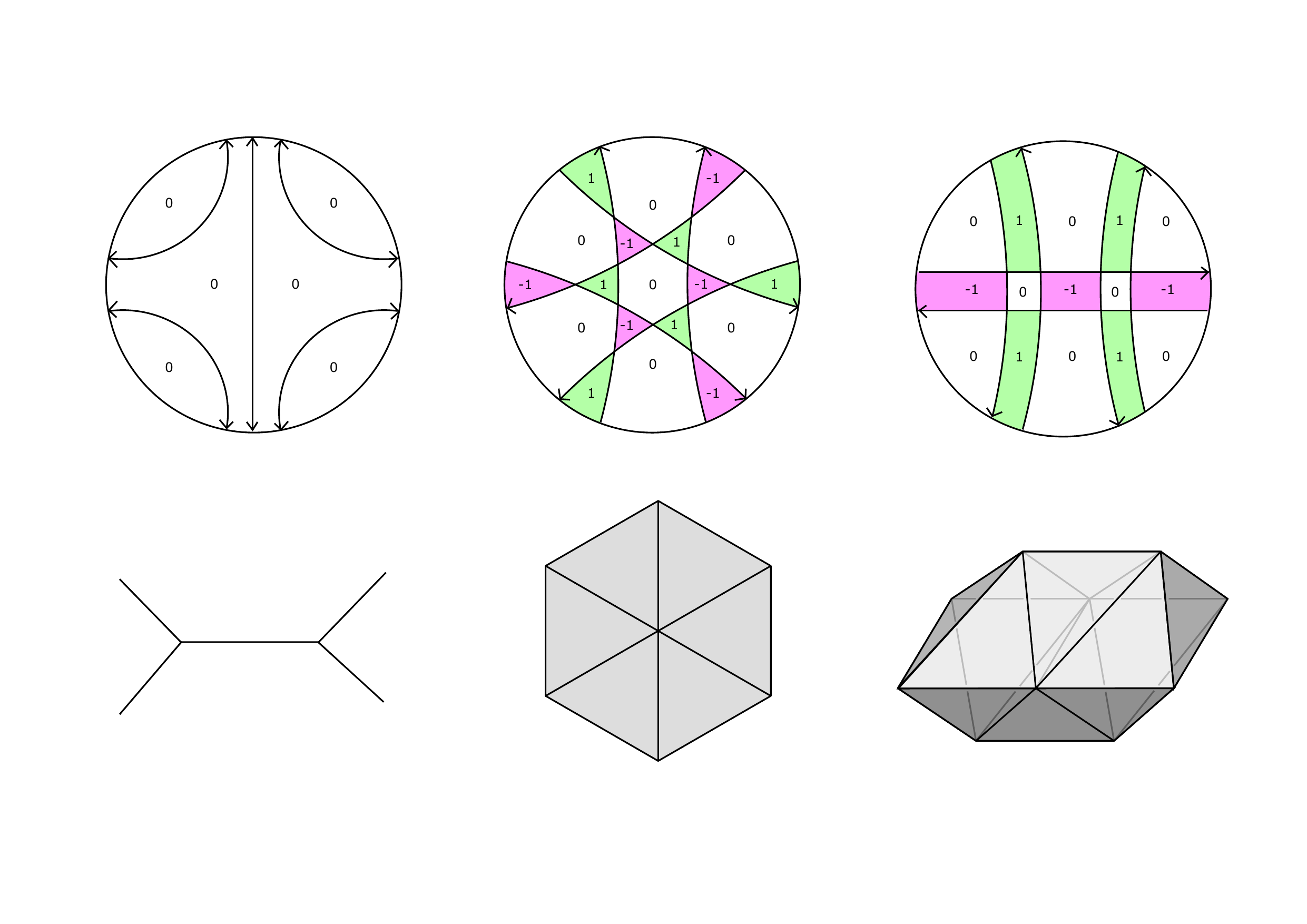}
    \caption{Examples of $X_\mu$ for some oriented geodesic currents on the disk, with holonomy functions. Generally, when $\mu$ is symmetric, $X_\mu$ is bigger than $X_\mu^{sym}$, but when $\mu$ is a measured lamination, $X_\mu = X_\mu^{sym}$.}
    
\end{figure}

\subsection{Future Directions}
This paper raises at least as many questions as it answers. Here are a few such questions.
\begin{itemize}
\item Do all tropical rank $n$ currents arise in the boundary of $\Hit^n(S)$?
\item Is there a nice, complete characterization of the multicurves on $S$ which are tropical rank $n$ currents, or (if there is a difference) $\Hit^n(S)$ boundary currents? 
\item What does $X_\mu$ look like in general, for $\mu$ a tropical rank $n$ current? What about specifically for $n=3$? Can we classify local models?
\item Is there a version of $X_\mu$ corresponding to the Weyl chamber length compactification of $\Hit^n(S)$, i.e. a version which keeps track of the ordered list of all eigenvalues $(\log |\lambda_1|, ..., \log|\lambda_n|)$?
\item Is there a version of $X_\mu$ for $G\neq \SL_n\!\R$?
\item If $B$ is a building with $\Gamma$ action lifting a boundary current $\mu$, what is the relationship between $X_\mu$ and $B$?
\end{itemize}

\subsection{Acknowledgments}
I am especially grateful to Jeff Danciger for many mathematical discussions and for thorough feedback during many stages of this project. This project benefited from discussions with many people, in particular Fran\c{c}ois Labourie, Anne Parreau, Beatrice Pozzetti, Xenia Flamm, Max Riestenberg, and Dan Freed. I am grateful also to the stimulating research environments of UT Austin, Harvard University, and the MPI for Mathematics in the Sciences. This research was supported in part by NSF grants DMS-1937215, and DMS-1945493.

\section{Equivariant bundles and geodesic currents}
\label{equivariant bundles and geodesic currents}
Geodesic currents which arise in nature tend to be curvatures of equivariant principal $\R$ bundles on $\mathcal{G}$. The central example is that the Lioville current of a negatively curved metric on $S$ is the curvature of a natural connection on the unit tangent bundle $T^1\tilde{S}$, viewed as a principal $\R$ bundle, where $\R$ acts by geodesic flow. We will see that currents coming from Anosov representations are also curvatures of bundles. One might ask whether any geodesic current can be realized as the curvature of an equivariant bundle. An initial obstruction is that a geodesic currents can assign non-zero measure to a fixed point $(\gamma^-,\gamma^+)\in \mathcal{G}$ meaning that the connection must have a singularity at that point. For this reason we will work with bundles with connection on the complement of all fixed points.
\[\mathcal{G}^\circ := \mathcal{G} \backslash\{ (\gamma^-,\gamma^+) : \gamma\in \Gamma\backslash\{1\}\}\]

We will factor the curvature map into two steps, and investigate the relationships between three objects: geodesic currents, positive holonomy functions, and equivariant bundles with positive taxi connection on $\mathcal{G}^\circ$. There are forgetful maps relating these three objects.
\begin{equation}
 \parbox{11em}{\centering equivariant bundles with \\ positive taxi connection} \longrightarrow 
 \parbox{10em}{\centering positive holonomy functions} \longrightarrow 
 \parbox{10em}{\centering geodesic currents}
\end{equation}
Denote the sets of (isomorphism classes of) these objects by $\mathcal{A}(S)$, $\mathcal{H}(S)$, and $\mathcal{C}(S)$ respectively. Geodesic currents were introduced in \cite{Bonahon88} to give a more efficient treatment of Thurston's compactification, and have been used extensively since. The other two objects are variations on existing notions. Positive holonomy functions generalize cross ratios \cite{Labourie07}, while equivariant bundles with positive taxi-connection are similar to reparametrizations of geodesic flow \cite{Sambarino14, BCLS2018}. Both have been used to study Anosov representations.


The three infinite dimensional cones $\mathcal{A}(S)$, $\mathcal{H}(S)$, and $\mathcal{C}(S)$ play similar roles. Moduli spaces of interest, for example $\Hit^n(S)$, have natural embeddings into these infinite dimensional spaces, providing new structure on said moduli spaces. Neither of the maps in the sequence $\mathcal{A}(S)\to \mathcal{H}(S)\to \mathcal{C}(S)$ is injective or surjective, but the discrepancies can be well understood.
In particular, any geodesic current which maps to zero in $H^1(S,\R)$ (in a sense we will define) is the curvature of some equivariant bundle, and any two equivariant bundles with the same curvature only differ by a change in equivariance.

\subsection{Taxi connections}
We will define a notion of $\Gamma$ equivariant bundles with connection on $\mathcal{G}^\circ$. The curvature of such a bundle will be a geodesic current.

\begin{definition}
    A \textbf{segment} of $\mathcal{G}$ is a subset of the form $s_{x,x';y} := [x,x']\times \{y\}$, where $y < x\leq x' < y$, or $s_{x;y,y'} := \{x\}\times [y,y']$ where $x < y \leq y' < x$. 
\end{definition}

\begin{definition} Let $A$ be a group. A \textbf{taxi connection} $F$ on a principal $A$ bundle $P$ over $\mathcal{G}^\circ$ is a $G$-orbit $F(s)$ of sections over every segment $s\subset\mathcal{G}^\circ$ which is compatible with restriction to subsegments. We refer to $F(s)$ as the flat sections over $s$.
\end{definition}

This definition gives a notion of parallel transport along ``taxi-paths" i.e. concatenations of horizontal and vertical segments.  In this paper, $A$ will always be an Abelian Lie group, usually $\R$ or $\R^*$. Recall that principal bundles with Abelian structure group can be tensored:
If $A$ is an Abelian group, and $P$ and $Q$ are principal $A$ bundles on a space $X$, their tensor product $P\otimes Q$ is the quotient of the fiber product $P\times_X Q$ by the equivalence relation $(p\cdot a,q) \sim (p,q\cdot a)$ for $a\in A$. If $P$ and $Q$ are principal $A$ bundles with taxi connection on $\mathcal{G}$ or $\mathcal{G}^\circ$ then $P\otimes Q$ inherits a taxi connection, thus isomorphism classes of $A$ bundles with taxi connection form an Abelian group. Just as the deRahm complex is useful in studying smooth connections, the following chain complex will be useful for taxi connections. 

\begin{definition} Define the chain complex $T_2(\mathcal{G}) 
\overset{\del}{\longrightarrow} T_1(\mathcal{G}) \overset{\del}{\longrightarrow} T_0(\mathcal{G})$ as follows.
\begin{itemize} 
    \item Let $T_0(\mathcal{G})$ denote the free Abelian group on points of $\mathcal{G}^\circ$.
    \item Let $T_1(\mathcal{G})$ denote the free Abelian group on the set of segments in $\mathcal{G}^\circ$, modulo the subgroup generated by 
    $s_{x;y,y'} + s_{x;y',y''} - s_{x;y,y''}$ for all $x < x' < x'' < y$, and $s_{x,x';y} + s_{x',x'';y} - s_{x,x'';y}$  for all $y < y' < y'' < x$.
    \item Let $T_2(\mathcal{G})$ denote the free Abelian group generated by boxes $r_{x,x';y,y'} := [x,x']\times[y,y']\subset \mathcal{G}$ with boundary in $\mathcal{G}^\circ$ modulo the subgroup generated by
    $r_{x,x';y,y'} + r_{x',x'';y,y'} - r_{x,x'';y,y'}$ for $x < x' < x'' < y < y'$ and 
    $r_{x,x';y,y'} + r_{x,x';y',y''} - r_{x,x';y,y''}$
    for $ y < y' < y'' < x < x'$.
    \item Define $\del:T_1(\mathcal{G})\to T_0(\mathcal{G})$ by
    \[\del s_{x,x';y} := (x',y) - (x,y)\]
    \[\del s_{x;y,y'} := (x,y') - (x,y)\]
    \item Define $\del:T_2(\mathcal{G})\to T_1(\mathcal{G})$ by
    \[\del r_{x,x';y,y'} = s_{x;y,y'} + s_{x,x';y'} - s_{x';y,y'} -  s_{x,x';y}\]
\end{itemize}
\end{definition}
One checks that $\del^2 = 0$ by evaluating it on an arbitrary rectangle $r_{x,x';y,y'}\in T^2(\mathcal{G})$.
We abuse notation and denote a segment and the corresponding generator in $T^1(\mathcal{G})$ the same way. 
It is an easy consequence of the definition of $T_1(\mathcal{G})$ that $s_{x,x;y} = 0$, and $s_{x,x';y} + s_{x',x;y} = 0$. The corresponding statements hold for vertical segments and for rectangles. 

Note that $\mathcal{G}$ is homotopy equivalent to a circle. The next lemma shows that the homology of $T_*(\mathcal{G})$ is the homology of a circle.

\begin{lemma}
\label{homology}
    The homology of $T_*(\mathcal{G})$ is $\Z,\Z,0$ in degrees $0,1,2$.
\end{lemma}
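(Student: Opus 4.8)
The plan is to treat $T_*(\mathcal{G})$ as a combinatorial (``taxi'', i.e.\ cubical) chain complex for the annulus $\mathcal{G}$, and to compute its homology degree by degree, using genericity to sidestep the countable set of fixed points that separates $\mathcal{G}^\circ$ from $\mathcal{G}$. Write $\del\Gamma=\R/\Z$, so that $\mathcal{G}\cong(\R/\Z)\times(0,1)$ is an open annulus deformation retracting onto its core circle, with both ends approaching the diagonal; the coordinates $\{\gamma^\pm:\gamma\in\Gamma\setminus\{1\}\}$ of fixed points form a countable subset of $\R/\Z$, so any finite family of segments or box boundaries can be perturbed to avoid all fixed points. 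For $H_0$: the augmentation $\epsilon\colon T_0(\mathcal{G})\to\Z$ sending every point to $1$ kills $\operatorname{im}\del_1$ (the boundary of a segment is a difference of two points), hence descends to a surjection $H_0\to\Z$; for injectivity one shows $\mathcal{G}^\circ$ is ``taxi-connected'' --- any two points $(a,b),(c,d)$ are joined by a horizontal segment at height $b$ followed by a vertical segment at abscissa $c$, the two arcs being routed around the countably many fixed points on those two lines --- so $(a,b)-(c,d)\in\operatorname{im}\del_1$ and $H_0\cong\Z$.

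For $H_1\cong\Z$, the first step is a winding cocycle. Let $\psi\colon\mathcal{G}\to\R/\Z$, $\psi(x,y)=\tfrac12(\tilde x+\tilde y)\bmod 1$, where $\tilde x,\tilde y$ are the lifts with $0<\tilde x-\tilde y<1$; this is well defined up to $\Z$ and models the retraction of $\mathcal{G}$ onto its core. On any segment $s$ the restriction of $\psi$ is monotone, and I let $\omega(s)\in[0,\tfrac12)$ be the total lifted increment. A direct telescoping check shows $\omega$ respects the defining relations of $T_1(\mathcal{G})$, and that $\omega(\del r)=0$ for every box $r$ (the four edge increments cancel in pairs once compatible corner lifts are chosen, which is possible precisely because $[x,x']\cap[y,y']=\emptyset$); thus $\omega$ is a cocycle and descends to a homomorphism $H_1\to\R$. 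The second step is an explicit generating $1$-cycle $L_0$: the eight-segment ``staircase'' through $(0,\tfrac12),(\tfrac14,\tfrac12),(\tfrac14,\tfrac34),(\tfrac12,\tfrac34),(\tfrac12,1),(\tfrac34,0),(\tfrac34,\tfrac14),(0,\tfrac14)$ and back, perturbed off the fixed points. One checks $\del L_0=0$ and $\omega(L_0)=1$, so $[L_0]$ has infinite order in $H_1$.

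It then remains to show $[L_0]$ generates. Given a $1$-cycle $c$, subdividing at all occurring coordinates presents it as a $\Z$-cycle in a finite directed multigraph, hence as a $\Z$-combination of simple closed taxi-loops; applying $\omega$ exhibits $\omega(c)$ as a sum of degrees of maps $S^1\to\R/\Z$, so $\omega(c)\in\Z$. If moreover $\omega(c)=0$, I claim $c\in\operatorname{im}\del_2$: using the box $r_{x,x';y,y'}$, whose boundary trades the horizontal segment $s_{x,x';y}$ for $s_{x,x';y'}$ plus two vertical segments (and symmetrically in the two variables), one homotopes $c$ modulo $\operatorname{im}\del_2$ until all of its finitely many segments lie on a single finite grid of horizontal and vertical lines chosen to avoid the fixed points; this grid cuts $\mathcal{G}$ into a finite CW annulus $X$ with $C_*(X)$ a subcomplex of $T_*(\mathcal{G})$, and a grid $1$-cycle of winding number $0$ about the core bounds in $C_*(X)$ by the classical computation $H_1(\text{annulus})=\Z$. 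Hence $c-\omega(c)\,L_0\in\operatorname{im}\del_2$ and $H_1=\Z\,[L_0]\cong\Z$. I expect this ``straightening onto a finite grid'' to be the main obstacle: one has to organize the sequence of box-moves so that it terminates and so that every intermediate segment and box stays inside $\mathcal{G}^\circ$; a Mayer--Vietoris argument, cutting $\mathcal{G}$ into two combinatorially contractible rectangles, is a plausible alternative, at the price of first proving that $T_*$ of a rectangle is acyclic.

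For $H_2=0$: let $z=\sum_i n_i r_i\in\ker\del_2$, a finite sum. For $p\in\mathcal{G}^\circ$ lying off every $\del r_i$, put $N(p)=\sum_i n_i\,\mathbf{1}[p\in r_i]$ with the $r_i$ taken as solid boxes. Tracking how $N$ changes across a vertical edge $e$ shows that jump equals the coefficient of $e$ in $\del z$, and likewise for horizontal edges; since $\del z=0$, every such jump vanishes, so $N$ is locally constant on $\mathcal{G}^\circ$ minus the finitely many corner points of the $r_i$, which is connected. Near either end of the annulus $N$ takes the value $0$, since $[x,x']\cap[y,y']=\emptyset$ forces each box to stay a positive distance from the diagonal; hence $N\equiv 0$. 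Finally, refining all the $r_i$ to a common finite grid (chosen generically, using the validity of the $r_i$'s to keep all grid cells admissible) expresses $z$ as a $\Z$-combination of distinct grid cells, which are linearly independent in $T_2(\mathcal{G})$ and have linearly independent indicator functions; so $N\equiv 0$ forces all coefficients to vanish and $z=0$. Therefore $H_2=0$, completing the computation.
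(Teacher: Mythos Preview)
Your proof is correct and, for $H_0$ and $H_2$, essentially coincides with the paper's argument: both use taxi-path connectedness for $H_0$, and both refine a $2$-cycle onto a common grid and observe that the resulting coefficients must be constant, hence zero because not all grid cells are admissible. Your indicator-function formulation of this last point is a cosmetic repackaging of the paper's.

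The genuine difference is in $H_1$. The paper fixes a single four-segment monotone loop $l$ and argues directly: any horizontal segment is homologous, via box boundaries, to vertical segments together with horizontal pieces lying on $l$; iterating, every $1$-cycle is pushed onto $l$ and hence is an integer multiple of $[l]$. This is terse and efficient but leaves the ``iterating'' step to the reader. Your route is more structural: you build a winding cocycle $\omega$ from the retraction $\psi(x,y)=\tfrac12(\tilde x+\tilde y)$ onto the core circle, check it kills box boundaries, and then reduce the kernel of $\omega$ to a finite CW annulus where classical homology applies. This costs more setup (the compatibility of lifts over a box, the grid-straightening, the check that the admissible grid cells really assemble into an annulus), but it makes the surjection $H_1\to\Z$ explicit and separates cleanly the ``detect the class'' and ``kill the kernel'' steps. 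Note that your worry about the straightening step is somewhat overstated: the relations in $T_1(\mathcal{G})$ already let you subdivide every segment at the finitely many occurring coordinates, so the cycle lands on the grid on the nose (no homotopy needed); the only thing to verify is that the admissible $2$-cells of that grid form a CW annulus, which follows because the excluded cells $I_i\times I_i$ are exactly the ones abutting the diagonal.
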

\begin{proof}

Fix a point $p\in \mathcal{G}^0$. Any point $q\in \mathcal{G}^\circ$ is homologous to $p$ because $\mathcal{G}^\circ$ is taxi-path connected. This proves that $H_0(T_*(\mathcal{G}))\simeq \Z$. 

Fix a taxi loop $l$ in $\mathcal{G}^\circ$ based at $p$ which projects monotonically and with degree $1$ to both $\del \Gamma\simeq S^1$ factors. For instance, let $l = s_{x,x';y} + s_{x';y,y'} + s_{x',x;y'} + s_{x;y',y}$ where $x < y < x' < y'$ are cyclically ordered points in $\del \Gamma^\circ$. Any horizontal segment $s_{x,x';y}\subset \mathcal{G}^\circ$ is homologous in $T_1(\mathcal{G})$ to a sum of vertical segments, and horizontal segments contained in $l$. A closed 1-cycle comprised only of vertical segments and segments in $l$ will be equal in $T_1(\mathcal{G})$ to a cycle consisting only of segments in $l$. Such a cycle must be an integral multiple of $l$, so $H^0(T_*(\mathcal{G}))\simeq \Z$.

Suppose $c$ is a 2-chain with $\del c = 0$. By definition, $c = \sum a_{x,x';y,y'} r_{x,x';y,y'}$ with $a_{x,x';y,y'}\in \Z$, and all but finitely many $a_{x,x';y,y'}$ are zero. Let $(x_i)$ and $(y_j)$ be cyclically ordered lists of coordinates which appear in coefficients $a_{x,x';y,y'}$ which are non-zero. We can rewrite $c$ as $\sum c_{ij}r_{i(i+1);j(j+1)}$. The only way for $\del c$ to be zero is for $c_{ij}$ to be all equal, but the rectangle $r_{i(i+1);j(j+1)}$ only exists when $x_i < x_{i+1} < y_j < y_{j+1}$ which is not true for all $i,j$. We conclude that all $c_{ij}$ are zero, so $c=0$. 
\end{proof}

\begin{definition}
For an Abelian group $A$, let $T^i(\mathcal{G},A) := \Hom(T_i(\mathcal{G}), A)$ and let $d$ be the dual differential.
\end{definition}

A taxi connection $F$ on the trivial bundle $\mathcal{G}^\circ\times A$ gives a cochain $t_F\in T^1(\mathcal{G})$. The connection $F$ is a choice of $A$ torsor $F(s)$ of functions $s\to A$ over each segment $s$, compatible with restriction. Define
\[t_F(s) = f(\del_+ s) - f(\del_- s)\]
where $f\in F(s)$, and $\del_{\pm} s$ are the front and back endpoints of $s$.

\begin{lemma} The map $F\mapsto t_F$ from taxi connections on the trivial bundle to $T^1(\mathcal{G},A)$ is a bijection.
\end{lemma}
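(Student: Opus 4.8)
The plan is to construct a two-sided inverse to $F\mapsto t_F$, i.e.\ to show that an arbitrary cochain $t\in T^1(\mathcal{G},A)$ determines a taxi connection on the trivial bundle $\mathcal{G}^\circ\times A$ and that this construction undoes $F\mapsto t_F$. First I would unpack what a taxi connection on a trivial bundle is: a choice, for each segment $s\subset\mathcal{G}^\circ$, of an $A$-torsor $F(s)$ inside the set $\Fun(s,A)$ of all $A$-valued functions on $s$, where the torsor structure is the one coming from translating by constants, and such that restriction to subsegments is compatible. Because $A$ acts on $\Fun(s,A)$ by global translation, an $A$-orbit of sections over $s$ is precisely the set of solutions of $f(q)-f(p)=c(p,q)$ for some fixed function $c$ on pairs of points of $s$ satisfying the cocycle identity $c(p,q)+c(q,r)=c(p,r)$; conversely, any such $c$ gives an orbit, and the orbit determines $c$. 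So a taxi connection is the same data as an additive, translation-invariant way of measuring the "height difference" along each segment, compatible under restriction.

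Given $t\in T^1(\mathcal{G},A)$, I would define, for a horizontal segment $s_{x,x';y}$ and two points $p=(u,y)$, $q=(u',y)$ on it with $x\le u\le u'\le x'$, the value $c(p,q):=t(s_{u,u';y})$, and similarly for vertical segments using the vertical generators; then set $F(s)$ to be the corresponding torsor of functions on $s$. The key points to verify are: (i) this $c$ is a well-defined cocycle on each segment — additivity $c(p,q)+c(q,r)=c(p,r)$ is exactly the relation $s_{u,u';y}+s_{u',u'';y}=s_{u,u'';y}$ imposed in the definition of $T_1(\mathcal{G})$, so $t$ respects it; (ii) $c(p,p)=0$ and $c(q,p)=-c(p,q)$, which follow from $s_{u,u;y}=0$ and $s_{u,u';y}+s_{u',u;y}=0$ noted in the text; (iii) compatibility with restriction to subsegments is automatic since $c$ is defined pointwise by the same formula regardless of which segment the pair of points lies in. Hence $t\mapsto F$ is well-defined.

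Next I would check the two compositions are identities. Starting from $t$, building $F$, and computing $t_F$: by definition $t_F(s)=f(\del_+s)-f(\del_-s)$ for $f\in F(s)$, and since $f$ satisfies $f(q)-f(p)=c(p,q)$, this equals $c(\del_-s,\del_+s)=t(s)$ for a standard generator $s$; because both $t_F$ and $t$ are homomorphisms out of $T_1(\mathcal{G})$ and agree on generators, $t_F=t$. Conversely, starting from a taxi connection $F$, forming $t_F$, and rebuilding a connection $F'$: over a segment $s$ the torsor $F'(s)$ consists of functions $f$ with $f(q)-f(p)=t_F(s_{p,q})=f_0(q)-f_0(p)$ for any fixed $f_0\in F(s)$ (using that the restriction of $F$ to the subsegment $s_{p,q}$ is an $A$-orbit, so its $t_F$-value is $f_0(q)-f_0(p)$); thus $F'(s)$ is the $A$-orbit of $f_0$, which is exactly $F(s)$.

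The main obstacle I anticipate is purely bookkeeping rather than conceptual: one must treat horizontal and vertical segments uniformly and be careful that the "well-definedness on $T_1(\mathcal{G})$" is genuinely an input (it is what makes $c$ additive) rather than something needing separate proof, and one must confirm that a compatible-under-restriction choice of $A$-orbit over \emph{every} segment is equivalent to a single global cocycle $c$ on all of $\mathcal{G}^\circ$ — i.e.\ that the local torsors glue. This gluing is where the restriction-compatibility axiom in the definition of taxi connection is used, and it is the one place to be slightly careful: two points $p,q$ may lie on several segments, and one needs the torsors over all of them to assign the same difference $c(p,q)$, which follows because any two such segments share the sub-segment $[p,q]$ (in the appropriate factor) and restriction is compatible. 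Once that is observed, injectivity and surjectivity both drop out of the torsor description, so I would present the argument in the order: (1) reformulate taxi connections on trivial bundles as global translation-invariant difference cocycles; (2) match these bijectively with $T^1(\mathcal{G},A)$ using the generators-and-relations presentation of $T_1(\mathcal{G})$; (3) observe this bijection is the one induced by $F\mapsto t_F$.
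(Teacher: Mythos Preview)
Your proposal is correct and is essentially the same argument as the paper's: both construct the inverse by declaring $F(s)$ to be the $A$-torsor of functions whose differences along subsegments are dictated by $t$, using the defining relations of $T_1(\mathcal{G})$ for additivity and the restriction axiom for compatibility. You are simply more explicit than the paper in verifying both compositions and in spelling out the gluing of local torsors, which the paper leaves as ``easy to see.''
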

\begin{proof}
    We give an inverse. Let $t\in T^1(\mathcal{G},A)$.
    For a horizontal segment $s_{x,x';y}$, define $F(s_{x,x';y})$ to be the set of functions $f:s\to A$ such that for any two points $x\leq p < q \leq x'$ we have $f((q,y))-f((p,y))=t(s_{p,q;y})$. The function $f((p,y)) = t(s_{x,p;y})$ will satisfy this property, so $F(s)$ is non-empty. It is easy to see that two elements of $F(s)$ must differ by a constant, and that $F$ is compatible with restriction. Define $F(s)$ for vertical segments in the same way.
\end{proof}

The cochain $t_F$ corresponding to a connection $F$ on the trivial bundle can be thought of as parallel transport. If $c\in T^1(\mathcal{G})$ is a taxi path i.e. a sum of segments $s_1 + ... + s_k$ with $\del_+ s_i = \del_- s_{i+1}$ for $i=1,...,k-1$, then $t_F(c)$ is the parallel transport along this path. If $c$ is a loop, i.e. $\del_+ s_k = \del_- s_1$, then $t_F(c)$ is the holonomy of this loop. 

Let $Z_1(\mathcal{G}) \subset T_1(\mathcal{G})$ denote the kernel of $\del$. Its elements will be referred to as taxi-cycles.

\begin{lemma}
\label{bundles classified by holonomy}
    Let $A$ be an Abelian group. Principal $A$-bundles with taxi connection on $\mathcal{G}^\circ$ are classified up to isomorphism by $\Hom(Z_1(\mathcal{G}),A)$.
\end{lemma}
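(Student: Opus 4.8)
The plan is to upgrade the previous lemma (taxi connections on the \emph{trivial} bundle $\mathcal{G}^\circ\times A$ are classified by $T^1(\mathcal{G},A)$) to arbitrary principal $A$-bundles. Since $\mathcal{G}^\circ$ is taxi-path connected (as used in Lemma \ref{homology}) and, more to the point, is topologically an open annulus and hence has the homotopy type of a circle, every principal $A$-bundle on it with $A$ abelian is classified by $H^1(\mathcal{G}^\circ;A)\cong A$ when $A$ is discrete and is trivializable when $A$ is a connected Lie group like $\R$; but rather than invoke topology I would argue combinatorially so the statement holds for every abelian $A$ at once. First I would show that every principal $A$-bundle with taxi connection $P$ admits a global flat section along a ``spanning tree'' of taxi-paths: fix the basepoint $p\in\mathcal{G}^\circ$ and a point $q_0$ in each fiber over $p$; for any other point $q\in\mathcal{G}^\circ$, choose a taxi-path from $p$ to $q$ and parallel transport $q_0$ along it. This is well-defined up to the holonomy of taxi-loops, so it trivializes the bundle on the complement of one ``cut'' taxi-loop, i.e. $P$ becomes isomorphic to a trivial bundle with taxi connection together with a gluing automorphism measured by the holonomy around the generator of $Z_1(\mathcal{G})/B_1(\mathcal{G})$.

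Next I would package this as an exact sequence. By the previous lemma, trivial bundles with taxi connection are $T^1(\mathcal{G},A)$, and two such are isomorphic as bundles-with-connection iff they differ by $d$ of an element of $T^0(\mathcal{G},A)$ (a gauge transformation of the trivial bundle is exactly a function $\mathcal{G}^\circ\to A$, and it changes $t_F$ by a coboundary). So trivial bundles with taxi connection up to isomorphism form $T^1(\mathcal{G},A)/dT^0(\mathcal{G},A)$. Every bundle with taxi connection is, by the trivialization step, obtained from a trivial one; and the restriction map $\Hom(Z_1(\mathcal{G}),A)\to$ (isomorphism classes) sends a cocycle $t\in T^1(\mathcal{G},A)$ to its restriction to $Z_1(\mathcal{G})$ — this is the holonomy representation. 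I claim this restriction map $T^1(\mathcal{G},A)\to \Hom(Z_1(\mathcal{G}),A)$ is surjective and has kernel exactly $dT^0(\mathcal{G},A)$, which would finish the proof: surjectivity because $Z_1(\mathcal{G})$ is a direct summand of $T_1(\mathcal{G})$ as an abelian group (it is the kernel of $\del:T_1(\mathcal{G})\to T_0(\mathcal{G})$, and $\del T_1(\mathcal{G})$ is free abelian, hence the sequence $0\to Z_1\to T_1\to \del T_1\to 0$ splits), so any homomorphism off $Z_1$ extends to all of $T_1$; and the kernel is the set of cocycles vanishing on all cycles, which is $B^1(\mathcal{G},A)=dT^0(\mathcal{G},A)$ because $H^1(T^*(\mathcal{G}),A)$ computes $\Hom(H_1,A)$ plus an $\mathrm{Ext}$ term that vanishes since $H_0(T_*(\mathcal{G}))=\Z$ is free (universal coefficients), so a cocycle killing $Z_1$ is a coboundary. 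Then the composite $\Hom(Z_1(\mathcal{G}),A)\cong T^1(\mathcal{G},A)/dT^0(\mathcal{G},A)\cong\{$bundles with taxi connection$\}$ is the desired bijection.

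The one place I would be careful — and I expect it to be the main obstacle — is the trivialization step for bundles that are not \emph{a priori} trivial: I must check that parallel transport along taxi-paths in $\mathcal{G}^\circ$ genuinely gives a well-defined section on the complement of a cut, i.e. that the only obstruction is the single holonomy class around the annulus, with no ``local'' monodromy. This is really the content of $H_1(T_*(\mathcal{G}))\cong\Z$ together with taxi-path connectedness, both established in Lemma \ref{homology}; concretely, any two taxi-paths between the same pair of points differ by a taxi-cycle, and every taxi-cycle is a multiple of the standard loop $l$ modulo boundaries, and boundaries of boxes bound flat-section discs by the very definition of a taxi connection (the four flat-section torsors over the sides of a box $r_{x,x';y,y'}$ are mutually compatible). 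So the holonomy descends to a homomorphism on $Z_1(\mathcal{G})$ and the chain-level bookkeeping above goes through. A secondary routine point is naturality of the tensor-product group structure: the bijection should be a group isomorphism, since holonomy of $P\otimes Q$ is the sum of holonomies, but that is immediate from the construction.
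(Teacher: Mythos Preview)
Your algebraic identification $T^1(\mathcal{G},A)/dT^0(\mathcal{G},A)\cong\Hom(Z_1(\mathcal{G}),A)$ is exactly the paper's argument, and your added justification (the short exact sequence $0\to Z_1\to T_1\to \del T_1\to 0$ splits because $\del T_1\subset T_0$ is free) is a welcome bit of detail the paper leaves implicit.

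Where you diverge is the trivialization step, and here you are working much harder than necessary. You construct a section by parallel transport along taxi-paths from a basepoint and then worry about monodromy around the generator of $H_1$; you even flag this as ``the main obstacle.'' But in the paper's setup, principal $A$-bundles with taxi connection carry \emph{no topology}: a bundle is just an $A$-torsor over each point of $\mathcal{G}^\circ$, and the taxi connection is extra combinatorial data (a torsor of sections over each segment). So a global section exists for the trivial reason that one can choose an arbitrary element in each fiber---no parallel transport, no spanning tree, no cut, no monodromy. The paper dispatches this in one line: ``Any $A$-bundle admits a section (recall that we are not considering $A$ bundles with taxi-connection as having topology).'' Your topological remarks about $H^1(\mathcal{G}^\circ;A)$ and the homotopy type of an annulus are therefore beside the point; there is no obstruction to trivialize in the first place.

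Once that is understood, the rest of your argument collapses to the paper's: bundles with taxi connection up to isomorphism are taxi connections on the trivial bundle modulo gauge, i.e.\ $T^1/dT^0$, which is $\Hom(Z_1,A)$. Your proof is correct, just circuitous on the one step that is actually immediate.
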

\begin{proof}
    Any $A$-bundle admits a section, (recall that we are not considering $A$ bundles with taxi-connection as having topology) and any two trivializations differ by addition of a function $f:\mathcal{G}^\circ \to A$. This means we can just study connections on the trivial bundle, quotiented by the action of functions. 

Let $F$ be a taxi connection on the trivial bundle, and let $t_F\in T^1(\mathcal{G}, A)$ be the corresponding cochain. If $f:\mathcal{G}^\circ\to A$ is a function, and $F + f$ is the taxi connection obtained by change of trivialization, then
\[t_{F+f} = t_F + df.\]
Thus, $A$ bundles with taxi connection are classified by
$T^1(\mathcal{G},A)/d T^0(\mathcal{G},A)$. 
Applying $\Hom(-,A)$ to the exact sequence \[0\to Z_1(\mathcal{G}) \to T_1(\mathcal{G}) \to T_0(\mathcal{G}),\]
we see $T^1(\mathcal{G},A)/d T^0(\mathcal{G},A) = \Hom(Z_1(\mathcal{G}),A)$.
\end{proof}
We refer to the element of $\Hom(Z_1(\mathcal{G}),A)$ corresponding to $(P,F)$ as the holonomy function $h_{(P,F)}$ of $(P,F)$. We have shown that taxi-connections are determined by holonomy, and every holonomy function is realizable.

We use the following notion of curvature:
\begin{definition} 
The curvature of a taxi-connection $F$ on an $A$-bundle $P$ 
is the 2-cocycle $curv(F)\in T^2(\mathcal{G},A)$ which assigns $h_F(\del r)$ 
to every rectangle $r$ with boundary in $\mathcal{G}^\circ$. 
\end{definition}

If we trivialize $P$, then $F$ is equivalent to the cochain $t_F\in T^1(\mathcal{G}, A)$, and $curv(F)$ is simply $d t_F$. This means that bundles with zero curvature are classified by $H^1(T^*(\mathcal{G},A))\cong A$ and all curvatures are realized by bundles because $H^2(T^*(\mathcal{G},A)) = 0$. 

For various arguments, we will want curvature to be a measure on $\mathcal{G}$ which when integrated over a rectangle gives holonomy around the boundary. For this we will assume curvature is positive. Let $\mathcal{C}(S)$ denote the space of geodesic currents on $S$. 

\begin{lemma} \label{positive cocycles are measures}
    Evaluating on rectangles with boundary in $\mathcal{G}^\circ$ gives a map $\mathcal{C}(S)\to T^2(\mathcal{G},\R)$ which is a bijection onto cocycles which take positive values on all rectangles, and are $\Gamma$ invariant.
\end{lemma}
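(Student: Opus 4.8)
The plan is to produce the claimed bijection in both directions and check the relevant properties. First I would recall that a geodesic current $\mu$ is by definition a locally finite $\Gamma$-invariant Borel measure on $\mathcal{G}$, and that $\mathcal{G}$ is covered by boxes of the form $[x,x']\times[y,y']$ with $x<x'<y<y'$ (or the other cyclic arrangement) whose closures lie in $\mathcal{G}^\circ$; such half-open boxes generate the Borel $\sigma$-algebra. So to define the map $\mathcal{C}(S)\to T^2(\mathcal{G},\R)$, send $\mu$ to the cochain $r_{x,x';y,y'}\mapsto \mu([x,x']\times[y,y'])$ (using half-open rectangles, say $[x,x')\times[y,y')$, to get exact additivity — I would remark on the convention and that since $\mu$ gives zero mass to the countable set of fixed-point coordinates when restricted appropriately, the choice of open/closed endpoints does not matter for rectangles with boundary in $\mathcal{G}^\circ$). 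One checks this is a well-defined element of $T^2(\mathcal{G},\R)$: it respects the defining relations $r_{x,x';y,y'}+r_{x',x'';y,y'}=r_{x,x'';y,y'}$ precisely because $\mu$ is finitely additive, it is a cocycle ($d$ of it vanishes on $T_3$-type relations — actually since $T_3=0$ the cocycle condition is automatic, but the real content is additivity), it is $\Gamma$-invariant because $\mu$ is, and it takes nonnegative values because $\mu$ is a measure; positivity (strict, on nondegenerate rectangles) I would phrase as: the image lands in the cone of cocycles that are $\geq 0$ on all rectangles, matching the statement's wording.

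Next I would construct the inverse. Given a $\Gamma$-invariant cocycle $\omega\in T^2(\mathcal{G},\R)$ that is nonnegative on all rectangles, define a premeasure on the algebra generated by half-open boxes by $[x,x')\times[y,y')\mapsto \omega(r_{x,x';y,y'})$. The relations in $T_2(\mathcal{G})$ guarantee finite additivity for "grid" decompositions of a box into subboxes, and a short combinatorial argument upgrades this to finite additivity on the generated algebra (any finite disjoint union of half-open boxes inside a fixed box can be refined to a common grid). The key analytic step is countable additivity / $\sigma$-additivity, which by Carathéodory's extension theorem reduces to continuity from above at $\emptyset$: if half-open boxes $B_1\supseteq B_2\supseteq\cdots$ have empty intersection then $\omega(B_k)\to 0$. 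Here I would use that each $B_k$ has closure in $\mathcal{G}^\circ$ (a compact set), monotonicity of $\omega$ (which follows from nonnegativity plus additivity), and a standard compactness argument: if $\omega(B_k)\geq\varepsilon$ for all $k$, shrink each $B_k$ slightly to a box $B_k'$ with $\overline{B_k'}\subset B_k$ and $\omega(B_k\setminus B_k')<\varepsilon 2^{-k-1}$ (possible because $\omega$ of a thin box tends to $0$ — this itself needs an argument, e.g. the value of $\omega$ on $[x,x')\times[y,y_0)$ is monotone in $y_0$, bounded, hence has a limit as $y_0\downarrow y$, and one must check this limit is the mass of the degenerate box, which is $0$ since $[x,x')\times\{y\}$ decomposes... actually here is where I expect the real friction). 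Then $\bigcap\overline{B_k'}=\emptyset$ but these are nested nonempty compacta in $\mathcal{G}^\circ$, contradiction unless some $\overline{B_k'}$ is empty, giving the bound.

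So the main obstacle is precisely this $\sigma$-additivity step, and within it, the sub-claim that $\omega$ "has no mass on thin slivers" — i.e. that the finitely-additive set function extends to a genuine measure rather than, say, a finitely-additive one with a purely-finitely-additive part. I would isolate this as the crux: one must show $\lim_{x'\downarrow x}\omega(r_{x,x';y,y'})=0$ for fixed $x<y<y'$ (and the analogue in the other variable). This should follow from local finiteness considerations once we know $\omega$ extends to some measure, but to bootstrap it I would instead argue directly: fix a large box $R$ with $\overline R\subset\mathcal{G}^\circ$ containing all the $r_{x,x';y,y'}$ in question; subdivide $R$ in the first coordinate into $N$ equal pieces; by additivity and nonnegativity the sum of the $\omega$-values of these $N$ sub-boxes equals $\omega(R)<\infty$, so the minimum is $\leq\omega(R)/N\to 0$; then use monotonicity in the second coordinate plus a transl/ordering argument to conclude the infimum over all thin boxes at a given $x$ is $0$, hence (by monotonicity as $x'\downarrow x$) the limit is $0$. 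Finally I would verify the two constructions are mutually inverse — straightforward since both are determined by their values on half-open boxes — and that the bijection is as cones/additive, completing the proof. I would also remark that local finiteness of the resulting measure is exactly the finiteness of $\omega$ on each box with boundary in $\mathcal{G}^\circ$, which is built into $\omega$ being a real-valued cochain, and that $\Gamma$-invariance transfers directly.
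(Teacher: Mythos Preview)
Your overall strategy matches the paper's: define a content on half-open boxes and then upgrade to $\sigma$-additivity via Carath\'eodory. The forward direction and the mutual-inverse check are fine. The gap is exactly where you flagged ``real friction'': the claim that thin slivers have vanishing mass, i.e.\ that $\lim_{x'\downarrow x}\omega(r_{x,x';y,y'})=0$ at every fixed $x$.

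Your subdivision argument does not establish this. Cutting a big box $R$ into $N$ strips shows only that \emph{some} strip has $\omega$-value at most $\omega(R)/N$; it says nothing about the strip abutting a prescribed $x$. The ``transl/ordering argument'' you gesture at cannot be carried out with monotonicity alone: a positive finitely additive box function can perfectly well put mass $\varepsilon$ on a single vertical segment $\{x_0\}\times[y,y']$ (equivalently, assign $\varepsilon$ to every box containing that segment) while remaining consistent with all the $T_2$ relations. So without further input the limit at $x_0$ is $\varepsilon$, not $0$, and $\sigma$-additivity fails.

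The missing input is $\Gamma$-invariance, which you have as a hypothesis but never use in this step. The paper's argument is: if some segment $s\subset\mathcal{G}^\circ$ had the property that every rectangle containing it has $\omega$-value at least $\varepsilon$, then by $\Gamma$-invariance every $\gamma\cdot s$ has the same property; since $s$ lies in $\mathcal{G}^\circ$ its $\Gamma$-orbit is infinite and accumulates inside a fixed box, so one finds arbitrarily many disjoint translates each forcing an $\varepsilon$-contribution, contradicting finiteness of $\omega$ on that box. Once you have this, continuity of $\omega$ on rectangles follows, and your Carath\'eodory argument (or the paper's direct appeal to $\sigma$-additivity of a continuous content) goes through.
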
 
\begin{proof}
    Let $c$ be a $\Gamma$ invariant, positive cocycle. We can define a content $\mu$ on the semiring of subsets of $\mathcal{G}$ of the form $(x_1,x_2]\times(y_1,y_2]$ with boundary in $\mathcal{G}$ by defining $\mu((x_1,x_2]\times(y_1,y_2]) = c(r_{x_1,x_2;y_1,y_2})$. If $\mu$ is $\sigma$-additive, then it defines a unique measure. To prove $\sigma$-additivity, it suffices to show that $c(r_{x_1,x_2;y_1,y_2})$ is continuous on the space of rectangles with boundary in $\mathcal{G}^\circ$. If it is discontinuous, then we can find a segment $s$ in $\mathcal{G}^\circ$ such that $c$ evaluates to at least $\epsilon$ on any rectangle containing $s$. This contradicts $\Gamma$ invariance of $c$.
\end{proof}

To define positivity for $A\neq \R$ we must assume $A$ is endowed with a partial order. We let $\mathcal{C}^2(\mathcal{G},A)_+ \subset T^2(\mathcal{G},A)$ denote the space of cocycles such that $c(r)\in A_{\geq 0}$ for all rectangles $r$ with boundary in $\mathcal{G}^\circ$, and call elements of $\mathcal{T}^2(\mathcal{G},A)_+$ positive cocycles. We are using the word ``positive" to mean positive or zero, by analogy with positive measures. 

In order to have Lemma \ref{positive cocycles are measures} for $A$, we must assume that $A$ is bounded complete, that is every bounded subset has a least upper bound. The space of invariant, positive cocycles, $(T^2(\mathcal{G},A)_+)^\Gamma$, is then identified with the space of geodesic currents valued in $A_{\geq 0}$ which we denote $\mathcal{C}(S,A)$, or $\mathcal{C}(S,A_{\geq 0})$ to emphasize that it really depends on the partial order. In this paper $A$ will be $\R$, $\R^*$ or $\Z$.

\begin{definition}
    Let $\mathcal{A}(S,A)$ denote the space of equivariant $A$ bundles with taxi-connection on $\mathcal{G}^\circ$ with positive curvature.
\end{definition}
\begin{definition}
Let $\mathcal{H}(S,A)$ denote the cone in $\Hom(Z_1(\mathcal{G}),A)^\Gamma$ of invariant holonomy functions which are positive or zero on boundaries of rectangles, and call elements of $\mathcal{H}(S,A)$ \textbf{positive holonomy functions}.
\end{definition}
The relation between positive holonomy functions and Labourie's cross ratios is as follows. For $x,x',y,y'\in \del \Gamma^\circ$ with $x,x'$ both distinct from $y,y'$, let 
\[[x,x';y,y'] := s_{x;y,y'} + s_{x,x';y'} + s_{x';y',y} + s_{x',x;y}\in T_1(\mathcal{G}).\]
Note that $[x,x';y,y']$ is the boundary of a rectangle when the points are ordered correctly, but not always.
If $h\in H(S,\R^*)$, and $h([x,x';y,y'])$ extends to a H\"older function 
\[B:\{x,x',y,y'\in \del\Gamma | x\neq y', y\neq x'\} \to \R\]
which vanishes along $x=y$ and $x'=y'$, then $B$ is a cross ratio in the sense of Labourie.

By convention, $\mathcal{A}(S)$ and $\mathcal{H}(S)$ denote the versions with $A=\R$. The sequence
\[\mathcal{A}(S,A)\to \mathcal{H}(S,A)\to \mathcal{C}(S,A)\]
is now defined, and we can compute kernels and cokernels.

\subsection{Geodesic currents and holonomy functions}
We will show that the map from positive holonomy functions to geodesic currents
\[c: \mathcal{H}(S,A) \to \mathcal{C}(S, A)\]
has kernel $A$ and cokernel $\Hom(\Gamma,A_0)\simeq (A_0)^{2g}$ where $A_0$ is the identity component of $A$. The kernel is the group of holonomy functions which vanish on all boundaries of rectangles. Call these flat holonomy functions.

\begin{lemma} \label{flat holonomy functions}
    Evaluation on a taxi-loop which generates $H_1(T_*(\mathcal{G}))$ gives an isomorphism from the group of flat holonomy functions to $A$.
\end{lemma}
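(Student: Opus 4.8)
The plan is to reduce the statement to Lemma~\ref{homology} by identifying the group of flat holonomy functions with $\Hom(H_1(T_*(\mathcal{G})),A)$. First I would unwind the definitions: a holonomy function is a homomorphism $h\colon Z_1(\mathcal{G})\to A$, and it is flat precisely when $h(\partial r)=0$ for every rectangle $r$ with boundary in $\mathcal{G}^\circ$. Since $T_2(\mathcal{G})$ is generated as an abelian group by the rectangles $r_{x,x';y,y'}$ with boundary in $\mathcal{G}^\circ$, flatness of $h$ is equivalent to the vanishing of $h$ on the subgroup $B_1(\mathcal{G}):=\partial T_2(\mathcal{G})\subseteq Z_1(\mathcal{G})$; in particular a flat $h$ is positive on boundaries of rectangles in the required sense, so it does lie in the cone $\mathcal{H}(S,A)$. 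By the universal property of the quotient, restricting a homomorphism along $Z_1(\mathcal{G})\to Z_1(\mathcal{G})/B_1(\mathcal{G})$ then gives a canonical identification
\[
\{\text{flat holonomy functions}\}\;=\;\Hom\bigl(Z_1(\mathcal{G})/B_1(\mathcal{G}),\,A\bigr)\;=\;\Hom\bigl(H_1(T_*(\mathcal{G})),\,A\bigr).
\]
This step is pure homological algebra and requires no hypotheses on $A$.

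Next I would invoke Lemma~\ref{homology}, which gives $H_1(T_*(\mathcal{G}))\cong\Z$ and whose proof exhibits an explicit monotone degree-one taxi-loop $l$ --- for instance $l=s_{x,x';y}+s_{x';y,y'}+s_{x',x;y'}+s_{x;y',y}$ with $x<y<x'<y'$ cyclically ordered --- generating $H_1(T_*(\mathcal{G}))$. Then evaluation on $l$,
\[
\Hom\bigl(H_1(T_*(\mathcal{G})),\,A\bigr)\;\longrightarrow\;A,\qquad h\;\longmapsto\;h(l),
\]
is an isomorphism: it is injective because $l$ generates $H_1(T_*(\mathcal{G}))$, and surjective because for any $a\in A$ the rule $nl\mapsto na$ is a well-defined homomorphism $H_1(T_*(\mathcal{G}))\to A$ taking the value $a$ on $l$. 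Composing with the identification of the previous paragraph shows that $h\mapsto h(l)$ is an isomorphism from the group of flat holonomy functions to $A$.

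The one point that I expect to need care --- and the closest thing to an obstacle --- is the $\Gamma$-equivariance bookkeeping, since $\mathcal{H}(S,A)$ was defined inside $\Hom(Z_1(\mathcal{G}),A)^\Gamma$. Here I would observe that, since $S$ is oriented, $\Gamma$ acts on $\partial\Gamma\cong S^1$ by orientation-preserving homeomorphisms, hence acts on $\mathcal{G}$ preserving its ambient orientation, and therefore acts trivially on $H_1(T_*(\mathcal{G}))\cong\Z$. Consequently every flat holonomy function is automatically $\Gamma$-invariant, so imposing invariance changes nothing and the isomorphism above is already the one asserted in the lemma; in particular it identifies the kernel of $c\colon\mathcal{H}(S,A)\to\mathcal{C}(S,A)$ with $A$. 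Beyond this remark the argument is immediate once Lemma~\ref{homology} is available, so I do not anticipate any further difficulty.
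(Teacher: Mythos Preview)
Your proof is correct and follows essentially the same approach as the paper's: identify flat holonomy functions with $\Hom(H_1(T_*(\mathcal{G})),A)$, invoke Lemma~\ref{homology} to get $H_1\cong\Z$, and then use orientation of $S$ to handle the $\Gamma$-invariance. Your write-up is simply more detailed, and you explicitly note the positivity condition is trivially satisfied, which the paper leaves implicit.
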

\begin{proof}
The curvature map $c$ is the restriction of $d:\Hom(Z_1(\mathcal{G}), A) \to T^2(\mathcal{G},A)$ which has kernel $H^1(T^*(\mathcal{G},A))$ which is the same as $\Hom(H_1(T_*(\mathcal{G})),A)$ which is identified with $A$ by evaluation on a generating taxi-loop. We must check that this copy of $A$ is contained in $\mathcal{H}(S,A)$. Since $S$ is oriented, $\Gamma$ acts trivially on first homology of $\mathcal{G}$, so the kernel of $d$ is contained in $\mathcal{H}(S,A)$. 
\end{proof}

The cokernel of $c$ is a bit more tricky to see. Every geodesic current can be lifted to $\Hom(Z_1(\mathcal{G}),A)$, but not always to a $\Gamma$-invariant element. We will construct a map $\mathcal{C}(S,A_{\geq 0})\to \Hom(\Gamma,A)$ whose kernel is the image of $c$. This map sends a current to its Poincare dual cohomology class. We use the signed variant of Bonahon's intersection product to make this precise. 

\begin{definition}
Let $I:\mathcal{G}\times \mathcal{G}\to \{0,1\}$ be $1$ on pairs of geodesics which intersect and zero on pairs which don't intersect. Let $I_{sgn}:\mathcal{G}\times \mathcal{G} \to \{-1,0,1\}$ be $1$ on pairs which intersect with positive orientation, $-1$ on pairs which intersect with negative orientation, and $0$ on pairs which don't intersect. If two geodesics share one or both endpoints, then they are considered not to intersect.
Bonahon's intersection product of two geodesic currents $\mu_1,\mu_2$ is
\[i(\mu_1,\mu_2) := \int_{\mathcal{G}\times\mathcal{G}/\Gamma} I(g_1,g_2) \mu_1\otimes \mu_2\]
whereas the signed intersection product is
\[i_{sgn}(\mu_1,\mu_2) := \int_{\mathcal{G}\times\mathcal{G}/\Gamma} I_{sgn}(g_1,g_2) \mu_1\otimes \mu_2\]
\end{definition}
Note that $i$ is symmetric, whereas $i_{sgn}$ is antisymmetric. In the case where one of the arguments is $\delta_{[\gamma]}$ for $\gamma\in \Gamma$ primitive, the formula is a bit simpler. 
\[i(\delta_{[\gamma]},\mu) := \int_{\mathcal{G}/\gamma} I((\gamma^-,\gamma^+),g) \mu\]
\[i_{sgn}(\delta_{[\gamma]},\mu) := \int_{\mathcal{G}/\gamma} I_{sgn}((\gamma^-,\gamma^+),g) \mu\]
When we write $\mu_1 \otimes \mu_2$ we use the multiplication $\R \times \R \to \R$. We can similarly define the intersection, or signed intersection of an $A$ valued current with a $\Z$ valued current with the multiplication map $\Z\times A \to A$. The signed intersection product is rarely used because it vanishes on most geodesic currents of interest, for example those coming from Anosov representations or negatively curved metrics. The next lemma explains why this is the case. 

\begin{lemma}
\label{balence}
The map $\gamma\mapsto i_{sgn}(\delta_{[\gamma]},\mu)$ is a homomorphism from $\Gamma$ to $A$. For $h\in \Hom(Z_1(\mathcal{G}),A)$ lifting $\mu$ to be invariant, $\mu$ must satisfy $i_{sgn}(\delta_{[\gamma]},\mu) = 0$ for all $\gamma\in \Gamma$. 
\end{lemma}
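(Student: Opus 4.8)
The plan is to prove the two assertions in turn, the first by a direct additivity computation and the second by an equivariance argument.

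For the homomorphism claim, fix $\gamma,\delta\in\Gamma$ and unwind $i_{sgn}(\delta_{[\gamma]},\mu)=\int_{\mathcal{G}/\gamma} I_{sgn}((\gamma^-,\gamma^+),g)\,\mu$. The quantity $I_{sgn}((\gamma^-,\gamma^+),-)$ is, up to sign, the indicator of the set of geodesics $g$ that separate $\gamma^-$ from $\gamma^+$ in $\partial\Gamma$, with the sign recording on which side of the oriented axis the geodesic crosses; integrating this against $\mu$ over a fundamental domain for $\langle\gamma\rangle$ acting on $\mathcal{G}$ is exactly the signed $\mu$-measure of geodesics crossing a fundamental segment of the axis of $\gamma$. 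Concretely, picking a point $p$ on the axis of $\gamma$ and writing the axis arc from $p$ to $\gamma p$ as a taxi-path in $\mathcal{G}^\circ$, this signed measure is the holonomy $h(\text{axis arc of }\gamma)$ of any invariant $h$ lifting $\mu$, read along the curve $[\gamma]$. The point is that first I would express $i_{sgn}(\delta_{[\gamma]},\mu)$ as the period $h_\mu(\gamma)$ of the holonomy function around the closed curve $\gamma$ — this uses that crossing number against $\delta_{[\gamma]}$ counts exactly the chords transverse to the axis, and the sign bookkeeping in $I_{sgn}$ matches the signed curvature integral. Periods of a ($\Gamma$-invariant) holonomy function are visibly additive: concatenating the axis path of $\gamma$ with a $\gamma$-translate of the axis path of $\delta$ represents $\gamma\delta$ (after a based-loop adjustment that cancels by invariance), so $h_\mu(\gamma\delta)=h_\mu(\gamma)+h_\mu(\delta)$, giving the homomorphism $\Gamma\to A$.

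For the vanishing claim, suppose $h\in\Hom(Z_1(\mathcal{G}),A)$ is a $\Gamma$-invariant lift of $\mu$. Then the period $h_\mu(\gamma)$ is the holonomy of $h$ around a specific taxi-loop representing the free homotopy class of $\gamma$. I would argue this loop is null-homologous in $T_*(\mathcal{G})$: since $\mathcal{G}$ is homotopy equivalent to a circle, $H_1(T_*(\mathcal{G}))\simeq\Z$ by Lemma \ref{homology}, and the generating class is detected by degree on either $\partial\Gamma$ factor; the axis loop of $\gamma$, being a closed loop in $\mathcal{G}^\circ$ that projects to a \emph{contractible} loop in each $\partial\Gamma\simeq S^1$ factor (it goes out along the axis and comes back), has degree zero in $H_1$, hence is a boundary, say $\partial c$ for some $c\in T_2(\mathcal{G})$. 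Therefore $h_\mu(\gamma)=h(\partial c)=\operatorname{curv}(h)(c)$, which is the $\mu$-measure — with signs — of a region in $\mathcal{G}$. But invariance of $h$ forces this to vanish: alternatively and more cleanly, antisymmetry of $i_{sgn}$ gives $i_{sgn}(\delta_{[\gamma]},\mu)=-i_{sgn}(\mu,\delta_{[\gamma]})$, and one shows the right side equals $i_{sgn}(\delta_{[\gamma]},\mu)$ using that the chords of $\mu$ crossing the axis of $\gamma$ positively and negatively are interchanged by the lift being globally defined, forcing the quantity to equal its own negative.

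The main obstacle I anticipate is the first step: carefully identifying $i_{sgn}(\delta_{[\gamma]},\mu)$ with the period $h_\mu(\gamma)$ of a holonomy lift, i.e.\ choosing the right taxi-loop representing $\gamma$ and checking that the sign convention in $I_{sgn}$ (positive versus negative crossing of the oriented axis) agrees with the sign in the curvature pairing $h(\partial r)$ on rectangles. This is bookkeeping rather than deep, but it is where a sign error would hide, and it is the hinge that makes both parts of the lemma fall out — additivity of periods for the homomorphism statement, and the null-homology of the $\gamma$-loop (via Lemma \ref{homology}) together with antisymmetry of $i_{sgn}$ for the vanishing statement.
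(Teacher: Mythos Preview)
Your argument rests on an object that does not exist: you repeatedly invoke ``the axis loop of $\gamma$'' or ``a taxi-loop representing the free homotopy class of $\gamma$'' as a cycle in $\mathcal{G}^\circ$, but the axis of $\gamma$ is a single \emph{point} $(\gamma^-,\gamma^+)\in\mathcal{G}$, and a taxi-path from some $g\in\mathcal{G}^\circ$ to $\gamma g$ has boundary $\gamma g - g \neq 0$, so is not a cycle and cannot be fed to $h\in\Hom(Z_1(\mathcal{G}),A)$. Your additivity argument for the homomorphism claim and your null-homology argument for vanishing both hang on this undefined loop. There is a further structural problem: you argue the homomorphism claim via periods of a $\Gamma$-invariant holonomy function, but the homomorphism statement must hold for \emph{every} current $\mu$, including those admitting no invariant lift at all. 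Finally, the antisymmetry shortcut cannot work: $i_{sgn}(\delta_{[\gamma]},\mu)=-i_{sgn}(\mu,\delta_{[\gamma]})$ is always true, yet there are currents with $i_{sgn}(\delta_{[\gamma]},\mu)\neq 0$ (take any oriented simple closed curve that is not null-homologous), so antisymmetry alone forces nothing.

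The paper's route sidesteps all of this. The set $T=\{h\in\Hom(Z_1(\mathcal{G}),A):dh=\mu\}$ of \emph{all} lifts (invariant or not) is an $A$-torsor by Lemma~\ref{flat holonomy functions}. Since $\mu$ is $\Gamma$-invariant, $\Gamma$ acts on $T$, and any action of a group on an $A$-torsor with $A$ abelian is encoded by a homomorphism $\phi_\mu:\Gamma\to A$; moreover $\phi_\mu=0$ exactly when $T$ has a $\Gamma$-fixed point, i.e.\ an invariant lift exists. Both assertions of the lemma are thus formal once one identifies $\phi_\mu(\gamma)$ with $i_{sgn}(\delta_{[\gamma]},\mu)$. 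That identification is the actual computation: for any $h\in T$ and any cycle $z$ generating $H_1(T_*(\mathcal{G}))$, one has $\phi_\mu(\gamma)=h(\gamma z)-h(z)$, and the signed $\mu$-area of the region between $z$ and $\gamma z$ is precisely $i_{sgn}(\delta_{[\gamma]},\mu)$.
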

\begin{proof}
    Fix an arbitrary geodesic current $\mu$. Let
    \[T := \{h\in \Hom(Z_1(\mathcal{G}),A) : dh = \mu\}\]
    be the set of lifts of $\mu$ to a function of all cycles. The set $T$ is a coset for the group of flat holonomy functions, which is $A$ by Lemma \ref{flat holonomy functions}.  Since $\mu$ is invariant, $\Gamma$ acts on $T$, giving a homomorphism $\phi_\mu: \Gamma\to A$. Since $A$ is Abelian, $\phi_\mu$ descends to a map $H_1(S) \to A$. The geodesic current $\mu$ can be upgraded to an invariant holonomy function only when $\phi_\mu = 0$.

    Now we show that $\phi_\mu(\gamma) = i_{sgn}( \delta_{[\gamma]},\mu)$. To compute $\phi_\mu(\gamma)$ it is sufficient to compute $h(\gamma\cdot z) - h(z)$ for any $h\in T$, and any taxi cycle $z\in T_1(\mathcal{G})$ which generates the homology of $\mathcal{G}$. Figure \ref{holonomy change} shows an example choice of $z$. The difference in holonomy is the signed measure between the two cycles. The two big rectangles will contribute $i_{sgn}(\delta_{[\gamma]},\mu)$. The small rectangles are necessary to avoid $(\gamma^-,\gamma^+)$ and $(\gamma^+,\gamma^-)$ and stay in $\mathcal{G}^\circ$ but contribute nothing, as will be explained in Lemma \ref{period}.
\begin{figure}[h]
	\label{holonomy change}
    \centering
    \includegraphics[width=5cm]{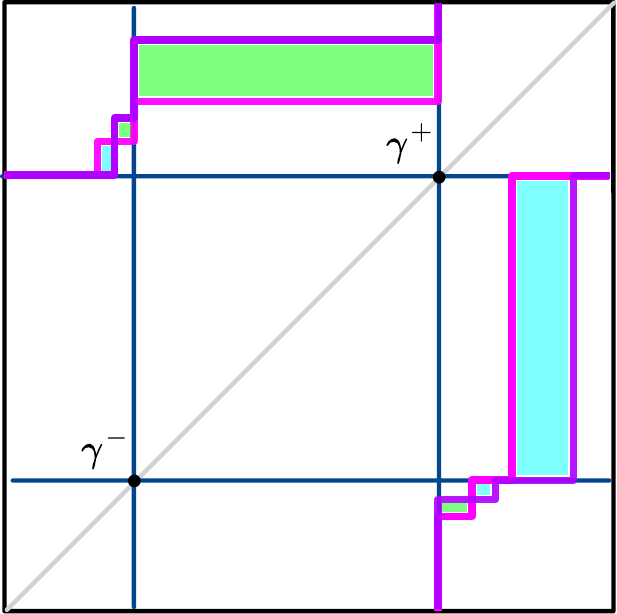}
    \caption{Area between a taxi-loop winding around $\mathcal{G}$ and its $\gamma$ translate}
    
\end{figure}
\end{proof}
The homomorphism $\gamma \to i_{sgn}(\delta_{[\gamma]},\mu)$ is actually valued in the subgroup $A_{\pm} = A_{\geq 0} + A_{\leq 0}$. The resulting map $\mathcal{C}(S,A) \to \Hom(\Gamma,A_{\pm})$ is a surjection, because any cohomology class is poincare dual to an oriented multicurve with $A_{\geq 0}$ weights. The cokernel of the curvature map $\mathcal{H}(S,A)\to \mathcal{C}(S,A)$ is thus $(A_{\pm})^{2g}$.

\subsection{Holonomy functions and equivariant bundles}
Now we show that the forgetful map from equivariant bundles with taxi connection to holonomy functions has kernel $A^{2g}$ and cokernel $A$. 

\begin{lemma}
    There is an exact sequence.
    \[0\to \Hom(\Gamma,A) \to \mathcal{A}(S,A) \to \mathcal{H}(S,A) \to H^2(\Gamma,A)\to 0\]
\end{lemma}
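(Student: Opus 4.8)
\section*{Proof proposal}

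The plan is to realize the sequence as the cohomological obstruction theory governing $\Gamma$-equivariant structures on a bundle, made concrete through the cochain description of Lemma~\ref{bundles classified by holonomy}. Fix $h\in\mathcal{H}(S,A)$ and, using Lemma~\ref{bundles classified by holonomy}, present the associated $A$-bundle with taxi connection on the \emph{trivial} bundle $\mathcal{G}^\circ\times A$ by a cochain $t\in T^1(\mathcal{G},A)$ with $t|_{Z_1(\mathcal{G})}=h$; such a $t$ exists because $T^1(\mathcal{G},A)\to\Hom(Z_1(\mathcal{G}),A)$ is onto, and $\Gamma$-invariance of $h$ forces $(1-\gamma)t$ to vanish on $Z_1(\mathcal{G})$, hence (using $H_0(T_*(\mathcal{G}))\cong\Z$) to be $d$-exact. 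An $A$-bundle automorphism of $\mathcal{G}^\circ\times A$ covering $\gamma\colon\mathcal{G}^\circ\to\mathcal{G}^\circ$ is translation by a function $\phi_\gamma\in T^0(\mathcal{G},A)$, so an equivariant structure is exactly the data of $\phi=(\phi_\gamma)_\gamma\in C^1(\Gamma,T^0(\mathcal{G},A))$ with (i) $d\phi_\gamma=(1-\gamma)t$ for all $\gamma$ (the connection is preserved), and (ii) $\phi$ a group $1$-cocycle for the $\Gamma$-module $T^0(\mathcal{G},A)$ (associativity of the action). Re-trivializing by $\psi\in T^0(\mathcal{G},A)$ replaces $(t,\phi)$ by $(t+d\psi,\phi-\delta\psi)$; since two cochains restricting to $h$ differ by $dT^0(\mathcal{G},A)$ we may normalize $t$, after which the only remaining gauge is by constants $A=\ker(d\colon T^0\to T^1)$, which act trivially on $\phi$ because $\Gamma$ acts trivially on $A$.

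With this set up I would first read off exactness at $\mathcal{A}(S,A)$. For fixed $t$ the solution set of (i)--(ii), when nonempty, is a torsor under $\{\phi_0\in C^1(\Gamma,A):\delta\phi_0=0\}=Z^1(\Gamma,A)=\Hom(\Gamma,A)$, which is $A^{2g}$ since $\Gamma$ is a genus-$g$ surface group. Taking $t=0$ shows the fiber of $\mathcal{A}(S,A)\to\mathcal{H}(S,A)$ over the zero holonomy function is canonically $\Hom(\Gamma,A)$, with the trivial equivariant bundle as basepoint; this defines the injection $\Hom(\Gamma,A)\hookrightarrow\mathcal{A}(S,A)$ and identifies its image with the preimage of $0$.

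Next, exactness at $\mathcal{H}(S,A)$ via an obstruction class. Given $h$, pick $t$ and any $\phi$ satisfying (i); then $\delta\phi\in C^2(\Gamma,T^0(\mathcal{G},A))$ measures the failure of (ii), and $d(\delta\phi)=\pm\delta(d\phi)=\mp\delta(\delta t)=0$ forces $\delta\phi\in C^2(\Gamma,\ker d)=C^2(\Gamma,A)$ (constants, since $\mathcal{G}^\circ$ is taxi-path connected); since also $\delta(\delta\phi)=0$ we get a class $o(h):=[\delta\phi]\in H^2(\Gamma,A)$. One checks $o(h)$ is independent of the choices (changing $t$ by $d\psi$ moves $\phi$ by $\delta\psi$ and leaves $\delta\phi$ fixed; changing $\phi$ by $c\in C^1(\Gamma,A)$ moves $\delta\phi$ by the coboundary $\delta c$), that $o$ is additive under tensor product, and that $o(h)=0$ iff some $\phi-c$ with $c\in C^1(\Gamma,A)$ is a genuine cocycle, i.e.\ iff $h$ lifts to an equivariant bundle. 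Hence $\operatorname{im}(\mathcal{A}(S,A)\to\mathcal{H}(S,A))=\ker o$, and combined with the torsor statement this also recovers the assertion that two equivariant bundles with the same curvature differ only by an element of $\Hom(\Gamma,A)$.

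The main obstacle is surjectivity of $o\colon\mathcal{H}(S,A)\to H^2(\Gamma,A)\cong A$ (the ``cokernel $A$''). Since $o$ is additive it suffices to realize a generating set. The plan is to evaluate $o$ on the group of flat holonomy functions, a copy of $A$ inside $\mathcal{H}(S,A)$ by Lemma~\ref{flat holonomy functions}: the flat function with period $a\in A$ around a taxi-loop generating $H_1(T_*(\mathcal{G}))\cong\Z$ (Lemma~\ref{homology}) underlies a flat taxi-connection of holonomy $a$, and because the first projection $\mathcal{G}\to\del\Gamma$ is a $\Gamma$-equivariant homotopy equivalence onto the circle $\del\Gamma$, its obstruction $o$ is the image of $a$ under multiplication by the Euler class $e\in H^2(\Gamma,\Z)$ of the boundary action $\Gamma\curvearrowright\del\Gamma\cong S^1$; that is, $o$ on flat holonomy functions is multiplication by $e$. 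As $\Gamma$ acts on its Gromov boundary as a cocompact Fuchsian group, $e=\pm\chi(S)\neq 0$, so this map is an isomorphism whenever $A$ is divisible --- in particular for $A=\R$, which is the case used throughout the rest of the paper --- and surjectivity follows. For $A=\Z$ or $A=\R^*$ the flat functions see only the image of multiplication by $e$, and I would finish by exhibiting non-flat positive holonomy functions, built from explicit geodesic currents with prescribed Poincar\'e-dual data, whose invariant lifts carry the remaining obstruction classes. I expect the delicate points to be the identification of $o$ on flat functions with the topological Euler class and, in the non-divisible case, the positivity bookkeeping for the supplementary currents; everything else is formal manipulation in the double complex $C^\bullet(\Gamma,T^\bullet(\mathcal{G},A))$.
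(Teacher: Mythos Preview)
Your proposal is correct and lands on the same argument as the paper, just in a different dialect. The paper forgoes the double complex $C^\bullet(\Gamma,T^\bullet(\mathcal{G},A))$ and instead, for each $P\in\mathcal{H}(S,A)$, directly defines the central extension $\Gamma_P$ of $\Gamma$ by $A$ consisting of connection-preserving bundle automorphisms of $P$ covering elements of $\Gamma$; the class $[\Gamma_P]\in H^2(\Gamma,A)$ is exactly your obstruction $o(h)$, and ``splitting of $\Gamma_P$'' replaces your cocycle condition on $\phi$. Your torsor argument for exactness at $\mathcal{A}(S,A)$ is the paper's ``two splittings differ by $\Hom(\Gamma,A)$''. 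For the key computation on flat holonomy functions, you invoke the Euler class of the Fuchsian boundary action $\Gamma\curvearrowright\del\Gamma$, while the paper computes $\chi(\Gamma_Q)=2-2g$ by identifying the flat $\Z$-bundle $\tilde{\mathcal{G}}\to\mathcal{G}$ of holonomy $1$ with the $\Z$-cover $\widetilde{T^1S}\to T^1\tilde S$; these are the same number for the same reason. Your cochain packaging is more portable and makes the homological bookkeeping explicit, whereas the paper's central-extension phrasing is shorter and keeps the geometric picture (the unit tangent bundle) in view.

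One point worth flagging: your caveat about surjectivity when $A$ is not $(2g-2)$-divisible is well taken, and the paper has the same gap. Its proof that every nullhomologous current lifts to $\mathcal{A}(S,A)$ asserts ``there is a unique flat $A$-bundle $Q$ with $\chi(\Gamma_Q)=-\chi(\Gamma_P)$'', which silently uses that $a\mapsto(2-2g)a$ is a bijection on $A$. This is fine for $A=\R$, which is the only case used downstream, but your honesty about $A=\Z$ and $A=\R^\ast$ is an improvement over the paper, not a deficiency of your write-up.
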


In the rest of this section it will be helpful use lemma \ref{bundles classified by holonomy} to equate holonomy functions with isomorphism classes of non-equivariant bundles with taxi-connection with $\Gamma$-invariant, positive holonomy. The question becomes ``when can a bundle be made equivariant, and how many ways are there to do so?"
\begin{definition}
\label{central extension definition}
Let $P\in \mathcal{H}(S,A)$ be a non-equivariant $A$-bundle with positive invariant holonomy. Let $\Gamma_{P}$ be the group consisting of pairs $(\gamma,\phi)$ where $\gamma\in \Gamma$, and $\phi:P\to P$ is a bundle map covering $\gamma$ preserving the taxi connection.
\[
\begin{tikzcd}
    P \arrow{r}{\phi} \arrow{d} & P  \arrow{d} \\
    \mathcal{G}^\circ \arrow{r}{\gamma} & \mathcal{G}^\circ
\end{tikzcd}
\]
\end{definition}

The group $\Gamma_{P}$ is a central extension of $\Gamma$ by $A$, and a splitting $\Gamma\to \Gamma_P$ is precisely the data making $P$ equivariant. 
Isomorphism classes of central extensions of $\Gamma$ by $A$ are classified by $H^2(\Gamma,A)$, which is the same as $H^2(S,A)$, which can be identified with $A$ by pairing with the fundamental class of $S$. 
We denote the map $H^2(\Gamma,A)\to A$ by $\mathcal{\chi}$ because if $B\to S$ is an oriented circle bundle, $\pi_1(B)$ will be a central extension of $\Gamma$ by $\Z$, and $\chi(\pi_1(B))$ will be the Euler class of $B$, which is often denoted $\chi(B)$.

Recall that the group structure on $H^2(G,A)$, where $G$ is a group and $A$ is an Abelian group, corresponds to the operation on central extensions
\[[\tilde{G}_1] + [\tilde{G}_2] = [\tilde{G}_1 \underset{G}{\times} \tilde{G}_2 / A] \]
where $A\subset \tilde{G}_1 \underset{G}{\times} \tilde{G}_2$ is the subgroup $\{(a,a^{-1}):a\in A\}$.

\begin{lemma}
    If $P$ and $Q$ are $A$ bundles on $\mathcal{G}$ with taxi connection, with $\Gamma$-invariant curvature, then 
\[[\Gamma_{P\otimes Q}] = [\Gamma_P] + [\Gamma_Q].\]
\end{lemma}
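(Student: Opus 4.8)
The plan is to unwind the definition of the central extension $\Gamma_P$ in Definition \ref{central extension definition} and exhibit an explicit isomorphism $\Gamma_{P\otimes Q} \xrightarrow{\sim} \Gamma_P \times_\Gamma \Gamma_Q / A$, where $A$ sits inside the fiber product as $\{(a,a^{-1})\}$; by the description of the group law on $H^2(\Gamma,A)$ recalled just before the statement, this is exactly what $[\Gamma_{P\otimes Q}] = [\Gamma_P] + [\Gamma_Q]$ means. The key observation is that a connection-preserving bundle map $\psi: P\otimes Q \to P\otimes Q$ covering $\gamma\in\Gamma$ is \emph{not} a priori induced by maps on $P$ and $Q$ separately, but the flat structure forces it to be, up to the $A$-ambiguity that gets quotiented out.

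First I would make the following elementary remark about taxi connections: if $P$ is an $A$-bundle with taxi connection on $\mathcal{G}^\circ$ and $\phi_1,\phi_2$ are two connection-preserving bundle maps covering the same $\gamma\in\Gamma$, then $\phi_2 = \phi_1 \cdot a$ for a \emph{globally constant} $a\in A$ — indeed $\phi_1^{-1}\phi_2$ is an automorphism of $P$ over the identity preserving $F$, hence given by a function $\mathcal{G}^\circ \to A$ that is flat along every segment, hence locally constant, hence constant since $\mathcal{G}^\circ$ is taxi-path connected (same argument as in Lemma \ref{bundles classified by holonomy}). This gives the central $A$ in $\Gamma_P$ and will be used repeatedly.

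Next, given $(\gamma,\phi_P)\in\Gamma_P$ and $(\gamma,\phi_Q)\in\Gamma_Q$ covering the \emph{same} $\gamma$, the map $\phi_P\otimes\phi_Q$ on $P\times_{\mathcal{G}^\circ} Q$ descends to $P\otimes Q$ (it respects the relation $(p\cdot a, q)\sim(p,q\cdot a)$ since $A$ is central and $\phi_P,\phi_Q$ are $A$-equivariant) and preserves the induced taxi connection on $P\otimes Q$ (flat sections of the tensor product are tensor products of flat sections). So $(\gamma, \phi_P, \phi_Q)\mapsto (\gamma,\phi_P\otimes\phi_Q)$ defines a homomorphism $\Gamma_P\times_\Gamma\Gamma_Q \to \Gamma_{P\otimes Q}$, and the pair $(a, a^{-1})\in A\times A$ in the fiber product acts as $\phi_P\otimes\phi_Q$ with $\phi_P = \mathrm{id}_P\cdot a$, $\phi_Q = \mathrm{id}_Q\cdot a^{-1}$, which is $\mathrm{id}_{P\otimes Q}$ by the tensor relation; hence the homomorphism kills $\{(a,a^{-1})\}$ and factors through $\Gamma_P\times_\Gamma\Gamma_Q / A$. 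Checking surjectivity and injectivity of this factored map is then the crux, and I expect it to be the main obstacle: for surjectivity one must show that an arbitrary connection-preserving $\psi$ on $P\otimes Q$ over $\gamma$ is of the form $\phi_P\otimes\phi_Q$; I would do this by first choosing \emph{any} connection-preserving lift $\phi_P$ of $\gamma$ to $P$ (which exists because every holonomy function can be realized, by Lemma \ref{bundles classified by holonomy}, applied to $\gamma^*P$ and the invariance of the holonomy — here one uses that $\gamma^*P \cong P$ as bundles-with-connection precisely because the holonomy is $\Gamma$-invariant), then $\psi\circ(\phi_P\otimes\mathrm{id})^{-1}$ is a connection-preserving map of $P\otimes Q$ over $\gamma$ that is "trivial on the $P$-factor", from which one extracts the required $\phi_Q$; injectivity follows from the constant-ambiguity remark above, counting the $A$-torsors of lifts on each side. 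A short diagram chase then confirms the map is compatible with the projections to $\Gamma$ and restricts to the standard isomorphism $A\to A$ on the central subgroups, so it is an isomorphism of central extensions, which is what we wanted.
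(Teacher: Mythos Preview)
Your approach is the same as the paper's: construct the natural homomorphism $\Gamma_P \times_\Gamma \Gamma_Q \to \Gamma_{P\otimes Q}$ sending $(\gamma,\phi_P,\phi_Q)$ to $(\gamma,\phi_P\otimes\phi_Q)$ and observe that it descends to an isomorphism after killing the antidiagonal copy of $A$. The paper's proof is essentially a one-line assertion of this; your version fills in the details the paper leaves implicit, and your reasoning (existence of lifts from $\Gamma$-invariance of holonomy via Lemma~\ref{bundles classified by holonomy}, and the ``constant ambiguity'' remark to count fibers) is correct.

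One small slip to fix in the surjectivity step: the expression $\phi_P\otimes\mathrm{id}$ does not make sense as a bundle map, since $\phi_P$ moves base points by $\gamma$ while $\mathrm{id}_Q$ does not, so the pair $(\phi_P(p),q)$ does not lie in the fiber product. The clean way to finish is exactly what your earlier remark sets up: choose \emph{any} lift $\phi_Q^0$ of $\gamma$ to $Q$ as well (again using invariance of holonomy), note that $\psi$ and $\phi_P\otimes\phi_Q^0$ are two connection-preserving maps of $P\otimes Q$ covering the same $\gamma$, hence differ by a constant $a\in A$, and set $\phi_Q = \phi_Q^0\cdot a$.
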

\begin{proof}
    The fiber product $\Gamma_P \underset{\Gamma}{\times} \Gamma_Q$ is the group of triples $(\gamma,\phi_1,\phi_2)$, where $\gamma\in \Gamma$, and $\phi_1:P\to P$, and $\phi_2:Q\to Q$ both cover the action of $\gamma$ action on $\mathcal{G}$. Quotienting this fiber product by the subgroup $\{(1, a,-a) : a\in A\}$ gives $\Gamma_{P\otimes Q}$. 
\end{proof}

So we have a homomorphism from $\mathcal{H}(S,A)$, to $H^2(S,A) = A$. We next check that this homomorphism is non-trivial on the subgroup of flat bundles. Let $l\in Z_1(\mathcal{G})$ be a taxi loop representing the generator of first homology of $T_*(\mathcal{G})$ which agrees with the orientation of $\del \Gamma$ induced by the orientation of $S$. The subgroup of flat bundles is identified with $A$ via measuring holonomy around $l$. 

\begin{lemma}
\label{holonomy gives extension}
    If $Q\in \mathcal{H}(S,\Z)$ is a flat bundle on $\mathcal{G}$ with holonomy $1$, then $\chi(\Gamma_Q) = 2-2g$.
\end{lemma}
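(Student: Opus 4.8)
The plan is to exploit the fact that, by Lemma~\ref{bundles classified by holonomy} together with Lemma~\ref{flat holonomy functions}, a flat $\Z$-bundle with holonomy $1$ on $\mathcal{G}^\circ$ is unique up to isomorphism, so it suffices to exhibit one convenient model and compute $\chi(\Gamma_Q)$ there. The model I would use is the restriction to $\mathcal{G}^\circ$ of the covering
\[
\widehat{\mathcal{G}} := \{(\tilde x,\tilde y)\in\R^2 : \tilde x<\tilde y<\tilde x+1\}\ \longrightarrow\ \mathcal{G},\qquad (\tilde x,\tilde y)\mapsto (\tilde x \bmod 1,\ \tilde y\bmod 1),
\]
where we identify $\del\Gamma$ with $\R/\Z$ via the orientation induced by $S$, and $\Z$ acts on $\widehat{\mathcal{G}}$ by the diagonal translation $n\cdot(\tilde x,\tilde y)=(\tilde x+n,\tilde y+n)$. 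Since $\widehat{\mathcal{G}}\cong \R\times(0,1)$ is simply connected, this is the connected $\Z$-cover of $\mathcal{G}$; its flat taxi connection is the covering structure, and one checks that parallel transport around the taxi-loop $l$ (oriented compatibly with $\del\Gamma$) is the generating deck transformation, i.e. the holonomy is $1$. Removing the $\Gamma$-orbit of fixed-point pairs gives the bundle $Q=\widehat{\mathcal{G}}^\circ\to\mathcal{G}^\circ$.

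Next I would identify the central extension $\Gamma_Q$ explicitly. A bundle automorphism of $Q$ covering the diagonal action of $\gamma\in\Gamma$ on $\mathcal{G}^\circ$ and preserving the flat structure is a covering-space lift of $\gamma$; since $\widehat{\mathcal{G}}^\circ$ is connected and $\Gamma$ acts trivially on $H_1(T_*(\mathcal{G}))$ (noted in the proof of Lemma~\ref{flat holonomy functions}), such lifts exist and form a $\Z$-torsor. Writing things out, every such lift has the form $(\tilde x,\tilde y)\mapsto(\tilde\gamma\tilde x,\tilde\gamma\tilde y)$ where $\tilde\gamma\colon\R\to\R$ is an orientation-preserving homeomorphism with $\tilde\gamma(t+1)=\tilde\gamma(t)+1$ lifting $\gamma\colon\del\Gamma\to\del\Gamma$, and conversely every such $\tilde\gamma$ yields one (it preserves the region $\tilde x<\tilde y<\tilde x+1$ because $\tilde\gamma$ is increasing and commutes with $t\mapsto t+1$). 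Composition of automorphisms corresponds to composition of the $\tilde\gamma$'s, so
\[
\Gamma_Q\ \cong\ \widetilde\Gamma := \bigl\{\tilde\gamma\in\mathrm{Homeo}^+(\R)\ :\ \tilde\gamma(t+1)=\tilde\gamma(t)+1,\ \ \overline{\tilde\gamma}\in\Gamma\bigr\},
\]
the preimage of $\Gamma$ under $\widetilde{\mathrm{Homeo}}^+(S^1)\to\mathrm{Homeo}^+(S^1)$, with the central $\Z$ generated by $t\mapsto t+1$ matching the deck $\Z$ of $Q$.

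It then remains to evaluate the extension class of $\widetilde\Gamma$ against the fundamental class of $S$, i.e. to compute the Euler number of the $\Gamma$-action on the circle $\del\Gamma$. Choosing an auxiliary hyperbolic metric identifies this action, up to conjugacy, with a Fuchsian action $\Gamma\hookrightarrow\PSL_2\R$ on $\del\mathbb{H}^2$; the preimage of a Fuchsian group in $\widetilde{\PSL_2\R}$ is $\pi_1$ of the unit tangent bundle $T^1S$, whose extension class is the Euler number of the oriented circle bundle $T^1S\to S$, namely $\chi(S)=2-2g$. (Equivalently, and without invoking a metric, one can run the Milnor--Wood computation directly: pick generators $a_i,b_i$ with $\prod_i[a_i,b_i]=1$, lift them to $\widetilde\Gamma$, and read off that $\prod_i[\tilde a_i,\tilde b_i]$ is translation by $2-2g$ using the rotation-number quasimorphism; the Euler class is a semiconjugacy invariant, so the answer is independent of the chosen metric.) Combining this with the previous step gives $\chi(\Gamma_Q)=2-2g$.

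The main obstacle I anticipate is purely one of signs and conventions: making sure that the orientation of $\del\Gamma$ induced from $S$, the sign convention built into the chosen generator $l$ of $H_1(T_*(\mathcal{G}))$, and the normalization of $\chi\colon H^2(\Gamma,\Z)\to\Z$ all line up so that the answer is $2-2g$ and not $2g-2$ --- in particular, checking that the holonomy of the model around $l$ is $+1$ rather than $-1$, since passing to the inverse bundle negates $\chi$ by the $[\Gamma_{P\otimes Q}]=[\Gamma_P]+[\Gamma_Q]$ lemma. The only other point needing care --- that $\Gamma_Q$ is exactly the full group $\widetilde\Gamma$, with no extra automorphisms and every lift realized --- reduces to connectedness of $\widehat{\mathcal{G}}^\circ$ and triviality of the $\Gamma$-action on $H_1(T_*(\mathcal{G}))$, both already available.
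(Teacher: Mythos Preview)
Your proposal is correct and follows essentially the same approach as the paper: realize the flat $\Z$-bundle with holonomy $1$ as the universal cover $\tilde{\mathcal{G}}\to\mathcal{G}$, identify $\Gamma_Q$ with the central extension obtained by lifting the $\Gamma$-action on $\del\Gamma\simeq S^1$ to $\R$, and quote that this extension has Euler number $\chi(S)=2-2g$. The paper phrases the last step via the unit tangent bundle of an auxiliary hyperbolic metric (so that $\Gamma_Q\cong\pi_1(T^1S)$ directly), which is exactly your Fuchsian option; your alternative Milnor--Wood route is a minor variation.
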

\begin{proof}
    For convenience, fix a negatively curved metric on $S$ so that we may talk about its unit tangent bundle $T^1S$. There is a commutative square 

\[
\begin{tikzcd}
    \widetilde{T^1 S} \arrow{r}\arrow{d} & \tilde{\mathcal{G}} \arrow{d} \\
    T^1 \tilde{S} \arrow{r} & \mathcal{G} \\
\end{tikzcd}
\]
    where the tildes denote universal cover. The horizontal arrows are principal $\R$ bundles (quotients by geodesic flow) and the vertical arrows are principal $\Z$ bundles. 
    The top row of the diagram has an action of $\tilde{\Gamma} := \pi_1(T^1 S)$ which is a central extension of $\Gamma$ with $\chi(\tilde{\Gamma}) = \chi(S) = 2-2g$. In fact $\tilde{\Gamma}$ is precisely the group of bundle maps of $\tilde{\mathcal{G}}$ which cover elements of $\mathcal{G}$ from definition \ref{central extension definition}. Note that $\tilde{\mathcal{G}}\to\mathcal{G}$ is a flat $\Z$ bundle with holonomy $1$.
\end{proof}

More generally, Lemma \ref{holonomy gives extension} implies that a flat $A$ bundle $Q$ on $\mathcal{G}$ with holonomy $a\in A$ has $\chi(\Gamma_Q) = (2-2g)a$, as $Q$ is induced from $\tilde{\mathcal{G}}$ by the homomorphism $\Z\to A$ with $1\mapsto a$. 

\begin{lemma}
    The curvature map $\mathcal{A}(S,A)\to \mathcal{C}(S,A)$ is surjective onto nullhomologous currents.
\end{lemma}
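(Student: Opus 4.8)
The plan is to factor the curvature map as the composite $\mathcal{A}(S,A)\to\mathcal{H}(S,A)\xrightarrow{\,c\,}\mathcal{C}(S,A)$ and to lift a nullhomologous current $\mu$ across the two arrows in turn. First I would handle the second arrow: by definition a nullhomologous current has $i_{sgn}(\delta_{[\gamma]},\mu)=0$ for all $\gamma$, so Lemma \ref{balence} provides an invariant holonomy function $h_0$ with $c(h_0)=\mu$; since $h_0(\partial r)=\mu(r)\in A_{\geq 0}$ for every rectangle $r$, in fact $h_0\in\mathcal{H}(S,A)$. Thus $c$ is already surjective onto nullhomologous currents, and the remaining content is that $h_0$, or some competitor lifting the same $\mu$, lies in the image of $\mathcal{A}(S,A)\to\mathcal{H}(S,A)$.

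For the first arrow I would invoke the exact sequence $0\to\Hom(\Gamma,A)\to\mathcal{A}(S,A)\to\mathcal{H}(S,A)\to H^2(\Gamma,A)\to 0$ obtained above: the image of $\mathcal{A}(S,A)$ is precisely the kernel of the map sending a holonomy function $h$, viewed via Lemma \ref{bundles classified by holonomy} as a non-equivariant $A$-bundle with taxi connection, to the class $\chi(\Gamma_h)\in H^2(\Gamma,A)\cong A$ of the central extension $\Gamma_h$ of Definition \ref{central extension definition}. So it suffices to modify $h_0$, without changing $c(h_0)=\mu$, until this obstruction class vanishes. The fiber of $c$ over $\mu$ is a coset of the flat holonomy functions, identified with $A$ by holonomy around a fixed generating taxi loop $l$ (Lemma \ref{flat holonomy functions}); its elements are the tensor products $h_0\otimes Q_a$ with $Q_a$ the flat bundle of holonomy $a$ around $l$, and each such twist has curvature unchanged (the flat factor contributes nothing to $dt$) and is still positive on rectangle boundaries, hence still lies in $\mathcal{H}(S,A)$ and still maps to $\mu$.

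Now I would combine additivity of $h\mapsto[\Gamma_h]$ under tensor product with the extension of Lemma \ref{holonomy gives extension} to general holonomy, which gives $\chi(\Gamma_{Q_a})=(2-2g)a$: this yields $\chi(\Gamma_{h_0\otimes Q_a})=\chi(\Gamma_{h_0})+(2-2g)a$. Over $A=\R$ I would simply choose $a$ with $(2-2g)a=-\chi(\Gamma_{h_0})$; then $\Gamma_{h_0\otimes Q_a}$ splits, a splitting turns $h_0\otimes Q_a$ into an equivariant bundle with taxi connection whose curvature is $\mu$, and this is the desired preimage in $\mathcal{A}(S,A)$. The one genuinely delicate point — the main obstacle — is this final normalization: over a general bounded-complete ordered group $A$ (such as $\Z$) one cannot divide by $2-2g$, so one must either assume $2-2g$ acts invertibly on $A$ or argue separately that the obstruction $\chi(\Gamma_{h_0})$ already lies in $(2-2g)A$. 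The remaining verifications — invariance and positivity of the twist, and the fact that the curvature is unchanged — are routine consequences of the lemmas already established.
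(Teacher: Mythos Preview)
Your proposal is correct and follows essentially the same route as the paper: lift $\mu$ to an invariant holonomy function via Lemma~\ref{balence}, view it as a non-equivariant bundle, then tensor with the flat bundle chosen to kill the obstruction $\chi$ in $H^2(\Gamma,A)$ so that the result admits $\Gamma$-equivariance. Your caveat about divisibility by $2-2g$ for general $A$ is well-taken and in fact applies equally to the paper's own proof, which implicitly assumes this when asserting the existence (and uniqueness) of the required flat bundle $Q$; the issue is moot for the cases $A=\R$ or $\R^*$ actually used in the paper.
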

\begin{proof}
    A nullhomologous current $\mu$ may be realized by an invariant holonomy function, which in turn is the holonomy of a non-equivariant $A$ bundle $P$ with taxi-connection. 
    There is a unique flat $A$ bundle $Q$ on $\mathcal{G}$ such that $\chi(\Gamma_Q) = -\chi(\Gamma_P)$, thus $Q\otimes P$ admits $\Gamma$ equivariance. Thus, there exists a $\Gamma$ equivariant bundle with curvature $\mu$. 
\end{proof}

\begin{lemma}
    Any two equivariant singular $A$-bundles with connection with curvature $\mu$ differ by a modification of the $\Gamma$ action by an element of $\Hom(\Gamma,A)$.
\end{lemma}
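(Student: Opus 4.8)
The plan is to unpack what "two equivariant structures on the same underlying bundle-with-connection" means, translate it into a difference of splittings of the central extension $\Gamma_P$, and identify that difference with an element of $\Hom(\Gamma,A)$. First I would set up the situation: suppose $(P_1, F_1)$ and $(P_2, F_2)$ are equivariant $A$-bundles with taxi connection on $\mathcal{G}^\circ$ with the same curvature $\mu$. By Lemma \ref{bundles classified by holonomy} their underlying (non-equivariant) bundles-with-connection are classified by holonomy functions $h_1, h_2 \in \Hom(Z_1(\mathcal{G}),A)$, and since $dh_1 = \mu = dh_2$, the difference $h_1 - h_2$ is a flat holonomy function, hence — by Lemma \ref{flat holonomy functions} — an element of the subgroup $A$ of flat bundles. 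Tensoring $P_1$ with the inverse of that flat bundle, which by Lemma \ref{holonomy gives extension} (and the remark after it) changes the Euler class $\chi(\Gamma_{\bullet})$ by $(2-2g)$ times that element of $A$, would obstruct the comparison unless we are careful: the cleaner route is to observe that any two equivariant bundles with curvature $\mu$ have underlying non-equivariant bundles which become isomorphic after tensoring by a \emph{flat} bundle, and then reduce to the case where the underlying non-equivariant bundles-with-connection are literally the same, i.e. $h_1 = h_2 =: h$.

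So the crux is: given a single non-equivariant $(P,F) \in \mathcal{H}(S,A)$ admitting two $\Gamma$-equivariant structures, show these differ by $\Hom(\Gamma,A)$. Here I would invoke Definition \ref{central extension definition}: an equivariant structure on $P$ is exactly a group-theoretic splitting $\sigma: \Gamma \to \Gamma_P$ of the central extension $1 \to A \to \Gamma_P \to \Gamma \to 1$ (with $\sigma$ landing in bundle maps preserving the taxi connection, which is automatic from the definition of $\Gamma_P$). Two splittings $\sigma_1, \sigma_2$ of a central extension by an abelian group $A$ differ by a map $\gamma \mapsto \sigma_1(\gamma)\sigma_2(\gamma)^{-1}$, and because $A$ is central and $\sigma_1,\sigma_2$ are homomorphisms, this map is itself a homomorphism $\Gamma \to A$; conversely any element of $\Hom(\Gamma,A)$ modifies a given splitting to another one. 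This is the standard fact that the set of splittings of a split central extension is a torsor over $\Hom(\Gamma, A)$. Translating back, modifying the equivariant structure by $\phi \in \Hom(\Gamma,A)$ is precisely what the statement calls "a modification of the $\Gamma$ action by an element of $\Hom(\Gamma,A)$."

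I would then close the loop on the reduction step: if $(P_1,F_1)$ and $(P_2,F_2)$ have the same curvature but their holonomy functions $h_1, h_2$ differ by a nonzero flat class $a \in A$, I need to rule out that this produces genuinely new equivariant bundles not accounted for by $\Hom(\Gamma,A)$. The point is that the flat bundle $Q_a$ with holonomy $a$ must \emph{itself} admit a $\Gamma$-equivariant structure for $P_2 = P_1 \otimes Q_a$ to be equivariant with the same curvature; but by Lemma \ref{holonomy gives extension} and its generalization, $\chi(\Gamma_{Q_a}) = (2-2g)a$, which is nonzero unless $a = 0$ (using $g \geq 2$ and that $A$ is torsion-free in the cases $\R, \R^*, \Z$ of interest — or more precisely that $(2-2g)$ acts injectively). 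Hence $Q_a$ is equivariant only if $a=0$, forcing $h_1 = h_2$ and reducing us to the previous paragraph.

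The main obstacle I expect is precisely this last bookkeeping: being careful that "same curvature" for \emph{equivariant} bundles forces "same holonomy function," rather than merely "holonomy functions differing by a flat class," so that the comparison genuinely takes place among splittings of one fixed central extension $\Gamma_P$. Once that is pinned down, the rest is the torsor-of-splittings argument, which is routine. One should also check the positivity hypotheses are preserved throughout — they are, since tensoring by a flat ($=$ zero-curvature) bundle does not change curvature, hence does not affect membership in the positive cone $\mathcal{H}(S,A)$ — and that the modification by $\phi \in \Hom(\Gamma,A)$ indeed lands back in $\mathcal{A}(S,A)$, which again is immediate because curvature is unchanged.
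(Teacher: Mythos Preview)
Your proposal is correct and uses the same essential ingredients as the paper. The paper's proof is more streamlined: it directly forms the equivariant flat bundle $\Hom(P,Q)$, invokes Lemma \ref{holonomy gives extension} to conclude it has trivial holonomy (exactly your reduction step), and then observes that a trivial bundle with equivariance is an element of $\Hom(\Gamma,A)$ (your splitting-of-central-extension argument, compressed). Your two-step decomposition---first forcing $h_1 = h_2$, then comparing splittings of $\Gamma_P$---is the same argument unpacked, and your explicit caveat that $(2-2g)$ must act injectively on $A$ is a detail the paper leaves implicit. One phrasing nit: in your reduction step you write that $Q_a$ ``must admit a $\Gamma$-equivariant structure for $P_2 = P_1 \otimes Q_a$ to be equivariant,'' but the logic runs the other way---$Q_a \cong P_1^{-1}\otimes P_2$ \emph{inherits} equivariance from $P_1$ and $P_2$, whence $\chi(\Gamma_{Q_a})=0$ and so $a=0$.
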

\begin{proof}
    The action of $\Hom(\Gamma,A)$ on equivariant $A$ bundles with connection can be thought of as tensoring with trivial $A$ bundles with possibly non-trivial equivariance. Let $P$ and $Q$ be equivariant $A$ bundles with the same curvature. The equivariant $A$ bundle $\Hom(P,Q)$ will be flat. By Lemma \ref{holonomy gives extension} it must have trivial holonomy, but it may be acted on non-trivially by $\Gamma$, so is described by an element of $\Hom(\Gamma,A)$. Finally note $Q = P\otimes \Hom(P,Q)$. 
\end{proof}

\section{Geodesic currents and length spectra}
If we have a $\Gamma$ equivariant $A$ bundle $P$ on $\mathcal{G}$, the period $l_P(\gamma)$ of $\gamma\in \Gamma$ is the amount that it translates the fiber over the fixed point $(\gamma^+,\gamma^-)$. 
We will extend this notion to equivariant bundles with taxi-connection over $\mathcal{G}^\circ$. Since $P$ is equivariant, $l_P(\gamma)$ will only depend on the conjugacy class of $\gamma$. We will see that the map


\[
\begin{tikzcd}
    \mathcal{A}(S,A) \arrow{r}  & A^{[\Gamma]} \\
\end{tikzcd}
\]
given by $P\mapsto l_P$ is injective. That is, equivariant bundles with positive taxi-connection are determined by their periods.

\subsection{Defining periods}

By definition, an equivariant bundle with taxi connection has no fibers over fixed points, so we need to come up with an alternative definition for periods. The intuition behind the definition is that one can measure translation lengths of group elements by measuring how horocycles centered at fixed points are acted upon.
\begin{lemma}
\label{period}
    Let $P\in \mathcal{A}(S)$. Let $\gamma \in \Gamma$. Consider the four rays emanating from $(\gamma^{-},\gamma^{+})$. 
    \[R_N = \{ (\gamma^-, y) : \gamma^+ < y < \gamma^-\} \;\;\;\;\;
    R_S = \{ (\gamma^-, y) : \gamma^- < y < \gamma^+\} \]
    \[R_E = \{ (x, \gamma^+) : \gamma^- < x < \gamma^+\} \;\;\;\;\;
    R_W = \{ (x, \gamma^+) : \gamma^+ < x < \gamma^-\} \]
    There is a number $l_P(\gamma)$, which we call the \textbf{period} of $\gamma$, such that if $s$ is a flat section of $P$ over any of these rays, we have $\gamma\cdot s = l_P(\gamma)s$.
\end{lemma}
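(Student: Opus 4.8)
The plan is to first pin down the number $l_P(\gamma)$ for a single one of the four rays and then to prove that the four numbers so obtained coincide. Since $S$ is oriented, $\gamma$ acts on $\del\Gamma\simeq S^1$ as an orientation-preserving homeomorphism fixing $\gamma^+$ and $\gamma^-$, hence preserving each of the four open arcs of $\del\Gamma\setminus\{\gamma^+,\gamma^-\}$; consequently $\gamma$ preserves each of $R_N,R_S,R_E,R_W$ as a subset of $\mathcal{G}^\circ$ (each ray limits to the excluded point $(\gamma^-,\gamma^+)$ but does not contain it). A ray is taxi-path connected and supports no holonomy, so the set of flat sections of $P$ over it is a torsor under $A=\R$. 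The bundle automorphism $\gamma$ preserves the taxi connection, hence permutes flat sections, and it commutes with the principal $A$-action; an automorphism of an $A$-torsor commuting with the structure group is translation by a fixed element. This produces numbers $l_N,l_S,l_E,l_W\in\R$ with $\gamma\cdot s=l_\bullet\,s$ for every flat section $s$ over the corresponding ray, and the content of the lemma is the equality $l_N=l_S=l_E=l_W=:l_P(\gamma)$.

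To prove, say, $l_E=l_N$, fix flat sections $s_E$ over $R_E$ and $s_N$ over $R_N$, choose $p_E=(x,\gamma^+)\in R_E$, $p_N=(\gamma^-,y)\in R_N$, and a taxi path $\pi\subset\mathcal{G}^\circ$ from $p_E$ to $p_N$ --- concretely the ``L-path'' $p_E\to(x,y)\to p_N$. Using that $\gamma$ is connection-preserving (so it carries parallel transport along $\pi$ to parallel transport along $\gamma\cdot\pi$), together with $\gamma\cdot s_E=l_E s_E$ and $\gamma\cdot s_N=l_N s_N$, a direct definition-chase expresses the difference as the holonomy of an explicit small taxi-cycle:
\[ l_N-l_E \;=\; h_P(z_{x,y}),\qquad z_{x,y}:=\sigma_E+\gamma\cdot\pi-\pi-\sigma_N\in Z_1(\mathcal{G}), \]
where $\sigma_E\subset R_E$ is the segment from $p_E$ to $\gamma p_E$ and $\sigma_N\subset R_N$ the segment from $p_N$ to $\gamma p_N$. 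In particular $h_P(z_{x,y})$ does not depend on $x$ or $y$. Since $P$ has positive curvature, the holonomy around a boundary $\del c$ equals the mass that the geodesic current $\mu=\curv(P)$ assigns to the $2$-chain $c$ (Lemma \ref{positive cocycles are measures}), and $z_{x,y}$ is a boundary (it bounds an explicit $2$-chain, produced below), so $l_N-l_E=\mu(c_{x,y})$ for a $2$-chain $c_{x,y}$ with $\del c_{x,y}=z_{x,y}$.

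Near $(\gamma^-,\gamma^+)$ the element $\gamma$ acts as a saddle --- expanding the $\del\Gamma$-factor at $\gamma^-$ and contracting the one at $\gamma^+$ --- so $\gamma\cdot\pi$ is again a small L-path near $\pi$, and $z_{x,y}$ bounds a difference $c_{x,y}=R_A-R_B$ of two honest rectangles $R_A,R_B$, each of which avoids the point $(\gamma^-,\gamma^+)$ yet is contained in a neighborhood of it that shrinks to it as $x\to\gamma^-$ and $y\to\gamma^+$. By local finiteness of $\mu$ and continuity of a measure from above, $\mu(R_A)$ and $\mu(R_B)$ are each at most $\mu(N)-\mu(\{(\gamma^-,\gamma^+)\})$ for a neighborhood $N\downarrow\{(\gamma^-,\gamma^+)\}$, hence tend to $0$; since $\mu(c_{x,y})=\mu(R_A)-\mu(R_B)$ is constant in $(x,y)$, it must be $0$, so $l_E=l_N$. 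Running the same argument around a cycle of neighboring rays, e.g.\ $R_E,R_N,R_W,R_S$, gives $l_N=l_S=l_E=l_W$, which is the lemma.

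The delicate step is the middle one: one must keep the whole figure --- the L-path $\pi$, its translate $\gamma\cdot\pi$, the segments $\sigma_E,\sigma_N$, and the rectangles $R_A,R_B$ --- inside $\mathcal{G}^\circ$, i.e.\ away from the countable set $\{(\delta^-,\delta^+):\delta\in\Gamma\setminus\{1\}\}$. This is possible because the first (resp.\ second) coordinates occurring in these fixed points form a countable set, so a generic vertical (resp.\ horizontal) line meets no fixed point and lies entirely in $\mathcal{G}^\circ$, and such lines occur arbitrarily close to $\gamma^-$ (resp.\ $\gamma^+$); one then checks that the bounding $2$-chain genuinely misses $(\gamma^-,\gamma^+)$ while collapsing onto it. That last point is what drives the enclosed curvature to zero even though $\mu$ may carry an atom at $(\gamma^-,\gamma^+)$, and it is the only place where positivity of $\curv(P)$ enters: it is what makes the curvature an honest measure rather than merely a $2$-cocycle.
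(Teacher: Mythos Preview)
Your proof is correct and follows essentially the same architecture as the paper's: you connect two adjacent rays, say $R_E$ and $R_N$, by an L-shaped taxi path $\pi$, compare $\pi$ with $\gamma\cdot\pi$, and identify $l_N-l_E$ with the holonomy of the resulting figure-8 cycle $z_{x,y}$. The paper does exactly this.

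The one genuine difference is in how you conclude that this holonomy vanishes. The paper observes that the figure-8 bounds the signed difference
\[
\mu\bigl([\gamma^-,\gamma x]\times[\gamma^+,\gamma y]\bigr)\;-\;\mu\bigl([\gamma^-,x]\times[\gamma^+,y]\bigr),
\]
and since the first box is the $\gamma$-translate of the second, $\Gamma$-invariance of $\mu$ gives zero immediately---one line, no limits. You instead split the figure-8 into the two ``small'' rectangles $R_A=[x,\gamma x]\times[\gamma^+,\gamma y]$ and $R_B=[\gamma^-,x]\times[\gamma y,y]$, note that neither contains $(\gamma^-,\gamma^+)$, and run a shrinking argument: both measures tend to $0$ as $(x,y)\to(\gamma^-,\gamma^+)$ while their difference is constant. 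This is valid (your handling of a possible atom at $(\gamma^-,\gamma^+)$ via $\mu(R_A)\le\mu(N)-\mu(\{(\gamma^-,\gamma^+)\})$ is exactly right), and it has the virtue that $R_A,R_B$ are honest elements of $T_2(\mathcal{G})$, whereas the paper's boxes have a corner at the excluded fixed point and so implicitly rely on the measure interpretation. The paper's route is shorter; yours is slightly more fastidious about staying inside $\mathcal{G}^\circ$.
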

\begin{proof}
    Since $\gamma$ preserves each of these rays, and must send flat sections to flat sections, we get a period for each ray, but it isn't obvious that these four periods coincide.
    We illustrate why sections over $R_N$ and $R_E$ are shifted the same amount. Let 
    \[p = s_{\gamma^-, x; y} \cup s_{x;y,\gamma^+}\]
    be a two segment taxi path in $\mathcal{G}^\circ$ connecting $R_N$ to $R_E$, (see Figure \ref{period well defined}). Choose a flat section $\sigma$ over $p$. The translate $\gamma \sigma$ will be a flat section over $\gamma p$.
    The holonomy around the figure-8 taxi-cycle obtained by joining $p$ and $\gamma p$ with a segment of $R_N$ and a segment of $R_E$ will be the difference in the periods of $\gamma$ measured over $R_N$ and $R_E$, as one sees by using $\sigma$, and $\gamma \sigma$ to make a lift.
    This holonomy coincides with the difference in measure of two boxes with respect to the curvature of $P$
    \[\mu_P([\gamma ^-, \gamma  x]\times [\gamma ^+, \gamma  y]) - \mu_P([\gamma ^-, x]\times [\gamma ^+, y])\]
    which must vanish by $\Gamma$-invariance of the measure.
\end{proof}
\begin{figure}[h]
\label{period well defined}
    \centering
    \includegraphics[width=6cm]{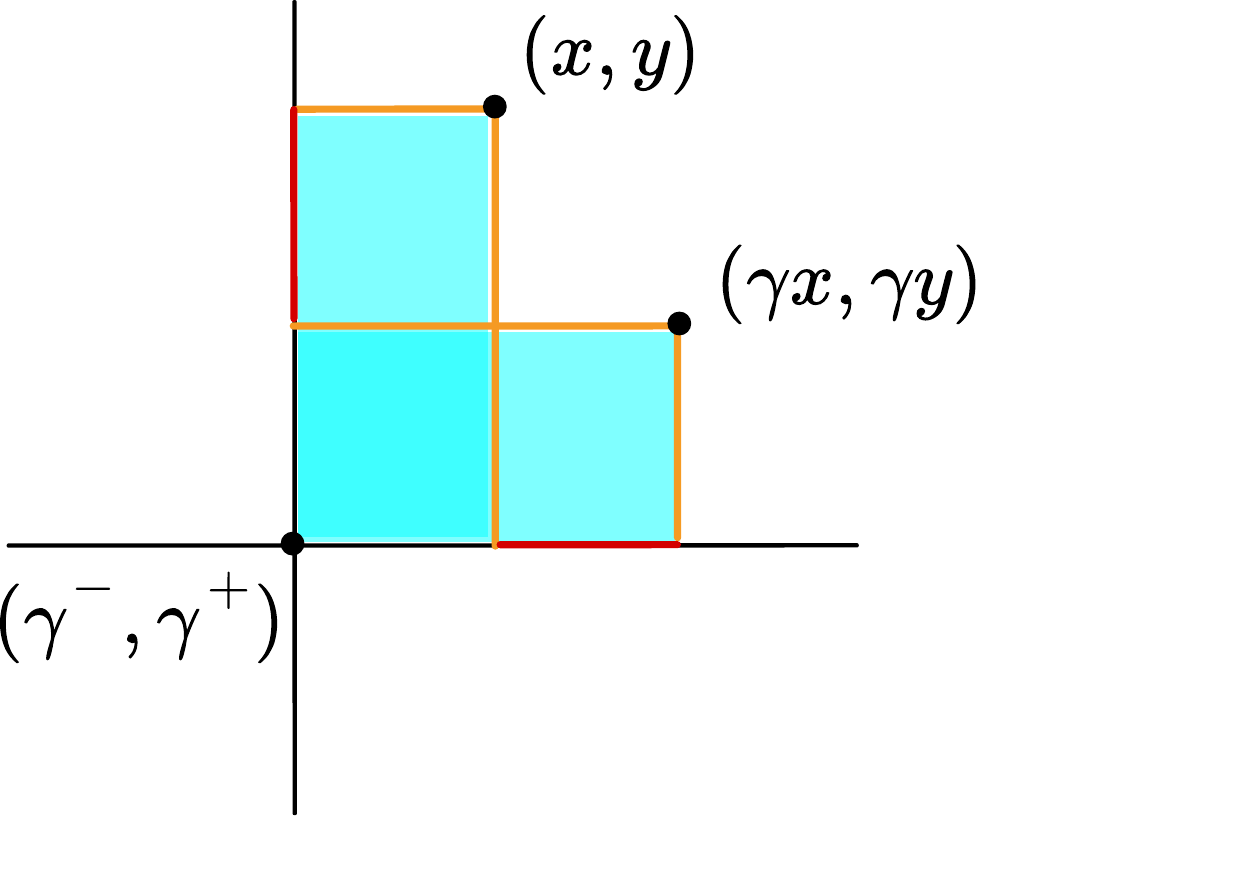}
    \caption{Comparing the two paths connecting $R_N$ to $R_E$}
    
\end{figure}

\subsection{Reduced length spectra and currents}
From an equivariant bundle with taxi connection $P$, one can extract two invariants: the period spectrum $l_P\in A^{[\Gamma]}$, and the curvature $\mu_P\in \mathcal{C}_0(\Gamma,A)$. In this section we derive formulas relating $l_P$ and $\mu_P$. One can change the periods of an equivariant $\R$ bundle without changing its curvature by modifying the $\Gamma$ action by an element of $\Hom(\Gamma,\R)$, motivating the following definition:
\begin{definition}
The reduced period spectrum of $P\in \mathcal{A}(S,A)$ is the image of $l_P$ in $A^{[\Gamma]}/\Hom(\Gamma,A)$. 
\end{definition}
We will use the well known formula \ref{hilbert length box} expressing $l(\alpha) + l(\alpha^{-1})$ as a cross ratio, along with a seemingly new formula \ref{abc lemma} expressing $l_P(\alpha) + l_P(\beta) - l_P(\alpha\beta)$ as a cross ratio, to show that $P\mapsto l_P$ descends to an injection from nullhomologous currents to $A^{[\Gamma]}/\Hom(\Gamma,A)$. It will follow that $P\mapsto l_P$ is also injective, because both vertical arrows in the commutative square
\[
\begin{tikzcd}
    \mathcal{A}(S,A) \arrow{r}\arrow{d}  & A^{[\Gamma]}\arrow{d} \\
    \mathcal{C}_0(\Gamma, A) \arrow{r} & A^{[\Gamma]}/\Hom(\Gamma,A)
\end{tikzcd}
\]
are quotients by $\Hom(\Gamma,A)$.
There is a general strategy for producing formulas relating periods and holonomy which we give now, though we will really only use lemmas \ref{hilbert length box} and \ref{abc lemma}.

\begin{lemma}
\label{relation vector loop}
    Suppose $\gamma_k \cdots \gamma_1 = e$ is a relation in $\Gamma$. Then then there is a taxi loop $r$ in $\mathcal{G}^\circ$ such that 
    \[\sum_{i=1}^k l_P(\gamma_i) = \hol_P(r)\]
    for any equivariant $\R$-bundle with taxi-connection $P$ on $\mathcal{G}^\circ$.
\end{lemma}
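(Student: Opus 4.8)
The plan is to build the taxi loop $r$ by concatenating, for each factor $\gamma_i$, a carefully chosen two-segment "detour" path of the type appearing in the proof of Lemma \ref{period}, and to track how the group relation $\gamma_k\cdots\gamma_1 = e$ forces the concatenation to close up. First I would fix a basepoint $q_0\in\mathcal{G}^\circ$ and a flat section germ there. For each $i$, set $q_i := (\gamma_i\cdots\gamma_1)\cdot q_0$, so that $q_k = q_0$ by the relation. The idea is to connect $q_{i-1}$ to $q_i$ by a taxi path $p_i$ in $\mathcal{G}^\circ$, chosen so that the first leg of $p_i$ runs toward a ray emanating from the fixed point $(\gamma_i^-,\gamma_i^+)$ of the conjugate $\gamma_i' := (\gamma_{i-1}\cdots\gamma_1)\gamma_i(\gamma_{i-1}\cdots\gamma_1)^{-1}$ — equivalently, $q_{i-1}$ should be positioned on one of the four rays $R_N,R_S,R_E,R_W$ of $\gamma_i'$ and $q_i$ on another. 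Then $r := p_k + \cdots + p_1$ is a taxi loop in $\mathcal{G}^\circ$ since the endpoints telescope.

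The key computation is then $\hol_P(r) = \sum_i \hol_P(p_i)$, and I would show each $\hol_P(p_i) = l_P(\gamma_i)$. Here is where Lemma \ref{period} does the work: parallel transport of a flat section from $q_{i-1}$ along $p_i$ lands in the fiber over $q_i = \gamma_i'\cdot q_{i-1}$; since $q_{i-1}$ and $q_i$ both lie on rays of $\gamma_i'$ and $q_i$ is the $\gamma_i'$-translate of $q_{i-1}$, the discrepancy between the transported section and the $\Gamma$-translate of the section is exactly the period $l_P(\gamma_i') = l_P(\gamma_i)$ (periods depend only on the conjugacy class, which I should invoke explicitly). More carefully, I would pick a flat section $\sigma_i$ along $p_i$; then $\gamma_i'\sigma_i$ is a flat section along $\gamma_i' p_i$, and the two agree up to the scalar $l_P(\gamma_i)$ on the overlap with the relevant ray of $\gamma_i'$. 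Summing the telescoping identities $\gamma_i'\cdot(\text{germ at }q_{i-1}) = l_P(\gamma_i)\cdot(\text{germ at }q_i)$ and using that $\gamma_k'\cdots\gamma_1' $ acts as the identity (the relation again, now in the conjugated form) yields $\prod_i l_P(\gamma_i)$ on one side and $\hol_P(r)$ on the other — in additive notation, the stated formula.

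The main obstacle I anticipate is purely combinatorial-topological: arranging the basepoints $q_i$ and the detour paths $p_i$ so that every segment used genuinely lies in $\mathcal{G}^\circ$ (avoiding all fixed points of $\Gamma$, not just those of the $\gamma_i'$), so that consecutive paths actually share the prescribed endpoint, and so that the resulting loop $r$ is a legitimate element of $Z_1(\mathcal{G})$. The fixed-point set is countable, so generic choices work, but one must check that the four rays $R_N,R_S,R_E,R_W$ of a given $\gamma_i'$ actually contain points usable as $q_{i-1},q_i$ compatibly with the previous step — this may require allowing extra "filler" segments between $p_i$'s, which contribute zero holonomy by the same $\Gamma$-invariance-of-curvature argument used at the end of the proof of Lemma \ref{period} (the small-rectangle correction terms vanish). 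I would state the construction for a convenient normal form of the relation and remark that the choice of detours does not affect $\hol_P(r)$ since any two taxi loops in $\mathcal{G}^\circ$ with the same endpoints and the same net winding differ by boundaries of rectangles, on which $\hol_P$ is the (invariant, hence here irrelevant) curvature.
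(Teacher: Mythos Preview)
Your strategy matches the paper's: set $q_i=(\gamma_i\cdots\gamma_1)q_0$, join $q_{i-1}$ to $q_i$ by a taxi path that touches a ray of the group element carrying one to the other, and read off the period as the jump in a piecewise-flat lift of the concatenated loop. The one slip is the conjugation. Since $q_i=\gamma_i q_{i-1}$ directly, the relevant element is $\gamma_i$ itself, not your $\gamma_i'=(\gamma_{i-1}\cdots\gamma_1)\gamma_i(\gamma_{i-1}\cdots\gamma_1)^{-1}$; with your $\gamma_i'$ the product $\gamma_k'\cdots\gamma_1'$ is \emph{not} the identity in general, so the telescoping step you invoke fails as written. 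Drop the conjugates and everything goes through.

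For comparison, the paper makes the connecting path explicit as the three-segment
\[(x_{i-1},y_{i-1})\to(x_{i-1},\gamma_i^+)\to(x_i,\gamma_i^+)\to(x_i,y_i),\]
the jump by $l_P(\gamma_i)$ occurring on the middle horizontal segment, which lies on a ray of $\gamma_i$ in the sense of Lemma~\ref{period}. Choosing the initial $q_0=(x,y)$ with neither coordinate fixed by any nontrivial element of $\Gamma$ keeps every segment in $\mathcal{G}^\circ$, so no filler segments or small-rectangle corrections are needed.
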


\begin{proof}
    Choose $g=(x,y)\in \mathcal{G}^\circ$ with neither $x $ or $y$ fixed by any element of $\Gamma$. 
    Choose a lift $\tilde{g}$ to the $\R$-bundle $P$. 
    Let $g_i = \gamma_i\cdot\gamma_{i-1}\cdots\gamma_1 g$, and $\tilde{g}_i = \gamma_i\cdot\gamma_{i-1}\cdots\gamma_1 \tilde{g}$ for $i=0,...,k$.
    Let $(x_i,y_i) = g_i$.
    For $i=1,...,k$, let $r_i$ be the following concatenation of segments in $\mathcal{G}^\circ$.
    \[(x_{i-1},y_{i-1})\to (x_{i-1}, \gamma_i^+) \to (x_i, \gamma_i^+) \to (x_i, y_i)\]
    We next construct a discontinuous piecewise section $\tilde{r}_i$ of $P$ over $r_i$ with endpoints $\tilde{g}_i$ and $\tilde{g}_{i+1}$. Over $(x_{i-1},y_{i-1})\to (x_{i-1}, \gamma_i^+)$ use the flat section $s$ starting at $\tilde{g}_{i-1}$, extend continuously over $(x_{i-1}, \gamma_i^+) \to (x_i, \gamma_i^+)$, but then use the section $\gamma_i (s^{-1})$ over $(x_i, \gamma_i^+) \to (x_i, y_i)$. The discontinuity of $\tilde{r}_i$ is exactly the period $l_P(\gamma_i)$. 

    Concatenating the sections $\tilde{r}_i$ we obtain a piecewise flat section over the closed loop $r$ in $\mathcal{G}^\circ$ with total discontinuity 
    \[\sum_{i=1}^k l_P(\gamma_i)\]
    which must coincide with the holonomy of the taxi cycle $-r$.
    
\end{proof}

In the proof, we chose $x$, and $y$ not to be fixed points to guarentee that the resulting cycle $r$ lay in $\mathcal{G}^\circ$. In practice, we may choose any starting point we like as long as the resulting cycle lies in $\mathcal{G}^\circ$. Applying the construction of lemma \ref{relation vector loop} to the simplest relation $\gamma ^{-1}\gamma  = e$, recovers the following well known fact.

\begin{figure}[h]
\label{width cross ratio}
    \centering
\includegraphics[width=10cm]{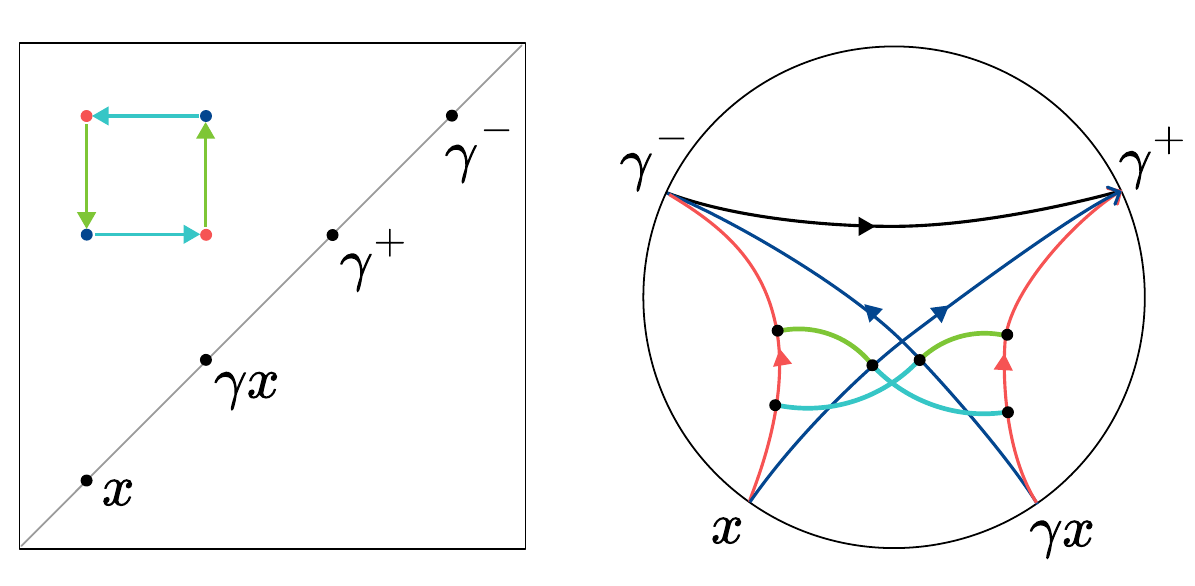}
    \caption{The loop in $P$, drawn in $\mathcal{G}$ and in $\tilde{S}$, which proves Lemma \ref{width cross ratio}}
    
\end{figure}
\begin{lemma}
\label{hilbert length box} $l(\gamma ) + l(\gamma ^{-1}) = h(x,\gamma x;\gamma ^+,\gamma ^-)$ for any nontrivial $\gamma \in \Gamma$, and any $x\in \del\Gamma^\circ$. 
\end{lemma}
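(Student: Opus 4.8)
The plan is to apply the general construction of Lemma \ref{relation vector loop} to the trivial relation $\gamma^{-1}\gamma = e$ and then identify the resulting holonomy with the value of $h$ on a specific taxi-cycle that happens to bound a rectangle. So first I would run the recipe: pick a basepoint $g = (x,y) \in \mathcal{G}^\circ$ (with $x$ not a fixed point; but here we want $y = \gamma^-$, which is legitimate because the construction only requires the assembled loop $r$ to lie in $\mathcal{G}^\circ$, and the rays $R_S, R_W$ through $(\gamma^-,\gamma^+)$ consist of non-fixed points), lift to $\tilde g \in P$, and track the two group elements $\gamma_1 = \gamma$ and $\gamma_2 = \gamma^{-1}$. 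The orbit points are $g_0 = (x,\gamma^-)$, $g_1 = \gamma g_0 = (\gamma x, \gamma^+)$ — using $\gamma \cdot \gamma^- = \gamma^-$ is wrong, rather I should take the basepoint so that the second coordinate is fixed by $\gamma$: set $g = (x,\gamma^+)$, so $g_0 = (x,\gamma^+)$, $g_1 = (\gamma x, \gamma^+)$, $g_2 = g_0$. The segments $r_1$ run $(x,\gamma^+) \to (x,\gamma^+) \to (\gamma x,\gamma^+) \to (\gamma x,\gamma^+)$, i.e. just the horizontal segment $s_{x,\gamma x;\gamma^+}$, and $r_2$ runs from $(\gamma x,\gamma^+)$ back via the detour through $\gamma^- = (\gamma^{-1})^+$: $(\gamma x,\gamma^+) \to (\gamma x,\gamma^-) \to (x,\gamma^-) \to (x,\gamma^+)$.

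Next I would read off the holonomy. By Lemma \ref{relation vector loop}, $l_P(\gamma) + l_P(\gamma^{-1}) = \hol_P(r)$ where $r = r_1 + r_2$ (up to sign/orientation, which I would fix to match the stated formula). Writing $r$ out as a sum of segments, $r = s_{x,\gamma x;\gamma^+} + s_{\gamma x;\gamma^+,\gamma^-} + s_{\gamma x,x;\gamma^-} + s_{x;\gamma^-,\gamma^+}$, which is exactly the taxi-cycle $[x,\gamma x;\gamma^+,\gamma^-]$ from the definition preceding Lemma \ref{positive cocycles are measures}. Since the four points $x, \gamma x, \gamma^+, \gamma^-$ are cyclically ordered (for $\gamma$ acting with north-south dynamics, $x < \gamma x < \gamma^+ < \gamma^-$ in the relevant cyclic order, as $\gamma$ pushes everything toward $\gamma^+$), this taxi-cycle is the boundary $\del r_{x,\gamma x;\gamma^+,\gamma^-}$ of an honest rectangle, so $h(r) = h([x,\gamma x;\gamma^+,\gamma^-])$ equals the curvature measure $\mu_P$ of that rectangle, which is finite and well-defined because the rectangle's boundary avoids all fixed points. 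This gives $l(\gamma) + l(\gamma^{-1}) = h(x,\gamma x;\gamma^+,\gamma^-)$ as claimed.

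The one genuine subtlety — the step I'd expect to be the main obstacle — is bookkeeping the detour rectangles and orientations carefully enough to see that nothing spurious is contributed and that the sign comes out right. In Lemma \ref{relation vector loop} the construction uses segments landing on rays emanating from the fixed point $(\gamma_i^-,\gamma_i^+)$, and here both $\gamma$ and $\gamma^{-1}$ have the same axis $\{\gamma^+,\gamma^-\}$, so the two detours interact; one must check that the piecewise section assembled over $r_1$ followed by $r_2$ genuinely closes up with total discontinuity $l_P(\gamma) + l_P(\gamma^{-1})$ and no leftover term from the rays $R_E$ (for $\gamma$) versus $R_W$ (for $\gamma^{-1}$) — but this is precisely the content of Lemma \ref{period}, which says the period is the same measured over any of the four rays, so the matching is automatic. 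I would also note explicitly that $x \in \del\Gamma^\circ$ suffices (rather than requiring $\gamma x$ or the axis endpoints to be non-fixed) because $\gamma x$ is automatically in $\del\Gamma^\circ$ when $x$ is, and the segments of $r$ meeting the axis lie along the rays $R_N, R_S, R_E, R_W$ which are contained in $\mathcal{G}^\circ$ by construction. The picture referenced in Figure \ref{width cross ratio} is exactly this loop, drawn both in $\mathcal{G}$ and in $\tilde S$.
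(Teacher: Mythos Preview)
Your proposal is correct and follows essentially the same route as the paper: apply the construction of Lemma~\ref{relation vector loop} to the relation $\gamma^{-1}\gamma = e$ with starting point $g=(x,\gamma^+)$, then recognize the resulting taxi-loop as the cross-ratio cycle $[x,\gamma x;\gamma^+,\gamma^-]$. The paper's own proof is quite terse (it essentially says ``use $g=(x,\gamma^+)$ and look at the figure''), and you have filled in the explicit segment-by-segment computation that the paper leaves to the picture; your discussion of the sign/orientation bookkeeping and of why the loop stays in $\mathcal{G}^\circ$ is more detailed than what the paper provides.
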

\begin{proof}
For the starting point use $g=(x,\gamma ^+)$, and apply the strategy of Lemma \ref{relation vector loop}. Figure \ref{width cross ratio} shows the resulting taxi-path in $\mathcal{G}^\circ$. Identifying $P$ with the unit tangent bundle of $\tilde{S}$ for a hyperbolic metric, we also depict the projection to $\tilde{S}$ of the piecewise flat lift of this path to $P$, which is a union of horocyclic segments. The discontinuities of this lift are seen to be $l(\gamma)$ and $l(\gamma^{-1})$
\end{proof} 

Now we apply the strategy from lemma \ref{relation vector loop} to the relation $ab(ab)^{-1}$ to get a loop whose holonomy is $l(a) + l(b) - l(ab)$, but we deviate slightly from the algorithm so that this loop is just a single cross ratio. 

\begin{lemma}
\label{abc lemma}
    For any $a,b\in \Gamma$ we have the following relation between periods and cross ratios.
    \[l(a) + l(b) - l(ba) = h((ba)^-, b^-; a^+,b\cdot a^+)\]
\end{lemma}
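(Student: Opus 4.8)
The plan is to apply the general machinery of Lemma \ref{relation vector loop} to the trivial relation $(ba)^{-1}\cdot b\cdot a = e$, but — as the statement indicates — to take a shortcut so that the resulting taxi-loop is exactly the boundary of a single box, hence its holonomy is a single cross ratio rather than a concatenation of four segments per group element. Concretely, I would choose the starting point $g = (a^+, b\cdot a^+)$, or rather track carefully the three points $g_0 = g$, $g_1 = a\cdot g$, $g_2 = ba\cdot g$, and the corresponding lifts $\tilde g_0, \tilde g_1, \tilde g_2$ to $P$. The key observation making the shortcut work is that $a$ fixes $a^+$ and $ba$ fixes $(ba)^+$, so the rays $R_E$ (for $a$, at $a^+$) and the analogous ray for $ba$ can be used directly: translating along a ray emanating from a fixed point shifts a flat section by exactly the period, by Lemma \ref{period}. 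So instead of routing each $r_i$ through an auxiliary corner at $\gamma_i^+$, I can let the intermediate vertices land on these fixed-point rays and let the period jumps occur there.

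The steps, in order: (1) Write down the taxi-loop $r$ explicitly as the boundary of the box with corners among $(ba)^-, b^-$ in the first coordinate and $a^+, b\cdot a^+$ in the second coordinate, checking that this box has boundary in $\mathcal{G}^\circ$ (this needs $(ba)^- \neq b\cdot a^+$ and $b^- \neq a^+$, which hold generically, and the degenerate cases can be handled by the remark after Lemma \ref{relation vector loop} allowing a different but homologous choice). (2) Build the piecewise-flat section of $P$ over $r$ following the recipe of Lemma \ref{relation vector loop}: start with a flat section based at $\tilde g_0$, transport it along the first two edges, apply $a$ to jump by $l_P(a)$, continue, then apply $b$ to jump by $l_P(b)$, and finally close up — the closing jump is governed by $ba$ and contributes $-l_P(ba)$ because $\tilde g_2 = ba\cdot \tilde g_0$ must match the start after undoing the $(ba)$-translation. (3) Conclude that the total discontinuity $l_P(a) + l_P(b) - l_P(ba)$ equals $\hol_P(r) = h_P(\partial r)$, and identify $\partial r$ with the cross-ratio element $[(ba)^-, b^-; a^+, b\cdot a^+] \in T_1(\mathcal{G})$, so that $h((ba)^-, b^-; a^+, b\cdot a^+)$ is exactly this holonomy.

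The main obstacle I anticipate is the bookkeeping in step (2): getting the orientations and the order of the edges right so that the three period jumps come out as $+l_P(a), +l_P(b), -l_P(ba)$ rather than some other signed combination, and simultaneously verifying that the underlying loop really is (a reparametrization of) the boundary of the single box with the claimed corners — in particular that the second coordinate genuinely runs between $a^+$ and $b\cdot a^+ = (ba)(a^{-1})\cdot(a^+)$ is consistent with where $a$ and $ba$ send the reference geodesic. This is precisely the ``deviation from the algorithm'' the text warns about: one must check that replacing the canonical detour-through-$\gamma_i^+$ construction by the direct box boundary does not change the holonomy, which follows from $\Gamma$-invariance of the curvature measure exactly as in the proof of Lemma \ref{period} (the extra regions between the naive loop and the box loop are fixed-point rays or contribute zero measure). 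Once the signs and the box identification are pinned down, the formula drops out, and the case $b = 1$ or $a$ trivial should reduce to Lemma \ref{hilbert length box} as a consistency check.
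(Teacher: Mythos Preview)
Your proposal is essentially the paper's own argument: apply the recipe of Lemma~\ref{relation vector loop} to the relation $(ba)^{-1}ba=e$, but route the taxi-loop so that it is exactly the box $[(ba)^-,b^-]\times[a^+,b\cdot a^+]$, then read off the three period jumps $l(a)$, $l(b)$, $-l(ba)$ from a piecewise-flat lift. The paper carries this out by starting at $(c^-,a^+)$ with $c=ba$, subdividing one edge at $(ac^-,a^+)$, and writing down five explicit flat pieces $\tilde s_1,\dots,\tilde s_5$ whose jumps occur at $(ac^-,a^+)$, $(b^-,ba^+)$, and $(c^-,ba^+)$.

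The one place where you underestimate the work is the verification that the box boundary lies in $\mathcal{G}^\circ$. It is not enough to check that the four corners avoid the diagonal; each of the four segments has one coordinate pinned at a fixed point of $a$, $b$, $ba$, or $bab^{-1}$, so one must rule out that the moving coordinate passes through the companion fixed point of any of these four elements. The paper handles this not by a genericity-plus-perturbation argument (which would not suffice, since the lemma asserts a specific cross ratio for \emph{all} $a,b$) but by a case analysis: there are three topological configurations for the axes of $a$ and $b$ (two pair-of-pants cases and a one-holed torus case), and in each one the relevant fixed points are located on the boundary circle and inspected directly. Your sketch is otherwise accurate, including the anticipated sign bookkeeping.
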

\begin{proof}
    The right hand side is by definition the holonomy around a cycle with four segments. We will check that that each of these segments is in $\mathcal{G}^\circ$ so that this is well defined, but first let us complete the proof assuming this. Let $c=ba$. We subdivide so that the cycle is written with five segments.
    \[(c^-, a^+) \overset{s_1}{\longrightarrow} 
    (ac^-,a^+) \overset{s_2}{\longrightarrow}  
    (b^-, a^+) \overset{s_3}{\longrightarrow}  
    (b^-, ba^+) \overset{s_4}{\longrightarrow}  
    (c^-, ba^+) \overset{s_5}{\longrightarrow}  
    (c^-, a^+) \]
    Choose a point $p\in P$ in the fiber over $(c^-,a^+)$. 
    Let $\tilde{s}_1$ be the flat section of $P$ over $s_1$ starting at $p$. 
    Let $\tilde{s}_2$ be the flat section over $s_2$ starting at $ap$. 
    Let $\tilde{s}_3$ be the flat section over $s_3$ which agrees with $\tilde{s}_2$ at $(b^-,a^+)$. 
    Let $\tilde{s}_4 = b \tilde{s}_2$, and note that its endpoint over $(c^-,ba^+)$ is $cp$. 
    Let $\tilde{s}_5$ be the flat lift of $s_5$ ending at $p$. The lift \[\tilde{s}_1 + \tilde{s}_2 + \tilde{s}_3 + \tilde{s}_4 + \tilde{s}_5\] has jumps by $l(a)$, $l(b)$ and $-l(c)$ at $(ac^-,a^+)$, $(b^-, ba^+)$, and $(c^-,ba^+)$ respectively, so we have
    \[h((ba)^-, b^-;b\cdot a^+, a^+) + l(a) + l(b) - l(c) = 0\]
    which is equivalent to the statement of the lemma.
    
\begin{figure}[h]
    \centering
\includegraphics[width=10cm]{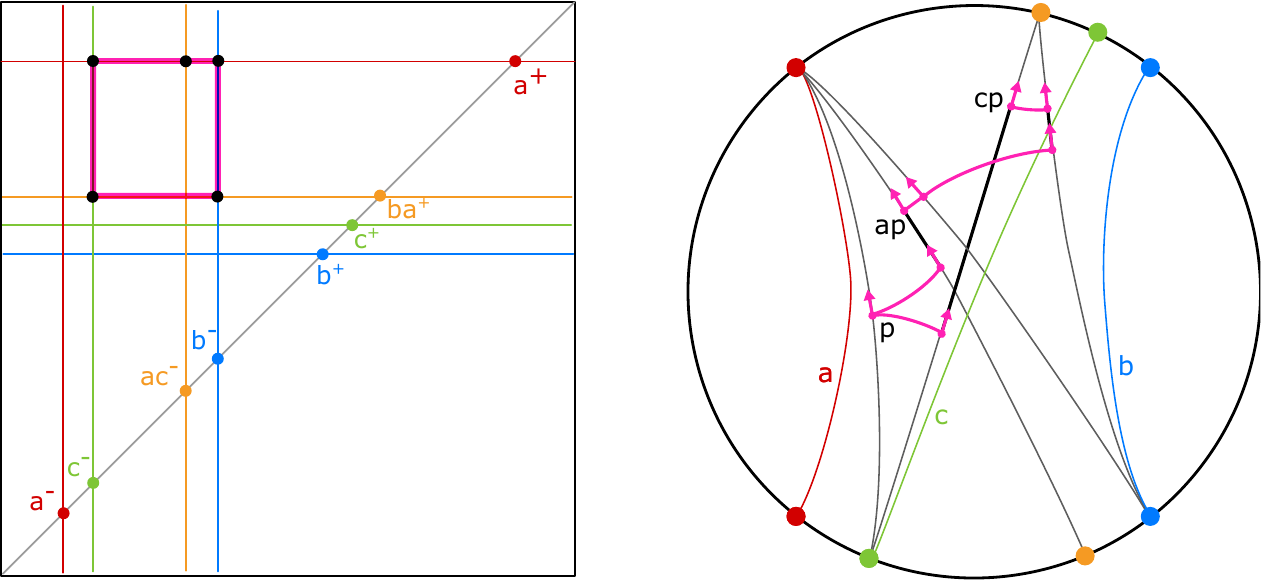}
    \caption{The geodesics and horocyclic segments used in the proof of Lemma \ref{abc lemma}}
    
\end{figure}

Now we return to the issue of showing that the segments invloved in \linebreak $h((ba)^-, b^-; ba^+,a^+)$ are all in $\mathcal{G}^\circ$. There are three topological possibilities for the relative position of the fixed points of $a$ and $b$.

We need to also know where the fixed points of $ba$ and $bab^{-1}$ are on the circle in each of these scenarios. (Note that $ba^+ = (bab^{-1})^+$.) One way to locate these fixed points is to draw the curves on the infinite volume surface $\langle a, b\rangle \backslash\tilde{S}$, which will either be a three-holed sphere, or a one-holed torus.
\begin{figure}[h]
\label{quotient surfaces}
    \centering
    \includegraphics[width=12cm]{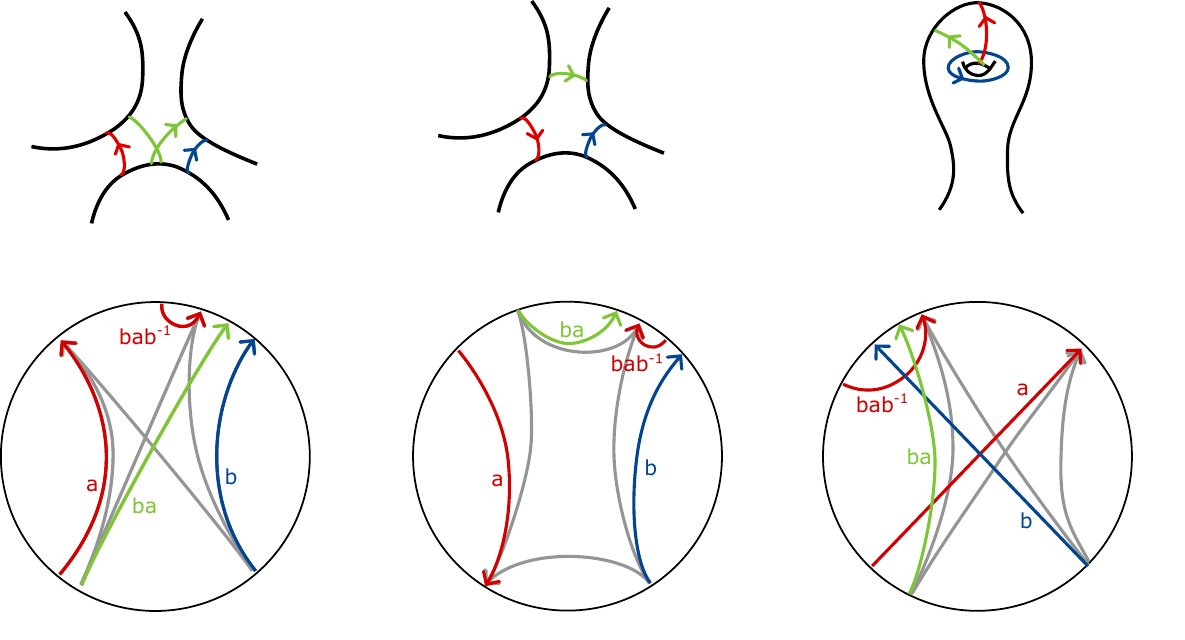}
    \caption{The three possible relative positions for axis of $a$ and $b$}
    
\end{figure}
In Figure \ref{quotient surfaces}, the grey geodesics are the corners of the taxi-path which we hope lies in $\mathcal{G}^\circ$. Since this taxi-path has one coordinate being a fixed point of of $a,b,ba$, or $bab^{-1}$ at all times, the only fixed points it could run into are the four geodesics in the picture. One inspects the picture to make sure this doesn't happen.
\end{proof}

Lemma \ref{abc lemma} lets us express any period as an integral combination of holonomies, and periods of a generating set.
\begin{lemma}
\label{holonomy determines spectrum}
    For any equivariant $A$ bundle with connection on $\mathcal{G}^\circ$, holonomy determines reduced period spectrum.
\end{lemma}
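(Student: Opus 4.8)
The plan is to read the statement off directly from the cross-ratio identities of Lemma~\ref{hilbert length box} and Lemma~\ref{abc lemma}, which were set up precisely for this purpose. I would rephrase ``holonomy determines the reduced period spectrum'' as follows: if $P,P'\in\mathcal{A}(S,A)$ are equivariant $A$-bundles with taxi-connection whose holonomy functions agree, say $h:=h_P=h_{P'}$, then $f:=l_P-l_{P'}\in A^{[\Gamma]}$ lies in $\Hom(\Gamma,A)$, so that $[l_P]=[l_{P'}]$ in $A^{[\Gamma]}/\Hom(\Gamma,A)$.

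First I would record that for arbitrary $a,b\in\Gamma$ the taxi-cycle $[(ba)^-,b^-;a^+,b\cdot a^+]$ lies over $\mathcal{G}^\circ$ --- this is exactly the verification already carried out at the end of the proof of Lemma~\ref{abc lemma} using the three possible relative positions of the axes of $a$ and $b$ --- so $h$ is defined on it. Applying Lemma~\ref{abc lemma} to $P$ and to $P'$ and subtracting, the holonomy terms cancel and one is left with $f(a)+f(b)-f(ba)=0$, i.e. $f(ba)=f(a)+f(b)$, for all $a,b\in\Gamma$. Since $A$ is abelian this is precisely the statement that $f$ is a homomorphism $\Gamma\to A$; it is a well-defined function of conjugacy classes because $l_P$ and $l_{P'}$ are (Lemma~\ref{period}), and Lemma~\ref{hilbert length box} gives $f(a)+f(a^{-1})=0$, consistent with $f(e)=0$. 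Hence $f\in\Hom(\Gamma,A)$ and the reduced period spectra of $P$ and $P'$ coincide.

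If a more constructive formulation is wanted, I would instead fix generators $\gamma_1,\dots,\gamma_{2g}$ of $\Gamma$ and argue by induction on word length: Lemma~\ref{abc lemma} in the form $l_P(\gamma w)=l_P(w)+l_P(\gamma)-h\bigl((\gamma w)^-,\gamma^-;w^+,\gamma w^+\bigr)$, together with $l_P(\gamma^{-1})=h(x,\gamma x;\gamma^+,\gamma^-)-l_P(\gamma)$ from Lemma~\ref{hilbert length box}, expresses $l_P$ on every element of $\Gamma$ in terms of $h$ and the $2g$ numbers $l_P(\gamma_i)$; the residual dependence on these $2g$ numbers is the ambiguity of an element of $\Hom(\Gamma,A)$, which is quotiented out in the reduced spectrum. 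I do not expect a genuine obstacle here: all the geometric content --- in particular that the cross-ratio loops avoid every fixed point of $\Gamma$ and hence stay in $\mathcal{G}^\circ$ --- has already been handled in the proof of Lemma~\ref{abc lemma}, and what remains is the purely formal point that a difference of spectra which is additive for \emph{every} pair $a,b$ (not merely commuting ones) is automatically a homomorphism, hence factors through $H_1(S)$.
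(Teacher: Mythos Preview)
Your proposal is correct, and your second (constructive) formulation is exactly the paper's proof: induct on word length in a fixed generating set, using Lemma~\ref{abc lemma} for $w'=a_i w$ and Lemma~\ref{hilbert length box} to convert $l(a_i^{-1})$ into $-l(a_i)$ plus a holonomy. Your first formulation---subtracting the two instances of Lemma~\ref{abc lemma} to see directly that $f=l_P-l_{P'}$ is additive, hence a homomorphism---is a clean repackaging of the same argument, resting on the same two lemmas.
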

\begin{proof}
    Working with reduced length spectra is the same as choosing a standard generating set $a_1,...,a_{2g}\in \Gamma$ and working with ordinary length spectra which satisfy $l(a_i) = l_i$ for some fixed $l_1,...,l_{2g}\in A$. 

    Suppose for induction that we can express every word in $a_1,...,a_{2g},a_1^{-1},...,a_{2g}^{-1}$ of length $k$ as a sum of $l_i$, and holonomies of taxi cycles in $\mathcal{G}^\circ$. Let $w'$ be a word of length $k+1$. Suppose $w'=a_i w$. By lemma \ref{abc lemma} we have
    \[l(w') = l(a_i) + l(w) + h((a_iw)^-, a_i^-; w^+,a_i\cdot w^+)\]
    Now suppose $w' = a_i^{-1}w$. We can first use lemma $\ref{abc lemma}$
    \[l(w') = l(a_i^{-1}) + l(w) + h((a_i^{-1}w)^-, a_i^+; w^+,a_i^{-1}\cdot w^+)\]
    then choose any $x\in \del\Gamma^\circ$ not fixed by $a_i$ and use lemma \ref{hilbert length box} 
    \[l(w') = -l(a_i) + l(w) + h(x,a_ix;a_i^-,a_i^+) + h((a_i^{-1}w)^-, a_i^+; a_i^{-1}w^+,w^+)\]
\end{proof}

Note that we didn't need positivity of curvature to show that holonomy determines reduced length spectrum. On the other hand, to show that reduced length spectrum determines holonomy, we will make use of positivity. By the dynamics of $\SL_2\!\R$ acting on $\mathbb{RP}^1$ we have
\[\lim_{N\to \infty} (b^Na^N)^- = a^-\]
\[\lim_{N\to \infty} b^N(a^N)^+ = b^+\]
If the points $(a^-,a^+)$ and $(b^-,b^+)$ have zero measure with respect to the curvature of $h$, then $h(x,x';y,y')$ is continuous at $(x,x',y,y')=(a^-,b^-,a^+,a^+)$, so we have
\[\lim_{N\to \infty} l(a^N) + l(b^N) - l(b^Na^N) = h(a^-,b^-;a^+,b^+).\] 
When the taxi-path is the perimeter of a box (i.e. the axes of $\alpha$ and $\beta$ either cross or point the same way), this limit will give the measure of the interior of the box.

Fixed points of group elements are dense in $\mathcal{G}$, so knowing a measure on rectangles of the form $(a^-,b^-)\times (b^+,a^+)$ determines the measure. Curvature is thus determined by reduced length spectrum. 

A positive, equivariant bundle is determined by its curvature up to changing the equivariance, but the period spectrum will clearly fix the equivariance, so we have shown that the period spectrum map 
$\mathcal{A}(S,A)\to A^{[\Gamma]}$ is injective.


\section{Rank $n$ and tropical rank $n$ holonomy functions}
In this section we associate to an $\R^*$ bundle with taxi connection to a Hitchin representation, and we call holonomy functions of such bundles rank $n$ holonomy functions. These are equivalent to Labourie's rank $n$ cross ratios. Next we investigate what kind of holonomy functions one gets as renormalized limits of logarithms of rank $n$ holonomy functions.
These are called tropical rank $n$ holonomy functions, and their curvatures are called tropical rank $n$ currents. 
We demonstrate that tropical rank $n$ currents have no self $n$-intersection. 

\subsection{$\R^*$ bundles with connection from Hitchin representations}
Let $V$ be a real vector space of dimension $n\geq 2$. Let $\rho:\Gamma \to \SL(V)$ be a representation in $\Hit^n(\Gamma)$. In particular, $\rho$ is \textit{projective Anosov} \cite{Labourie_anosov} meaning that we have continuous equivariant limit maps 
\[\xi:\del\Gamma\to \mathbb{P}(V)\]
\[\xi^*:\del\Gamma\to \mathbb{P}(V^*)\]
such that, for each $\gamma\in \Gamma$, $\xi(\gamma^+)$ is the eigenline of top eigenvalue of $\rho(\gamma)$, and $\xi^*(\gamma^+)$ is the eigenline of $\rho^*(\gamma)$ of top eigenvalue. Here, $\rho^*:\Gamma\to \SL(V^*)$ is the dual representation. It is sometimes helpful to think of $\mathbb{P}(V^*)$ as the set of hyperplanes in $\mathbb{P}(V)$. In geometric language, $\xi(\gamma)$ is the attracting fixed point of $\gamma$, and $\xi^*$ is the repelling hyperplane. The limit maps $\xi$, and $\xi^*$ are transverse in the sense that, $\xi^*(x)$ contains $\xi(y)$ if and only if $x=y$. Hitchin representations are precisely the projective Anosov representations such that $V=\xi(x_1) + ... + \xi(x_n)$ for any tuple of distinct points $x_1,...,x_n\in \del\Gamma$ \cite{Guichard08}. This property of $\xi$ is known as hyperconvexity.

Let $\Delta\subset \mathbb{P}(V^*)\times \mathbb{P}(V)$ be the set of pairs consisting of a hyperplane and a point in that hyperplane. The manifold $\mathbb{P}(V^*)\times \mathbb{P}(V) \backslash \Delta$ comes with structure which we can pull back via $\xi^*\times \xi$ to $\mathcal{G}$. Namely, $\mathbb{P}(V^*)\times \mathbb{P}(V) \backslash \Delta$ is the base of the principal $\R^*$ bundle
\[U := \{(\alpha,v)\in V^*\times V : \alpha(v)=1\}\]
which has a natural connection $\nabla$. The $\R^*$ action is $\lambda\cdot(\alpha,v) = (\lambda^{-1}\alpha,\lambda v)$. The connection $\nabla$ can be described by its property that the affine subspaces
\[\{(\alpha,v)\in U : \alpha = \alpha_0\}\;\;\;\alpha_0\in V^*\backslash\{0\}\]
\[\{(\alpha,v)\in U : v=v_0\}\;\;\;v_0\in V\backslash\{0\}\]
are flat sections over the fibers of the projections of $\mathbb{P}(V^*)\times \mathbb{P}(V) \backslash \Delta$ to $\mathbb{P}(V^*)$ and $\mathbb{P}(V)$ respectively. 
The curvature of this connection is a symplectic form for which the projections to $\mathbb{P}(V^*)$ and $\mathbb{P}(V)$ are Lagrangian fibrations. 
If $\gamma$ is a loop in $\mathbb{P}(V^*)\times\mathbb{P}(V) \backslash \Delta$ which visits the 4 points \[(\zeta^*_1,\zeta_1), (\zeta^*_1,\zeta_2), (\zeta^*_2,\zeta_2), (\zeta^*_2,\zeta_1)\in \mathbb{P}(V^*)\times \mathbb{P}(V)\] via four paths in each of which only one coordinate is changing, then the holonomy of $\nabla$ around $\gamma$ is a cross ratio.
\[\textrm{hol}_\nabla(\gamma) = \frac{\langle \zeta^*_1,\zeta_1\rangle\langle \zeta^*_2,\zeta_2\rangle}{\langle \zeta_1^*,\zeta_2\rangle\langle \zeta_2^*,\zeta_1\rangle}\]
Let $P_\rho$ be the pullback of $U$ by $\xi^*\times\xi$:
\[P_\rho := \{(\alpha,v)\in V^*\times V : \alpha(v)=1, [\alpha] \in \im(\xi^*), [v]\in \im(\xi)\}\]
Flat sections of $P_\rho$ over a segment $H$ are defined to be the sections which are contained in a flat section of $U$. 
For each $x\in \del\Gamma$, and $\alpha\in \xi^*(x)\backslash \{0\}$ there is a flat section $\{x\}\times \del\Gamma\backslash\{x\} \to U$ taking $(x,y)$ to $(\alpha, v)$ where $v$ is the element of $\xi(y)$ such that $\alpha(v)=1$. 
Flat sections over horizontal lines $\del\Gamma\backslash\{y\} \times \{y\}$ are similarly indexed by $\xi(y)\backslash \{0\}$. This construction $\rho\mapsto P_\rho$ defines a map $\Hit^n(S)\to \mathcal{A}(S,\R^*)$, which we can compose with to get other structures associated to representations.

\begin{tikzcd}
\Hit^n(S) \arrow[r] & \mathcal{A}(S,\R^*) \arrow[r]\arrow[d,"\log|\cdot|"] & \mathcal{H}(S,\R^*) \arrow[d,"\log|\cdot|"]  \\
 \: & \mathcal{A}(S) \arrow[r] & \mathcal{H}(S) \arrow[r]  & \mathcal{C}(S)
\end{tikzcd}

We refer to the holonomy functions and currents associated with $\rho$ by $H_\rho\in \mathcal{H}(S,\R^*)$, $h_\rho \in \mathcal{H}(S)$, and $\mu_\rho\in \mathcal{C}(S)$.

\begin{definition}
A \textbf{potential} for a holonomy function $H\in \mathcal{H}(S,A)$ is a function $M:\mathcal{G}^\circ \to A$ such that
\[H(x_1,y_1;x_2,y_2) = \frac{M(x_1,y_1)M(x_2,y_2)}{M(x_1,y_2)M(x_2,y_1)}.\]

\end{definition}
To obtain a potential for $H_\rho$, simply choose lifts of the limit maps $\tilde\xi:\del\Gamma\to V$, and $\tilde\xi^*:\del \Gamma\to V^*$, and define $M(x,y) = \langle \tilde{\xi}^*(x),\tilde{\xi}(y)\rangle$. Note that when $n$ is even there won't exist continuous lifts, but it isn't important here that the lifts be continuous. 
In fact, potentials are always of this form:

\begin{lemma}
\label{potentials come from sections}
    If $m$ is a potential for an $A$ bundle with taxi connection on $\mathcal{G}^\circ$, then $m = u - v$ where $v$ is flat along vertical segments and $u$ is flat along horizontal segments. The sections $v,u$ are determined by $m$ up to simultaneously shifting by a constant. 
\end{lemma}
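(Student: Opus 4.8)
The plan is to trivialize $P$, translate everything into (co)chain data on the complex $T_*(\mathcal{G})$, and build $u$ by integrating a carefully chosen closed $1$-cochain; Lemma \ref{homology} is exactly what controls whether that cochain is a coboundary. First I would pick a trivialization of $P$, which by Lemma \ref{bundles classified by holonomy} turns the taxi connection into a cochain $t_F\in T^1(\mathcal{G},A)$; write $\phi$ and $\psi$ for its values on horizontal and on vertical segments, and let $h_P\in\Hom(Z_1(\mathcal{G}),A)$ be the holonomy function, i.e. the restriction of $t_F$ to cycles. Writing the operation of $A$ additively, the hypothesis that $m$ is a potential for $P$ says precisely that for every cross-ratio cycle $[x,x';y,y']\in T_1(\mathcal{G})$ --- not only those that bound a rectangle ---
\[ h_P([x,x';y,y']) = m(x,y)+m(x',y')-m(x,y')-m(x',y). \]

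For existence, define $\eta\in T^1(\mathcal{G},A)$ by letting $\eta$ agree with $t_F$ on every horizontal segment and with $t_F+dm$ on every vertical segment, where $d m$ is the coboundary of $m$ viewed in $T^0(\mathcal{G},A)$. Evaluating $d\eta$ on a box $r_{x,x';y,y'}$ gives $h_P(\partial r_{x,x';y,y'})$ plus the mixed second difference $m(x,y')-m(x,y)-m(x',y')+m(x',y)$; since $\partial r_{x,x';y,y'}=[x,x';y,y']$, the potential identity makes this vanish, and as the boxes generate $T_2(\mathcal{G})$ we conclude $d\eta=0$. By Lemma \ref{homology} we have $H^2(T^*(\mathcal{G}),A)=0$ and $H^1(T^*(\mathcal{G}),A)\cong A$, the isomorphism being evaluation on a generating taxi loop $l$; hence the closed cochain $\eta$ is a coboundary if and only if its period around $l$ is zero. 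Taking $l=[x,x';y',y]$ with $x<y<x'<y'$ cyclically ordered, the period of $\eta$ around $l$ equals $h_P(l)$ plus $dm$ summed over the two vertical segments of $l$, and the potential identity applied to $l$ shows this sum is $0$. Therefore $\eta=du$ for some $u\colon\mathcal{G}^\circ\to A$, unique up to an additive constant; by construction $u$ is flat along every horizontal segment, and setting $v:=u-m$ gives $dv=\eta-dm=t_F$ on vertical segments, so $v$ is flat along every vertical segment and $m=u-v$.

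For uniqueness, suppose $m=u-v=u'-v'$ with $u,u'$ flat along horizontals and $v,v'$ flat along verticals. Then $u-u'=v-v'$; the left side has zero difference across every horizontal segment and the right side has zero difference across every vertical segment, so $u-u'=v-v'$ is constant on all of $\mathcal{G}^\circ$, because $\mathcal{G}^\circ$ is taxi-path connected (as noted in the proof of Lemma \ref{homology}). Hence $(u',v')=(u-c,v-c)$ for a single constant $c\in A$, which is the claimed rigidity.

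The step I expect to be the main obstacle is the global existence of $u$. Because $\mathcal{G}$ is only homotopy equivalent to a circle, the closed cochain $\eta$ need not be exact, and showing that its period around the generating loop vanishes is exactly the place where one must use the potential identity on a non-rectangular cross-ratio cycle (here $[x,x';y',y]$), rather than merely the fact that $m$ reproduces the curvature on rectangles. That verification needs a little care with cyclic orderings and signs; everything else is routine bookkeeping in $T_*(\mathcal{G})$.
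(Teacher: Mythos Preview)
Your proof is correct, and both the uniqueness argument and the key observation---that the potential identity must hold on \emph{all} cross-ratio cycles, including the non-rectangular one generating $H_1$---are exactly right.

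The paper's proof reaches the same conclusion by a slightly different, more conceptual route. Rather than trivializing $P$ and building $u$ as a primitive of a closed cochain, the paper constructs a \emph{model} bundle: on the trivial $A$-bundle define a taxi connection $F$ whose vertical flat sections are the constant functions and whose horizontal flat sections are the functions $m|_s + C$. On this model the decomposition is manifest, with $v=0$ and $u=m$. One then observes that any bundle $(P,H)$ for which $m$ is a potential must have the same holonomy as this model, hence (by Lemma~\ref{bundles classified by holonomy}) is isomorphic to it; transporting $v$ and $u$ along the isomorphism finishes the argument. Your period computation on the generating loop is precisely the content hidden in the paper's sentence ``will have the same holonomy'': one needs the potential identity on the non-rectangular cross-ratio loop, not just on rectangle boundaries, and you make that step explicit. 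Conversely, the paper's packaging avoids the explicit cochain $\eta$ and the appeal to $H^2=0$, at the cost of invoking Lemma~\ref{bundles classified by holonomy} a second time.
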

\begin{proof}
    Suppose $m=u-v=u'-v'$ are two expressions of $m$ as a difference of a horizontally flat section and a vertically flat section. Let $C_u = u' - u$ and let $C_v = v' - v$. Clearly $C_u$ is constant along horizontal segments and $C_v$ is constant along vertical segments. Also $C_u = C_v$ by rearranging $u-v=u'-v'$. This implies $C_u = C_v$ is a constant because $\mathcal{G}^\circ$ is taxi-path connected.

    Now for existence. Define a taxi connection $F$ on the trivial bundle $\underline{A}$ on $\mathcal{G}$ as follows. 
    If $s$ is a vertical segment, let $F(s)$ be the set of constant functions. 
    If $s$ is a horizontal segment, let $F(s)$ be functions of the form $m|_s + C$. We see that 
    \[\hol_{F} (x_1,x_2;y_1,y_2) = m(x_1,y_1) + m(x_2,y_2) - m(x_1,y_2) - m(x_2,y_1)\]
    so $m$ is a potential for $(\underline{A},F)$. The bundle $(\underline{A},F)$ comes with a vertically flat section $v = 0$, and a horizontally flat section, namely $u = m$, and the difference $u - m$ is clearly $m$. Any other $(P,H)$ for which $m$ is a potential, will have the same holonomy, thus be isomorphic to $(\underline{A},H_m)$. Choosing an isomorphism gives the desired sections of $P$. 
\end{proof}

If $M:\mathcal{G}^\circ \to \R^*$ is a potential for an $\R^*$ valued holonomy function, it is helpful to define $\bar{M}:\mathcal{G}^\circ\cup \Delta \to \R$ to coincide with $M$ on $\mathcal{G}^\circ$ and be zero on $\Delta$.

\begin{definition}
An $\R^*$ valued holonomy function $H$ is \textbf{rank $n$} if any, thus every, potential $M$ satisfies
\[ \det(\bar{M}(x_i,y_j)) = 0 \]
for all tuples $x_1,...,x_{n+1},y_1,...,y_{n+1}\in \del\Gamma^\circ$, and
\[ \det(\bar{M}(x_i,y_j)) \neq 0 \]
for all tuples $x_1,...,x_{n},y_1,...,y_{n}\in \del\Gamma^\circ$ with $x_i\neq x_j$, and $y_i\neq y_j$ when $i\neq j$. 
\end{definition}
The holonomy $H_\rho$ for $\rho\in \Hit^n(S)$ is clearly rank $n$, as $M(x,y) = \langle \tilde{\xi}^*(x),\tilde{\xi}(y)\rangle$ is a potential. 
Because $\dim(V)=n$,  $(n+1)\times (n+1)$ minors of $M(x,y)$ vanish whereas, by hyperconvexity, $n\times n$ minors do not. 

\begin{remark}
	If $H \in \mathcal{H}(S,\R^*)$ is a rank $n$ holonomy function such that $H(x_1,x_2;y_1,y_2)$ extends to a H\"older function 
$\{(x_1,x_2,y_1,y_2) : x_1\neq y_2, x_2 \neq y_1\} \to \R$, 
 then $H$ is a rank $n$ cross ratio in the sense of \cite{Labourie07}. Labourie showed that $\rho\mapsto H_\rho$ is a bijection from $\Hit^n(S)$ to rank $n$ cross ratios.
\end{remark}

Working with potentials is sometimes convenient, but it is also nice to have a criteria for rank $n$ which doesn't reference a choice of potential. This is the path taken in \cite{Labourie07}.
\begin{lemma}
\label{equivalent rank n condition}
    A holonomy function $H\in \mathcal{H}(S,\R^*)$ is rank $n$ if and only if 
\[ \det(\bar{H}(x_0,x_i;y_0,y_j)) = 0 \]
for all tuples $x_0,...,x_{n+1},y_0,...,y_{n+1}\in \del\Gamma^\circ$ with $x_0\neq y_i$ and $y_0\neq x_i$, and
\[ \det(\bar{H}(x_0,x_i;y_0,y_j)) \neq 0 \]
for all tuples $x_0,...,x_{n},y_0,...,y_{n}\in \del\Gamma^\circ$ with $x_0\neq y_i$ and $y_0\neq x_i$, and with $x_i\neq x_j$, and $y_i\neq y_j$ when $i\neq j$. 
\end{lemma}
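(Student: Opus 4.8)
The plan is to derive everything from one elementary identity comparing $\bar H$ to a potential of $H$, after which both directions are linear algebra and a small amount of bookkeeping.

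First recall that every $H\in\mathcal{H}(S,\R^*)$ admits a potential $M:\mathcal{G}^\circ\to\R^*$: choosing a section $u$ of the underlying $\R^*$-bundle that is flat along every horizontal segment, and a section $v$ flat along every vertical segment — possible because each horizontal or vertical line of $\mathcal{G}$ meets $\mathcal{G}^\circ$ in a disjoint union of intervals, over each of which a flat section exists, and no continuity across these intervals is required — the function $M:=u/v$ is a potential, by the same rectangle-holonomy computation as in the proof of Lemma \ref{potentials come from sections}. Fix such an $M$, with $\bar M$ its extension by $0$ along $\Delta$. Unwinding the definitions of a potential and of $\bar H$, one gets, for points with $x_0\neq y_j$ and $x_i\neq y_0$ (so the denominators below are genuinely nonzero),
\[
\bar H(x_0,x_i;y_0,y_j)\;=\;\frac{\bar M(x_0,y_0)}{M(x_i,y_0)}\cdot\bar M(x_i,y_j)\cdot\frac{1}{M(x_0,y_j)} .
\]
Since $x_0,y_0$ are fixed, the first factor depends only on $i$ and the last only on $j$, so the $k\times k$ matrix $\big(\bar H(x_0,x_i;y_0,y_j)\big)_{i,j=1}^{k}$ equals $D_1\,\big(\bar M(x_i,y_j)\big)_{i,j=1}^{k}\,D_2$ with $D_1=\operatorname{diag}\!\big(\bar M(x_0,y_0)/M(x_i,y_0)\big)_i$ and $D_2=\operatorname{diag}\!\big(1/M(x_0,y_j)\big)_j$. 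If moreover $x_0\neq y_0$ then $D_1,D_2$ are invertible, and therefore $\det\big(\bar H(x_0,x_i;y_0,y_j)\big)$ and $\det\big(\bar M(x_i,y_j)\big)$ vanish simultaneously and are nonzero simultaneously. Note $x_0\neq y_0$ is forced by the hypotheses of the lemma in both displays (take $i=0$).

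Now the two directions. If $H$ is rank $n$, then $M$ satisfies the determinant conditions of the definition; applying the identity with $k=n+1$ (where no distinctness of the $x_i$'s or $y_j$'s is needed) gives vanishing of the $(n+1)\times(n+1)$ determinants of $\bar H$, and applying it with $k=n$ to distinct $x_i$'s and distinct $y_j$'s gives non-vanishing of the $n\times n$ determinants. Conversely, assume the $\bar H$ conditions. Given $x_1,\dots,x_{n+1},y_1,\dots,y_{n+1}\in\del\Gamma^\circ$, pick $x_0,y_0\in\del\Gamma^\circ$ with $x_0$ avoiding $\{y_1,\dots,y_{n+1}\}$, with $y_0$ avoiding $\{x_1,\dots,x_{n+1}\}$, and $x_0\neq y_0$ (finitely many constraints, so possible); the identity then forces $\det\big(\bar M(x_i,y_j)\big)=0$. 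For the non-vanishing statement, given distinct $x_i$'s and distinct $y_j$'s, choose $x_0,y_0$ in addition distinct from all the $x_i$'s, resp.\ $y_j$'s, so that the hypotheses of the $\bar H$ non-vanishing condition are met; that condition then yields $\det\big(\bar M(x_i,y_j)\big)\neq0$. Thus $M$ satisfies the defining conditions for a rank $n$ potential, so $H$ is rank $n$.

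I expect the only genuinely delicate points to be (i) the existence of a potential for an arbitrary $H\in\mathcal{H}(S,\R^*)$ — this is where one must notice that the leafwise-flat sections, and hence $M$, need not be continuous — and (ii) checking that the particular non-degeneracy hypotheses appearing in the lemma are exactly what is needed to keep $D_1$ and $D_2$ invertible, equivalently to keep $\bar H$ well defined with $(x_0,y_0)$ and $(x_i,y_j)$ the only rectangle corners that may land on $\Delta$. Everything past that is multiplicativity of the determinant under diagonal scaling.
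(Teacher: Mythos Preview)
Your argument is correct and is essentially the same as the paper's: both hinge on the identity $\bar H(x_0,x_i;y_0,y_j)=M(x_0,y_0)\,\bar M(x_i,y_j)/\bigl(M(x_0,y_j)M(x_i,y_0)\bigr)$ and the observation that the resulting diagonal row/column scaling preserves vanishing and non-vanishing of determinants. You supply more of the bookkeeping (existence of a potential, invertibility of $D_1,D_2$, and the choice of $x_0,y_0$ in the converse), but the core idea is identical.
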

\begin{proof}
    Suppose $H$ is an $\R^*$ valued holonomy function and that $M$ is a potential. Fix two distinct points $x_0$ and $y_0$ in $\del\Gamma^\circ$. Every $k\times k$ minor of the function
    \[H(x_0,x;y_0,y) = \frac {M(x_0,y_0)\bar{M}(x,y)}{M(x_0,y)M(x,y_0)}\]
    of $x\in \del\Gamma^\circ\backslash \{y_0\}$ and $y\in \del\Gamma^\circ\backslash \{x_0\}$ will vanish if and only if the corresponding $k\times k$ minor of $\bar{M}(x,y)$ does. 
\end{proof}

\subsection{Tropical rank-$n$ currents}
The notion of tropical rank-$n$ is based on the following lemma, which is just the application of the standard tropicalization of polynomials to the determinant.

\begin{lemma}
\label{tropical determinant}
    If $A^{(k)}$ is a sequence of $n\times n$ matrices with 
    \[\lim_{k\to\infty} \frac {\log|A^{(k)}_{ij}|}{R_k} = a_{ij}\]
    for a sequence of real numbers $R_k\to \infty$, then
    \[\lim_{k\to\infty} \frac{\log|\det(A^{(k)})|}{R_k} = \max_{\sigma\in S_n} \sum_{i=1}^{n} a_{i \sigma(i)}\]
    whenever there is a single permutation $\sigma\in S_n$ attaining the maximum. In particular, if $\det(A_i)=0$ for all $i$, then two permutations must attain the maximum.
\end{lemma}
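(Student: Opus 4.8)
The plan is to expand $\det(A^{(k)})$ as a signed sum over permutations, $\det(A^{(k)}) = \sum_{\sigma \in S_n} \operatorname{sgn}(\sigma) \prod_{i=1}^n A^{(k)}_{i\sigma(i)}$, and track the exponential growth rate of each term. For a fixed $\sigma$, the hypothesis gives $\log|\prod_i A^{(k)}_{i\sigma(i)}| = \sum_i \log|A^{(k)}_{i\sigma(i)}| = R_k\big(\sum_i a_{i\sigma(i)} + o(1)\big)$, so after dividing by $R_k$ the term corresponding to $\sigma$ has growth rate $\sum_i a_{i\sigma(i)}$. Write $m := \max_{\sigma}\sum_i a_{i\sigma(i)}$ for the tropical determinant. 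The strategy is then to show that when a unique permutation $\sigma_0$ attains $m$, the $\sigma_0$-term dominates all others and hence governs the growth of the whole sum.

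First I would handle the generic case where all entries are genuinely nonzero for $k$ large, so the logarithms make sense. Factor out the dominant term: write $\det(A^{(k)}) = \big(\prod_i A^{(k)}_{i\sigma_0(i)}\big)\big(\operatorname{sgn}(\sigma_0) + \sum_{\sigma \neq \sigma_0} \operatorname{sgn}(\sigma)\, r_\sigma^{(k)}\big)$, where $r_\sigma^{(k)} = \prod_i A^{(k)}_{i\sigma(i)} / \prod_i A^{(k)}_{i\sigma_0(i)}$. Taking $\log|\cdot|$ and dividing by $R_k$, the ratio $r_\sigma^{(k)}$ has growth rate $\sum_i a_{i\sigma(i)} - m < 0$ by the uniqueness assumption, so $r_\sigma^{(k)} \to 0$ for every $\sigma \neq \sigma_0$. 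Hence the bracketed factor converges to $\operatorname{sgn}(\sigma_0) = \pm 1$, its logarithm divided by $R_k$ goes to $0$, and $\log|\det(A^{(k)})|/R_k \to m$, as claimed.

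There are two technical points to address. One is the possibility that some entries $A^{(k)}_{ij}$ vanish (or are zero infinitely often); there $\log|A^{(k)}_{ij}| = -\infty$, and we should interpret $a_{ij} = -\infty$, with the convention that any permutation using such an entry contributes $-\infty$ and is irrelevant to the maximum — one can either assume entries are eventually nonzero, or pass to a subsequence and note that terms with a vanishing factor simply drop out of the sum. The second, and this is the one subtlety worth care rather than the main obstacle, is ruling out catastrophic cancellation: a priori, even though $\sigma_0$ dominates termwise, if several subdominant permutations tied among themselves they could still only contribute a lower-order correction, so this is fine — the dominance of $\sigma_0$ over \emph{each} other term is all that is needed, and no cancellation among subdominant terms can overtake it. For the final sentence of the lemma: if $\det(A^{(k)}) = 0$ for all $k$ but only one permutation attained the maximum $m$, the argument just given would force $0 = \log 0$ to have growth rate $m$ (finite), a contradiction; hence at least two permutations must attain the maximum. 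I do not expect any serious obstacle here — the only thing to be careful about is bookkeeping with the $-\infty$ convention and stating the uniqueness hypothesis precisely enough that the subdominant ratios genuinely decay.
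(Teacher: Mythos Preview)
Your argument is correct and is exactly the standard tropicalization argument the paper alludes to; the paper does not actually give a proof of this lemma, only remarking that it ``is just the application of the standard tropicalization of polynomials to the determinant.'' Your factor-out-the-dominant-term computation is the usual way to make that remark precise, and the contrapositive deduction of the ``in particular'' clause is correct.
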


The right hand side is called the tropical determinant of the matrix $a$. If $m:\mathcal{G}^\circ\to \R$ is a potential for a holonomy function $h\in \mathcal{H}(S,\R)$ then let $\bar{m}:\mathcal{G}^\circ\cup \Delta\to \R\cup \{-\infty\}$ be the extension of $m$ which is $-\infty$ on $\Delta$.
\begin{definition}
    A holonomy function $h$ is \textbf{tropical rank $n$} if for any potential $m(x,y)$ for $h$, there are two distinct permutations realizing the maximum in the tropical determinant
    \[\max_{\sigma\in S_{n+1}} \sum_{i=1}^{n+1} \bar{m}(x_i,y_{\sigma(i)})\]
    for any tuples $x_1,...,x_{n+1},y_1,...,y_{n+1}\in \del\Gamma^\circ$. 
\end{definition}

Just as in lemma \ref{equivalent rank n condition}, there is a more concrete criterion for tropical rank $n$.
\begin{lemma}
    A holonomy function $h\in \mathcal{H}(S,\R)$ is tropical rank $n$ if and only if there are two distinct permutations realizing the maximum in the tropical determinant
    \[\max_{\sigma\in S_n} \sum_{i=1}^{n+1} h(x_0,x_i;y_0,y_{\sigma(i)})\]
    for all tuples $x_0,...,x_{n+1},y_0,...,y_{n+1}\in \del\Gamma^\circ$ with $x_0\neq y_i$ and $y_0\neq x_i$.
\end{lemma}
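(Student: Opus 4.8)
The plan is to tropicalize the proof of Lemma~\ref{equivalent rank n condition} essentially verbatim: replace each product of potential values by a sum, replace the multiplicative extension $\bar H$ by the additive extension $\bar m$ (valued in $\R\cup\{-\infty\}$, equal to $-\infty$ on $\Delta$), and replace ``a $k\times k$ minor vanishes'' by ``the maximum in the $k\times k$ tropical determinant is attained by at least two permutations''. As in the $\R^*$ case the one non-formal input I would need is that $h$ admits a potential; I would produce one as in the existence part of Lemma~\ref{potentials come from sections}, by choosing a flat section over each component of each horizontal, resp.\ vertical, leaf of $\mathcal{G}^\circ$ to obtain a horizontally flat global section $u$ and a vertically flat global section $v$ of the bundle with holonomy $h$, and then setting $m=u-v$.

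With a potential $m$ fixed and two basepoints $x_0,y_0\in\del\Gamma^\circ$ chosen, the additive form of the identity from Lemma~\ref{equivalent rank n condition} reads
\[\bar h(x_0,x;y_0,y)=m(x_0,y_0)+\bar m(x,y)-m(x_0,y)-m(x,y_0),\]
which is defined precisely when $x_0\neq y$ and $x\neq y_0$, and is $-\infty$ exactly when $x=y$. So for any further points $x_1,\dots,x_{n+1},y_1,\dots,y_{n+1}\in\del\Gamma^\circ$ the matrix $A_{ij}=\bar h(x_0,x_i;y_0,y_j)$ equals $B_{ij}+r_i+c_j$, where $B_{ij}=\bar m(x_i,y_j)$, $r_i=-m(x_i,y_0)$ and $c_j=m(x_0,y_0)-m(x_0,y_j)$. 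Then for every permutation $\sigma\in S_{n+1}$,
\[\sum_{i=1}^{n+1}A_{i\sigma(i)}=\sum_{i=1}^{n+1}B_{i\sigma(i)}+\sum_{i=1}^{n+1}r_i+\sum_{j=1}^{n+1}c_j,\]
and the last two sums do not depend on $\sigma$. Hence the set of permutations achieving $\max_\sigma\sum_i A_{i\sigma(i)}$ is exactly the set achieving $\max_\sigma\sum_i B_{i\sigma(i)}$; note this also shows that set is independent of the choice of potential $m$, since $A$ does not involve $m$.

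It then remains only to reconcile the two quantifier ranges. For ``tropical rank $n$ $\Rightarrow$ concrete criterion'': given $x_0,\dots,x_{n+1},y_0,\dots,y_{n+1}\in\del\Gamma^\circ$ with $x_0\neq y_i$ and $y_0\neq x_i$, I apply the definition to the sub-tuple $x_1,\dots,x_{n+1},y_1,\dots,y_{n+1}$ and any potential $m$ to get two maximizers for $B$, hence for $A$, which is exactly the displayed condition. For the converse: given $x_1,\dots,x_{n+1},y_1,\dots,y_{n+1}\in\del\Gamma^\circ$ and a potential $m$, I pick $x_0\in\del\Gamma^\circ\setminus\{y_1,\dots,y_{n+1}\}$ and $y_0\in\del\Gamma^\circ\setminus\{x_1,\dots,x_{n+1}\}$ (possible since $\del\Gamma^\circ$ is uncountable), apply the concrete criterion to obtain two maximizers for $A$, hence for $B=\bigl(\bar m(x_i,y_j)\bigr)$, which is the defining condition.

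I do not expect a real obstacle here; this is a routine tropical mirror of Lemma~\ref{equivalent rank n condition}. The only points needing care are the bookkeeping around $\Delta$ — making sure ``the maximum is attained by two permutations'' is read consistently (among permutations of finite value) on the $A$- and $B$-sides, and that the quantities $\bar h(x_0,x_i;y_0,y_j)$ are formed only under the stated hypotheses $x_0\neq y_i$, $y_0\neq x_i$, which is exactly what keeps the defining formula finite except where $\bar m(x_i,y_j)=-\infty$ — and the existence of potentials for $h$, which is the one genuinely non-formal ingredient but is already essentially contained in Lemma~\ref{potentials come from sections}.
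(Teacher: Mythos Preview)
Your proposal is correct and is exactly the approach the paper intends: the paper's proof consists of the single sentence ``The proof is the same as for Lemma~\ref{equivalent rank n condition},'' and your tropicalization---showing that the matrix $\bigl(\bar h(x_0,x_i;y_0,y_j)\bigr)$ differs from $\bigl(\bar m(x_i,y_j)\bigr)$ by row and column additions, so the sets of maximizing permutations coincide---is precisely that argument. Your additional care about the existence of potentials and the bookkeeping around $\Delta$ and quantifier ranges is more detail than the paper supplies, but all correct.
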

The proof is the same as for Lemma \ref{equivalent rank n condition}.

\begin{lemma}
\label{equivalent tropical rank n condition}
    If $H_k$ are rank $n$ holonomy functions and
    \[\lim_{k\to \infty} \frac{\log |H_k| }{R_k} = h\]
    for a sequence of real numbers $R_k\to \infty$, then $h$ must be tropical rank $n$. 
\end{lemma}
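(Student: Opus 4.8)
The plan is to reduce the statement to a pointwise statement about the tropical determinant and then invoke Lemma \ref{tropical determinant}. Fix a potential $m$ for $h$. By Lemma \ref{potentials come from sections} applied to the $\R^*$ bundles underlying the $H_k$, we can choose potentials $M_k$ for the $H_k$ so that, after possibly renormalizing (the ambiguity in a potential is a simultaneous multiplicative shift $M \mapsto CM$, which does not affect any $(n+1)\times(n+1)$ ratio appearing in the determinant condition), we have $\log|M_k|/R_k \to m$ pointwise on $\mathcal{G}^\circ$. Concretely, one can produce $M_k$ from the flat sections $u_k, v_k$ guaranteed by Lemma \ref{potentials come from sections}, normalize at a fixed basepoint of $\mathcal{G}^\circ$, and use the convergence $\log|H_k|/R_k \to h$ together with the potential formula $H_k(x_1,y_1;x_2,y_2) = M_k(x_1,y_1)M_k(x_2,y_2)/(M_k(x_1,y_2)M_k(x_2,y_1))$ to transfer convergence of holonomies to convergence of (logarithms of) potentials.

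Next I would fix an arbitrary tuple $x_1,\dots,x_{n+1},y_1,\dots,y_{n+1}\in \del\Gamma^\circ$ and form the $(n+1)\times(n+1)$ matrices $A^{(k)}$ with entries $A^{(k)}_{ij} = \bar M_k(x_i,y_j)$. Since each $H_k$ is rank $n$, Lemma \ref{equivalent rank n condition} (or directly the definition of rank $n$ via potentials) gives $\det A^{(k)} = 0$ for all $k$. On the other hand, $\log|A^{(k)}_{ij}|/R_k \to \bar m(x_i,y_j)$, where the $-\infty$ convention on $\Delta$ is exactly the right one: whenever $x_i = y_j$ the entry $\bar M_k(x_i,y_j)$ is genuinely $0$ (not merely small), so that index pair contributes $-\infty$ to every tropical product through it, consistently with the hypothesis of Lemma \ref{tropical determinant}. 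Now apply the contrapositive form of Lemma \ref{tropical determinant}: if the maximum $\max_{\sigma\in S_{n+1}} \sum_{i=1}^{n+1} \bar m(x_i,y_{\sigma(i)})$ were attained by a \emph{unique} permutation, then $\log|\det A^{(k)}|/R_k$ would converge to that finite maximum, contradicting $\det A^{(k)} = 0$ (whose log is $-\infty$, or at any rate $\log|\det A^{(k)}|/R_k \to -\infty$). Hence at least two distinct permutations attain the maximum, which is precisely the defining condition for $h$ to be tropical rank $n$.

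The main obstacle is the first step: making the convergence $\log|H_k|/R_k \to h$ yield an honest pointwise limit of rescaled potentials with a \emph{single} choice of normalization working simultaneously for all of $\mathcal{G}^\circ$. A potential is only determined up to a global multiplicative constant, and the natural choices of $M_k$ might a priori oscillate; one must pin down the constants (e.g. by fixing $M_k$ at one basepoint pair and propagating via the horizontally- and vertically-flat sections $u_k,v_k$ of Lemma \ref{potentials come from sections}) and then check that convergence of all holonomies forces convergence of $\log|u_k|/R_k$ and $\log|v_k|/R_k$ separately. A clean way to sidestep part of this is to work entirely with the concrete criterion: fix the two auxiliary points $x_0,y_0$, set $\tilde M_k(x,y) := H_k(x_0,x;y_0,y)$ (which is literally a potential, up to the constant $M_k(x_0,y_0)$, and for which $\log|\tilde M_k|/R_k$ converges by hypothesis since it is a single value of $h$), form the matrices $\bar{\tilde M}_k(x_i,y_j)$, and run the same Lemma \ref{tropical determinant} argument; this recovers the tropical rank $n$ condition in the form of the lemma immediately following the definition, which is equivalent. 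Either route reduces everything to the tropical determinant lemma, and the only real content beyond bookkeeping is that $\det = 0$ is incompatible with a unique maximizing permutation.
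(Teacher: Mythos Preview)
Your proposal is correct, and the ``clean sidestep'' route you describe at the end---fixing $x_0,y_0$, using $H_k(x_0,x_i;y_0,y_j)$ as the matrix entries, and applying the contrapositive of Lemma~\ref{tropical determinant} together with the concrete criteria for rank $n$ and tropical rank $n$---is exactly the paper's proof. The first route via convergent potentials works as well but is more labor than necessary; the paper bypasses potentials entirely by using Lemma~\ref{equivalent rank n condition} and its tropical analogue.
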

\begin{proof}
    This follows directly from Lemmas \ref{tropical determinant}, \ref{equivalent rank n condition}, and \ref{equivalent tropical rank n condition}.
\end{proof}
We do not know if every tropical rank $n$ holonomy function arises in the boundary of $\Hit^n(S)$. 
\begin{definition}
\label{tropical rank n current}
    A geodesic current $\mu\in \mathcal{C}(S)$ is tropical rank $n$ if it is the curvature of an equivariant bundle with taxi connection $P\in \mathcal{A}(S)$ which has tropical rank $n$ holonomy. 
\end{definition}
Just as the boundary of Teichmuller space consists of currents with no self intersection, boundary points of $\Hit^n(S)$ have no ``$n$-intersection". 
\begin{figure}[h]
\label{fig:no n intersection}
    \centering
\includegraphics[width=10cm]{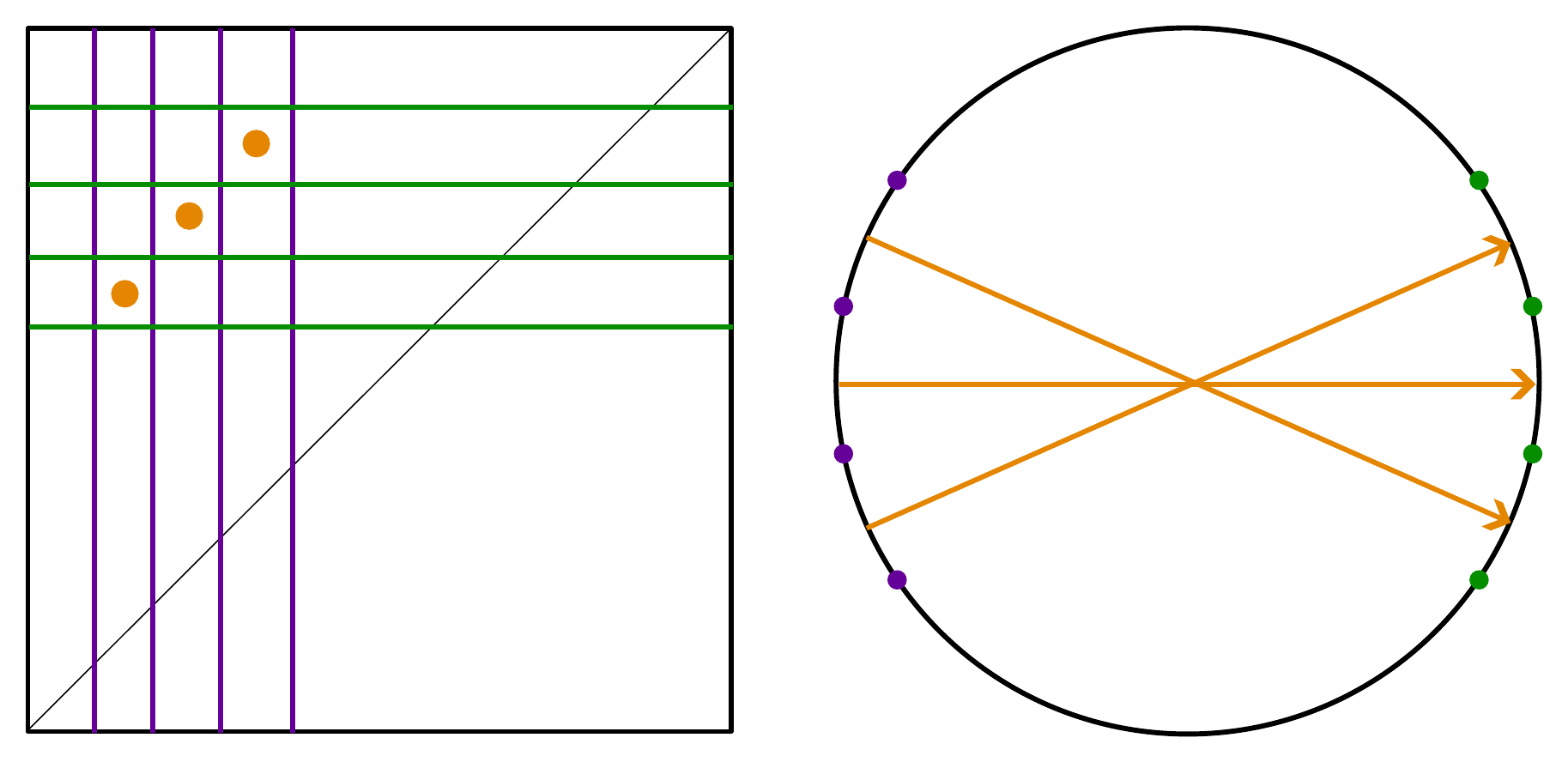}
    \caption{3 geodesics which 3-intersect, and 8 points of $\del\Gamma$ which rule out their simultanious presence in a tropical rank $3$ current}
    
\end{figure}

\begin{lemma}
\label{no n-intersection}
    If $\mu$ is a tropical rank $n$ current, then for any $x_1< ... < x_{n+1} < y_1 < ... < y_{n+1} \in \del\Gamma^\circ$, there must be some $i\in 1,...,n$ such that $\mu([x_i,x_{i+1}]\times [y_i,y_{i+1}]) = 0$.
\end{lemma}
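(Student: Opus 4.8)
The plan is to apply the tropical rank $n$ hypothesis to this single tuple and show that, were the conclusion to fail, the tropical determinant of the relevant matrix would be attained by a \emph{unique} permutation. Suppose for contradiction that $\mu([x_i,x_{i+1}]\times[y_i,y_{i+1}])>0$ for every $i=1,\dots,n$. By Definition \ref{tropical rank n current}, $\mu=\curv(P)$ for some $P\in\mathcal{A}(S)$ whose holonomy function $h$ is tropical rank $n$; fix a potential $m$ for $h$. Since $x_1<\dots<x_{n+1}<y_1<\dots<y_{n+1}$ are all distinct and in $\del\Gamma^\circ$, no pair $(x_p,y_q)$ lies on the diagonal, so $\bar m(x_p,y_q)=m(x_p,y_q)=:b_{p,q}$; moreover for $1\le p,q\le n$ the loop $[x_p,x_{p+1};y_q,y_{q+1}]$ bounds the box $[x_p,x_{p+1}]\times[y_q,y_{q+1}]$, whose corners and edges lie in $\mathcal{G}^\circ$ (this is the one place the ordering hypothesis is used). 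Hence the mixed second differences
\[
D(p,q):=b_{p,q}+b_{p+1,q+1}-b_{p,q+1}-b_{p+1,q}=h\big([x_p,x_{p+1};y_q,y_{q+1}]\big)=\mu\big([x_p,x_{p+1}]\times[y_q,y_{q+1}]\big)
\]
are all $\ge 0$ (the curvature is a positive cocycle, Lemma \ref{positive cocycles are measures}), and by assumption $D(p,p)>0$ for $p=1,\dots,n$.

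Next I would invoke the elementary fact that a matrix with nonnegative mixed second differences (an ``inverse Monge'' matrix) has its diagonal as a maximizing assignment. Telescoping the $D$'s gives, for $1\le k,l\le n+1$,
\[
b_{k,l}=b_{n+1,l}+b_{k,n+1}-b_{n+1,n+1}+\sum_{i=k}^{n}\sum_{j=l}^{n}D(i,j),
\]
so that, using that $\sigma$ is a bijection, for every $\sigma\in S_{n+1}$
\[
\sum_{p=1}^{n+1}b_{p,p}-\sum_{p=1}^{n+1}b_{p,\sigma(p)}=\sum_{i=1}^{n}\sum_{j=1}^{n}c_{ij}(\sigma)\,D(i,j),\qquad c_{ij}(\sigma):=\min(i,j)-\#\{k\le i:\sigma(k)\le j\}.
\]
Every $c_{ij}(\sigma)\ge 0$, because $\#\{k\le i:\sigma(k)\le j\}$ is at most $i$ and at most $j$. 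So the identity $\mathrm{id}$ always realizes the maximum of $\sigma\mapsto\sum_p b_{p,\sigma(p)}=\sum_p\bar m(x_p,y_{\sigma(p)})$.

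The decisive point is uniqueness. If $c_{kk}(\sigma)=0$ then $\#\{k'\le k:\sigma(k')\le k\}=k$, i.e. $\sigma(\{1,\dots,k\})\subseteq\{1,\dots,k\}$, hence $\sigma(\{1,\dots,k\})=\{1,\dots,k\}$ by equality of cardinalities. Thus if $c_{kk}(\sigma)=0$ for all $k=1,\dots,n$, then $\sigma$ stabilizes each of $\{1\},\{1,2\},\dots,\{1,\dots,n\}$, forcing $\sigma=\mathrm{id}$. Consequently, for any $\sigma\ne\mathrm{id}$ there is some $k\le n$ with $c_{kk}(\sigma)\ge 1$, and then $\sum_p b_{p,p}-\sum_p b_{p,\sigma(p)}\ge D(k,k)>0$. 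Hence $\mathrm{id}$ is the unique maximizer of $\max_{\sigma\in S_{n+1}}\sum_{i=1}^{n+1}\bar m(x_i,y_{\sigma(i)})$, contradicting the definition of tropical rank $n$, which demands at least two distinct maximizing permutations for this tuple. This contradiction proves the lemma.

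I expect the only genuinely subtle step to be this uniqueness claim: it is immediate that the identity maximizes the assignment sum, but one must observe that positivity of merely the $n$ diagonal boxes $D(p,p)$ already forces uniqueness, via the initial-segment argument above. Everything else is bookkeeping --- translating between box measures, the cross ratios $h([x_p,x_{p+1};y_q,y_{q+1}])$, and the tropical determinant through the potential $m$, plus the short check that all the boxes and edges involved lie in $\mathcal{G}^\circ$. (If one worries that $h$ need not admit a global potential, a potential on the finitely many points $\{x_p\}\times\{y_q\}$ can be produced directly from the cocycle identities for $h$, which is all the argument uses; and if the sign convention makes $\mu(\text{box})$ equal $-D(p,q)$ rather than $D(p,q)$, one simply replaces $\mathrm{id}$ by the order-reversing permutation and runs the same argument.)
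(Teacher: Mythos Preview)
Your proof is correct and follows essentially the same approach as the paper's: both show that the identity permutation maximizes the assignment sum $\sigma\mapsto\sum_p \bar m(x_p,y_{\sigma(p)})$ because differences are nonnegative combinations of box measures, and that this maximum is unique unless some adjacent box measure $\mu([x_i,x_{i+1}]\times[y_i,y_{i+1}])$ vanishes. The paper packages this via the (reverse) Bruhat order on $S_{n+1}$ --- the sum is weakly increasing along covers, so a second maximizer must be an adjacent transposition --- rather than your explicit Monge-type coefficient formula $c_{ij}(\sigma)=\min(i,j)-\#\{k\le i:\sigma(k)\le j\}$, but the content is the same.
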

\begin{proof}
Let $h$ denote the holonomy function corresponding to $\mu$ and let $m$ be any potential for $h$. If $\sigma,\sigma'\in S^n$ are two permutations, then the difference
\[\sum_{i=1}^{n+1} m(x_i,y_{\sigma(i)}) - m(x_i,y_{\sigma'(i)})\]
will always the holonomy of a cycle. 
In the special case when the two permutations differ by a transposition, $\sigma' = \sigma(ij)$, the difference of the sums is
\[m(x_i,y_{\sigma(i)}) + m(x_j,y_{\sigma(j)}) - m(x_i,y_{\sigma(j)}) - m(x_j,y_{\sigma(i)}) = 
h(x_i ,x_j,y_{\sigma(i)},y_{\sigma(j)})\]
and in the case when $i<j$ and $\sigma(i)<\sigma(j)$ we recognize this as the measure of the box $\mu([x_i,x_j]\times[y_{\sigma(i)},y_{\sigma(j)}])$. Thus, whenever $\sigma' = \sigma(ij)$, with $i<j$ and $\sigma(i) <\sigma(j)$, 
\[\sum_{i=1}^{n+1} m(x_i,y_{\sigma(i)}) \geq m(x_i,y_{\sigma'(i)}).\]
In other words, the map
\[\sigma \mapsto \sum_{i=1}^{n+1} m(x_i,y_{\sigma(i)})\]
is weakly increasing for the reverse Bruhat order on the symmetric group. Let $X(\sigma)$ denote the number of crossings of $\sigma$, i.e. the number of pairs $i<j$ with $\sigma(i)<\sigma(j)$. The reverse Bruhat order is defined by the property that $\sigma$ covers $\sigma'$ iff $\sigma'=\sigma(ij)$, and $X(\sigma) = X(\sigma')+1$. Recall that an element of a poset $a$ covers another element $b$ if $a>b$ and there is no $c$ such that $a>c>b$. There is a unique maximal element for the reverse Bruhat order, namely the identity permutation.

Since $m$ is tropical rank $n$, we know that at least two permutations must maximize $\sum_{i=1}^{n+1} m(x_i,y_{\sigma(i)})$. One of those must be the identity permutation, and another must be covered by identity, i.e. must be a transposition of the form $(i(i+1))$ for some $i\in \{1,...,n\}$. The difference, $\mu([x_i,x_{i+1}]\times [y_i,y_{i+1}])$, must then be zero.
\end{proof}

\section{From a current to a metric space}
In this section, for any nullhomologous geodesic current $\mu$, we construct a metric space $X_\mu$ with $\Gamma$ action. In the case when $\mu$ is the curvature of an equivariant bundle $P$ with period function $l$, the translation length of $a\in \Gamma$ acting on $X_\mu$ is $l(a) + l(a^{-1})$. The non-symmetrized periods are encoded in a more abstract structure on $X_\mu$ which we call a relative metric. If $\mu$ is a continuous measure, then $X_\mu$ is infinite dimensional, but when $\mu$ is tropical rank $n$, $X_\mu$ is at most $n-1$ dimensional. 

\subsection{Holonomy zero lower submeasures}
A ``lower submeasure" is similar to an \textbf{order ideal}: a subset $I$ of a poset $A$ such that if $a$ in $I$, every element less than $a$ is in $I$. 
The relevant poset for us is $\mathcal{G}$ with $(x',y') < (x,y)$ if $(x<x'<y'<y)$. Order ideals of $\mathcal{G}$ arise in a natural way. Choosing a hyperbolic metric on $S$, $\mathcal{G}$ becomes identified with the set of geodesic half-spaces in $\tilde{S}$ which are naturally ordered by inclusion. For each $x\in \tilde{S}$, the set of half spaces not containing $x$ is an order ideal of $\mathcal{G}$. 

\begin{definition}
If $\mu$ is a measure on $\mathcal{G}$, a \textbf{lower submeasure} of $\mu$ is a measure $\nu$, with $\nu(U)<\mu(U)$ for all measurable sets $U\subset \mathcal{G}$, such that if $(x,y)\in \supp(\nu)$ then $\nu$ and $\mu$ coincide on $\mathcal{G}_{<(x,y)}$. 
\end{definition}
An order ideal $I\subset \mathcal{G}$ gives a lower submeasure $\nu(U) := \mu(U\cap I)$, though different order ideals can give the same submeasure and not all lower submeasures come from order ideals. 

If $l\subset \mathcal{G}$ is a monotonic path in $\mathcal{G}^\circ$ which wraps around once, then define $\nu_l$ to be the measure which coincides with $\mu$ on or below $l$ and is zero above $l$. Note that for any two loops $l$ and $l'$, the difference $\nu_l - \nu_{l'}$ will be a finite signed measure because $\mu$ is locally finite.

If $\nu$ is a lower submeasure, then $\bar{\nu} := \mu - \nu$ is an upper submeasure. Instead of using lower submeasures, it is more natural to use ``monotone partitions" of $\mu$: 
\begin{definition} A monotone partition of a geodesic current $\mu$ is a pair of measures $(\nu,\bar{\nu})$ on $\mathcal{G}$ such that $\mu = \nu + \bar{\nu}$ and every point of $\supp(\nu)$ is less than or equal to every point of $\supp(\bar{\nu})$. 
\end{definition}
It is not hard to check that $(\nu,\bar{\nu})$ is a monotone partition if and only if $\nu$ is a lower submeasure. We will switch back and forth between these two notions.

\begin{definition}
    A lower submeasure $\nu$ is \textbf{admissible} if $\nu - \nu_l$ is a finite measure for a monotonic loop $l\subset \mathcal{G}^\circ$. 
\end{definition}

\begin{definition}
Let $\mu\in \mathcal{C}_0(S)$ be a nullhomologous geodesic current, and let $h\in \mathcal{H}(S)$ be the unique holonomy function which can be lifted to $\mathcal{A}(S)$. Holonomy of admissible lower submeasures of $\mu$ is defined by the following two conditions.
	\begin{itemize}
		\item If $l$ is a monotonic taxi loop in $\mathcal{G}^\circ$ wrapping once around, then $h(\nu_l) = h(l)$.
		\item If $\nu' = \nu + \epsilon$ where $\epsilon$ is a finite positive measure, then $h(\nu')=h(\nu)+|\epsilon|$.
	\end{itemize}
\end{definition}
Now we can give the main definition of this section: a geometric incarnation of geodesic currents.
\begin{definition}
\label{X definition}
If $\mu$ is a nullhomologous geodesic current, its \textbf{universal asymmetric dual space} $X_\mu$ is the set of holonomy zero lower submeasures of $\mu$. 
\end{definition}
The weak topology makes $X_\mu$ into a topological space, though soon we will endow it with an explicit metric. As mentioned before, a lower submeasure gives a monotone partition $\mu = \nu + \bar{\nu}$. We call $\supp(\nu)\cap \supp(\bar{\nu})$ the set of shared points.

\begin{lemma}
If the support of $\mu$ is discrete, then the set of lower submeasures with finitely many shared points is a cube complex. 
\end{lemma}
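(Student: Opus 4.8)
The plan is to exhibit an explicit cube complex structure on the set $X_\mu^{\mathrm{fin}}$ of lower submeasures with finitely many shared points, indexed by the combinatorics of which of the (discretely many) $\Gamma$-orbits of support geodesics are ``cut'' and by how much. First I would fix a $\Gamma$-invariant enumeration of the support of $\mu$: since $\supp(\mu)$ is discrete, it is a countable union of atoms $(x_j,y_j)$ with weights $w_j > 0$. A lower submeasure $\nu$ is determined by a partition of each atom into a part below (weight $t_j \in [0,w_j]$ kept in $\nu$) and a part above, subject to the monotonicity constraint that every point of $\supp(\nu)$ precede every point of $\supp(\bar\nu)$ in the order on $\mathcal{G}$. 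So the whole of $X_\mu^{\mathrm{fin}}$ embeds in $\prod_j [0,w_j]$, and I would identify the image as a locally finite union of faces.

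The key steps, in order: (1) Show the monotonicity constraint is a conjunction of ``two-atom'' constraints: for atoms $i,j$ with $(x_i,y_i) < (x_j,y_j)$ in the order, if $t_j > 0$ then $t_i = w_i$ (i.e. atom $i$ is entirely below), and if two atoms are $\le$-incomparable there is no constraint between them; for a pair of atoms lying on the same geodesic (same endpoints) there is likewise no constraint, they may be split independently. This reduces the defining conditions to a ``staircase'' poset condition. (2) Observe that having finitely many shared points means only finitely many coordinates $t_j$ are strictly interior to $(0,w_j)$; all others are pinned to an endpoint. Hence near any such $\nu$, the nearby submeasures are obtained by varying those finitely many coordinates, and the local constraints among them (from step 1) cut out a face of a cube. (3) Verify the incidence relations: a face is specified by a finite ``active set'' of atoms together with a choice, for each inactive atom, of $0$ or $w_j$, compatible with the staircase poset; faces glue along subfaces exactly as coordinates become pinned, giving a CW structure whose cells are cubes and whose attaching maps are face inclusions, i.e. a cube complex. (4) Check $\Gamma$-equivariance: $\Gamma$ permutes atoms preserving weights and the order on $\mathcal{G}$, hence permutes faces, so the cube complex is $\Gamma$-equivariant; and use local finiteness of $\mu$ to see the complex is locally finite. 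Finally, one should note that $X_\mu$ itself (holonomy zero) is a subcomplex: the holonomy function $h$ is affine (indeed, additive in the weights) on each face by the two defining properties of holonomy of admissible lower submeasures, so $\{h = 0\}$ intersects each face in a subcube (a face obtained by fixing one coordinate relation), again a cube complex.

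The main obstacle I anticipate is step (1)–(2): making precise that the monotonicity condition is genuinely a finite, locally cube-like system of inequalities near a point with finitely many shared points, rather than an infinite tangle. Concretely, one must rule out the possibility that infinitely many inactive atoms ``want'' to become active under an arbitrarily small perturbation — this is where discreteness of $\supp(\mu)$ and local finiteness of $\mu$ do the work: only finitely many atoms have weight above any threshold in a given compact region of $\mathcal{G}$, and the order-ideal structure confines perturbations to a neighborhood of the finite set of shared points. A secondary subtlety is atoms on a common geodesic (the splitting phenomenon emphasized in the heuristic section): these contribute free $[0,w_j]$ directions with no poset constraint, so they genuinely enlarge the cube dimension, and one must make sure the face poset is still that of a cube complex and not something worse. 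Once these local models are nailed down, the gluing and equivariance are formal.
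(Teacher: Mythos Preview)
Your core approach (steps (1)--(3)) is essentially the paper's proof: embed lower submeasures into the cube $\prod_{p\in\supp(\mu)}[0,\mu(\{p\})]$, and observe that the set of lower submeasures is a union of faces $C(L,F,U)$ indexed by partitions $\supp(\mu)=L\sqcup F\sqcup U$ with $F$ finite. The paper's version is much shorter --- it simply notes that if $\nu$ lies in some $C(L,F,U)$ then every point of that face is again a lower submeasure (your ``staircase'' condition unpacked), so the face structure is inherited from the ambient cube. Your worries about an ``infinite tangle'' of constraints, about local finiteness, and about ``atoms on a common geodesic'' are non-issues: discreteness means there is exactly one atom per support point, the splitting phenomenon from the heuristic section is already encoded by the single interval $[0,w_j]$ for that atom, and the face containment is a purely order-theoretic check requiring no metric input.

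Your final remark, however, contains an error. You claim that $\{h=0\}$ intersects each cube in a subcube. But on a face with free coordinates $(t_j)_{j\in F}$, holonomy is $h=c+\sum_{j\in F} t_j$ for some constant $c$ (from the defining additivity of $h$), so $\{h=0\}$ is an affine hyperplane of the form $\sum t_j=\text{const}$. Its intersection with a cube is a polytope, not in general a subcube --- for three free coordinates it is typically a triangle or hexagon. This is why the paper only asserts that $X_\mu$ is a \emph{polyhedral} complex (Lemma~\ref{Xmu is dimension n-1}), not a cube complex, and this distinction matters. Since the lemma at hand is only about the full set of lower submeasures, your error does not affect the proof of the stated lemma, but you should drop or correct that last paragraph.
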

\begin{proof}
    By evaluating a submeasure on each support point of $\mu$, the set of submeasures is identified with a cube in $\R^{\supp(\mu)}$. For every partition $\supp(\mu) = L\sqcup F \sqcup U$ such that $F$ is finite, let $C(L,F,U)$ denote the set of lower submeasures $\nu < \mu$ such that $\nu(p) = \mu(p)$ for $p\in L$, and $\nu(p) = 0$ for $p$ in $U$. The set $C(L,F,U)$ is either empty, or a closed finite dimensional face of the cube of submeasures. A lower submeasure $\nu$ gives a partition $\supp(\mu) = L\sqcup F \sqcup U$, such that $C(L,F,U)$ is the smallest face containing $\nu$. Every point in $C(L,F,U)$ is also a lower submeasure. The set of lower submeasures with finitely many shared points is thus a union of closed finite dimensional faces of a cube in $\R^{\supp(\mu)}$.
\end{proof}

\begin{lemma} If $\mu$ is the curvature of a holonomy function $h$, and $\mu = \nu + \bar{\nu}$ is a monotone, holomony zero partition, then the function 
\[m_\nu(x,y) := - \nu(\mathcal{G}_{\geq(x,y)}) - \bar{\nu}(\mathcal{G}_{\leq(x,y)})\]
is a potential for $h$. 
\end{lemma}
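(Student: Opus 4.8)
The plan is to verify directly that the proposed function $m_\nu$ has the defining cross-ratio property of a potential for $h$, namely that
\[
h(x_1,x_2;y_1,y_2) = m_\nu(x_1,y_1) + m_\nu(x_2,y_2) - m_\nu(x_1,y_2) - m_\nu(x_2,y_1)
\]
for all admissible cyclically-ordered tuples, and since $h = \operatorname{curv}(\text{its bundle})$ is the holonomy of the unique lift of $\mu$ to $\mathcal{A}(S)$, it suffices to check this on boundaries of rectangles, where the right-hand side should equal $\mu([x_1,x_2]\times[y_1,y_2])$. First I would fix a rectangle $r = r_{x_1,x_2;y_1,y_2}$ with $x_1 < x_2 < y_1 < y_2$, and expand the alternating sum of the four values $m_\nu(x_i,y_j)$ using the definition $m_\nu(x,y) = -\nu(\mathcal{G}_{\geq (x,y)}) - \bar\nu(\mathcal{G}_{\leq (x,y)})$. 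The key combinatorial fact is that the four ``up-sets'' $\mathcal{G}_{\geq(x_i,y_j)}$ and the four ``down-sets'' $\mathcal{G}_{\leq(x_i,y_j)}$ satisfy inclusion-exclusion identities whose error term is exactly the box $[x_1,x_2]\times[y_1,y_2]$: concretely, the alternating sum of the indicator functions $\mathbf{1}_{\mathcal{G}_{\geq(x_i,y_j)}}$ equals $\mathbf{1}$ of the box (with appropriate sign) plus terms supported off $\operatorname{supp}(\mu)$ in the relevant region, and similarly for the down-sets with the opposite sign, so that when integrated against $\nu$ and $\bar\nu$ respectively and added, the contributions combine to $\pm\mu(\text{box})$ with the two pieces reinforcing rather than cancelling.

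The technical care needed is in checking that the relevant sets have finite $\nu$- or $\bar\nu$-measure so that the alternating sums make sense — this is where admissibility of $\nu$ (that $\nu - \nu_l$ is a finite measure for a monotonic loop $l$) enters, since $\mathcal{G}_{\geq(x,y)}$ itself has infinite $\mu$-measure in general but the differences that actually appear are finite. I would phrase the computation so that only differences of the form $\nu(\mathcal{G}_{\geq(x_i,y_j)}) - \nu(\mathcal{G}_{\geq(x_{i'},y_{j'})})$ appear, each of which is $\pm\nu$ of a bounded region (a ``half-open rectangle'' in $\mathcal{G}$), and hence finite because $\mu$ is locally finite and $\nu \leq \mu$. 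The same for $\bar\nu$. Then the identity reduces to the set-theoretic claim that, for the poset order on $\mathcal{G}$, the symmetric differences of the four up-sets tile a region whose intersection with $\operatorname{supp}(\mu)$ is the box, and that on the box itself $\nu$ and $\bar\nu$ together recover all of $\mu$.

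I would also need to handle the degenerate cyclic orderings where $[x_1,x_2]\times[y_1,y_2]$ is not literally a box in $\mathcal{G}$ (the points cyclically ordered differently), but by the bilinear/alternating structure of the cross-ratio defining a potential, and the relations $s_{x,x;y}=0$, $s_{x,x';y}+s_{x',x;y}=0$ already noted for $T_1(\mathcal{G})$, these cases follow formally once the box case is established, exactly as a cross ratio is determined by its values on genuine boxes. Finally, to pin down that this particular potential is a potential for $h$ and not merely for some holonomy function with the same curvature, I would observe that any two potentials of holonomy functions with the same curvature differ by a function of the form $u - v$ with $u$ horizontally flat and $v$ vertically flat (Lemma \ref{potentials come from sections}); since $h$ is the \emph{unique} holonomy function in $\mathcal{H}(S)$ lifting to $\mathcal{A}(S)$ and $m_\nu$ manifestly has curvature $\mu$, the holonomy function it induces is $h$.

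The main obstacle I anticipate is bookkeeping the signs and the finiteness of the intermediate quantities simultaneously: the up-sets are measured by $\nu$ and the down-sets by $\bar\nu$, and one must see that the ``leftover'' region between a generic support point configuration and the box contributes zero because $\nu$ is constant equal to $\mu$ below its support and $\bar\nu$ is constant equal to $\mu$ above its support (by the definition of monotone partition and lower submeasure). Getting the inclusion-exclusion to land exactly on $\mu(\text{box})$ rather than, say, $\nu(\text{box}) + \bar\nu(\text{box})$ with a spurious cross-term is the crux, and it works precisely because the monotone partition condition forces $\operatorname{supp}(\nu)$ and $\operatorname{supp}(\bar\nu)$ to meet only at shared points, so on the box the decomposition $\mu = \nu + \bar\nu$ is honest and complete.
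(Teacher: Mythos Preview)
Your approach to the rectangle case is essentially the same as the paper's: an inclusion-exclusion identity among the four up-sets $\mathcal{G}_{\geq(x_i,y_j)}$ (integrated against $\nu$) and the four down-sets $\mathcal{G}_{\leq(x_i,y_j)}$ (integrated against $\bar\nu$) yields $\nu(r)+\bar\nu(r)=\mu(r)$ for a genuine box $r$. That part is fine.

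The gap is in the other cyclic ordering. You assert that ``these cases follow formally once the box case is established, exactly as a cross ratio is determined by its values on genuine boxes,'' but this is false: by Lemma~\ref{flat holonomy functions}, holonomy functions with curvature $\mu$ form a one-parameter family, distinguished precisely by their value on a taxi loop that winds once around $\mathcal{G}$. Checking only boxes shows $m_\nu$ is a potential for \emph{some} holonomy function with curvature $\mu$, not for the specific $h$ in the statement. Your final paragraph tries to patch this by invoking uniqueness of the $h$ that lifts to $\mathcal{A}(S)$, but that is a non sequitur: you never verify that the holonomy function induced by $m_\nu$ is the one that lifts, so you cannot conclude it equals $h$.

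What is missing is exactly where the \emph{holonomy-zero} hypothesis on $\nu$ enters (you never use it; admissibility only gives finiteness). The paper treats the winding configuration $x_1<y_1<x_2<y_2$ directly: a second pair of inclusion-exclusion identities shows that the alternating sum of $m_\nu$ at the four corners equals $-\nu(\mathcal{G}_{\geq l})+\bar\nu(\mathcal{G}_{\leq l})$ for the winding loop $l$, and the definition of holonomy of a lower submeasure together with $h(\nu)=0$ is what identifies this quantity with $h(l)$. Without that second computation your argument does not close.
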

\begin{proof}
    Admissibility implies that $m_\nu(x,y)$ is well defined. 
    
    Let $x_1,x_2,y_1,y_2\in \del \Gamma$, be four points of $\del \Gamma$ so that the taxi-loop $l$ given by
    \[(x_1,y_1)\to(x_1,y_2)\to (x_2,y_2)\to (x_2,y_1)\to (x_1,y_1)\] is in $\mathcal{G}^\circ$.
    Suppose $x_1<x_2<y_1<y_2$ so that $l$ is the boundary of the rectangle $r = [x_1,x_2]\times [y_1,y_2]$. For a subset $U\subset \mathcal{G}$, let $\mathbf{1}[U]$ denote the indicator function on $U$. There are inclusion-exclusion type identities of indicator functions.
    \[- \mathbf{1}[\mathcal{G}_{\geq (x_1,y_1)}] - \mathbf{1}[\mathcal{G}_{\geq (x_2,y_2)}] + \mathbf{1}[\mathcal{G}_{\geq (x_1,y_2)}] + \mathbf{1}[\mathcal{G}_{\geq (x_2,y_1)}] = \mathbf{1}[(x_1,x_2]\times [y_1,y_2)]\]
    \[- \mathbf{1}[\mathcal{G}_{\leq (x_1,y_1)}] - \mathbf{1}[\mathcal{G}_{\leq (x_2,y_2)}] + \mathbf{1}[\mathcal{G}_{\leq (x_1,y_2)}] + \mathbf{1}[\mathcal{G}_{\leq (x_2,y_1)}] = \mathbf{1}[[x_1,x_2)\times (y_1,y_2]]\]
    These identities imply $m_\nu$ correctly computes the area of $r$.
    \[m_\nu(x_1,y_1) + m_\nu(x_2,y_2) - m_\nu(x_1,y_2) - m_\nu(x_2,y_1) = \nu(r) + \bar{\nu}(r) = \mu(r).\]
    On the other hand, suppose $x_1 < y_1 < x_2 < y_2$. Now $l$ is a taxi loop which winds once, positively, around $\mathcal{G}$. We have
    \[- \mathbf{1}[\mathcal{G}_{\geq (x_1,y_1)}] - \mathbf{1}[\mathcal{G}_{\geq (x_2,y_2)}] + \mathbf{1}[\mathcal{G}_{\geq (x_1,y_2)}] + \mathbf{1}[\mathcal{G}_{\geq (x_2,y_1)}] = -\mathbf{1}[\mathcal{G}_{\geq l}]\]
    \[- \mathbf{1}[\mathcal{G}_{\leq (x_1,y_1)}] - \mathbf{1}[\mathcal{G}_{\leq (x_2,y_2)}] + \mathbf{1}[\mathcal{G}_{\leq (x_1,y_2)}] + \mathbf{1}[\mathcal{G}_{\leq (x_2,y_1)}] = \mathbf{1}[\mathcal{G}_{\leq l}]\] 
    Now we have
    \[m_\nu(x_1,y_1) + m_\nu(x_2,y_2) - m_\nu(x_1,y_2) - m_\nu(x_2,y_1) = - \nu(\mathbf{1}[\mathcal{G}_{\geq l}]) + \bar{\nu}(-\mathbf{1}[\mathcal{G}_{\leq l}]).\]
    Since $\nu$ was assumed to be holonomy zero, this is preciesly the holonomy of $l$.
\end{proof}
This next lemma is in a way the main point of this paper.
\begin{lemma}
\label{Xmu is dimension n-1}
Suppose $\mu$ is tropical rank $n$, then holonomy zero lower submeasures can have at most $n$ shared points. Consequently, if $\mu$ is also discrete, $X_\mu$ is a polyhedral complex of dimension at most $n-1$. 
\end{lemma}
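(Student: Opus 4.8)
The plan is to feed $n+1$ shared points of such a submeasure into the tropicalized determinant relations, exactly as in the proof of Lemma~\ref{no n-intersection}, and derive a contradiction. First I would record the structural fact that the shared points of a monotone partition $\mu=\nu+\bar\nu$ are pairwise linked. If $g$ is a shared point then $\nu$ agrees with $\mu$ on $\mathcal{G}_{<g}$ and $\bar\nu$ agrees with $\mu$ on $\mathcal{G}_{>g}$; hence for two shared points $g,g'$ no element of $\mathcal{G}$ can lie $<g$ and $>g'$ simultaneously (such an element would carry $\mu$-mass lying entirely in $\nu$ and entirely in $\bar\nu$), and by transitivity of $<$ this forces $g$ and $g'$ to be incomparable with nothing strictly between them. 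A short inspection of the cyclic order on $\del\Gamma$ then excludes the ``disjoint, side by side'' configuration, leaving only the crossing one: any two shared points cross. Consequently $n+1$ of them can be enumerated as $(X_1,Y_1),\dots,(X_{n+1},Y_{n+1})$ with $X_1<X_2<\dots<X_{n+1}<Y_1<Y_2<\dots<Y_{n+1}$ in the cyclic order on $\del\Gamma$. I expect this linking statement --- in particular the exclusion of the side-by-side case --- to be the main obstacle; morally it says a complementary region can only sit inside a family of chords that all pass through it.

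Next I would produce a positive box at each of these points. Choosing $x_i,y_i$ in $\del\Gamma$ that avoid the countably many fixed endpoints, with $x_i$ slightly cyclically before $X_i$ and $y_i$ slightly cyclically before $Y_i$ --- the perturbation small enough that $x_1<\dots<x_{n+1}<y_1<\dots<y_{n+1}$ is preserved and that for each $j\in\{1,\dots,n\}$ the rectangle $B_j:=[x_j,x_{j+1}]\times[y_j,y_{j+1}]$ contains $(X_j,Y_j)$ in its interior --- all corners of the $B_j$ lie in $\mathcal{G}^\circ$, and since $(X_j,Y_j)\in\supp(\mu)$ we get $\mu(B_j)>0$ for every such $j$.

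Then I would invoke tropical rank $n$. Since $\mu$ is tropical rank $n$, its holonomy function $h$ is a tropical rank $n$ holonomy function (tropical rank $n$-ness only depends on the curvature, being unchanged by adding a flat holonomy function, which alters a potential by a term of the form $f(x)+g(y)$), and by the preceding lemma $m_\nu(x,y)=-\nu(\mathcal{G}_{\ge(x,y)})-\bar\nu(\mathcal{G}_{\le(x,y)})$ is a potential for $h$ --- this is where holonomy zero enters. Applying the tropical rank $n$ condition to $m_\nu$ and the $2(n+1)$ points $x_i,y_i$, two distinct permutations of $S_{n+1}$ realize $\max_{\sigma}\sum_i m_\nu(x_i,y_{\sigma(i)})$. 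As in the proof of Lemma~\ref{no n-intersection}, the map $\sigma\mapsto\sum_i m_\nu(x_i,y_{\sigma(i)})$ is weakly increasing for the reverse Bruhat order, whose unique maximal element is the identity; so one maximizer is the identity and a second may be taken covered by it, hence a transposition $(j,j+1)$ with $j\in\{1,\dots,n\}$. The difference of the two sums is then $h(x_j,x_{j+1};y_j,y_{j+1})=\mu(B_j)$, forcing $\mu(B_j)=0$ and contradicting the previous step. Hence every holonomy zero lower submeasure has at most $n$ shared points.

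Finally, assuming $\mu$ discrete, the bound just proved shows every holonomy zero lower submeasure has finitely many shared points, so $X_\mu$ lies inside the cube complex $K\subset\prod_{g\in\supp\mu}[0,\mu(\{g\})]$ of the preceding lemma. On the open face of $K$ cut out by a splitting $\supp\mu=L\sqcup F\sqcup U$, where $F$ is the set of genuinely split chords (so $|F|\le n$), the holonomy function restricts to an affine function --- shifting the mass of a chord of $F$ from $\bar\nu$ to $\nu$ changes $h$ by exactly that amount --- which is non-constant whenever $F\ne\emptyset$; thus $X_\mu$ meets this face in an affine slice of dimension $|F|-1\le n-1$ (and meets a vertex, the case $F=\emptyset$, in at most a point). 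These slices are compatible along common faces, so $X_\mu$ is a polyhedral complex of dimension at most $n-1$.
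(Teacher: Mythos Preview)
Your overall strategy matches the paper's---feed putative shared points into the tropical determinant relations and derive a contradiction---but the executions differ in an instructive way, and your linking argument has a gap.

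On linking: you correctly deduce that any two shared points are incomparable in the partial order on $\mathcal{G}$, but incomparable pairs come in two flavors, crossing and side-by-side, and your ``short inspection of the cyclic order'' does not exclude the latter. Two side-by-side geodesics are incomparable with nothing strictly between them, so your stated criterion does not separate the cases. (The paper's proof does not spell this step out either; it simply labels the shared points $(u_i,v_i)$ and writes $u_i<x_i<u_{i+1}$, $v_i<y_i<v_{i+1}$ as if the crossing arrangement shown in its figure were already in hand.)

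On the main step, the paper's route is more direct than yours. Once the shared points are arranged as $u_1<\dots<u_{k}<v_1<\dots<v_{k}$, you perturb to nearby $x_i,y_i$ and rerun the Bruhat-order argument of Lemma~\ref{no n-intersection}, forcing some adjacent box $[x_j,x_{j+1}]\times[y_j,y_{j+1}]$ to have $\mu$-measure zero while containing a support point. This is correct, but notice that it uses nothing about $\nu$ or about holonomy zero: you are simply reapplying Lemma~\ref{no n-intersection} to pairwise-crossing support points of $\mu$, so as written you only bound the number of \emph{pairwise-crossing} shared points of an arbitrary lower submeasure. The paper instead exploits the specific potential $m_\nu$, whose very existence as a potential is what consumes the holonomy-zero hypothesis. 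Since $m_\nu\le 0$ everywhere, with equality exactly on points lying neither below $\supp\nu$ nor above $\supp\bar\nu$, the paper places the test points $(x_i,y_i)$ in this zero locus, interleaved between the shared points. Then $m_\nu(x_i,y_i)=0$ on the diagonal, while each off-diagonal $(x_i,y_j)$ is forced above or below some shared point and hence has $m_\nu(x_i,y_j)<0$. The identity is therefore the \emph{unique} maximizer of $\sum_i m_\nu(x_i,y_{\sigma(i)})$, contradicting tropical rank $n$ at once---no Bruhat order needed. Your closing paragraph on the polyhedral structure is fine.
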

\begin{proof}
    Let $\nu$ be a holonomy zero lower submeasure of $\mu$. 
    Recall that $m_\nu$ is zero precisely on the points which are neither below $\nu$ nor above $\bar{\nu}$. Suppose $(u_1,v_1), ... ,(u_n,v_n)\in \mathcal{G}$ are the shared points of $\nu$. 
    Choose points $(x_1,y_1), \dots ,$ $(x_n,y_n)$ in the zero set of $m$ with $u_i < x_i < u_{i+1}$ and $v_i < y_i < v_{i+1}$. 
    This implies that if $i\neq j$, $(x_i,y_j)$ is above or below some shared point, thus $m(x_i,y_j) < 0$. This means that the term $\sum m(x_i,y_i)$ uniquely maximizes the tropical determinant, thus $\mu$ cannot be tropical rank $n-1$ or less.
\end{proof}
\begin{figure}[h]
\label{fig:n shared points}
    \centering
    \includegraphics[width=9cm]{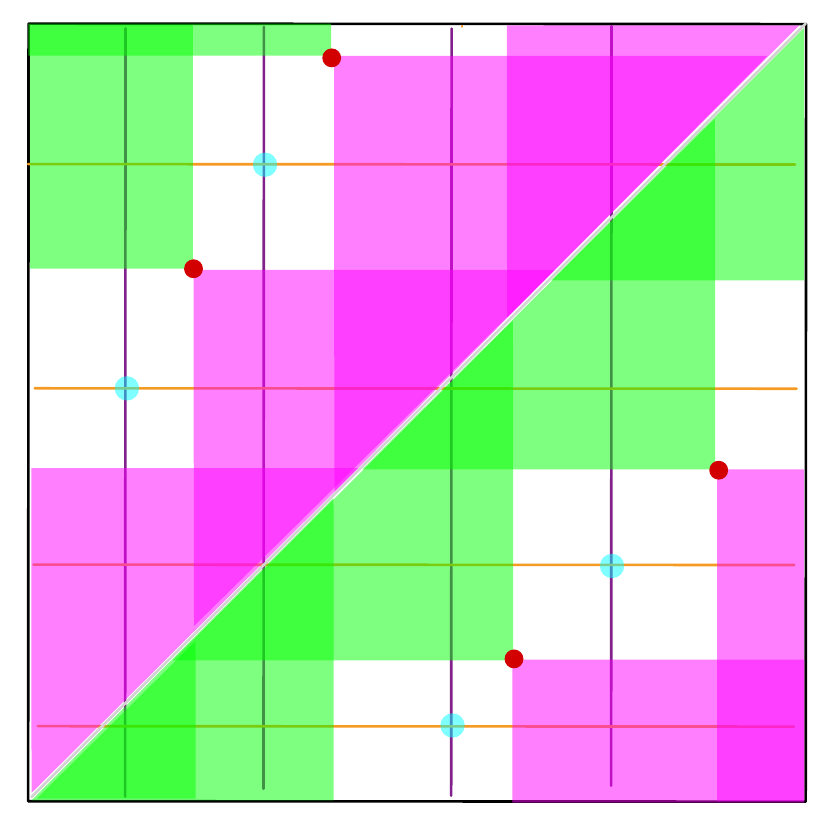}
    \caption{A partition with 4 shared points}
    
\end{figure}

\subsection{The metric}
Now we put a metric on $X_\mu$.
Let $\nu_1,\nu_2\in X_\mu$. The difference $\nu_2 - \nu_1$ is a signed measure of total measure zero. The metric on $X_\mu$ which is simplest to define is $d'(\nu_1,\nu_2) = |(\nu_2 - \nu_1)^+|$, the total measure of the positive part of the difference. Instead we opt for a different metric, the advantages of which will start to emerge in the next subsection. 
We can push forward $\nu_2 - \nu_1$ to $\del \Gamma$ and integrate to get a function on $\del \Gamma$.
\[f_{\nu_1,\nu_2}(x) := \int_{(x_0,x]} (\pi_1)_* (\nu_2 - \nu_1)\]
Here $x_0\in \del \Gamma$ is a basepoint, and changing the basepoint only changes $f_{\nu_1,\nu_2}$ by a constant.
\begin{definition}
\[d(\nu_1,\nu_2) = \sup(f_{\nu_1,\nu_2}) - \inf(f_{\nu_1,\nu_2})\]
\end{definition}

\begin{remark}
This metric is inspired by a metric in symplectic geometry called Lagrangian Hofer distance. If $L\subset M$ is a connected Lagrangian in a symplectic manifold, the space of infinitesimal Hamiltonian deformations of $L$ is $C^\infty(L)/\R$. The norm $\sup(f) - \inf(f)$ defines a Finsler metric on the Hamiltonian isotopy class of $L$. In our context, if $\mu$ is an absolutely continuous geodesic current, we can think of it as a symplectic form, and holonomy-zero, monotone partitions correspond to monotonic loops in $\mathcal{G}$ which we can think of as Lagrangians. See the appendix for more explanation.
\end{remark}


\begin{lemma}
    $(X_\mu,d)$ is a metric space.
\end{lemma}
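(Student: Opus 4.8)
The goal is to verify that $d(\nu_1,\nu_2) = \sup(f_{\nu_1,\nu_2}) - \inf(f_{\nu_1,\nu_2})$ satisfies the axioms of a metric on $X_\mu$: non-negativity, symmetry, the triangle inequality, and that $d(\nu_1,\nu_2)=0$ implies $\nu_1=\nu_2$. The plan is to dispatch these one at a time, using the fact that $\nu_2 - \nu_1$ is a finite signed measure of total mass zero (because $\nu_1,\nu_2$ are admissible lower submeasures of the same current, they both differ from $\nu_l$ by a finite measure and both have the same total mass as $\nu_l$, so their difference is finite with zero total mass).

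First I would record the basic properties of the function $f_{\nu_1,\nu_2}$. Since $\nu_2-\nu_1$ is a finite signed measure, $f_{\nu_1,\nu_2}$ is a bounded function on $\del\Gamma$; changing the basepoint $x_0$ shifts it by a constant, so $\sup f - \inf f$ is basepoint-independent and $d$ is well-defined. Because the pushforward $(\pi_1)_*(\nu_2-\nu_1)$ has total mass zero, $f_{\nu_1,\nu_2}$ returns to its starting value after going once around $\del\Gamma\simeq S^1$, so it is genuinely a function on the circle (up to the additive constant). Non-negativity of $d$ is immediate, and symmetry follows from $f_{\nu_2,\nu_1} = -f_{\nu_1,\nu_2}$, whence $\sup f_{\nu_2,\nu_1} - \inf f_{\nu_2,\nu_1} = -\inf f_{\nu_1,\nu_2} + \sup f_{\nu_1,\nu_2}$. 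For the triangle inequality, I would use the cocycle identity $f_{\nu_1,\nu_3} = f_{\nu_1,\nu_2} + f_{\nu_2,\nu_3}$ (valid for a fixed common basepoint), together with the elementary fact that for bounded functions $g,h$ on a set, $\sup(g+h) - \inf(g+h) \le (\sup g - \inf g) + (\sup h - \inf h)$.

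The one axiom requiring a genuine argument is that $d(\nu_1,\nu_2)=0$ forces $\nu_1=\nu_2$. If $d=0$ then $f_{\nu_1,\nu_2}$ is constant, which means $(\pi_1)_*(\nu_2-\nu_1)=0$, i.e. the two submeasures have the same marginal on the first factor. This does not immediately give $\nu_1=\nu_2$ for an arbitrary pair of measures, so here the monotone-partition structure must be used: both $\nu_1$ and $\nu_2$ are lower submeasures of the \emph{same} $\mu$, so on the set $\mathcal{G}_{<(x,y)}$ below any support point of $\nu_i$ the measures $\nu_i$ and $\mu$ agree. I would argue that if $\nu_1\ne\nu_2$ then, after possibly swapping them, there is a region where $\nu_2$ has strictly more mass than $\nu_1$ in a way that is ``below'' the support of $\bar\nu_1$, producing a point where $\nu_2-\nu_1$ is a nonzero positive measure whose pushforward to the first factor cannot be cancelled — contradicting $(\pi_1)_*(\nu_2-\nu_1)=0$. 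Concretely: the support condition on lower submeasures means $\nu_2-\nu_1$, restricted to where it is positive, lives on a region that projects injectively enough to the $\del\Gamma$ factor (two distinct order-ideal-type submeasures of $\mu$ differ on a ``staircase'' region, and such a region has nonconstant first marginal unless it is empty). This is the step I expect to be the main obstacle, since it is the only place the specific geometry of lower submeasures of $\mu$ — rather than formal properties of signed measures — genuinely enters, and it requires being careful that a nonzero difference of lower submeasures cannot be ``vertical'' (supported on a set of the form $\{x\}\times(\text{interval})$ summed over $x$ in a null set), which is ruled out because both are submeasures of the locally finite current $\mu$ and differ only along a monotone interface.

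I would also note in passing that $d$ can be infinite a priori if one allowed non-admissible submeasures, but by Definition of admissibility every element of $X_\mu$ differs from $\nu_l$ by a finite measure, so $\nu_2-\nu_1$ is always finite and $d$ is finite-valued; this should be stated explicitly so that $(X_\mu,d)$ is a bona fide metric space rather than an extended one.
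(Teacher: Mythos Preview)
Your handling of symmetry, the triangle inequality, finiteness, and well-definedness matches the paper's: those are declared ``straightforward'' there too, and your cocycle/oscillation argument is exactly right.

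The gap is in non-degeneracy. You correctly reach $(\pi_1)_*(\nu_2-\nu_1)=0$, but from there your sketch (``differ on a staircase region that projects nontrivially'', ``vertical differences ruled out by local finiteness and the monotone interface'') does not supply a mechanism that actually forces $\nu_1=\nu_2$. Local finiteness of $\mu$ does not by itself prevent $\epsilon=\nu_2-\nu_1$ from being supported on two incomparable points with the same first coordinate and opposite signs; something sharper is needed.

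The paper's argument runs through the Hahn decomposition $\epsilon=\epsilon_+-\epsilon_-$ and then uses \emph{both} monotone partitions at once. From $\epsilon=\nu_2-\nu_1=\bar\nu_1-\bar\nu_2$ one gets $\supp(\epsilon_+)\subset\supp(\nu_2)\cap\supp(\bar\nu_1)$ and $\supp(\epsilon_-)\subset\supp(\nu_1)\cap\supp(\bar\nu_2)$, so any $p\in\supp(\epsilon_+)$ and $q\in\supp(\epsilon_-)$ satisfy $p\nless q$ and $q\nless p$. That mutual incomparability, together with $(\pi_1)_*\epsilon_+=(\pi_1)_*\epsilon_-$ (and compactness of the supports), is what the paper uses to conclude $\supp(\epsilon_+)=\supp(\epsilon_-)$, that this common support projects injectively to $\partial\Gamma$, and hence $\epsilon_+=\epsilon_-$. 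Your proposal gestures at the monotone structure but never isolates this two-sided incomparability constraint, which is the actual content of the step you flagged as the obstacle.
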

\begin{proof}
Symmetry and triangle inequality are straight forward to check. Non-degeneracy requires more effort. Suppose $d(\nu_1,\nu_2) = 0$. This means $f_{\nu_2,\nu_1}$ is constant, therefore $(\pi_1)_*(\nu_2 - \nu_1) = 0$. Let $\epsilon = \nu_2-\nu_1$, and let $\epsilon = \epsilon_+ - \epsilon_-$ be the decomposition into positive measures coming from the Hahn decomposition theorem. Since $\nu_2 - \nu_1 = \bar{\nu}_1 - \bar{\nu}_2$, we have
\[\supp(\epsilon_+) \subset \supp(\nu_2) \cap \supp(\bar{\nu}_1)\]
\[\supp(\epsilon_-) \subset \supp(\nu_1) \cap \supp(\bar{\nu}_2)\]
Since $\mu = \nu_1 + \bar{\nu}_1$, and $\mu = \nu_2 + \bar{\nu}_2$ are monotone partitions, we see
\[p \in \supp(\epsilon_+) \;\;\; \text{and} \;\;\; q\in \supp(\epsilon_-) \;\;\; \Rightarrow \;\;\; q \nless p \;\;\; \text{and} \;\;\; p \nless q \]
Suppose $(x,y)\in \supp(\epsilon_+)$. Since $\pi_*\epsilon_- = \pi_*\epsilon_+$, and both $\epsilon_+$ and $\epsilon_-$ have support in some compact $C\in \mathcal{G}$, there must be some $y'$ with $(x,y')\in \supp(\epsilon_-)$. By the observation above, $y=y'$, and actually $(x,y)$ is the only support point of $\epsilon_-$ and $\epsilon_+$. We conclude that $\supp(\epsilon_-) = \supp(\epsilon_+)$, and this subset projects homeomorphically to its image in $\del\Gamma$. Then, $\pi_*\epsilon_- = \pi_*\epsilon_+$ implies $\epsilon_- = \epsilon_+$ and thus $\nu_1 = \nu_2$.

\end{proof}

We now recall the notion of translation length. Suppose $\phi$ is an isometry of a metric space $X$. The translation length of $\phi$ is 
\[\lim_{n\to \infty} \frac{d(x,\phi^n(x))}{n}\]
for any choice of $x\in X$. If $y\in X$ is another point, we have 
\[\lim_{n\to \infty} \frac{d(y,\phi^n(y))}{n} \leq \lim_{n\to \infty} \frac{2 d(y,x) + d(x,\phi^n(x))}{n} = \lim_{n\to \infty} \frac{d(x,\phi^n(x))}{n}\]
so the definition doesn't depend on the choice of point.
If $x\in X$ satisfies $d(x,\phi^n(x)) = n d(x,\phi(x))$ for all $n\in \mathbb{N}$, then the translation length is simply $d(x,\phi(x))$. 

\begin{lemma}
\label{translation length and area}
If $\mu$ is a geodesic current, and $\gamma\in \Gamma$ then a submeasure $\nu\in X_\mu$ such that $\nu = \mu$ on both $\mathcal{G}_{<(\gamma^+,\gamma^-)}$ and $\mathcal{G}_{<(\gamma^-,\gamma^+)}$ will satisfy $d(\nu,\gamma^n\nu) = n d(\nu,\gamma \nu)$ thus the translation length of $\gamma$ is $d(\nu,\gamma \nu)$. 
\end{lemma}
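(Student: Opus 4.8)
The plan is to show that a submeasure $\nu$ which agrees with $\mu$ below both fixed points $(\gamma^+,\gamma^-)$ and $(\gamma^-,\gamma^+)$ acts as an ``axis point'' for the isometry $\gamma$, in the sense that $\gamma$ shifts $\nu$ in a way that adds up additively. Concretely, I would first unwind what $f_{\nu,\gamma\nu}$ looks like. By definition $f_{\nu,\gamma\nu}$ integrates $(\pi_1)_*(\gamma\nu - \nu)$ starting from a basepoint; the key observation is that since $\nu = \mu = \gamma\nu$ on $\mathcal{G}_{<(\gamma^\pm,\gamma^\mp)}$ (using that $\gamma$ fixes these points and preserves $\mu$, hence preserves $\mu$ restricted to the order ideals below them), the signed measure $\gamma\nu - \nu$ is supported away from neighborhoods of $\gamma^+$ and $\gamma^-$ in the $\del\Gamma$-coordinate. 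So $f_{\nu,\gamma\nu}$ is locally constant near $\gamma^+$ and near $\gamma^-$, taking some values $c^+$ and $c^-$ there. Since $\gamma\nu - \nu$ has total mass zero, these two constants together with the total variation on the two arcs between $\gamma^\pm$ determine $\sup f - \inf f$.

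The main step is then to check the additivity $d(\nu,\gamma^n\nu) = n\, d(\nu,\gamma\nu)$. For this I would write $\gamma^n\nu - \nu = \sum_{k=0}^{n-1} \gamma^k(\gamma\nu - \nu)$ (here $\gamma^k$ acts by pushforward on signed measures). Pushing forward to $\del\Gamma$ and integrating, $f_{\nu,\gamma^n\nu} = \sum_{k=0}^{n-1} (f_{\nu,\gamma\nu}\circ \gamma^{-k} + \text{const})$, where the constants are forced by the choice of basepoint. The crucial geometric input is that the supports of the translates $\gamma^k(\gamma\nu-\nu)$ are ``nested along the axis'': because $\gamma$ acts on $\del\Gamma$ with north-south dynamics (attracting $\gamma^+$, repelling $\gamma^-$), and because each summand is supported in a region bounded away from $\gamma^\pm$ in its first coordinate, the arcs $\gamma^{-k}(\supp(\pi_1)_*(\gamma\nu-\nu))$ march monotonically from near $\gamma^-$ toward $\gamma^+$ as $k$ increases, without overlapping in a way that causes cancellation. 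Consequently the running integral $f_{\nu,\gamma^n\nu}$ is a monotone ``staircase'' whose total rise (or fall) is exactly $n$ times that of $f_{\nu,\gamma\nu}$, so $\sup - \inf$ scales by $n$.

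From $d(\nu,\gamma^n\nu) = n\, d(\nu,\gamma\nu)$, the general fact recalled just before the statement — that an orbit on which distances add linearly computes translation length as $d(x,\phi(x))$ — gives immediately that the translation length of $\gamma$ on $(X_\mu,d)$ is $d(\nu,\gamma\nu)$. One should also remark that such a $\nu$ exists and lies in $X_\mu$: the submeasure obtained by taking $\mu$ below a monotone loop threaded between $(\gamma^+,\gamma^-)$ and $(\gamma^-,\gamma^+)$ and adjusting by a finite positive measure to achieve holonomy zero is admissible and has the required agreement property; I expect the existence claim to be routine given the earlier discussion of $\nu_l$ and holonomy.

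The hard part will be making the ``nested supports, no cancellation'' argument rigorous — precisely controlling that the positive and negative parts of the telescoped sum $\sum_k \gamma^k(\gamma\nu-\nu)$ do not interfere, so that $\sup f - \inf f$ is genuinely additive rather than merely subadditive. This is where the hypothesis that $\nu$ agrees with $\mu$ below \emph{both} fixed points is essential, and where the north-south dynamics of $\gamma$ on $\del\Gamma$ must be used carefully; the rest is bookkeeping with pushforwards of signed measures and the definition of $d$.
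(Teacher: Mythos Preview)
Your overall strategy---telescope $\gamma^n\nu-\nu=\sum_{k=0}^{n-1}\gamma^k(\gamma\nu-\nu)$ and argue there is no cancellation---is the same as the paper's, but the mechanism you propose for ``no cancellation'' is not correct and would not close the argument. Two specific issues:

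First, the claim that $f_{\nu,\gamma\nu}$ is \emph{locally constant} near $\gamma^\pm$, and that the supports $\gamma^{-k}\bigl(\supp(\pi_1)_*(\gamma\nu-\nu)\bigr)$ ``march monotonically'' along $\del\Gamma$, is too strong. The signed measure $\gamma\nu-\nu$ vanishes on the set $C$ of geodesics \emph{not} crossing the axis $(\gamma^-,\gamma^+)$, but the complement $D_+\cup D_-$ of $C$ projects under $\pi_1$ onto all of $\del\Gamma\setminus\{\gamma^\pm\}$; so $(\pi_1)_*(\gamma\nu-\nu)$ can be supported on the whole of both arcs, and the translates by $\gamma^{-k}$ overlap heavily rather than forming a staircase of disjoint steps. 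A ``nested supports'' picture therefore cannot be what forces $\sup-\inf$ to add.

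Second, and this is the missing idea, the paper works not on $\del\Gamma$ but in $\mathcal{G}$, using the partial order. Write $D_+=\{(x,y):x<\gamma^-<y<\gamma^+\}$ and $D_-=\{(x,y):y<\gamma^-<x<\gamma^+\}$ for the two components of geodesics crossing the axis. The hypothesis on $\nu$ forces $\gamma^n\nu-\nu$ to vanish on $C$. On $D_+$, the action of $\gamma$ moves every point strictly \emph{up} in the order on $\mathcal{G}$, so for a lower submeasure one has $\gamma^n\nu-\nu\ge 0$ on $D_+$ for all $n$; symmetrically $\gamma^n\nu-\nu\le 0$ on $D_-$. Since $\pi_1(D_+)$ and $\pi_1(D_-)$ are the two complementary arcs of $\del\Gamma$ between $\gamma^\pm$, this sign information makes $f_{\nu,\gamma^n\nu}$ monotone on each arc, so $\sup f-\inf f$ is exactly the total positive mass $\int_{D_+}(\gamma^n\nu-\nu)$. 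That integral then telescopes trivially:
\[
\int_{D_+}(\gamma^n\nu-\nu)=\sum_{k=1}^n\int_{D_+}(\gamma^k\nu-\gamma^{k-1}\nu)=n\int_{D_+}(\gamma\nu-\nu)=n\,d(\nu,\gamma\nu).
\]
So the ``no cancellation'' you were looking for is not a statement about supports in $\del\Gamma$ but a sign statement on $D_\pm$ coming from monotonicity of $\nu$ together with the direction $\gamma$ shifts the order on each half.
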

\begin{proof}
    The difference $\gamma^n\nu - \nu$ will be zero on the set $C\subset \mathcal{G}$ of geodesics not intersecting $(\gamma^-,\gamma^+)$.
    \[C = \mathcal{G}_{\leq (\gamma^-,\gamma^+)} \cup \mathcal{G}_{\geq (\gamma^-,\gamma^+)} \cup \mathcal{G}_{\leq (\gamma^+,\gamma^-)} \cup \mathcal{G}_{\geq (\gamma^+,\gamma^-)}\]
    Both $\gamma^n\nu$ and $\nu$ are zero above the fixed points $(\gamma^-,\gamma^+)$ and $(\gamma^+,\gamma^-)$, agree with $\mu$ below the fixed points, and must agree with eachother at the fixed points. The set of geodesics intersecting $(\gamma^-,\gamma^+)$ has two components
    \[D_+ = \{(x,y) : x < \gamma^- < y < \gamma^+\}\]
    \[D_- = \{(x,y) : y < \gamma^- < x < \gamma^+\}\]
    and $\gamma$ translates $D_+$ upward while it translates $D_-$ downward. This means that $\gamma^n\nu - \nu$ is always positive on $D_+$ while it is always negative on $D_-$. This means that the distance from $\nu$ to $\gamma^n \nu$ is just the integral of $\gamma^n \nu - \nu$ over $D^+$. 
    
    \[d(\nu,\gamma^n\nu) = \int_{D^+} \gamma^n\nu - \nu = \sum_{k=1}^n \int_{D^+} \gamma^k\nu - \gamma^{k-1}\nu = n d(\nu,\gamma \nu)\]

\end{proof}
The distance $d(\nu,\gamma \nu)$ is the integral of $\gamma \nu - \nu$ over $D^+$ which is equal to the integral of $\nu$ over the box $[x,\gamma x] \times [\gamma^-,\gamma^+]$ for $x\in \del\Gamma^\circ$ with $\gamma^+ < x <\gamma^-$. Together with Lemma \ref{hilbert length box} this shows that the geometry of $(X_\mu,d)$ captures symmetrized periods.

\begin{lemma} If $\mu$ is the curvature of a bundle $P\in\mathcal{A}(S)$, the translation length of $\gamma$ acting on $X_\mu$ is $l_P(\gamma) + l_P(\gamma^{-1})$. 
\end{lemma}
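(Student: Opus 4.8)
The plan is to combine the two structural facts established just before this statement: Lemma~\ref{translation length and area}, which computes translation length geometrically, and Lemma~\ref{hilbert length box}, which expresses $l_P(\gamma)+l_P(\gamma^{-1})$ as a single cross ratio. Concretely, pick any $x\in\del\Gamma^\circ$ with $\gamma^+<x<\gamma^-$, and choose a lower submeasure $\nu\in X_\mu$ that agrees with $\mu$ on both $\mathcal{G}_{<(\gamma^+,\gamma^-)}$ and $\mathcal{G}_{<(\gamma^-,\gamma^+)}$; such a $\nu$ exists because the two fixed points of $\gamma$ are incomparable in the order on $\mathcal{G}$, so one can take the lower submeasure cut off by a monotonic loop passing just ``above'' both fixed points, then subtract its holonomy by adding a finite positive measure if necessary to land in the holonomy-zero locus. (Here one uses that $\mu$ is nullhomologous, so $X_\mu$ is nonempty and the holonomy bookkeeping makes sense.)

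First I would invoke Lemma~\ref{translation length and area} to conclude that the translation length of $\gamma$ on $X_\mu$ equals $d(\nu,\gamma\nu)$. Next I would use the computation in the paragraph following that lemma: $d(\nu,\gamma\nu)$ is the integral of $\gamma\nu-\nu$ over $D^+=\{(x,y):x<\gamma^-<y<\gamma^+\}$, which by the monotone-partition property equals $\nu\big([x,\gamma x]\times[\gamma^-,\gamma^+]\big)$, and since $\nu$ agrees with $\mu$ below the fixed points this box integral is the same as $\mu\big([x,\gamma x]\times[\gamma^-,\gamma^+]\big)$. Finally, the box $[x,\gamma x]\times[\gamma^-,\gamma^+]$ has boundary equal to the taxi loop whose holonomy is $h(x,\gamma x;\gamma^+,\gamma^-)$, so $\mu$-measure of this box equals that holonomy value, and by Lemma~\ref{hilbert length box} applied to the bundle $P$ this is exactly $l_P(\gamma)+l_P(\gamma^{-1})$.

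The one point requiring care — and the main obstacle — is the existence of a suitable holonomy-zero submeasure $\nu$ that is simultaneously full below \emph{both} fixed points of $\gamma$; I would argue this by starting from $\nu_l$ for a monotonic loop $l\subset\mathcal{G}^\circ$ that separates the two fixed points from everything above them, observing that $\nu_l$ is admissible and that adjusting by a finite positive measure supported away from the two relevant half-spaces changes the holonomy by a real number that can be tuned to zero (using that the holonomy of $\nu_l$ is $h(l)$ and that adding $\epsilon$ adds $|\epsilon|$). Since the correction is supported away from $\mathcal{G}_{<(\gamma^{\pm},\gamma^{\mp})}$, the resulting $\nu$ still agrees with $\mu$ below both fixed points, so Lemma~\ref{translation length and area} applies. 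Everything else is a direct chaining of the two cited lemmas with the box-equals-holonomy identity.
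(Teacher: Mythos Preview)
Your proposal is correct and follows exactly the paper's approach: the lemma is stated in the paper as an immediate consequence of Lemma~\ref{translation length and area}, the box computation in the paragraph following it, and Lemma~\ref{hilbert length box}. You are in fact more careful than the paper in flagging and sketching the existence of a suitable holonomy-zero $\nu$ full below both fixed points, which the paper leaves implicit; your adjustment argument is a bit loose (if $h(l)>0$ you must \emph{remove} measure from the region below $l$ but above the two fixed-point downsets, rather than add), but the idea is right and this is not a gap in your logic so much as a detail the paper also does not spell out.
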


\subsection{Universal symmetric dual space}
If $\mu$ is symmetric, than we can define another smaller universal dual space. Let $\tau:\mathcal{G}\to \mathcal{G}$ be the involution $(x,y)\mapsto (y,x)$. 
\begin{definition}
Let $\mu\in \mathcal{C}(S)$, and suppose $\tau_*\mu = \mu$. A monotone partition $\mu = \nu + \bar{\nu}$ is \textbf{symmetric} if $\bar\nu = \tau_*\nu$. We also call $\nu$ a symmetric lower submeasure.
\end{definition}

\begin{lemma}
\label{symmetric implies hol zero}
Symmetric lower submeasures are holonomy-zero.
\end{lemma}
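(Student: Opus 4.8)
The plan is to show that the holonomy $h(\nu)$ of a symmetric lower submeasure $\nu$, computed against the canonical holonomy function $h\in\mathcal{H}(S)$ which lifts $\mu$ to $\mathcal{A}(S)$, is forced to equal its own negative, hence vanishes. Two features of the coordinate swap $\tau$ drive the argument: it \emph{reverses} the half-space order on $\mathcal{G}$ (so for a winding loop $l$ it exchanges $\mathcal{G}_{\le l}$ with $\mathcal{G}_{\ge\tau_* l}$ and $\mathcal{G}_{>l}$ with $\mathcal{G}_{<\tau_* l}$), and it \emph{reverses the orientation} of the surface $\mathcal{G}\subset\del\Gamma\times\del\Gamma$; meanwhile it commutes with the $\Gamma$-action and fixes the measure, $\tau_*\mu=\mu$.

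The first and main step is the lemma $\tau^* h=-h$, where $(\tau^* h)(z):=h(\tau_* z)$ for $z\in Z_1(\mathcal{G})$. Since $\tau$ is orientation-reversing on $\mathcal{G}$, for a box $r$ the $1$-cycle $\tau_*(\del r)$ is $-\del$ of the box $\tau(r)$; combined with $\tau_*\mu=\mu$ this gives $\curv(\tau^* h)=-\mu$, so $\curv(-\tau^* h)=\mu$. On the other hand $\tau$ commutes with $\Gamma$, so conjugation by a lift of $\tau$ identifies the central extension $\Gamma_{\tau^* h}$ with $\Gamma_h$ as extensions of $\Gamma$ by $\R$, whence $\chi(\Gamma_{-\tau^* h})=-\chi(\Gamma_{\tau^* h})=-\chi(\Gamma_h)=0$. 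Thus $-\tau^* h$ is a positive holonomy function with curvature $\mu$ that lifts to $\mathcal{A}(S)$, and by uniqueness of $h$ among such functions, $-\tau^* h=h$.

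Then comes the computation. Write $\mu=\nu+\bar\nu$ with $\bar\nu=\tau_*\nu$, so also $\nu=\tau_*\bar\nu$. Fix a monotonic taxi loop $l\subset\mathcal{G}^\circ$ wrapping once around and disjoint from $\supp(\mu)=\supp(\nu)\cup\supp(\bar\nu)$; since $\tau$ preserves $\supp(\mu)$, the image loop $\tau_* l$ is another such loop. Because $\nu-\nu_l=\nu|_{\mathcal{G}_{>l}}-\bar\nu|_{\mathcal{G}_{\le l}}$ is a Hahn decomposition, the two defining conditions for holonomy of submeasures give
\[h(\nu)=h(l)+\nu(\mathcal{G}_{>l})-\bar\nu(\mathcal{G}_{\le l}).\]
Applying the same with $l$ replaced by $\tau_* l$, and rewriting the measure terms using $\mathcal{G}_{>\tau_* l}=\tau(\mathcal{G}_{<l})$, $\mathcal{G}_{\le\tau_* l}=\tau(\mathcal{G}_{\ge l})$, $\tau_*\nu=\bar\nu$ and $\tau_*\bar\nu=\nu$, gives
\[h(\nu)=h(\tau_* l)+\bar\nu(\mathcal{G}_{\le l})-\nu(\mathcal{G}_{>l}).\]
Adding the two identities cancels the measure terms, and the lemma yields $2h(\nu)=h(l)+h(\tau_* l)=h(l)+(\tau^* h)(l)=h(l)-h(l)=0$. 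Hence $h(\nu)=0$, i.e.\ $\nu$ is holonomy-zero.

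The hard part is the lemma $\tau^* h=-h$: it is the only place one goes beyond formal manipulation, using both that $\tau$ reverses the orientation of $\mathcal{G}$ (to flip the curvature cocycle) and the uniqueness of the canonical lift, normalized by $\chi(\Gamma_h)=0$, to remove the residual flat ambiguity. The only other care needed is the genericity assumption that $l$ misses $\supp(\mu)$, which one disposes of by a routine approximation (or by bookkeeping the finitely many support points $l$ meets).
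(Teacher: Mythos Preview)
Your proof is correct and follows the same strategy as the paper's: both hinge on the antisymmetry $\tau^* h=-h$ of the canonical holonomy function, from which the paper deduces $h(\tau_*\bar\nu)=-h(\nu)$ and hence $h(\nu)=0$ when $\nu=\tau_*\bar\nu$. Your two-loop computation is just a concrete unfolding of that one-line step; the main difference is that you actually \emph{justify} the antisymmetry via the $\chi$-normalization (showing $-\tau^* h$ has curvature $\mu$ and $\chi(\Gamma_{-\tau^*h})=0$, hence equals $h$), whereas the paper simply asserts ``since $\mu$ is symmetric, the corresponding holonomy function $h$ is symmetric'' without further argument.
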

\begin{proof}
    Since $\mu$ is symmetric, the corresponding holonomy function $h$ is symmetric, meaning $h(z) := -h(\tau_* z)$ for any cycle $z$ in $\mathcal{G}^\circ$. This means that $h(\tau_* \bar{\nu}) = - h(\nu)$ for all admissible lower submeasures $\nu$. If $\nu = \tau_*\bar{\nu}$, then $\nu$ must be holonomy-zero.
\end{proof}

The space $X_\mu^{sym}$ of admissible symmetric lower submeasures is thus a subset of $X_\mu$. We call it the universal symmetric dual space of $\mu$. When $\mu$ is discrete, $X_\mu^{sym}$ is a cube complex. If a symmetric lower submeasure $\nu$ has $n$ shared points, than the dimension of the cube in $X_\mu^{sym}$ containing $\nu$ is $n/2$, while the dimension of the polyhedron in $X_\mu$ containing $\nu$ is $n-1$.

\subsection{A relative metric which knows periods}
We would really like to have an asymmetric metric space with $\Gamma$ action whose translation lengths are periods of a given equivariant $\R$ bundle $P\in \mathcal{A}(S)$. Changing the $\Gamma$ action by a homomorphism $\Gamma\to \R$ changes the periods, but doesn't change the curvature, so this metric space certainly must involve $P$, not just it's curvature $\mu$. In this section we find something slightly different: a $\Gamma$-space living over $X_\mu$ with an asymmetric two argument function that is not quite a metric, but which nonetheless has translation lengths which are periods of $P$.

\begin{definition}
\label{definition relative metric}
    A \textbf{relative metric} on a principal $\R$ bundle $L$ over a set $X$ is a function $d:L\times L\to \R$ which satisfies the following three properties.
    \begin{enumerate}
        \item Homogeneity: $d(x,y+r) = r+d(x,y) = d(x-r,y)$ for all $x,y\in L$ and $r\in \R$,
        \item Triangle inequality: $d(x,y) + d(y,z) \geq d(x,z)$ for all $x,y,z\in L$, and
        \item Non-degeneracy: $d(x,y) + d(y,x) = 0$ if and only if $x$ and $y$ are in the same fiber of $L$.
    \end{enumerate}
\end{definition}

If $d$ is a relative metric on a principal $\R$ bundle $L\to X$, then the symmetrization $d(x,y)+d(y,x)$ descends to a metric on $X$. If we choose a section $s:X\to L$ such that $d(s(x),s(y)) > 0$ for all $x\neq y$, then $d(s(x),s(y))$ is an asymmetric metric on $X$. Two different sections will give metrics which differ by a function of the form $f(y) - f(x)$. 

We now construct a relative metric space over $X_\mu$. The rough idea is that in defining the symmetric metric on $X_\mu$ we had to take $\sup(f_{\nu_1,\nu_2}) - \inf(f_{\nu_1,\nu_2})$ because $f_{\nu_1,\nu_2}$ is really only naturally defined up to adding constants, so we will find a way to fix this constant, then use only the supremum. Let $P$ be an equivariant principal bundle on $\mathcal{G}^\circ$ with curvature $\mu$. Recall that Lemma \ref{potentials come from sections} says that for every $\nu\in X_\mu$, $m_\nu = u - v$ where $v$ and $u$ are sections of $P$ such that $v$ is flat along vertical segments and $u$ is flat along horizontal segments. Furthermore, if $m_\nu = u' - v'$ is another such decomposition, then there is $C\in \R$ such that $v' = v + C$ and $u' = u + C$. 
\begin{definition}
    Let $L_P$ denote the principal $\R$-bundle on $X_\mu$ consisting of triples $(\nu,v,u)$ where $\nu\in X_\mu$, $v$ is a vertically flat section of $P$, $u$ is a horizontally flat section of $P$, and $m_\nu = u - v$.
\end{definition}
If $\mathcal{L} = (\nu,v,u)$ and $\mathcal{L}' = (\nu',v',u')$ are two points in $L_P$, let 
\[f_{\mathcal{L},\mathcal{L'}}(x) = \lim_{y\to x^+} v'(x,y) - v(x,y).\]
The function $v'(x,y) - v(x,y)$ is constant along vertical segments, though since it is only defined on $\mathcal{G}^\circ$, this doesn't quite imply that it is independant of $y$. The limit deals with this technicality. Really, $v'(x,y) - v(x,y)$ will be constant in $y$ for $(x,y)$ not above the supports of $\bar{\nu}$ and $\bar{\nu}'$. 
Now we define the relative metric.
\[d(\mathcal{L},\mathcal{L}') := \sup_{x\in \del\Gamma} f_{\mathcal{L},\mathcal{L'}}(x)\]
The triangle inequality follows from
\[f_{\mathcal{L},\mathcal{L}''} = f_{\mathcal{L},\mathcal{L}'} + f_{\mathcal{L}',\mathcal{L}''}\]
because supremums are subadditive. 
This relative metric symmetrizes to the ordinary metric:

\begin{lemma}
    If $\mathcal{L} = (\nu, v, u)$ and $\mathcal{L}' = (\nu', v', u')$ are two points in $L_P$, then
    \[f_{\mathcal{L},\mathcal{L}'} = f_{\nu,\nu'} + C\]
    where $C$ is some constant, consequently
    \[d(\mathcal{L},\mathcal{L}')
    + d(\mathcal{L}',\mathcal{L}) = d(\nu,\nu')\]
\end{lemma}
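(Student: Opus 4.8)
The plan is to prove that $f_{\mathcal{L},\mathcal{L}'}$ and $f_{\nu,\nu'}$ have the same increments along $\del\Gamma$ — hence differ by a constant — and then read off the distance identity from the cocycle relation $f_{\mathcal{L},\mathcal{L}''}=f_{\mathcal{L},\mathcal{L}'}+f_{\mathcal{L}',\mathcal{L}''}$ noted just above.

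First I would record a formula for the difference of potentials. Put $\epsilon:=\nu'-\nu$, a finite signed measure because $\nu,\nu'$ are admissible, and note $\bar\nu'-\bar\nu=-\epsilon$. Then straight from the definition of $m_\nu$,
\[
m_{\nu'}(x,y)-m_\nu(x,y)=\epsilon\big(\mathcal{G}_{\le(x,y)}\big)-\epsilon\big(\mathcal{G}_{\ge(x,y)}\big).
\]
Since $\mathcal{L}=(\nu,v,u)$ and $\mathcal{L}'=(\nu',v',u')$ are built from sections of the \emph{same} bundle $P$, the difference $v'-v$ is an honest function on $\mathcal{G}^\circ$ that is constant along every vertical segment, $u'-u$ is constant along every horizontal segment, and $(u'-u)-(v'-v)=m_{\nu'}-m_\nu$. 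In particular $v'-v$ is a function of the first coordinate alone off the (countable) fixed-point locus, and $f_{\mathcal{L},\mathcal{L}'}(x)$ is simply its value at $x$.

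The technical core is to compute the increment of $v'-v$ in the first variable. For $x_0,x_1\in\del\Gamma^\circ$, pick a height $c$ with $(x_0,c),(x_1,c)\in\mathcal{G}^\circ$. Along a horizontal segment $u'-u$ is constant, so $v'-v$ changes there by $-(m_{\nu'}-m_\nu)$; the horizontal arc from $x_0$ to $x_1$ does meet the dense fixed-point set, but one detours around each such point by a vertical--horizontal--vertical move, contributing nothing on the vertical parts since $v'-v$ is vertically constant, so the increments telescope and
\[
f_{\mathcal{L},\mathcal{L}'}(x_1)-f_{\mathcal{L},\mathcal{L}'}(x_0)=(m_{\nu'}-m_\nu)(x_0,c)-(m_{\nu'}-m_\nu)(x_1,c).
\]
Feeding in the potential-difference formula turns the right-hand side into an inclusion--exclusion of the half-space sets $\mathcal{G}_{\le(\cdot,c)},\mathcal{G}_{\ge(\cdot,c)}$ — the same kind of indicator-function cancellation used to prove that $m_\nu$ is a potential — and after the dust settles the only surviving contribution is $\epsilon\big(\pi_1^{-1}((x_0,x_1])\big)=(\pi_1)_*\epsilon\big((x_0,x_1]\big)$, which is exactly $f_{\nu,\nu'}(x_1)-f_{\nu,\nu'}(x_0)$. (The handful of $x$'s carrying an atom of $(\pi_1)_*\epsilon$ are dealt with by one-sided limits, which is precisely why the limit appears in the definition of $f_{\mathcal{L},\mathcal{L}'}$.) Hence $f_{\mathcal{L},\mathcal{L}'}-f_{\nu,\nu'}$ is constant.

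For the stated consequence, take $\mathcal{L}''=\mathcal{L}$ in the cocycle relation to get $f_{\mathcal{L}',\mathcal{L}}=-f_{\mathcal{L},\mathcal{L}'}$, so that
\[
d(\mathcal{L},\mathcal{L}')+d(\mathcal{L}',\mathcal{L})=\sup_{\del\Gamma}f_{\mathcal{L},\mathcal{L}'}+\sup_{\del\Gamma}\big(-f_{\mathcal{L},\mathcal{L}'}\big)=\sup_{\del\Gamma}f_{\mathcal{L},\mathcal{L}'}-\inf_{\del\Gamma}f_{\mathcal{L},\mathcal{L}'},
\]
and by the first part this equals $\sup f_{\nu,\nu'}-\inf f_{\nu,\nu'}=d(\nu,\nu')$. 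I expect the main obstacle to be the middle step: making the detour-around-fixed-points argument rigorous and verifying that the half-space inclusion--exclusion really collapses to the claimed pushforward, atoms at $x_0,x_1$ included; the rest is formal manipulation.
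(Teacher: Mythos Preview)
Your approach is correct and is essentially the paper's argument. Both proofs compute the increment $f_{\mathcal{L},\mathcal{L}'}(x_1)-f_{\mathcal{L},\mathcal{L}'}(x_0)$ by moving along a horizontal segment (where $u'-u$ is constant, so the change in $v'-v$ equals minus the change in $m_{\nu'}-m_\nu$) and then identify the result with $(\pi_1)_*\epsilon((x_0,x_1])$; the deduction of the distance identity via $f_{\mathcal{L}',\mathcal{L}}=-f_{\mathcal{L},\mathcal{L}'}$ is also the same.

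The only noteworthy difference is in how the height is chosen. You pick a generic $c$ and plan to reduce $(m_{\nu'}-m_\nu)(x_0,c)-(m_{\nu'}-m_\nu)(x_1,c)$ to $\epsilon(\pi_1^{-1}((x_0,x_1]))$ by an inclusion--exclusion on the sets $\mathcal{G}_{\le(\cdot,c)}$ and $\mathcal{G}_{\ge(\cdot,c)}$. The paper instead chooses $y$ close enough to $x_2$ that $(x_1,y)$ and $(x_2,y)$ are not above any support point of $\bar\nu$ or $\bar\nu'$; since $\supp\epsilon\subset\supp\bar\nu\cup\supp\bar\nu'$, the $\mathcal{G}_{\le}$ terms vanish on $\supp\epsilon$ and the identity becomes a one-line computation with only the $\mathcal{G}_{\ge}$ half. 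This special choice also makes $f_{\mathcal{L},\mathcal{L}'}(x_i)=(v'-v)(x_i,y)$ hold on the nose, so the limit in the definition is already attained. The paper then covers a general interval by subdividing into such short pieces, rather than by detouring around fixed points. Your detour argument is fine but unnecessary: for a generic $c$ the entire horizontal segment $[x_0,x_1]\times\{c\}$ lies in $\mathcal{G}^\circ$, since only countably many heights are excluded.
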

\begin{proof}
    Let $x_1 < x_2 < y \in \del \Gamma$ be points such that $(x_1,y)$ and $(x_2,y)$ are both in $\mathcal{G}^\circ$, and neither are above any support points of $\bar{\nu}$ or $\bar{\nu}'$. It will suffice to show
    \[f_{\nu,\nu'}(x_2) - f_{\nu,\nu'}(x_1) = f_{\mathcal{L},\mathcal{L}'}(x_2) - f_{\mathcal{L},\mathcal{L}'}(x_1).\]
    The left hand side is equal to $(\nu' - \nu)(\pi_1^{-1}((x_1,x_2]))$. This is the same as the evaluation of $\nu' - \nu$ on $\mathcal{G}_{\geq (x_2,y)} \backslash \mathcal{G}_{\geq (x_1,y)}$ which can be written in terms of $m_\nu$ and $m_{\nu'}$.
    \[\nu'(\pi_1^{-1}((x_1,x_2])) - \nu(\pi_1^{-1}((x_1,x_2])) = m_{\nu'} (x_1,y) - m_{\nu'}(x_2,y) - m_\nu (x_1,y) + m_\nu(x_2,y)\]
    Using the decompositions of the potentials into horizontally and vertically flat sections, this becomes
    \[v'(x_2,y) - v'(x_1,y) - v(x_2,y) + v(x_1,y),\]
    where the contributions from $u$ and $u'$ have all cancelled. This is equal to the right hand side $f_{\mathcal{L},\mathcal{L}'}(x_2) - f_{\mathcal{L},\mathcal{L}'}(x_1)$. 
    
    It follows that $f_{\mathcal{L},\mathcal{L}'} = f_{\nu,\nu'} + C$ for some constant $C$, because every interval in $\del\Gamma$ can be subdivided into intervals $[x_1,x_2]$ such that there exists $y$ satisfying our hypothesis on $x_1,x_2,y$. Finally,
    \[\sup f_{\nu,\nu'} - \inf f_{\nu,\nu'} = \sup f_{\nu,\nu'} + \sup f_{\nu',\nu} = \sup f_{\mathcal{L},\mathcal{L}'} + \sup f_{\mathcal{L},\mathcal{L}'}\]
    therefore $d(\nu,\nu') = d(\mathcal{L},\mathcal{L}') + d(\mathcal{L}',\mathcal{L})$.

\end{proof}

Translation length for relative metrics is defined in exactly the same way as for ordinary metrics. If $d$ is a relative metric on an $\R$ bundle $L$ over a space $X$, and $\phi: L\to L$ is a bundle map preserving $d$, then we choose a point $\tilde{x}\in L$ and take the limit of $d(\tilde{x},\phi^n(\tilde{x}))/n$ as $n$ goes to $\infty$. Again, if $d(\tilde{x},\phi^n(\tilde{x})) = n d(\tilde{x},\phi(\tilde{x}))$ then the translation length of $\phi$ is $d(\tilde{x},\phi(\tilde{x}))$. 
\begin{lemma}
    Let $P\in \mathcal{A}(S)$, and let $\gamma\in \Gamma$ be a non-trivial group element. The translation length of $\gamma$ acting on $L_P$ is the period $l_P(\gamma)$. 
\end{lemma}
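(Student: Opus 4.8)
The plan is to exhibit one basepoint of $L_P$ over which all the powers of $\gamma$ behave linearly, mirroring the proof of Lemma~\ref{translation length and area}. Choose a holonomy-zero lower submeasure $\nu\in X_\mu$ with $\nu=\mu$ on $\mathcal{G}_{<(\gamma^+,\gamma^-)}$ and on $\mathcal{G}_{<(\gamma^-,\gamma^+)}$, as supplied by Lemma~\ref{translation length and area}, and lift it to a point $\mathcal{L}=(\nu,v,u)\in L_P$ using the decomposition $m_\nu=u-v$ of Lemma~\ref{potentials come from sections}. It is then enough to prove $d(\mathcal{L},\gamma^{n}\mathcal{L})=n\,l_P(\gamma)$ for all $n\ge 1$, since that forces the translation length of $\gamma$ on $L_P$ to be $d(\mathcal{L},\gamma\mathcal{L})=l_P(\gamma)$.

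First I would record two bookkeeping facts about the functions $f_{\mathcal{L},\mathcal{L}'}$ on $\del\Gamma$. The additivity $f_{\mathcal{L},\mathcal{L}''}=f_{\mathcal{L},\mathcal{L}'}+f_{\mathcal{L}',\mathcal{L}''}$ was already used above; and since $\gamma$ acts on $P$ by connection-preserving bundle maps, it carries the vertically flat section attached to $\mathcal{L}$ to the one attached to $\gamma\mathcal{L}$, and acts on fibres $\R$-equivariantly, so $f_{\gamma\mathcal{L}_1,\gamma\mathcal{L}_2}(x)=f_{\mathcal{L}_1,\mathcal{L}_2}(\gamma^{-1}x)$. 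Together these give the telescoping identity
\[f_{\mathcal{L},\gamma^{n}\mathcal{L}}(x)=\sum_{k=0}^{n-1}f_{\mathcal{L},\gamma\mathcal{L}}(\gamma^{-k}x).\]
Next, exactly as in the proof of Lemma~\ref{translation length and area}, the signed measure $\gamma^{n}\nu-\nu$ vanishes on geodesics that miss the axis of $\gamma$, is non-negative on $D_+$ and non-positive on $D_-$, and $\pi_1(D_+)$ and $\pi_1(D_-)$ are the two complementary arcs of $\del\Gamma\setminus\{\gamma^-,\gamma^+\}$. Since $f_{\mathcal{L},\gamma^{n}\mathcal{L}}$ differs only by a constant from $x\mapsto\int_{(x_0,x]}(\pi_1)_*(\gamma^{n}\nu-\nu)$, it is monotone on each of these two arcs, increasing on one and decreasing on the other, so its supremum is its one-sided limit at the fixed point towards which it increases; after relabelling $\gamma^\pm$ if needed, call this point $\gamma^-$, so $d(\mathcal{L},\gamma^{n}\mathcal{L})=\lim_{x\to\gamma^-}f_{\mathcal{L},\gamma^{n}\mathcal{L}}(x)$.

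It remains to evaluate this limit, and this is where Lemma~\ref{period} enters. Unwinding the definition, $f_{\mathcal{L},\gamma\mathcal{L}}(x)=\lim_{y\to x^+}\bigl((\gamma\cdot v)(x,y)-v(x,y)\bigr)$, where $(\gamma\cdot v)(x,y)=\gamma\cdot v(\gamma^{-1}x,\gamma^{-1}y)$. As $x\to\gamma^-$ the vertically flat section $v$ restricts, near the appropriate ray $R_N$ or $R_S$ of Lemma~\ref{period}, to a flat section $s$ of $P$ over that ray; since $\gamma$ preserves the ray, $\gamma\cdot v$ restricts there to $\gamma\cdot s$, so $\lim_{x\to\gamma^-}f_{\mathcal{L},\gamma\mathcal{L}}(x)$ equals the amount by which $\gamma$ shifts $s$, which by Lemma~\ref{period} is precisely $l_P(\gamma)$. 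Feeding this into the telescoping identity and using that $\gamma$ fixes $\gamma^-$, hence $\gamma^{-k}x\to\gamma^-$ for each $k$, gives $\lim_{x\to\gamma^-}f_{\mathcal{L},\gamma^{n}\mathcal{L}}(x)=n\,l_P(\gamma)$, and therefore $d(\mathcal{L},\gamma^{n}\mathcal{L})=n\,l_P(\gamma)=n\,d(\mathcal{L},\gamma\mathcal{L})$, as wanted. The step I expect to be the main obstacle is this last identification: one must be careful which of the two rays $R_N,R_S$ is relevant, how the limit $x\to\gamma^-$ interacts with the $\lim_{y\to x^+}$ built into $f$, and that the supremum is genuinely achieved in this limit despite the fixed point lying outside $\mathcal{G}^\circ$.
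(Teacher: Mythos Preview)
Your proposal is correct and follows essentially the same route as the paper: pick the special $\nu$ from Lemma~\ref{translation length and area}, observe that $f_{\mathcal{L},\gamma\mathcal{L}}$ achieves its supremum at $\gamma^-$ because $f_{\nu,\gamma\nu}$ does and the two differ by a constant, and identify the value at $\gamma^-$ as the shift of a vertically flat section over the ray based at $\gamma^-$, which is $l_P(\gamma)$ by Lemma~\ref{period}.

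Two small remarks. First, the paper simply evaluates $f_{\mathcal{L},\gamma\mathcal{L}}(\gamma^-)=\gamma v(\gamma^-,y)-v(\gamma^-,y)$ for $\gamma^-<y<\gamma^+$ rather than passing through a limit $x\to\gamma^-$; the definition of $f$ makes sense at $\gamma^-$ since $(\gamma^-,y)\in\mathcal{G}^\circ$ for generic $y$ near $\gamma^-$, so your caution there is harmless but unnecessary. Second, your ``after relabelling $\gamma^\pm$ if needed'' hedge is superfluous: the supremum is always at the repelling fixed point $\gamma^-$, not at either fixed point indifferently, as one sees from the sign analysis on $D_+$ in Lemma~\ref{translation length and area}. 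Your telescoping identity for $f_{\mathcal{L},\gamma^n\mathcal{L}}$ is a pleasant addition; the paper leaves the passage to $\gamma^n$ implicit, the point being that the same computation with $\gamma^n$ in place of $\gamma$ gives $l_P(\gamma^n)=n\,l_P(\gamma)$ directly.
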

\begin{proof}
Let $\mu$ be the curvature of $P$, and let $\nu\in X_\mu$ be such that $(\gamma^-,\gamma^+)$ is not below any support points of $\nu$ or above any support points of $\bar{\nu}$. The supremum of $f_{\nu,\gamma\nu}$ is attained at $\gamma^-$. Hence, if $\mathcal{L} = (\nu,v,u)$ is a lift of $\nu$ to $L_P$, the supremum of $f_{\mathcal{L},\gamma\mathcal{L}}$ is also attained at $\gamma^-$. The value $f_{\mathcal{L},\gamma\mathcal{L}}(\gamma^-)$ is $\gamma v(\gamma^-,y) - v(\gamma^-,y)$ for any $\gamma^- < y < \gamma^+$, which is the period $l_P(\gamma)$. 
\end{proof}

\section{Tropical rank $2$ currents}
In this section we show that tropical rank $2$ currents are measured laminations, and that the space of holonomy zero lower submeasures of a measured lamination is an $\R$-tree.
\subsection{Symmetry}
In Bonahon's original work \cite{Bonahon88}, geodesic currents were defined to be invariant under the involution $(x,y) \mapsto (y,x)$ of $\mathcal{G}$. Here we call such geodesic currents \textbf{symmetric}. To show symmetry of rank $2$ currents, we first show that holonomies of certain paths vanish. 

\begin{definition}
    For any three distinct points $x,y,z\in \del\Gamma$, let $[x,y,z] \in Z_1(\mathcal{G})$ denote the following taxi path.
    \[(x,y)\to (x,z)\to (y,z)\to (y,x) \to (z,x) \to (z,y) \to (x,y)\]
    If $h$ is a holonomy function, $h([x,y,z])$ is referred to as a \textbf{triple ratio} of $h$.
\end{definition}

\begin{lemma} All triple ratios of rank $2$ holonomy functions are $-1$ and all triple ratios of tropical rank $2$ holonomy functions are $0$.
\end{lemma}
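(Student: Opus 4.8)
The plan is to prove both assertions by relating triple ratios to determinantal (resp.\ tropical-determinantal) identities of $3\times 3$ submatrices of a potential, using the rank $2$ (resp.\ tropical rank $2$) condition in the form that all $3\times 3$ determinants of the potential vanish (resp.\ the tropical determinant is attained twice).

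First, pick an $\R^*$-valued potential $M$ for a rank $2$ holonomy function $H$ (by the remark after Definition ``potential'', potentials exist and are unique up to the ambiguity in Lemma \ref{potentials come from sections}). Write $[x,y,z]$ as a concatenation of six segments and use the decomposition $M = u\cdot v^{-1}$, with $u$ horizontally flat and $v$ vertically flat, to express the holonomy $H([x,y,z])$ in terms of the values $\bar M(a,b)$ with $a,b\in\{x,y,z\}$. The bookkeeping here is the same inclusion–exclusion used in the proof of Lemma \ref{no n-intersection} and in the potential-is-a-potential lemmas: each vertical segment contributes only through $v$, each horizontal segment only through $u$, and going around the closed loop the $u$- and $v$-contributions pair up so that $H([x,y,z])$ equals a ratio of products of the entries $M(x,x),M(x,y),M(x,z),M(y,x),\dots$. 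Concretely I expect
\[
H([x,y,z]) = \frac{M(x,y)\,M(y,z)\,M(z,x)}{M(x,z)\,M(z,y)\,M(y,x)},
\]
the classical triple ratio of the three ``flags'' determined by $x,y,z$. (One should double-check the orientation conventions so the sign comes out right, but any potential gives the same $H$, so the formula is potential-independent once verified for one.)

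Now invoke the rank $2$ hypothesis. Apply the vanishing of all $3\times 3$ determinants of $\bar M$ to the $3\times 3$ matrix with rows indexed by $x_1=x,x_2=y,x_3=z$ and columns indexed by $y_1=x,y_2=y,y_3=z$. Since the diagonal entries $\bar M(x,x),\bar M(y,y),\bar M(z,z)$ vanish (they lie on $\Delta$), the $3\times 3$ determinant collapses to exactly two surviving permutation terms — the two $3$-cycles — and $\det = 0$ forces
\[
M(x,y)M(y,z)M(z,x) + M(x,z)M(z,y)M(y,x) = 0,
\]
i.e. $H([x,y,z]) = -1$. For the tropical statement, take instead a potential $m$ for a tropical rank $2$ holonomy function and the analogous $3\times 3$ tropical determinant $\max_{\sigma\in S_3}\sum_i \bar m(x_i,y_{\sigma(i)})$ with the same three rows and columns; the diagonal terms are $-\infty$, so only the two $3$-cycles compete, and the tropical rank $2$ condition says the maximum is attained twice, hence the two $3$-cycle sums are equal, which is precisely $h([x,y,z]) = 0$ after taking logarithms of the multiplicative identity (the additive difference of the two $3$-cycle sums is, by the same segment bookkeeping, the holonomy $h([x,y,z])$).

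The main obstacle I anticipate is purely combinatorial rather than conceptual: getting the six-segment loop $[x,y,z]$ correctly matched against a genuine $3\times 3$ determinant, in particular confirming that the off-diagonal structure really is a full $3\times 3$ matrix (not, say, something that uses only four of the six entries) and that the orientation of the taxi path produces the sign $-1$ and not $+1$. I would handle this by choosing a convenient cyclic order of $x,y,z$ on $\del\Gamma$, writing $m$ via $u-v$, and carefully tracking which segment of $[x,y,z]$ contributes which $\bar m(x_i,y_j)$; because $H_\rho$ for $\rho\in\Hit^2(S)$ is an honest potential coming from $\tilde\xi,\tilde\xi^*$, I can sanity-check the formula against the $\SL_2$ limit maps, where the triple ratio of three points on $\RP^1$ is indeed $-1$. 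Once the formula $H([x,y,z]) = \frac{M(x,y)M(y,z)M(z,x)}{M(x,z)M(z,y)M(y,x)}$ is pinned down, both conclusions are immediate from the rank $2$ / tropical rank $2$ definitions applied to the single $3\times 3$ block.
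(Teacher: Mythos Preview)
Your proposal is correct and follows essentially the same route as the paper: write the triple ratio via a potential as $H([x,y,z]) = \frac{M(x,y)M(y,z)M(z,x)}{M(x,z)M(y,x)M(z,y)}$, then expand the $3\times 3$ determinant of $\bar M$ on the index set $\{x,y,z\}$ with vanishing diagonal to get the two surviving $3$-cycle terms summing to zero (resp.\ coinciding tropically). The ``main obstacle'' you flag is minor --- the formula drops out immediately from the trivialization in Lemma~\ref{potentials come from sections} (vertical segments contribute nothing, each horizontal segment contributes a difference of two values of $m$), so the segment bookkeeping is a three-line computation rather than something requiring a sanity check against $\SL_2$.
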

\begin{proof}
    Let $M$ be a potential for a rank $2$ holonomy function $H\in \mathcal{H}(\R^*)$. By definition,
    \[\det 
    \begin{bmatrix}
        0 & M(x,y) & M(x,z) \\
        M(y,x) & 0 & M(y,z) \\
        M(z,x) & M(z,y) & 0 
    \end{bmatrix}=0\]
    for any distinct $x,y,z\in \del\Gamma$. 
    This implies the triple ratio is $-1$.
    \[H([x,y,z]) = \frac{M(x,y)M(y,z)M(z,x)}{M(x,z)M(y,x)M(z,y)} = -1\]
    Now let $m$ be a potential for a tropical rank $2$ holonomy function $h\in \mathcal{H}(\R)$. Since there are only two sums in the tropical determinant which are not $-\infty$, they must coincide, so we have $m(x,y)+m(y,z)+m(z,x) = m(x,z)+m(y,x)+m(z,y)$, implying that the triple ratio is zero.
\end{proof}

\begin{lemma}
\label{vanishing triple ratios implies symmetry}
    A holonomy function $h\in \mathcal{H}(S,\R)$ is symmetric if and only if it has trivial triple ratios.
\end{lemma}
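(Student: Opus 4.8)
The plan is to prove both implications by evaluating the defining relation ``$h(\tau_*z)=-h(z)$'' on a convenient generating set of the group $Z_1(\mathcal{G})$ of taxi-cycles. The starting observation is purely combinatorial: $\tau$ interchanges horizontal and vertical segments, $\tau(s_{a;b,c})=s_{b,c;a}$, so applying $\tau$ to the six segments that make up the triple-ratio loop $[x,y,z]$ merely cyclically rotates that loop, giving $\tau_*[x,y,z]=[x,y,z]$ in $Z_1(\mathcal{G})$. The forward implication is then immediate: if $h$ is symmetric then $h([x,y,z])=h(\tau_*[x,y,z])=-h([x,y,z])$, so every triple ratio vanishes.

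For the converse, first note the analogous segment-by-segment computation gives $\tau_*[x,x';y,y']=-[y,y';x,x']$ for every cross-ratio loop. I would then use that the cross-ratio loops $[x,x';y,y']$ together with the triple-ratio loops generate $Z_1(\mathcal{G})$: a cross-ratio loop is precisely the boundary $\del r_{x,x';y,y'}$ of a box, so cross-ratio loops generate $B_1(\mathcal{G})=\del T_2(\mathcal{G})$, while a triple-ratio loop $[x,y,z]$ with $x<y<z$ cyclically ordered winds once around $\mathcal{G}$ in each coordinate and hence maps to a generator of $H_1(T_*(\mathcal{G}))\cong\mathbb{Z}$ by Lemma \ref{homology}; since $Z_1/B_1=H_1$, these loops together span $Z_1$. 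Now set $g:=h+h\circ\tau_*$, a $\Gamma$-invariant homomorphism $Z_1(\mathcal{G})\to\mathbb{R}$; symmetry of $h$ amounts to $g\equiv 0$. On a triple-ratio loop, $g([x,y,z])=2h([x,y,z])=0$ by hypothesis, so everything comes down to showing $g([x,x';y,y'])=h([x,x';y,y'])-h([y,y';x,x'])=0$ on every cross-ratio loop.

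This last equality — the symmetry $h([x,x';y,y'])=h([y,y';x,x'])$ of cross ratios — is the heart of the matter, and the point where the vanishing of \emph{all} triple ratios (rather than mere winding-number bookkeeping) enters. I would prove it by manufacturing a potential for $h$: using the relations defining $T_1(\mathcal{G})$ one checks that, for a fixed basepoint $(x_0,y_0)$, the function $m(x,y):=h([x_0,x;y_0,y])$ satisfies $h([x,x';y,y'])=m(x,y)+m(x',y')-m(x,y')-m(x',y)$, so it is a potential. In terms of $m$, vanishing of the triple ratio $h([x,y,z])$ says exactly that $\phi(x,y):=m(x,y)-m(y,x)$ is a $1$-coboundary on $\partial\Gamma$, i.e.\ $\phi(x,y)=n(y)-n(x)$ for some function $n$; replacing $m$ by the gauge-equivalent potential $m'(x,y):=m(x,y)+n(x)$ makes $m'$ symmetric, and then $h([x,x';y,y'])-h([y,y';x,x'])$ is visibly zero, since both terms equal the same alternating sum of values of $m'$. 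Combined with the previous paragraph this yields $g\equiv 0$ on all of $Z_1(\mathcal{G})$, i.e.\ $h$ is symmetric.

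The main obstacle I anticipate is the global definedness of the potential $m$: the loop $[x_0,x;y_0,y]$ must lie in $\mathcal{G}^\circ$ (avoiding every fixed point), so some care in choosing the basepoint — or in working with a cofinal family of sub-intervals and patching — is required, and this is exactly why a general $h\in\mathcal{H}(S,\mathbb{R})$ need not admit a globally defined potential. An alternative that avoids potentials entirely is to prove the purely combinatorial statement that $[x,x';y,y']-[y,y';x,x']$ is a $\mathbb{Z}$-linear combination of triple-ratio loops (the chain-level form of cross-ratio symmetry), after which $g\equiv 0$ is immediate from $g$ vanishing on triple-ratio loops. I expect the combinatorial route to be the cleaner one to carry out in full detail.
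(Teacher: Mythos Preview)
Your proposal is correct, and the ``combinatorial route'' you flag at the end is exactly the paper's proof: the paper supplies the explicit chain-level identity
\[
[x,x',y'] - [x,x',y] \;=\; [x,x';y,y'] - [y,y';x,x']
\]
in $Z_1(\mathcal{G})$ (verified by a picture), which immediately gives $h([x,x';y,y'])=h([y,y';x,x'])$ once all triple ratios vanish, and then concludes symmetry exactly as you do from the fact that cross-ratio loops and a triple-ratio loop generate $Z_1(\mathcal{G})$. Your forward direction is identical to the paper's. The potential-based argument you sketch first is unnecessary and, as you yourself note, runs into the domain issue that $[x_0,x;y_0,y]$ is not a loop in $\mathcal{G}^\circ$ for all $(x,y)$; the paper bypasses this entirely via the single two-triple-ratio identity above.
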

\begin{proof}
Suppose $\tau^*h = -h$. Then
\[h([x,y,z]) = h(\tau([x,y,z])) = -h([x,y,z])\] because
the cycle $[x,y,z]$ is $\tau$-invariant. Hence, $h([x,y,z]) = 0$.

Now we show that vanishing of triple ratios implies symmetry. For any distinct $x, x', y, y'\in \del \Gamma^\circ$, the identity 
\[[x,x',y'] - [x,x',y] \simeq [x,x';y,y'] - [y,y';x,x']\]
of cycles holds, as depicted in Figure \ref{triple ratio difference}. 
\begin{figure}[h]
    \centering
    \includegraphics[width=7cm]{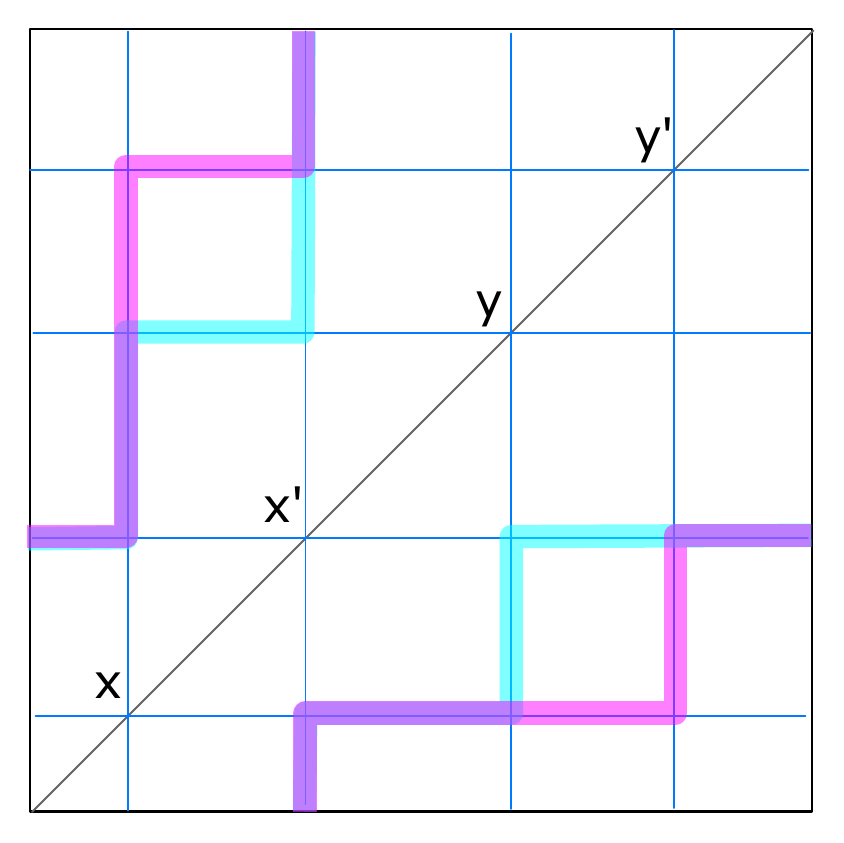}
    \caption{A relation between taxi cycles}
    \label{triple ratio difference}
\end{figure}
If all triple ratios of a holonomy function $h$ vanish, then $h([x,x';y,y']) = h([y,y';x,x'])$. Since $h$ is determined by its cross ratios, $h$ must be symmetric.

    
\end{proof}

A measured lamination is a geodesic current which is symmetric and has no self-intersection. By lemma \ref{no n-intersection} a tropical rank $2$ cross ratio has no self intersection, so must be a measured lamination. 

The original construction of an $\R$-tree from a measured lamination in \cite{MorganShalen91} is very intuitive, and gives the same result as $X_\mu$. Choose a hyperbolic structure on $S$, and identify $\mathcal{G}$ with the space of geodesics in $\tilde{S}$.  Let $\tilde{L}\subset \tilde{S}$ be the union of geodesics parameterized by $\supp(\mu)$. Let $V$ denote the set of components of $\tilde{S}\backslash \tilde{L}$. We define the distance between $v_1,v_2\in V$ to be the measure of the set of geodesics in $L$ which separate $v_1$ from $v_2$. By convention we divide by $2$ to compensate for counting each geodesic with two orientations. Morgan and Shalen show that there is a unique minimal $\R$-tree into which $V$ isometrically embeds. If $\mu$ is discrete, then $V$ is the set of vertices, each leaf of $\tilde{L}$ corresponds to an edge, and the measure $\mu$ assigns to that leaf is the length of the edge.

\begin{figure}[h]
    \centering
    \includegraphics[width=4cm]{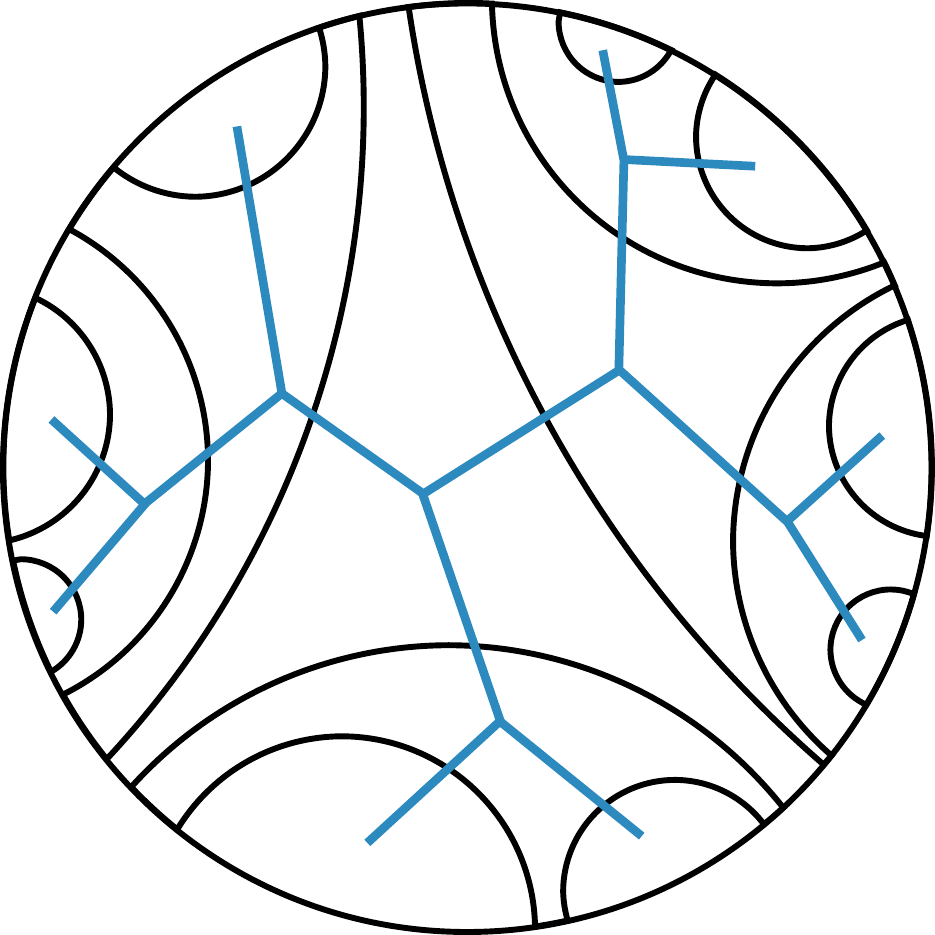}
    \caption{The dual tree to a measured lamination}
    
\end{figure}

\subsection{Holonomy zero lower submeasures}
A submeasure $\nu$ of a geodesic current $\mu$ is called symmetric if $\tau_*\nu = \bar{\nu}$, where $\bar{\nu} := \mu - \nu$.

\begin{lemma}
    A lower submeasure $\nu$ of a measured lamination $\mu$ is holonomy zero if and only if it is symmetric.
\end{lemma}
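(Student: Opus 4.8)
\section*{Proof proposal}

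The plan is to treat the two implications separately. The ``if'' direction is immediate: a measured lamination is symmetric by definition, so Lemma~\ref{symmetric implies hol zero} applies and every symmetric lower submeasure of it is holonomy zero. So the content is the ``only if'' direction, where I would in fact prove the stronger statement that \emph{every admissible lower submeasure $\nu$ of a measured lamination $\mu$ is symmetric} --- that is, $\bar\nu=\tau_*\nu$ --- which suffices since holonomy is defined only for admissible submeasures, so a holonomy-zero $\nu$ is in particular admissible.

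First I would record the only order-theoretic facts needed, all immediate from the definition $(x',y')<(x,y)\iff x<x'<y'<y$: a geodesic $(x,y)$ and its reverse $(y,x)$ are always incomparable, no geodesic is $\geq$ both, and none is $\leq$ both (one cannot place both $x$ and $y$ strictly between one fixed pair of points in two opposite cyclic orders). Then, writing the monotone partition $\mu=\nu+\bar\nu$: since $\mu$ is symmetric its support is $\tau$-invariant, and since $\mu$ has no self-intersection (definition of measured lamination) its support consists of pairwise non-crossing geodesics; the closed sets $\supp(\nu)$ and $\supp(\bar\nu)$ both lie inside $\supp(\mu)$ and cover it.

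The heart of the proof is two steps. \emph{No shared points:} if $g\in\supp(\nu)\cap\supp(\bar\nu)$, monotonicity forces every point of $\supp(\nu)$ to be $\leq g$ and every point of $\supp(\bar\nu)$ to be $\geq g$; then $\tau g$, incomparable to $g$, lies in neither support, contradicting $\tau g\in\supp(\mu)=\supp(\nu)\cup\supp(\bar\nu)$ (using $\tau_*\mu=\mu$). So the two supports partition $\supp(\mu)$. \emph{$\tau$-equivariance:} if $g\in\supp(\nu)$ and also $\tau g\in\supp(\nu)$, then every point of $\supp(\bar\nu)$ would be $\geq$ both $g$ and $\tau g$, so $\supp(\bar\nu)=\emptyset$ and $\nu=\mu$; but $\mu$ is not an admissible lower submeasure of itself on a closed surface (its mass above any monotone taxi loop is infinite), a contradiction; the symmetric case forces $\nu=0$, again impossible. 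Hence $\tau(\supp(\nu))\subseteq\supp(\bar\nu)$ and $\tau(\supp(\bar\nu))\subseteq\supp(\nu)$, so $\supp(\bar\nu)=\tau(\supp(\nu))$. Since the two supports are disjoint, $\bar\nu$ vanishes on $\supp(\nu)$, so $\nu=\mu|_{\supp(\nu)}$ and $\bar\nu=\mu|_{\tau(\supp(\nu))}$; then $\tau_*\nu=\mu|_{\tau(\supp(\nu))}=\bar\nu$ by $\tau_*\mu=\mu$, so $\nu$ is symmetric.

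The step I expect to be most delicate is the support bookkeeping above when the transverse measure has a continuous part: one must check that $\supp(\nu)\cup\supp(\bar\nu)=\supp(\mu)$ genuinely splits the leaves of the lamination $\tau$-equivariantly, and that leaves sharing an ideal endpoint --- which are incomparable to $(x,y)$ and so are not classified by the dichotomy used above --- cause no trouble (no positive transverse measure can sit on such leaves, since a transversal meets a spiralling leaf infinitely often). An alternative, probably cleaner, is to run the two steps directly on the measures $\nu,\bar\nu$ via their Hahn decompositions rather than on topological supports. I would also include the short verification of the ``$\mu$ is not admissible'' claim.
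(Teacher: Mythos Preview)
Your argument rests on reading the definition of monotone partition literally as ``every point of $\supp(\nu)$ is $\leq$ every point of $\supp(\bar\nu)$''. But that is strictly stronger than the notion of lower submeasure actually used to define $X_\mu$: a lower submeasure only guarantees that no point of $\supp(\bar\nu)$ lies \emph{strictly below} a point of $\supp(\nu)$ (equivalently, no point of $\supp(\nu)$ lies strictly above a point of $\supp(\bar\nu)$); incomparable pairs are allowed. With that correct reading both of your steps collapse. In Step~1, from $g\in\supp(\nu)\cap\supp(\bar\nu)$ you only get that nothing in $\supp(\bar\nu)$ is $<g$ and nothing in $\supp(\nu)$ is $>g$; since $\tau g$ is incomparable to $g$, this places no constraint on which support $\tau g$ lies in. In Step~2 the same issue arises: $g,\tau g\in\supp(\nu)$ does not force $\supp(\bar\nu)=\emptyset$, because points of $\supp(\bar\nu)$ need not be $\geq g$.

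In fact the stronger statement you set out to prove is false. For any monotone taxi loop $l$ that is not $\tau$-invariant, the reference submeasure $\nu_l$ is an admissible lower submeasure of $\mu$ but is not symmetric: if $(x,y)$ is a leaf with $(x,y)$ below $l$ and $(y,x)$ above $l$ (which happens for any leaf sufficiently close to the diagonal on one side), then $(x,y)\in\supp(\nu_l)$ while $\tau(x,y)=(y,x)\in\supp(\bar\nu_l)$, yet these two are incomparable, so $\tau_*\nu_l\neq\bar\nu_l$. Correspondingly $\nu_l$ has holonomy $h(l)$, which need not vanish. So the holonomy-zero hypothesis is genuinely needed.

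The paper's proof uses it as follows: set $\epsilon=\nu-\tau_*\bar\nu$, note $\epsilon$ is $\tau$-invariant, and Hahn-decompose $\epsilon=\epsilon^+-\epsilon^-$. The lower/upper submeasure conditions (in the weak sense above) together with the lamination's lack of self-intersection force $\supp(\epsilon^-)$ to be empty once $\supp(\epsilon^+)$ is nonempty. Then one uses that both $\nu$ and $\tau_*\bar\nu$ are holonomy-zero lower submeasures (the latter because symmetry of $\mu$ gives $h(\tau_*\bar\nu)=-h(\nu)=0$), so their difference $\epsilon$ has total signed mass zero; combined with $\epsilon^-=0$ this yields $\epsilon=0$. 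The holonomy-zero hypothesis enters exactly at this ``total mass zero'' step and cannot be dropped.
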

\begin{proof}
We showed in lemma \ref{symmetric implies hol zero} that symmetric lower submeasures are holonomy zero. Conversely, suppose $\nu$ is holonomy zero. Let $\epsilon = \nu - \tau_* \bar{\nu}$. Expanding this definition,
    \[\epsilon = \nu - \tau_*(\mu - \nu) = \nu + \tau_*\nu - \mu\]
    we see that $\epsilon$ is $\tau$-invariant.
Decompose $\epsilon$ as $\epsilon^+ - \epsilon^-$, where $\epsilon^\pm$ are positive measures, using the Hahn decomposition theorem. Suppose $\epsilon^+$ is non-zero, and let $(x,y)\in \supp(\epsilon^+)$. By symmetry, $(y,x)\in \supp(\epsilon^+)$. By monotonicity, $\epsilon^-$ is zero on the set of geodesics greater than or less than or equal to $(x,y)$ and $(y,x)$, that is, all geodesics not intersecting $(x,y)$. Since $\mu$ has no self-intersection, $\epsilon^-$ is also zero on the set of geodesics intersecting $(x,y)$, so $\epsilon^-$ is zero on all of $\mathcal{G}$. 
The total measure of $\epsilon$ must be zero because both $\nu$ and $\tau_*\nu$ are holonomy-zero, so if $\epsilon^+$ is non-zero then $\epsilon^-$ is also non-zero. In conclusion, $\epsilon = 0$.
\end{proof}

\subsection{$X_\mu$ is a tree}

Firstly, the metric on $X_\mu$ has a simpler form in the rank $2$ case.
\begin{lemma}
    If $\nu_1$ and $\nu_2$ are symmetric lower submeasures of a geodesic lamination $\mu$, then there is a unique geodesic from $\nu_1$ to $\nu_2$, and $d(\nu_1,\nu_2) = |(\nu_2 - \nu_1)^+|$.
\end{lemma}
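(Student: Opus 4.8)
\section*{Proof proposal}

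The plan is to reduce everything to the structure of the finite signed measure $\epsilon := \nu_2-\nu_1$. First I would record its elementary features: since $\nu_1,\nu_2$ are admissible holonomy-zero lower submeasures, $\epsilon$ is finite and (comparing $\nu_1+\epsilon^+=\nu_2+\epsilon^-$ and using additivity of holonomy under positive measures) satisfies $|\epsilon^+|=|\epsilon^-|$; since both are symmetric, $\tau_*\epsilon=-\epsilon$, so by uniqueness of the Jordan decomposition $\tau_*\epsilon^+=\epsilon^-$ and $\supp\epsilon^+,\supp\epsilon^-$ are disjoint. As in the proof that $d$ is a metric, the monotone-partition property of $\nu_1$ and $\nu_2$ forces every point of $\supp\epsilon^+$ to be incomparable to every point of $\supp\epsilon^-$; combining this with the no-self-intersection of $\mu$ (all geodesics in $\supp\mu$ pairwise non-crossing), a short case analysis of cyclic orders on $\del\Gamma$ — using the ``inclusion'' partial order on half-spaces to handle shared endpoints — shows that the oriented geodesics in $\supp\epsilon^+$ form a \emph{totally ordered, coherently oriented} family. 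Consequently $(\pi_1)_*\epsilon^+$ and $(\pi_1)_*\epsilon^-$ are mutually singular and supported in complementary arcs of $\del\Gamma$ (meeting at most in their endpoints, and with no shared atom, again by coherence of the orientation).

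For the distance formula I would prove matching inequalities. For the upper bound, which holds for any $\mu$: by definition $d(\nu_1,\nu_2)=\sup f_{\nu_1,\nu_2}-\inf f_{\nu_1,\nu_2}$, the oscillation of the primitive of $(\pi_1)_*\epsilon$ on the circle; a function on $S^1$ whose derivative is a mean-zero measure has oscillation at most half that measure's total variation, so $d(\nu_1,\nu_2)\le\tfrac12\|(\pi_1)_*\epsilon\|\le\tfrac12\|\epsilon\|=\tfrac12(|\epsilon^+|+|\epsilon^-|)=|\epsilon^+|$. For the lower bound I use the structural statement: on the (open) arc carrying $(\pi_1)_*\epsilon^+$ the measure $(\pi_1)_*\epsilon$ is nonnegative, so $f_{\nu_1,\nu_2}$ is monotone there and climbs by exactly $|(\pi_1)_*\epsilon^+|=|\epsilon^+|$, whence $\sup f_{\nu_1,\nu_2}-\inf f_{\nu_1,\nu_2}\ge|\epsilon^+|$. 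Together these give $d(\nu_1,\nu_2)=|(\nu_2-\nu_1)^+|$.

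For the unique geodesic, the total order on $\supp\epsilon^+$ supplies a canonical path: let $\sigma_t$ be the (unique) initial segment of $\epsilon^+$ of mass $t\,|\epsilon^+|$ with respect to this order, and set $\nu_t:=\nu_1+\sigma_t-\tau_*\sigma_t$. Each $\nu_t$ is symmetric; the one nonroutine point is that it is a lower submeasure, which I would verify from the order structure: if $\tau_*\sigma_t$ fails to vanish strictly below some $g_0\in\supp\epsilon^-$, then $\sigma_t$ already exhausts $\epsilon^+$ strictly below $\tau g_0$, forcing $\nu_t(g_0)=\nu_2(g_0)$, so either $g_0\notin\supp\nu_t$ or the lower-submeasure property of $\nu_2$ applies. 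Since $\nu_t-\nu_s=(\sigma_t-\sigma_s)-\tau_*(\sigma_t-\sigma_s)$ with $\sigma_s\le\sigma_t$, the distance formula gives $d(\nu_s,\nu_t)=|\sigma_t-\sigma_s|=(t-s)\,d(\nu_1,\nu_2)$, so $t\mapsto\nu_t$ is a geodesic. For uniqueness, if $\nu'$ lies on any geodesic from $\nu_1$ to $\nu_2$ then $d(\nu_1,\nu')+d(\nu',\nu_2)=d(\nu_1,\nu_2)$; writing $\alpha=\nu'-\nu_1$, $\beta=\nu_2-\nu'$ (both $\tau$-antisymmetric, hence determined by $\alpha^+,\beta^+$), the equality case of $(\alpha+\beta)^+\le\alpha^++\beta^+$ forces $\epsilon^+=\alpha^++\beta^+$ with $\alpha^+\le\epsilon^+$, and imposing the lower-submeasure condition of $\nu'$ at points of $\supp\alpha^+$ forces $(\alpha^+,\beta^+)$ to be a monotone cut of $\epsilon^+$; such a cut is uniquely pinned down by $|\alpha^+|=d(\nu_1,\nu')$, so $\nu'=\nu_1+\alpha^+-\tau_*\alpha^+$ coincides with the corresponding $\nu_t$.

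The main obstacle is the structural statement — establishing that $\supp\epsilon^+$ is a coherently oriented chain requires careful cyclic-order bookkeeping, especially for geodesics sharing an endpoint — and, relatedly, checking that the candidate path $\nu_t$ genuinely consists of lower submeasures; the two inequalities for the formula and the Jordan-decomposition equality case are comparatively routine.
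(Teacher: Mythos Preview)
Your proposal is correct and follows essentially the same approach as the paper: establish $\tau$-antisymmetry of $\epsilon$, deduce that $\supp\epsilon^+$ is a totally ordered chain, read off the distance formula, and identify geodesic midpoints with lower submeasures of $\epsilon^+$ (which form an interval). Two minor differences worth noting: for the distance formula you argue by matching upper and lower bounds (oscillation $\le \tfrac12\|(\pi_1)_*\epsilon\|$ plus monotonicity on the arc carrying $(\pi_1)_*\epsilon^+$), whereas the paper simply picks points $z,z'$ separating $\supp\epsilon^+$ from $\supp\epsilon^-$ and computes $f_{\nu_1,\nu_2}(z')-f_{\nu_1,\nu_2}(z)$ directly; and for the total-ordering step the paper gets it in one stroke by observing that $(y,x)=\tau(x,y)\in\supp\epsilon^-$, so incomparability with $\supp\epsilon^-$ already kills $\mathcal{G}_{\le(y,x)}\cup\mathcal{G}_{\ge(y,x)}$ and the lamination hypothesis kills the crossing geodesics, leaving only $\mathcal{G}_{\le(x,y)}\cup\mathcal{G}_{\ge(x,y)}$ --- this is cleaner than a full cyclic-order case analysis and you might prefer it.
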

\begin{proof}
    The difference $\epsilon = \nu_2 - \nu_1$ is anti-invariant with respect to $\tau$:
    \[\tau_*(\nu_2 - \nu_1) = (\mu - \nu_2) - (\mu - \nu_1) = \nu_1 - \nu_2\]
    This means that $\epsilon^- = \tau_* \epsilon^+$. If $(x,y)\in \supp(\epsilon^+)$, then $\epsilon^+$ is zero on the set of all geodesics intersecting $(x,y)$
    because $\mu$ is a measured lamination. It is also zero on $\mathcal{G}_{\leq (y,x)}\cup \mathcal{G}_{\geq (y,x)}$
    because $(y,x)\in \supp(\epsilon^-)$.
    It follows that $\epsilon^+$ is supported within $\mathcal{G}_{\leq (x,y)}\cup \mathcal{G}_{\geq (x,y)}$. We conclude that $\supp(\epsilon^+)$ is a totally ordered subset of $\mathcal{G}$. 

    We can find $z,z'\in \del\Gamma$ such that for any $(x,y)\in \supp(\epsilon^+)$ we have $z < x \leq z' < y \leq z$. If we integrate $(\pi_+)_*\epsilon$ to get a function $f_{\nu_1,\nu_2}:\del\Gamma\to \R$, its maximum will be attained at $z'$, its minimum will be attained at $z$, and the difference $d(\nu_1,\nu_2) = f_{\nu_1,\nu_2}(z')-f_{\nu_1,\nu_2}(z)$ will be the integral of $(\pi_+)_*\epsilon$ over $(z,z']$ which is $|\epsilon^+|$. 

    The union of all geodesics in $X_\mu$ connectiong $\nu_1$ to $\nu_2$ is the set of lower submeasures $\eta$ such that $d(\nu_1,\eta) + d(\eta,\nu_2) = d(\nu_1,\nu_2)$. In other words,
    \[|(\eta - \nu_1)^+| + |(\nu_2 - \eta)^+| = |\epsilon^+|.\]
    The only way this equality of masses can hold is if it holds at the level of measures:
    \[(\eta - \nu_1)^+ + (\nu_2 - \eta)^+ = \epsilon^+.\]
    The difference $(\eta - \nu_1)^+$ must be a lower submeasure of $\epsilon^+$ because $\eta$ is a lower submeasure. A point on a geodesic from $\nu_1$ to $\nu_2$ thus corresponds with a lower submeasure of $\epsilon^+$. The space of lower submeasures of $\epsilon^+$ is the interval $[0,d(\nu_1,\nu_2)]$ because $\supp(\epsilon^+)$ is totally ordered.
\end{proof}

\begin{lemma}
\label{measured lamination implies R tree}
If $\mu$ is a measured lamination, $X_\mu$ is an $\R$-tree.
\end{lemma}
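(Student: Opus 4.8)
The plan is to invoke the standard characterization of $\R$-trees: a metric space is an $\R$-tree if and only if it is uniquely geodesic and every geodesic triangle is a tripod, i.e.\ for any three points $\nu_1,\nu_2,\nu_3$ there is a \emph{median} $m$ with $d(\nu_i,m)+d(m,\nu_j)=d(\nu_i,\nu_j)$ for all $i\neq j$ (equivalently, $X_\mu$ is geodesic and $0$-hyperbolic). Unique geodesicity, together with the explicit formula $d(\nu_1,\nu_2)=|(\nu_2-\nu_1)^+|$ and the identification of $[\nu_1,\nu_2]$ with the interval of lower submeasures of $(\nu_2-\nu_1)^+$, is exactly the content of the preceding lemma; so the whole problem reduces to producing the median $m$.

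First I would unwind the description of geodesics. If $\nu$ and $\nu_1$ are symmetric lower submeasures of the lamination $\mu$, then $\nu-\nu_1$ is $\tau$-anti-invariant, so with $\sigma:=(\nu-\nu_1)^{+}$ we have $(\nu-\nu_1)^{-}=\tau_*\sigma$, hence $\nu=\nu_1+\sigma-\tau_*\sigma$, and $\sigma\perp\tau_*\sigma$ by Hahn. The preceding lemma identifies $[\nu_1,\nu_2]$ with the ``prefixes'' $\sigma$ of $\epsilon_{12}:=(\nu_2-\nu_1)^{+}$, a set which is a genuine interval because $\supp(\epsilon_{12})$ is totally ordered; and likewise for $[\nu_1,\nu_3]$ with prefixes of $\epsilon_{13}:=(\nu_3-\nu_1)^{+}$. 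Comparing the two descriptions (and taking positive parts, using $\sigma\perp\tau_*\sigma$), a point of $[\nu_1,\nu_2]\cap[\nu_1,\nu_3]$ corresponds to a single measure $\sigma$ which is simultaneously a prefix of $\epsilon_{12}$ and of $\epsilon_{13}$. Since the ambient order on $\mathcal{G}$ restricts to the same total order on the two supports wherever they overlap, such common prefixes form an interval $[0,\delta]$ with a largest element $\delta$; I set $m:=\nu_1+\delta-\tau_*\delta$. A short computation shows $m$ is again a symmetric lower submeasure of $\mu$ (symmetry: $\tau_*m=\tau_*\nu_1+\tau_*\delta-\delta=\mu-m$), hence $m\in X_\mu$ by Lemma~\ref{symmetric implies hol zero}, and $m$ is the common endpoint of the overlaps $[\nu_1,\nu_2]\cap[\nu_1,\nu_3]$.

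It remains to verify $m\in[\nu_2,\nu_3]$. Using $\delta\le\epsilon_{12}$ and $\delta\le\epsilon_{13}$ one gets $(\nu_3-m)^{+}=\epsilon_{13}-\delta$ and $(m-\nu_2)^{+}=\tau_*(\epsilon_{12}-\delta)$, so $d(m,\nu_3)=|\epsilon_{13}-\delta|$ and $d(\nu_2,m)=|\epsilon_{12}-\delta|$; and then $(\nu_3-\nu_2)^{+}=\tau_*(\epsilon_{12}-\delta)+(\epsilon_{13}-\delta)$, which yields $d(\nu_2,m)+d(m,\nu_3)=d(\nu_2,\nu_3)$. This last identity is where the hypothesis that $\mu$ is a lamination enters decisively: no self-intersection forces $\epsilon_{12}-\delta$ and $\epsilon_{13}-\delta$ — the parts of $\nu_2$ and $\nu_3$ ``beyond'' their common part with $\nu_1$ — to be carried by leaves that do not cross and are coherently ordered, so no further cancellation occurs when forming $(\nu_3-\nu_2)^{+}$. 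The main obstacle is exactly this measure-theoretic bookkeeping: showing the common prefixes have a largest element $\delta$ and that $\epsilon_{12}-\delta$, $\tau_*(\epsilon_{12}-\delta)$, $\epsilon_{13}-\delta$ are mutually singular in the pattern required, all of which rests on the no-crossing property (without it $\supp(\epsilon^+)$ need not be totally ordered and $X_\mu$ is merely a cube complex).

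An alternative, more geometric route — following the paragraph preceding this lemma — is to fix a hyperbolic metric on $S$, realize $\supp(\mu)$ as a lamination $\tilde L\subset\tilde S$, and send each complementary region $R$ of $\tilde L$ to the symmetric lower submeasure $\nu_R$ cut out by the order ideal of half-spaces not containing $R$. One checks $d(\nu_{R},\nu_{R'})$ equals the $\mu$-mass of leaves separating $R$ from $R'$, so this is an isometric embedding of the set of complementary regions, and that the union of geodesics in $X_\mu$ between such points is all of $X_\mu$ (a general symmetric lower submeasure is obtained from the $\nu_R$ by splitting separating leaves, which happens along these geodesics); then $X_\mu$ is an $\R$-tree by \cite{MorganShalen91}, matching the dual tree described there.
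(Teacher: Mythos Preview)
Your strategy matches the paper's: both show $X_\mu$ is $0$-hyperbolic by exhibiting a median for any triple $\nu_1,\nu_2,\nu_3$, relying on the preceding lemma for unique geodesicity and the formula $d(\nu_1,\nu_2)=|(\nu_2-\nu_1)^+|$. The difference is in how the median is built. You work asymmetrically from $\nu_1$, defining $m=\nu_1+\delta-\tau_*\delta$ where $\delta$ is the maximal common lower submeasure of $\epsilon_{12}$ and $\epsilon_{13}$, and then you have to argue that $\delta$ exists, that $\epsilon_{12}-\delta$ and $\epsilon_{13}-\delta$ are mutually singular, and that no cancellation occurs in $(\nu_3-\nu_2)^+$. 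You rightly flag this bookkeeping as the main obstacle, and while it can be pushed through using the lamination hypothesis, it is not entirely trivial.

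The paper sidesteps this by writing the median down symmetrically. The three partitions $\mu=\nu_i+\bar\nu_i$ refine to an eight-piece decomposition of $\mu$ via Hahn, and one simply sets
\[
\nu_0=\nu_1\cap\nu_2\cap\nu_3+\bar\nu_1\cap\nu_2\cap\nu_3+\nu_1\cap\bar\nu_2\cap\nu_3+\nu_1\cap\nu_2\cap\bar\nu_3.
\]
Then $\nu_0-\nu_1=\bar\nu_1\cap\nu_2\cap\nu_3-\nu_1\cap\bar\nu_2\cap\bar\nu_3$ and $\nu_0-\nu_2=\bar\nu_2\cap\nu_1\cap\nu_3-\nu_2\cap\bar\nu_1\cap\bar\nu_3$; these are manifestly disjoint (the positive part of one lives in $\bar\nu_1$, the other in $\nu_1$, etc.), so the triangle inequality is an equality and $\nu_0\in[\nu_1,\nu_2]$. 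By symmetry of the formula the same holds for the other two sides, and you are done in a few lines. This is the insight you are missing: rather than building the median from one vertex and proving singularity of the remainders, decompose $\mu$ once against all three partitions and read off the median directly. Your alternative geometric route via Morgan--Shalen is valid but is exactly the classical construction the paper is trying to recover intrinsically.
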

\begin{proof}
One way to show that $X_\mu$ is an $\R$-tree is to show that it is a $0$-hyperbolic metric space. Let $\nu_1,\nu_2,\nu_3\in X_\mu$. The three partitions of $\mu$ give rise (using the Hahn decomposition theorem) to a partition of $\mu$ into eight pieces
\[\mu = \nu_1\cap \nu_2 \cap \nu_3\; +\; \bar\nu_1\cap \nu_2 \cap \nu_3 \;+ \cdots\]
where for two positive measures $\alpha$ and $\beta$, $\alpha\cap\beta$ denotes the biggest measure less than $\alpha$ and $\beta$. We will show that the geodesics connecting any two of the $\nu_i$ pass through 
\[\nu_0 = \nu_1\cap \nu_2 \cap \nu_3 + \bar\nu_1\cap \nu_2 \cap \nu_3 + \nu_1\cap \bar\nu_2 \cap \nu_3 + \nu_1\cap \nu_2 \cap \bar\nu_3\]
Let's check the difference between $\nu_0$ and $\nu_1$, and between $\nu_0$ and $\nu_2$.
\[\nu_0 - \nu_1 = \bar\nu_1\cap \nu_2\cap\nu_3 - \nu_1\cap\bar\nu_2\cap\bar\nu_3\]
\[\nu_0 - \nu_2 = \bar\nu_2\cap \nu_1\cap\nu_3 - \nu_2\cap\bar\nu_1\cap\bar\nu_3\]
Since these two differences are totally disjoint, there will be no canceling when we add, so the triangle inequality will be an equality for $\nu_1,\nu_0,\nu_2$. This means that $\nu_0$ lives on the geodesic from $\nu_1$ to $\nu_2$. The same is true of $\nu_1$, $\nu_3$ and $\nu_2$, $\nu_3$. We have shown that $X_\mu$ is $0$-hyperbolic.
\end{proof}

\section{Currents from Finsler metrics}
\label{sec:currents from metrics}
In this section we show how to extract a geodesic current from a Finsler metric on $S$ which is not quite negatively curved, and not necessarily symmetric. In contrast to the negatively curved case, the current may be singular. This current will be the curvature of a bundle with connection on $\mathcal{G}^\circ$ whose periods are lengths of curves in $S$.

In Section \ref{rank 3} we will apply the theory to Finsler metrics whose length spectra arise in $\del_{\lambda_1}\! \Hit^3(S)$, namely triangular Finsler metrics. Triangular Finsler metrics exhibit the the eccentricities that we will have to deal with in this section: asymmetry, and non-uniqueness of geodesics, so we define them now as an example to keep in mind.
\begin{definition}
Let $\mu$ be a cubic differential on a Riemann surface $C$; that is, a holomorphic section of $(T^*C)^{\otimes 3}$. The Finsler metric $F^\Delta_\mu$ is defined by
\[F^\Delta_\mu(v) := \max_{\{\alpha\in T_x^*C:\alpha^3=\mu_x\}} 2 Re(\alpha(v))\]
where $x\in C$ is a point, and $v\in T_x C$ is a tangent vector.
\end{definition}

\subsection{Horofunction Boundaries}
In this subsection we recall Gromov's notion of horofunction boundary \cite{MR0624814} but in the case of asymmetric metrics.
Let $X$ be a proper, geodesic, asymmetric metric space. Let $C(X)$ denote the space of continuous real valued functions with the topology of uniform convergence on compact subsets. There are natural embeddings $D_+,D_-:X\to C(S)$ given by 
\[D_+(x) = d(-,x)\]
\[D_-(x) = d(x,-)\]
which are both isometric embeddings from $X$, with the metric $\max(d(x,y), d(y,x))$, to $C(X)$ equipped with the supremum norm.
\begin{definition}
The \textbf{plus horofunction compactification}, $\del_+^{h}X$, is the closure of $D_+(X)$ in the quotient $C(X)/\R$ of continuous functions by constant functions. Similarly, the \textbf{minus horofunction compactification} $\del_-^h X$ is the closure of $D_-(X)$ in $C(X)/\R$.  The plus and minus horofunction boundaries $\del^h_+X$, and $\del_-^{h}X$ are the complements of $X$ in these two compactifications. Plus and minus \textbf{horofunctions} are functions on $X$ which represent points in $\del^h_-X$, and $\del^h_+X$  respectively.
\end{definition}
 Since $\del_-^h X$ is just $\del_+^h X$ for the reversed metric, we will sometimes make statements only for $\del_+^h X$. 

As an example, if $X = \C$ with the triangular Finsler metric $F^\Delta_{dz^3}$, then the horofunction boundary $\del_-^h X$ is a circle with a natural cell decomposition into three copies of $\R$, and three points. Linear functions \[h(z) = 2 Re(\zeta z) + C\] with $\zeta^3=1$
will give three points in $\del_-^h(\C,F^\Delta_{dz^3})$. 
There will also be horofunctions of the form \[h(y) = \max (2 Re(\zeta z)+C, 2 Re(\zeta' z)+C')\] for two distinct third roots of unity $\xi,\xi'$ which will descend to three copies of $\R$ in $\del_-^h$.

In general, the horofunction boundary can be quite different from the visual boundary, but in the Gromov hyperbolic case there is a close relationship. 
\begin{lemma} [\cite{CoornaertPapadopoulos2001}]
    For a Gromov hyperbolic geodesic metric space $X$, the visual boundary $\del\! X$ is the quotient of the horofunction boundary $\del^{h}\!X$, where two horofunctions are identified if their difference is bounded.
\end{lemma}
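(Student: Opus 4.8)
The plan is to reproduce the classical argument of Coornaert--Papadopoulos identifying the horofunction compactification of a proper geodesic Gromov hyperbolic space with its Gromov compactification; I would carry it out for $\del^h_+X$, the case of $\del^h_-X$ being identical after reversing the metric. Fix a basepoint $o\in X$, write $(\cdot\mid\cdot)_o$ for the Gromov product based at $o$, and recall that a point of the Gromov boundary $\del X$ is an equivalence class of sequences $(x_n)$ in $X$ with $(x_n\mid x_m)_o\to\infty$, two such sequences being equivalent when all cross products $(x_n\mid y_m)_o$ tend to $\infty$, with the topology on $X\cup\del X$ induced by the (suitably extended) Gromov product. For $x\in X$ set $\psi_x:=d(\cdot,x)-d(o,x)$, so that $\del^h_+X$ is by definition the set of limits in $C(X)/\R$ of sequences $[\psi_{x_n}]$ with $x_n\to\infty$. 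The functions $\psi_x$ are equicontinuous and locally uniformly bounded, so by properness of $X$ any sequence $(\psi_{x_n})$ with $x_n\to\infty$ subconverges to a horofunction.

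First I would build a map $\Phi$ from $\del X$ to the set of horofunctions modulo bounded difference: given $\xi\in\del X$, pick $x_n\to\xi$ and let $h_\xi$ be any subsequential limit of $(\psi_{x_n})$. The crux is a uniform estimate: there is $c=c(\delta)$ with $\sup_{z\in X}|\psi_{x_n}(z)-\psi_{y_m}(z)|\le c$ whenever $x_n\to\xi$ and $y_m\to\xi$ represent the same boundary point and $n,m$ are large. This is where $\delta$-thinness enters: for fixed $z$ one compares the geodesic triangles $o,x_n,z$ and $o,y_m,z$, using that $[o,x_n]$ and $[o,y_m]$ $\delta$-fellow-travel up to distance roughly $(x_n\mid y_m)_o\to\infty$, to bound the difference by a multiple of $\delta$ via a Gromov-product computation. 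It follows that $h_\xi$ is well defined up to a bounded additive function, independently of all choices, so $\Phi$ is well defined.

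Next I would prove $\Phi$ is a bijection and a homeomorphism. Surjectivity: any horofunction is $\lim[\psi_{x_n}]$ with $x_n\to\infty$, and by properness $(x_n)$ subconverges to some $\xi\in\del X$, whence the uniform estimate forces the horofunction to equal $h_\xi$ up to bounded difference. Injectivity: if $\xi\ne\eta$, choose geodesic rays $\gamma,\gamma'$ from $o$ to $\xi,\eta$; distinct rays in a hyperbolic space diverge, so $d(\gamma(t),\gamma')\to\infty$, and evaluating along $\gamma(t)$ gives $h_\xi(\gamma(t))\to-\infty$ while $h_\eta(\gamma(t))$ stays bounded below, so $h_\xi-h_\eta$ is unbounded. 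For the topology, both $X\cup\del X$ and the horofunction compactification are compact; the bounded-difference relation on the compact space of normalized horofunctions is closed, so the quotient is Hausdorff, and comparing the two notions of convergence ($x_n\to\xi$ in $X\cup\del X$ iff every subsequential limit of $[\psi_{x_n}]$ is $h_\xi$) shows $\Phi$ and $\Phi^{-1}$ are continuous; a continuous bijection from a compact space to a Hausdorff space is a homeomorphism.

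The hard part will be the uniform estimate of the second paragraph, namely controlling $\psi_{x_n}-\psi_{y_m}$ over \emph{all} of $X$, including points $z$ lying far out toward $\xi$ where the relevant geodesics must be compared with care; and, since the present setting is that of asymmetric metrics, checking that the thin-triangle and Gromov-product inequalities persist with controlled constants once symmetry is dropped. A secondary point needing attention is the Hausdorffness of the quotient, which is precisely what licenses the compact-to-Hausdorff argument at the end.
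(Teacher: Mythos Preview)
Your proposal is correct and reproduces the classical Coornaert--Papadopoulos argument. The paper does not give its own proof of this lemma; it simply cites the reference and then sketches the quotient map in the \emph{opposite} direction to yours: from a horofunction $h$ (normalized so $h(p)=0$) one extracts a geodesic ray $\gamma$ with $h(\gamma(t))=-t$, obtained as a subsequential limit of geodesics from $p$ to points $q_i\to[h]$, and Gromov hyperbolicity forces any two such rays, arising from horofunctions with bounded difference, to stay bounded distance apart and hence represent the same point of $\del X$.

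So the paper builds $\del^h X\to\del X$ directly and checks its fibers are the bounded-difference classes, whereas you build $\Phi:\del X\to\del^h X/\!\!\sim$ and prove it is a bijection. Your route has the virtue of isolating the key uniform estimate $\sup_X|\psi_{x_n}-\psi_{y_m}|\le c(\delta)$ explicitly; note that once this is proved, the relation ``bounded difference'' on horofunctions coincides with ``difference bounded by $c(\delta)$'', which is manifestly closed, so your Hausdorffness step goes through. The paper's direction is marginally cleaner for the topology, since one maps a compact space into the a~priori Hausdorff $\del X$ and reads off the quotient structure without needing that observation. Both are standard and either suffices here; your caution about the asymmetric setting is appropriate but, as the paper is only using the lemma as a black box from the literature, it does not engage with that issue.
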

The map is defined as follows: For any horofunction $h$, and any $p\in X$ with $h(p)=0$, we can find a geodesic ray $\gamma$ starting at $p$ satisfying $h(\gamma(t)) = -t$. We find $\gamma$ by taking geodesics from $p$ to $q_i$ for a sequence $q_i\in X$ converging to $[h]$. Simply take $\gamma$ to be the limit of a convergent subsequence for the topology of convergence on compact subsets. Gromov hyperbolicity forces any two geodesic rays $\gamma,\gamma'$ which satisfy this property, with respect to two horofunctions $h,h'$ with $h-h'$ bounded, to be bounded distance from eachother, thus represent the same point in $\del X$. Note however, that $\gamma$ may not converge to $[h]$ in the horofunction compactification. We will denote the projections to the visual boundary by $v_-:\del_-^h X\to \del X$ and $v_+:\del_+^h X \to \del X$. 

In the generality we need, there is no natural map from the visual boundary to the horofunction boundary, but geodesic rays do always converge to horofunctions. Endpoints of geodesic rays in the horofunction boundary are called Bussmann points. 

\begin{lemma}
    Geodesic rays converge in the horofunction compactification. If two rays $\gamma_1,\gamma_2:[0,\infty)\to X$ are asymptotic, i.e.
    \[\lim_{t\to\infty} d(\gamma_1(t),\gamma_2(t+T)) = 0\]
    for some $T$, then they converge to the same point.
\end{lemma}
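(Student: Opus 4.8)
The plan is to prove the two assertions in turn, working throughout with the plus horofunction compactification $\del_+^h X$; the statement for $\del_-^h X$ follows by reversing the metric, as already noted. Parametrize a geodesic ray $\gamma$ by arclength, so that $d(\gamma(s),\gamma(t)) = t-s$ whenever $0 \le s \le t$, and consider the normalized functions $h_t := D_+(\gamma(t)) - d(\gamma(0),\gamma(t)) \in C(X)$, that is $h_t(x) = d(x,\gamma(t)) - t$. Since $h_t$ represents $D_+(\gamma(t))$ in the quotient $C(X)/\R$, it suffices to show that $h_t$ converges in $C(X)$ as $t\to\infty$.

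First I would establish monotonicity: for $t \le t'$ the triangle inequality $d(x,\gamma(t')) \le d(x,\gamma(t)) + d(\gamma(t),\gamma(t')) = d(x,\gamma(t)) + (t'-t)$ gives $h_{t'}(x) \le h_t(x)$, so $t \mapsto h_t(x)$ is non-increasing. Next, the triangle inequality $t = d(\gamma(0),\gamma(t)) \le d(\gamma(0),x) + d(x,\gamma(t))$ gives $h_t(x) \ge -d(\gamma(0),x)$, so the functions are uniformly bounded below (and bounded above by $h_0(x)=d(x,\gamma(0))$ by monotonicity). Hence $h_t(x)$ decreases to a pointwise limit $h_\infty(x) = \inf_t h_t(x)$. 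To upgrade this to convergence in $C(X)$ I would use that $|h_t(x)-h_t(y)| = |d(x,\gamma(t)) - d(y,\gamma(t))| \le \max(d(x,y),d(y,x))$, so $\{h_t\}$ is equicontinuous for the symmetrized metric; since $X$ is proper, on every compact set Arzel\`a--Ascoli together with the known pointwise limit forces uniform convergence. Thus $h_t \to h_\infty$ in $C(X)$ and $D_+(\gamma(t)) \to [h_\infty]$ in $C(X)/\R$, which proves the first assertion; the endpoint $[h_\infty]$ is the associated Busemann point.

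For the second assertion, let $\gamma_1,\gamma_2$ be asymptotic, write $\epsilon_t := d(\gamma_1(t),\gamma_2(t+T))\to 0$, and let $h^1_t(x) = d(x,\gamma_1(t)) - t$ and $h^2_s(x) = d(x,\gamma_2(s)) - s$, with pointwise-decreasing limits $h^1_\infty, h^2_\infty$ as above. For each $t$, the triangle inequality $d(x,\gamma_2(t+T)) \le d(x,\gamma_1(t)) + \epsilon_t$ rearranges to $h^2_{t+T}(x) \le h^1_t(x) - T + \epsilon_t$; letting $t\to\infty$ gives $h^2_\infty \le h^1_\infty - T$. The companion estimate $d(x,\gamma_1(t)) \le d(x,\gamma_2(t+T)) + d(\gamma_2(t+T),\gamma_1(t))$, with $d(\gamma_2(t+T),\gamma_1(t))$ also tending to $0$ (the natural two-sided reading of ``asymptotic'' in an asymmetric metric, equivalently $\max(d(\gamma_1(t),\gamma_2(t+T)),d(\gamma_2(t+T),\gamma_1(t)))\to 0$), yields $h^1_\infty \le h^2_\infty + T$. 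Hence $h^1_\infty$ and $h^2_\infty$ differ by the constant $T$, so the two rays represent the same point of $\del_+^h X$.

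The main obstacle I anticipate is entirely bookkeeping of the asymmetry: each triangle inequality must be applied in the correct order, and the equicontinuity estimate feeding Arzel\`a--Ascoli must be stated for the symmetrized metric $\max(d(x,y),d(y,x))$, with respect to which $X$ is a genuine proper metric space. The monotonicity and boundedness that drive convergence are robust and easy; the only genuinely delicate point is ensuring that ``asymptotic'' is used in the two-sided sense needed to conclude that the limiting horofunctions differ by a constant rather than merely satisfying a one-sided inequality.
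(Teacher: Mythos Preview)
Your proof is correct and follows essentially the same line as the paper's: normalize $h_t(x)=d(x,\gamma(t))-t$, show it is non-increasing in $t$ and bounded below, and use the $1$-Lipschitz bound to upgrade pointwise to uniform convergence on compacts; then use the triangle inequality for the asymptotic claim. The paper dismisses the second assertion as ``clear from the triangle inequality,'' whereas you correctly flag that in a genuinely asymmetric metric one needs the two-sided condition $\max(d(\gamma_1(t),\gamma_2(t+T)),d(\gamma_2(t+T),\gamma_1(t)))\to 0$ to get equality rather than a one-sided inequality between the limiting horofunctions---this is a real subtlety the paper glosses over.
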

\begin{proof}
    The second claim is clear from the triangle inequality, but the first statement is a bit less immediate.
    Let $\gamma:[0,\infty)\to X$ be a geodesic, meaning $d(\gamma(t),\gamma(t'))=t'-t$ for all $0\leq t\leq t'$. Let $x\in X$. We would like to show that the path of functions
    \[h^t_{\gamma}(x) := d(x,\gamma(t)) - t\]
    converges on all compact subsets of $X$ as $t$ goes to infinity. First we show that $h^t_{\gamma}(x)$ is decreasing in $t$. Let $t' \geq t$, and use the triangle inequality.
    \[h^t_{\gamma}(x) - h^{t'}_{\gamma}(x) = d(x,\gamma(t)) + (t'-t) - d(x,\gamma(t')) \geq 0\]
    The function $h^t_{\gamma}(x)$ is also bounded below as a function of $t$:
    \[d(\gamma(0),x) + d(x,\gamma(t)) \geq t\]
    \[d(x,\gamma(t)) - t \geq  - d(\gamma(0),x)\]
    Since the path of functions $h^t_\gamma(x)$ is $1$-lipshitz in $x$, bounded below on compact subsets of $X$, and monotonically decreasing in $t$, it must converge uniformly on compact subsets of $X$.
\end{proof}

\subsection{A bundle with taxi connection}
We will define a pairing between minus horofunctions and plus horofunctions, from which we construct a cross ratio, and even an $\R$-bundle with taxi connection on a subset of $\del_-^hX\times \del_+^h X$. 

\begin{definition}
    Let $X$ be a proper asymmetric geodesic metric space. The \textbf{pairing} of a minus horofunction $g$, and a plus horofunction $h$ is the infimum of their sum.
    \[\langle g, h \rangle := \inf_{X} (g + h) \in [-\infty,\infty)\]
    The \textbf{cross ratio} of $[g_1],[g_2]\in \del_-^h X$ and $[h_1],[h_2]\in \del_+^h X$ is the following combination of pairings.
    \[b([g_1],[g_2];[h_1],[h_2]) = \langle g_1,h_1\rangle + \langle g_2,h_2\rangle - \langle g_1,h_2\rangle - \langle g_2,h_1\rangle\]
    
\end{definition}

The pairing between horofunctions is a sort of renormalized limit of distance between points. It is closely related to the Gromov product.
\begin{lemma}
\label{gromov product and pairing}
    Let $X$ be a Gromov hyperbolic,  proper, geodesic, asymmetric metric space. Let $p\in X$ be a basepoint. Suppose $x_i$ converges to $[g]\in\del_-^h X$ and $y_i$ converges to $[h]\in \del_+^h X$ where $g$ and $h$ are normalized to vanish on $p$. Then
    \[\lim_{i\to \infty} d(x_i,y_i) - d(x_i,p) - d(p,y_i) = \langle g,h\rangle\]
\end{lemma}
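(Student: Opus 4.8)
The plan is to unwind the definitions and match the left-hand limit to the infimum of $g+h$. First I would normalize: since $x_i \to [g]$ in $C(X)/\R$ and $g$ vanishes at $p$, the functions $d(x_i, -) - d(x_i, p)$ converge uniformly on compact sets to $g$; likewise $d(-, y_i) - d(p, y_i)$ converges uniformly on compact sets to $h$. Adding these and evaluating at an arbitrary point $z \in X$ gives
\[
\bigl[d(x_i, z) - d(x_i, p)\bigr] + \bigl[d(z, y_i) - d(p, y_i)\bigr] \longrightarrow g(z) + h(z).
\]
By the triangle inequality $d(x_i, z) + d(z, y_i) \geq d(x_i, y_i)$, so each term on the left is $\geq d(x_i,y_i) - d(x_i,p) - d(p,y_i)$; taking $\liminf_i$ of the quantity $Q_i := d(x_i,y_i) - d(x_i,p) - d(p,y_i)$ and then the infimum over $z$ shows $\limsup_i Q_i \le \langle g, h\rangle$ is the wrong direction — so I need to be careful about which inequality comes for free and which uses Gromov hyperbolicity. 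In fact the easy direction is the \emph{lower} bound: $Q_i = d(x_i,y_i) - d(x_i,p) - d(p,y_i)$ is, up to sign, twice the Gromov product $(x_i \mid y_i)_p$ (with appropriate asymmetric conventions), and for any $z$ one has $d(x_i,y_i) \le d(x_i, z) + d(z, y_i)$, giving $Q_i \le [d(x_i,z) - d(x_i,p)] + [d(z,y_i) - d(p,y_i)]$; passing to the limit, $\limsup_i Q_i \le g(z) + h(z)$ for every $z$, hence $\limsup_i Q_i \le \langle g, h\rangle$.

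For the reverse inequality $\liminf_i Q_i \ge \langle g, h\rangle$, this is where Gromov hyperbolicity enters and is the main obstacle. The point is that in a $\delta$-hyperbolic space, for a geodesic $[x_i, y_i]$ there is a point $z_i$ on it within bounded distance (depending only on $\delta$) of both geodesics $[x_i, p]$ and $[p, y_i]$ — a "near-center" of the triple $x_i, y_i, p$. Then $d(x_i, z_i) + d(z_i, y_i) = d(x_i, y_i)$ exactly (since $z_i$ is on the geodesic), and $d(x_i, p) = d(x_i, z_i) + O(\delta)$, $d(p, y_i) = d(z_i, y_i) + O(\delta)$ by the fellow-traveling property, so $Q_i = [d(x_i, z_i) - d(x_i, p)] + [d(z_i, y_i) - d(p, y_i)] + O(\delta)$, and evaluating the converging functions at $z_i$ — here I must check $z_i$ stays in a compact set, or more carefully that the convergence of $d(x_i,-) - d(x_i,p)$ to $g$ is good enough near $z_i$; this requires the standard estimate that these functions are $1$-Lipschitz so uniform convergence on the relevant balls suffices — gives $\liminf_i Q_i \ge g(z_i) + h(z_i) - O(\delta) \ge \langle g, h\rangle - O(\delta)$. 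Since $\delta$ is fixed this only gives the bound up to an additive $O(\delta)$, which is not good enough; the resolution is to observe that this whole argument can be rerun with $p$ replaced by points far out along geodesic rays representing $g$ and $h$, or alternatively to use that $h^t_\gamma$ converges \emph{monotonically} (as in the preceding lemma) to pin the constant exactly. I expect the cleanest route is: represent $[g]$ and $[h]$ by Busemann-type functions along geodesic rays $\gamma_g, \gamma_h$ emanating from $p$, use the monotone-limit description $g(z) = \lim_t (d(z,\gamma_g(t)) - t)$, and then the infimum $\inf_z(g+h)$ is realized (up to $\epsilon$) along the "bridge" geodesic between the two rays, which is exactly where the near-centers $z_i$ accumulate, forcing equality.

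The remaining steps are bookkeeping: verify that $\langle g, h \rangle > -\infty$ (so the statement is not vacuous — this uses that $x_i \to [g]$, $y_i \to [h]$ with $x_i, y_i$ genuinely escaping, together with hyperbolicity, bounds $(x_i \mid y_i)_p$ from above iff $[g] \ne [h]$ as points of $\del X$, and one should state the convention when $v_-[g] = v_+[h]$), and confirm the asymmetric conventions are consistent throughout — in particular that $d(x_i, p)$ versus $d(p, x_i)$ appears in the right slots, since the metric is not symmetric and $D_+, D_-$ use the two different "directions" of $d$. I would present the $\limsup$ bound first as a short paragraph, then the $\liminf$ bound via near-centers and the monotone Busemann description as the substantive part, and close by noting the two bounds together give the claimed limit.
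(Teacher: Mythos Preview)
Your approach is essentially the paper's, but you have introduced a spurious $O(\delta)$ error that sends you off on an unnecessary detour. Once $z_i$ lies on a geodesic from $x_i$ to $y_i$, you have $d(x_i,y_i)=d(x_i,z_i)+d(z_i,y_i)$ exactly, and therefore
\[
Q_i \;=\; \bigl[d(x_i,z_i)-d(x_i,p)\bigr] + \bigl[d(z_i,y_i)-d(p,y_i)\bigr]
\]
exactly, with no error term. Your claimed approximations $d(x_i,p)=d(x_i,z_i)+O(\delta)$ and $d(p,y_i)=d(z_i,y_i)+O(\delta)$ are simply false (the discrepancy is of order the Gromov product, not $\delta$), but they are also irrelevant: you never needed them. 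With the exact identity above, uniform convergence on the compact set containing the $z_i$ gives $\liminf_i Q_i \ge \langle g,h\rangle$ immediately, and your Busemann-ray workaround can be deleted.

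The paper packages both inequalities at once by writing $d(x_i,y_i)=\inf_{z\in X}\bigl(d(x_i,z)+d(z,y_i)\bigr)$, observing that Gromov hyperbolicity confines the minimizers to a fixed compact set $K$ (this is the one place hyperbolicity is used, exactly as you suspected), and then passing to the limit uniformly on $K$ to get $\inf_K(g+h)=\inf_X(g+h)$. This is cleaner than splitting into $\limsup$ and $\liminf$, but it is the same argument. Your remark that the case $v_-[g]=v_+[h]$ (where both sides are $-\infty$) should be handled separately is well taken; the paper's proof tacitly assumes the compact region exists.
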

\begin{proof}
    First note that $d(x_i,y_i)$ is the minimum of the function $d(x_i,z) + d(z,y_i)$. The set of minima is the union of all geodesics connecting $x_i$ to $y_i$. Re-write the left hand side:
    \[\lim_{i,j\to \infty} \inf_{z\in X} d(x_i,z) - d(x_i,p) + d(z, y_i) - d(p,y_i)\]
    Gromov hyperbolicity implies that there is a compact region of $X$ that all geodesics from $x_i$ to $y_i$ pass through for all $i$. These infimums can thus be all attained for $z$ constrained to this compact region. On such region, $d(x_i,z) - d(x_i,p)$ is converging uniformly to $g(z)$, and $d(z, y_i) - d(p,y_i)$ is converging uniformly to to $h(z)$.  The expression becomes
    \[\inf_{z\in X} h(z) + g(z)  = \langle h, g \rangle\]
    
\end{proof}
Note that $d(x_i,y_i) - d(x_i,p) - d(p,y_i)$ is just $-2$ times the Gromov product of $x_i$ and $y_i$. If $X$ is Gromov hyperbolic, and $x_i$ and $y_i$ converge in the visual boundary, then the Gromov product diverges if and only if they converge to the same point. Therefore, if $X$ is Gromov hyperbolic, then $\langle g,h\rangle = -\infty$ if and only if $v([g]) = v([h])$.

Lemma \ref{gromov product and pairing} implies a more geometric formula for the cross ratio of four horofunctions as a limit of ``cross distances".
\begin{lemma}
\label{cross distance becomes cross ratio}
    If $x_{1,i}, x_{2,i}$ limit to $[g_1],[g_2]\in \del_-^h X$, and $y_{1,i}, y_{2,i}$ limit to $[h_1],[h_2]\in \del_+^h X$, then 
    \[b([g_1],[g_2];[h_1],[h_2]) = \lim_{i\to\infty}[d(x_{1,i}, y_{1,i}) + d(x_{2,i}, y_{2,i}) - d(x_{1,i}, y_{2,i}) - d(x_{2,i}, y_{1,i})]\]
\end{lemma}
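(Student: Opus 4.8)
The plan is to deduce this from Lemma~\ref{gromov product and pairing} by keeping careful track of the basepoint terms. First I would fix a basepoint $p\in X$ and take the representatives $g_1,g_2$ of $[g_1],[g_2]$ and $h_1,h_2$ of $[h_1],[h_2]$ that vanish at $p$; with these normalizations the left-hand side is by definition the alternating sum of the four pairings,
\[b([g_1],[g_2];[h_1],[h_2]) = \langle g_1,h_1\rangle + \langle g_2,h_2\rangle - \langle g_1,h_2\rangle - \langle g_2,h_1\rangle,\]
each $\langle g_a,h_b\rangle$ being finite precisely when $v_-([g_a])\neq v_+([h_b])$, which I would assume throughout since otherwise the cross ratio is undefined.

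Next, for each pair of indices $(a,b)\in\{1,2\}^2$ I would apply Lemma~\ref{gromov product and pairing} to the sequences $x_{a,i}\to[g_a]$ and $y_{b,i}\to[h_b]$, obtaining
\[\langle g_a,h_b\rangle = \lim_{i\to\infty}\big[\, d(x_{a,i},y_{b,i}) - d(x_{a,i},p) - d(p,y_{b,i})\,\big].\]
Since each of these four limits exists and is finite, the alternating combination of the pairings equals the limit of the alternating combination of the bracketed quantities. In that combination $d(x_{1,i},p)$ appears with coefficient $+1$ (from $\langle g_1,h_1\rangle$) and $-1$ (from $-\langle g_1,h_2\rangle$) and hence cancels, and the same cancellation occurs for $d(x_{2,i},p)$, $d(p,y_{1,i})$ and $d(p,y_{2,i})$. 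What survives is exactly $d(x_{1,i},y_{1,i}) + d(x_{2,i},y_{2,i}) - d(x_{1,i},y_{2,i}) - d(x_{2,i},y_{1,i})$, and taking the limit yields the claimed identity.

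The only genuine subtlety — and the step I would be most careful about — is the behavior at $-\infty$: passing from a sum of limits to a limit of a sum is legitimate only when all four pairings are finite, so the statement should be read with the standing hypothesis that the cross ratio on the left is defined, equivalently that no $v_-([g_a])$ coincides with any $v_+([h_b])$. Under that hypothesis the argument is complete; I would also note that the ambient hypotheses needed for Lemma~\ref{gromov product and pairing} (Gromov hyperbolicity together with properness and geodesicity of $X$) are in force here, as these are what guarantee that the four individual limits exist rather than merely the corresponding $\limsup$/$\liminf$ bounds.
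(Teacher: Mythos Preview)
Your argument is correct and is exactly what the paper intends: it states this lemma as an immediate consequence of Lemma~\ref{gromov product and pairing} without further proof, and you have spelled out the four applications together with the cancellation of the basepoint terms. One tiny slip that does not affect the conclusion: the sign you quote for $d(x_{1,i},p)$ is reversed (it enters with coefficient $-1$ from $\langle g_1,h_1\rangle$ and $+1$ from $-\langle g_1,h_2\rangle$), but the cancellation is the same.
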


In much the same way as for Anosov representations, we construct a bundle with connection for which these cross ratios are holonomies.
\begin{definition}
    Let $\mathcal{G}^h_X$ be the subset of $\del_-^h X \times \del_+^h X$ consisting of pairs which map to two distinct visual boundary points. 
    Let $U^h_X$ denote the set of pairs $(g,h)$ with $\langle g, h\rangle = 0$. Endow $U^h_X$ with the $\R$ action $r\cdot(g,h) = (g-r,h+r)$. It is a principal $\R$ bundle on $\mathcal{G}^h_X$. Endow $U^h_X$ with the taxi connection whose horizontal and vertical flat sections are sections for which one coordinate is constant.
\end{definition}

We can usually understand this connection using geodesics. A geodesic $\gamma:\R\to X$, gives a pair of horofunctions.
\[\gamma(-\infty) := \lim_{t\to-\infty} [d(\gamma(t),p) + t]\]
\[\gamma(\infty) := \lim_{t\to\infty} [d(p,\gamma(t)) - t]\]
These satisfy $\langle \gamma(-\infty),\gamma(\infty)\rangle = 0$, thus $[\gamma]:=(\gamma(-\infty),\gamma(\infty))$ is a point in $U^X_h$. Suppose $\gamma_1$ and $\gamma_2$ are parametrized geodesics which are asymptotic in a strong sense: 
\[\lim_{t\to\infty} d(\gamma_1(t),\gamma_2(t)) = 0\]
The triangle inequality implies $\gamma_1(\infty) = \gamma_2(\infty)$, so the corresponding points $[\gamma_1]$ and $[\gamma_2]$ in $U^h_X$ will lie on a flat section. 

\subsection{Cyclic order on horofunction boundary}
Let $d$ be a $\Gamma$-invariant metric on $\tilde{S}$. We would like to push forward the curvature of $U^h_d$ from $\mathcal{G}^h_d$ to $\mathcal{G}$ to define a geodesic current. To do this, we need the curvature of $U^h_d$ to be a positive measure on $\mathcal{G}^h_d$. To define positivity, we construct cyclic orders on $\del_-^h \tilde{S}$ and $\del_+^h \tilde{S}$, which refine the cyclic order on $\del \Gamma$, such that cross ratios are positive when expected.

From now on, let $X = \tilde{S}$ be the universal cover of $S$ equipped with a $\Gamma$-invariant asymmetric metric $d$ such that
\begin{enumerate}
    \item every point in $\del^h_-\tilde{S}$ and $\del^h_+\tilde{S}$ is the limit of a continuous path $[0,\infty)\to X$,
    \item connecting any two visual boundary points $x,y\in \del \Gamma$, there is a bi-geodesic, i.e. a path $\gamma$ such that both $\gamma$ and $\gamma^{-1}$ are geodesic for $d$,
    \item and every point between two parallel bi-geodesics $\gamma$, and $\gamma'$ is itself on a bi-geodesic between $\gamma$ and $\gamma'$.
\end{enumerate}
We call such metrics ``good".

\begin{definition}
Three horofunction boundary points $[h_1],[h_2],[h_3]\in \del_h^+ X$ are cyclically ordered if we can find three counterclockwise ordered rays $\gamma_1,\gamma_2,\gamma_3:[0,\infty)\to X$ which only intersect at $\gamma_1(0)=\gamma_2(0)=\gamma_3(0) = p$, and converge to $[h_1],[h_2],[h_3]$. 
\begin{figure}[h]
    \centering
    \includegraphics[width=0.5\linewidth]{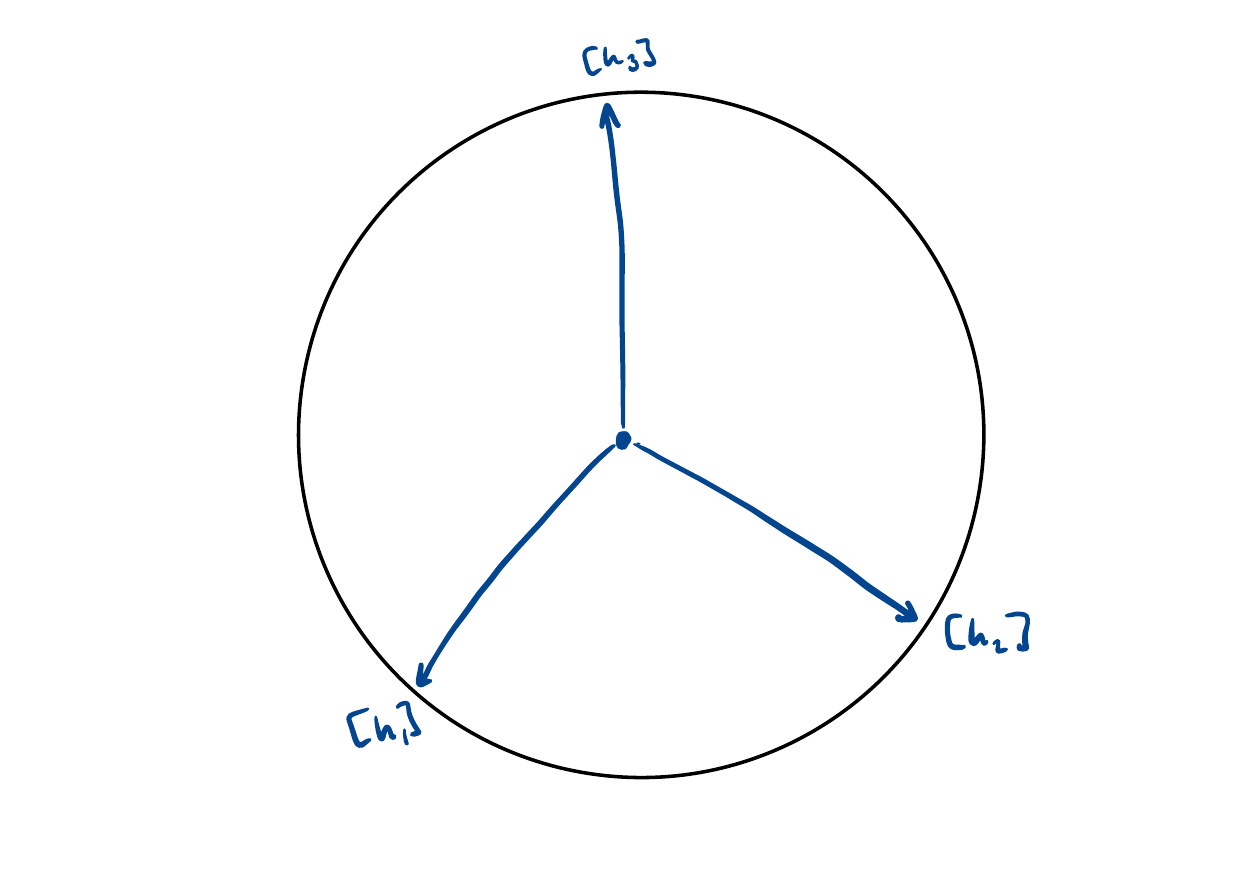}
    \caption{Cyclic order on the horofunction boundary}
    
\end{figure}
\end{definition}
This is compatible with the cyclic ordering on $\del X$. Note that paths converging to distinct horofunctions must eventually be distinct, so it is not hard to find such rays for any triple of distinct boundary points. The next lemma shows this cyclic order is well defined.

\begin{lemma}
\label{cyclic well defined}
    Let $h_1,h_2,h_3$ be horofunctions on $X$, and suppose $v(h_1) = v(h_2) \neq v(h_3)$. If $h_1\leq h_2 < h_3$ and $h_1 \leq h_2 < h_3$, then $[h_1] = [h_2]$.
\end{lemma}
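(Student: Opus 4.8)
The statement is the key step in verifying that the cyclic order on $\del^h_+X$ introduced above is genuinely a cyclic order; read correctly the two hypotheses say that the triple $(h_1,h_2,h_3)$ can be realized by counterclockwise rays both in the stated cyclic order and in the opposite one, and the conclusion is that this forces $[h_1]=[h_2]$. The plan is first to dispose of the generic case and then to localize. If $v(h_1),v(h_2),v(h_3)$ are pairwise distinct, then any rays $\gamma_i$ from a basepoint with $\gamma_i\to[h_i]$ are eventually pairwise disjoint and converge to distinct points of $\del X\simeq S^1$, so their counterclockwise order is forced by the cyclic order of $\del X$ and there is nothing to check. Hence the only case to handle is $v(h_1)=v(h_2)=:x\neq y:=v(h_3)$ — exactly the hypothesis $v(h_1)=v(h_2)\neq v(h_3)$ — and the task is to show that $(h_1,h_2,h_3)$ cannot be realized by counterclockwise rays in both cyclic orders unless $[h_1]=[h_2]$.

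Next I would arrange that all the witnessing rays emanate from a single basepoint $p$. This uses that $X$ is proper and good: given any $[h]\in\del^h_+X$ and any $p\in X$, there is a geodesic ray from $p$ converging to $[h]$ in the horofunction compactification, obtained by taking geodesic segments from $p$ to points marching off along a witnessing path and extracting a subsequence converging uniformly on compacta; the monotonicity of $t\mapsto h^t_\gamma$ from the convergence lemma above is what identifies the limit ray's horofunction with $[h]$, not merely with $v(h)$. With a common basepoint the rays $\alpha_1,\alpha_2,\alpha_3$ (counterclockwise, $\to[h_1],[h_2],[h_3]$) and $\beta_1,\beta_2,\beta_3$ (counterclockwise, $\to[h_2],[h_1],[h_3]$) are available simultaneously.

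The geometric core is a separation argument. Using goodness, fix a bi-geodesic $\beta$ with endpoints $x$ and $y$; it cuts the disk $\overline X = X\cup\del X$ into two closed regions. I would attach to each horofunction $h$ with $v(h)=x$ a position relative to the bi-geodesics from $x$ to $y$: the claim is that every geodesic ray converging to $[h]$ is, past some point, contained strictly on one side of $\beta$ or else runs parallel to $\beta$ inside a flat strip, and that which alternative occurs, and on which side, depends only on $[h]$. Granting this, the fiber $v_+^{-1}(x)$ acquires a linear order refining ``side of $\beta$'', and the counterclockwise-ray definition recovers exactly this order on the sub-triple lying over $x$. A realization of $(h_1,h_2,h_3)$ in one cyclic order then places $[h_1]$ and $[h_2]$ on prescribed opposite sides of the $[h_3]$-ray relative to $\beta$, while a realization in the opposite order reverses this, and the two are compatible only if both $[h_1]$ and $[h_2]$ lie on $\beta$ itself, i.e.\ $[h_1]=[\beta(\infty)]=[h_2]$.

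The main obstacle is establishing that a geodesic ray $\gamma\to[h]$ with $v(h)=x$ is eventually on a definite side of $\beta$ — equivalently, ruling out infinitely many crossings of $\beta$ by $\gamma$, and showing that the eventual side is an invariant of $[h]$. This is precisely where conditions (2) and (3) defining a good metric enter: between two consecutive crossing points $a,b$ the sub-arc of $\gamma$ and the sub-arc of $\beta$ are both geodesic, and one argues, from the existence of bi-geodesics between any two points and the flatness of the region between parallel bi-geodesics, that the bigon they bound is a flat parallelogram; chaining these shows that after the last crossing $\gamma$ lies on one side of $\beta$ or inside a maximal flat strip coterminous with $\beta$, and that two rays to the same $[h]$ lying on opposite sides of $\beta$ would, together with $\beta$, produce a forbidden configuration of crossing bi-geodesics. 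Making ``the bigon between two geodesics is flat'' precise for the asymmetric Finsler metrics in play, and packaging it as the linear order on the fiber, is the part I expect to require the most care; the basepoint reduction is a lesser but genuine technicality.
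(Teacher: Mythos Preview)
Your approach is genuinely different from the paper's, and as written it has a gap at the final step.

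The paper argues analytically with the function $h_1-h_2$ itself. From the cyclic order $h_1\le h_2<h_3$ it deduces that $h_1-h_2$ is monotonically increasing along every bi-geodesic whose backward endpoint lies in the arc $([h_3],[h_1])$ and whose forward endpoint lies in $([h_2],[h_3])$. The mechanism is exactly the ``crossing paths are longer than non-crossing paths'' inequality, applied not to the rays $\gamma_i$ themselves but to points $p_1^k,p_2^k$ chosen as the last intersections of $\gamma_1,\gamma_2$ with the boundaries of a nested sequence of halfspaces shrinking to $v(h_1)=v(h_2)$; one passes to the limit in $d(-,p_1^k)-d(-,p_2^k)$. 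The opposite cyclic order gives monotonicity in the opposite direction along the same bi-geodesics, so $h_1-h_2$ is constant on each of them; goodness (condition (3)) then forces $h_1-h_2$ to be globally constant, i.e.\ $[h_1]=[h_2]$.

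Your route via ``side of a fixed bi-geodesic $\beta$'' does not close. A single $\beta$ gives at best a three-way partition (left/on/right) of the fiber $v_+^{-1}(x)$, not the linear order you assert; you would need to vary $\beta$ over all bi-geodesics from $x$ to $y$ and argue that the resulting family of partitions is nested, which you do not do. More seriously, your conclusion ``both $[h_1]$ and $[h_2]$ lie on $\beta$ itself, i.e.\ $[h_1]=[\beta(\infty)]=[h_2]$'' presupposes that at most one horofunction over $x$ is ``on $\beta$''. That uniqueness is essentially the content of the lemma and cannot be assumed. The paper avoids this circularity entirely by never invoking a side invariant: showing $h_1-h_2$ is constant is a direct statement about functions, and equality in $C(X)/\R$ follows immediately. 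Your flat-strip/bigon analysis and basepoint reduction are also unnecessary for the paper's argument, which only needs the crossing inequality and the limit of $d(-,p_1^k)-d(-,p_2^k)$.
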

\begin{proof}
    $h_1\leq h_2 < h_3$ implies that $h_1 - h_2$ is an increasing function on any bi-geodesic which starts between $[h_3]$ and $[h_1]$ and ends between $[h_2]$ and $[h_3]$. To see this, choose paths $\gamma_i$ converging to $[h_i]$ which exhibit their cyclic order. Chose a nested sequence of halfspaces $H^k$ bounded by bi-geodesics which converge to $v(h_1)=v(h_2)$. Let $p_1^k,p_2^k$ be the last points on $\gamma_1$ and $\gamma_2$ to hit the boundary of $H_k$. The function $d(-,p_1^k) - d(-,p_2^k)$ is increasing on any bigeodesic from $([h_3],[h_1])$ to $([h_2],[h_3])$ because crossing paths are longer than non crossing paths. This implies the same of $h_1 - h_2$. 
    
    If also $h_2\leq h_1 \leq h_3$, then $h_1 - h_2$ is also increasing on every such two way geodesic. A function which is constant on all such two way geodesics must be constant (because our metric is good), so $[h_1] = [h_2]$.
\end{proof}
\begin{figure}[h]
    \centering
    \includegraphics[width=0.5\linewidth]{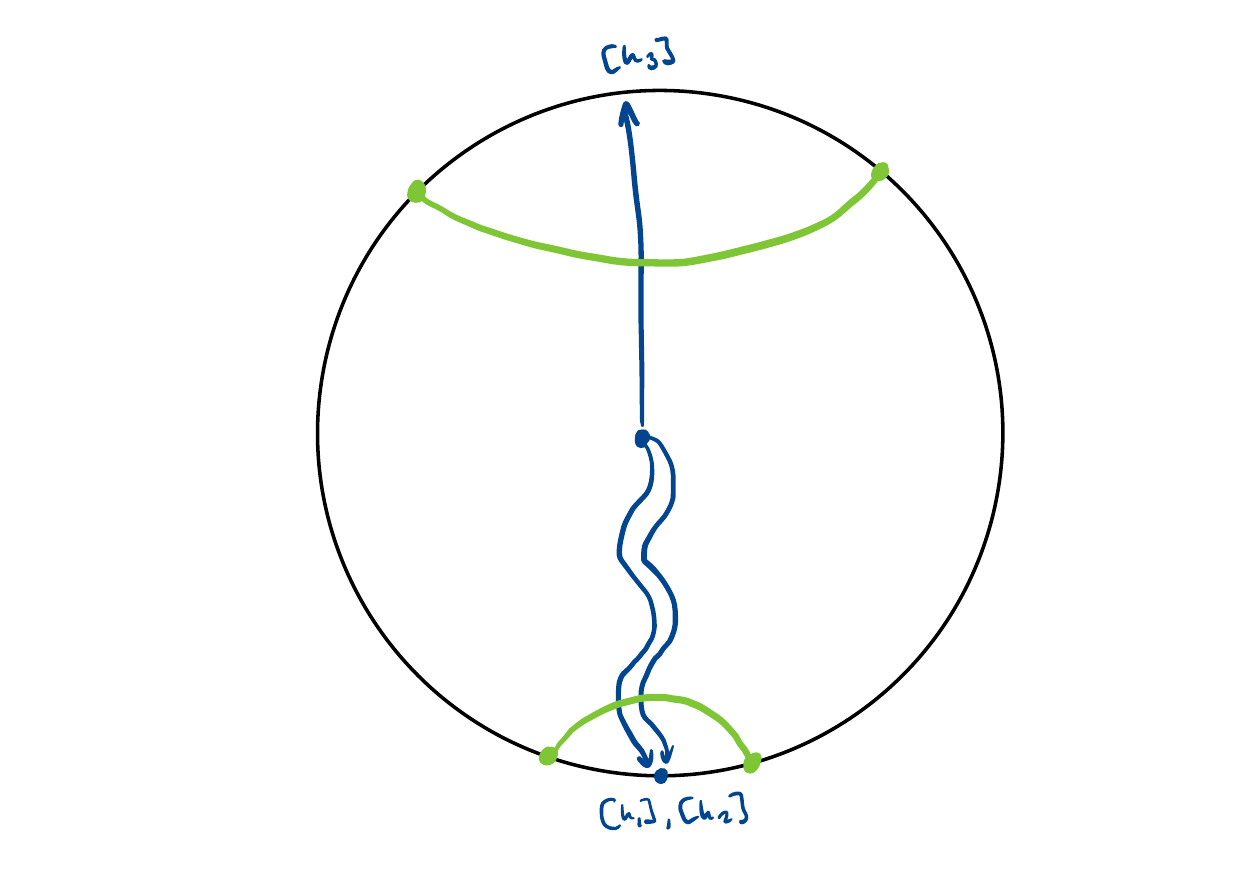}
    \caption{$h_1 - h_2$ must be increasing on the upper bi-geodesic}
    
\end{figure}
A tuple of $n$ horofunctions is cyclically ordered if they can be reached by $n$ cyclically ordered rays which only intersect at their starting points.

\begin{lemma}
    If $[h_1],[h_2],[g_1],[g_2]$ are cyclically ordered, then 
    \[b([h_1],[h_2];[g_1],[g_2])\geq 0.\]
\end{lemma}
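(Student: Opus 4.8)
The plan is to reduce the inequality to a statement about cross-distances via Lemma~\ref{cross distance becomes cross ratio} and then prove the corresponding finite-distance inequality. First I would choose cyclically ordered rays $\gamma_1,\gamma_2$ (converging to $[h_1],[h_2]\in \del^h_+X$) and $\delta_1,\delta_2$ (converging to $[g_1],[g_2]\in \del^h_-X$, so that reversed they are rays issuing outward) which all meet only at their common starting point $p$ and appear in the cyclic order $[h_1],[h_2],[g_1],[g_2]$. Set $x_{j,i}=\delta_j(i)$ and $y_{j,i}=\gamma_j(i)$. Then by Lemma~\ref{cross distance becomes cross ratio},
\[
b([g_1],[g_2];[h_1],[h_2]) = \lim_{i\to\infty}\bigl[ d(x_{1,i},y_{1,i}) + d(x_{2,i},y_{2,i}) - d(x_{1,i},y_{2,i}) - d(x_{2,i},y_{1,i})\bigr],
\]
so it suffices to show the bracketed quantity is $\geq 0$ for each fixed large $i$.

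The heart of the argument is the classical ``crossing paths are longer than non-crossing paths'' principle, which is exactly what the goodness hypotheses on $d$ are designed to supply (it was already used in the proof of Lemma~\ref{cyclic well defined}). Concretely: with the four endpoints arranged in cyclic order $x_{1,i}, \dots$ wait — since $\delta_j$ are reversed rays, the points $x_{j,i}$ sit on the ``$g$-side'' and the $y_{j,i}$ on the ``$h$-side''. Because the rays emanate from the single point $p$ in the stated cyclic order, any geodesic from $x_{1,i}$ to $y_{2,i}$ and any geodesic from $x_{2,i}$ to $y_{1,i}$ must cross (they separate the pair $\{x_{1,i}, y_{1,i}\}$ from $\{x_{2,i}, y_{2,i}\}$ in the disk $X$), whereas the ``parallel'' pairing $x_{1,i}\!-\!y_{1,i}$, $x_{2,i}\!-\!y_{2,i}$ need not. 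Picking a crossing point $z$ of a geodesic $[x_{1,i},y_{2,i}]$ with a geodesic $[x_{2,i},y_{1,i}]$ and applying the triangle inequality through $z$ gives
\[
d(x_{1,i},y_{2,i}) + d(x_{2,i},y_{1,i}) = d(x_{1,i},z)+d(z,y_{2,i})+d(x_{2,i},z)+d(z,y_{1,i}) \geq d(x_{1,i},y_{1,i}) + d(x_{2,i},y_{2,i}),
\]
where the last step regroups $d(x_{1,i},z)+d(z,y_{1,i})\geq d(x_{1,i},y_{1,i})$ and $d(x_{2,i},z)+d(z,y_{2,i})\geq d(x_{2,i},y_{2,i})$. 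Rearranging is exactly the claimed nonnegativity of the bracket.

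The main obstacle is making ``the two geodesics must cross'' rigorous in an asymmetric, possibly non-uniquely-geodesic setting: one must use that $X=\tilde S$ is a topological disk (with circle boundary $\del\Gamma$) and that the four basepoint-disjoint rays cut it into regions so that the chords joining the prescribed pairs are forced to intersect — a Jordan-curve/planarity argument rather than a metric one. A secondary technical point is that $z$ lies on geodesics for $d$ (not $d^{-1}$), so one must be careful that the triangle inequalities are applied in the correct order of arguments; the homogeneity and triangle axioms for $d$ as stated handle this, but the bookkeeping of which point is a ``source'' and which a ``sink'' needs attention. Finally, one should note the degenerate case where some pairing has pairing value $-\infty$ (i.e.\ two of the horofunctions share a visual endpoint): then the relevant cross-distance term diverges to $-\infty$ on the subtracted side, so the inequality holds trivially, and one only needs the finite-distance argument when all four visual endpoints are distinct.
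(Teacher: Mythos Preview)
Your overall strategy is exactly the paper's: represent the four horofunction classes by cyclically ordered rays from a common point, invoke Lemma~\ref{cross distance becomes cross ratio} to reduce to a finite cross-distance, and then use the ``crossing geodesics are longer than non-crossing geodesics'' principle via a triangle-inequality cut at the crossing point. So the architecture is fine.

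However, you have the crossing pair backwards, and this flips the sign of the conclusion. With the four ray-points in cyclic order $y_1,y_2,x_1,x_2$ (your labeling, coming from $[h_1],[h_2],[g_1],[g_2]$), the chords $[x_1,y_2]$ and $[x_2,y_1]$ are \emph{adjacent sides} of the cyclic quadrilateral and need not intersect; it is the \emph{diagonals} $[x_1,y_1]$ and $[x_2,y_2]$ that must cross. Cutting those two geodesics at their crossing point $z$ gives
\[
d(x_1,y_1)+d(x_2,y_2)=d(x_1,z)+d(z,y_1)+d(x_2,z)+d(z,y_2)\ \ge\ d(x_1,y_2)+d(x_2,y_1),
\]
which is precisely the nonnegativity of the bracket. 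Your displayed inequality rearranges instead to the bracket being $\le 0$. Once you swap the roles of the two pairs, the rest of your argument (including the planarity justification for the crossing and the bookkeeping of source/sink for the asymmetric triangle inequality) goes through and matches the paper's proof.
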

\begin{proof}
    Represent all four horofunctions as endpoints of cyclically ordered rays \linebreak $\gamma_1^-,\gamma_2^-,\gamma_1^+,\gamma_2^+$ eminating from $p\in X$. For $r>0$ let $x_1,x_2,y_1,y_2$ be the last points on each ray which are distance $r$ from $p$. These points will be cyclically ordered on the boundary of the the $r$ ball centered at $p$. The cross-distance
    \[d(x_1,y_1) + d(x_2,y_2) - d(x_1,y_2) - d(x_2,y_1)\] 
    is positive because we can find geodesics within the ball $B(p,r)$ connecting each pair of points, and the sum of the crossing geodesics' lengths is always greater than the sum of non-crossing geodesics' lengths. By Lemma \ref{cross distance becomes cross ratio}, the cross ratio is the limit as $r$ goes to infinity which thus must also be positive.
\end{proof}
\begin{figure}[h]
    \centering
    \includegraphics[width=0.5\linewidth]{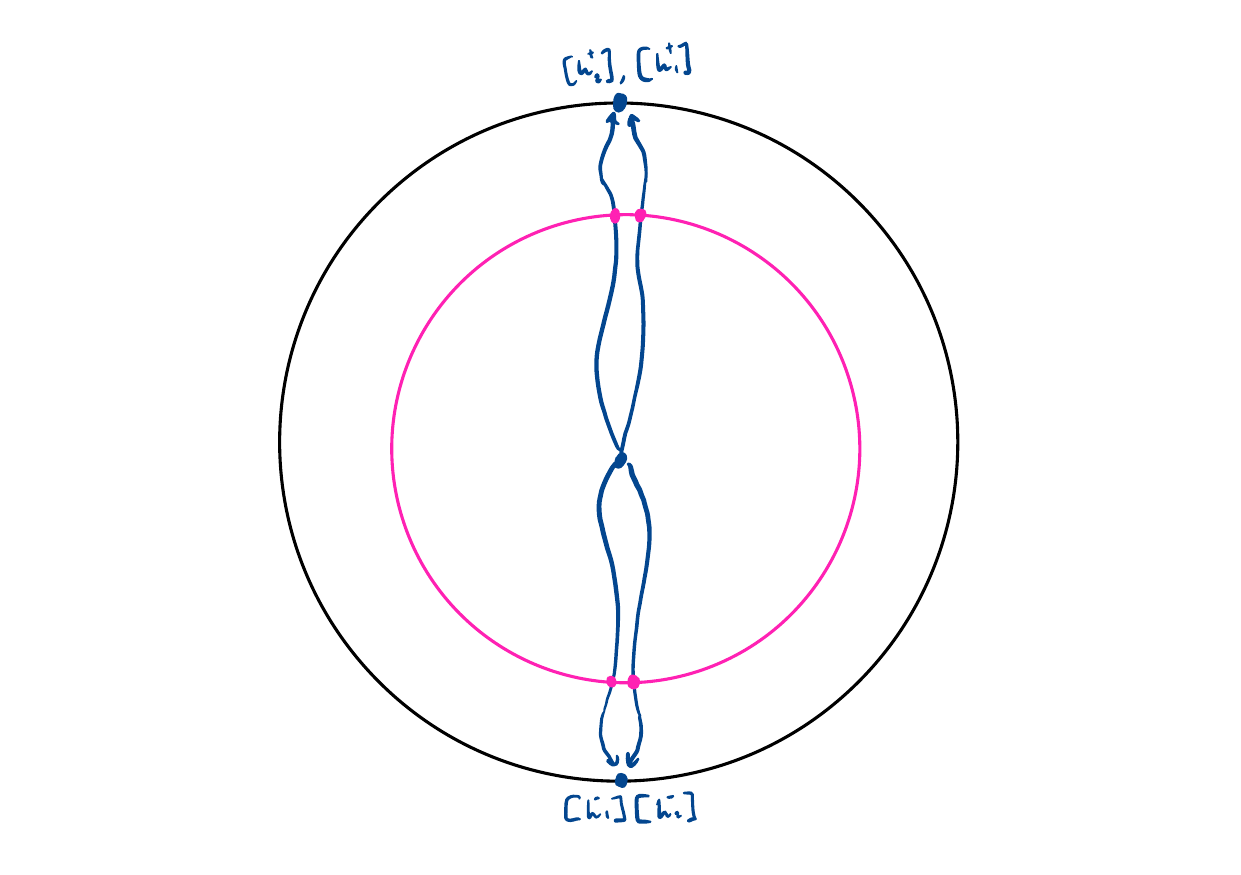}
    \caption{Four cyclically ordered horofunction boundary points}
    
\end{figure}

This cross ratio $b$ is a finitely additive, positive function on rectangles in $\mathcal{G}^h_d$. It is also continuous in its four arguments, which is enough to show it is also $\sigma$-additive, so the curvature of $U^h_d$ is a positive measure $\mu^h_d$ on $\mathcal{G}^h_d$. Pushing forward $\mu^h_d$ to $\mathcal{G}$ gives a geodesic current $\mu_d$ which we call the \textbf{Lioville current} of $d$. 

Let $v:\mathcal{G}^h_d\to \mathcal{G}$ be the projection given by restricting $v_- \times v_+$. For every $p=(x,y)\in \mathcal{G}^\circ$, $\mu_d(p) = 0$, so $\mu^h_d(v^{-1}(p))=0$. This means that the connection on $U^h_d$ must be flat on $v^{-1}(p)$. The preimage $v^{-1}(p)$ is a little box $v_-^{-1}(x)\times \pi_+^{-1}(y)$. We can define $U_p$ to be the set of flat sections over $\pi^{-1}(p)$, and this will be non-empty because the connection is flat over this box. We call the bundle with taxi connection $U_d\in \mathcal{A}(S)$ the generalized geodesic flow bundle of $d$.

\subsection{Structure of horofunction boundaries}
So far we have no idea how many horofunctions map to each Gromov boundary point. 
The following non-degeneracy fact gives some control.
\begin{lemma}
\label{horofunction non-degen}
    If $h_1$ and $h_2$ are two plus horofunctions such that $\langle g, h_1 \rangle = \langle g, h_2 \rangle$ for every minus horofunction $g$, then $h_1 = h_2$.
\end{lemma}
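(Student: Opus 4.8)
The plan is to reduce the lemma to the following claim: for every $x\in X$ and every plus horofunction $h$ there is a minus horofunction $g$ for which the infimum defining $\langle g,h\rangle$ is attained at $x$, i.e. $\langle g,h\rangle = g(x)+h(x)$. Granting this, the lemma is immediate: fix $x$ and apply the claim to $h=h_1$ to get $g$ with $\langle g,h_1\rangle = g(x)+h_1(x)$; then $g(x)+h_1(x) = \langle g,h_1\rangle = \langle g,h_2\rangle = \inf_X(g+h_2) \le g(x)+h_2(x)$, so $h_1(x)\le h_2(x)$; applying the claim to $h=h_2$ instead gives the reverse inequality, so $h_1(x)=h_2(x)$. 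Since $x$ is arbitrary, $h_1=h_2$. (Note that this needs no normalization convention — pointwise equality comes out directly — and uses continuity of horofunctions only if one prefers to verify the claim on a dense set of $x$.)

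\textbf{Proof of the claim.} First produce the gradient ray of $h$ at $x$: choosing $y_i$ with $h=\lim_i\big(d(\cdot,y_i)-d(p,y_i)\big)$ and geodesics from $x$ to $y_i$, properness lets us extract a subsequential limit $\gamma^+\colon[0,\infty)\to X$ with $\gamma^+(0)=x$ and $h(\gamma^+(t))=h(x)-t$. Next extend $\gamma^+$ to a forward geodesic line $\gamma\colon\mathbb{R}\to X$ (meaning $d(\gamma(a),\gamma(b))=b-a$ for $a\le b$) whose forward ray is asymptotic to $\gamma^+$: this is where the hypotheses on \emph{good} metrics enter — using property (2) (bi-geodesics between any two visual boundary points), property (3) (points between parallel bi-geodesics lie on bi-geodesics), and properness, one sandwiches $\gamma^+$ between bi-geodesics asymptotic to $v_+([h])$ passing arbitrarily close to $x$ and passes to a limit. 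Set $g:=\gamma(-\infty)$, the minus horofunction at the backward end, and note $\gamma(+\infty)=h$: the forward ray of $\gamma$ is asymptotic to $\gamma^+$, and the Busemann function of the gradient ray of $h$ recovers $h$ (using the geodesic-ray convergence lemma together with convexity of good metrics).

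\textbf{Verifying the infimum is attained at $x$.} Since $h=\gamma(+\infty)$ and $g=\gamma(-\infty)$ are the forward and backward Busemann functions of the line $\gamma$, the functions $t\mapsto h(\gamma(t))$ and $t\mapsto g(\gamma(t))$ are affine of slopes $-1$ and $+1$, so $g(\gamma(t))+h(\gamma(t))$ is constant, equal to $g(x)+h(x)$. Normalizing both horofunctions at $p$, one computes $g(x)+h(x)=\lim_{s\to\infty}\big(2s - d(\gamma(-s),p) - d(p,\gamma(s))\big)\le 0$, because $d(\gamma(-s),p)+d(p,\gamma(s))\ge d(\gamma(-s),\gamma(s))=2s$. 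On the other hand $\langle g,h\rangle = \langle\gamma(-\infty),\gamma(\infty)\rangle = 0$ by the computation already recorded in this section, and trivially $\langle g,h\rangle \le g(x)+h(x)$. Combining, $g(x)+h(x)=0=\langle g,h\rangle$, i.e. the infimum of $g+h$ is attained at $x$, which proves the claim.

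\textbf{Main obstacle.} The delicate step is the middle paragraph: constructing a genuine forward geodesic line through the prescribed point $x$ whose forward Busemann function is \emph{exactly} $h$. Good metrics here need not be Gromov hyperbolic (they include flat triangular Finsler metrics), so a single visual boundary point may carry several horofunctions, and one must rule out that the gradient-ray construction drifts onto a neighboring horofunction over $v_+([h])$; this is precisely where properties (1)--(3) of good metrics and the cyclic-order structure of $\partial^h X$ built earlier in the section are needed. The reduction and the slope/triangle-inequality computations are routine by comparison.
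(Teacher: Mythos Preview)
Your reduction is fine, but the proof of the claim has a false step at precisely the point you flag. You assert $\gamma(+\infty)=h$, but a gradient ray of $h$ need not have Busemann function equal to $h$; in general one only gets $\gamma^+(+\infty)\ge h$ (up to additive constant), since $d(z,\gamma^+(t))-t\ge h(z)-h(x)$ follows from the $1$-Lipschitz property but the reverse inequality does not. For the triangular Finsler metrics under consideration, plus horofunctions over a fixed point of $\Gamma$ can be of ``corner'' type---locally a maximum of two linear pieces---and a gradient ray from a generic $x$ heads off into one linear region and converges to the corresponding \emph{linear} horofunction, not to $h$. Your sandwiching by bi-geodesics does not repair this: bi-geodesics through $x$ asymptotic to the visual point $v_+([h])$ exist, but their forward Busemann functions are the extreme horofunctions in the fibre $v_+^{-1}(v_+([h]))$, not an interior point $h$. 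Since your verification paragraph uses $h=\gamma(+\infty)$ to get $\langle g,h\rangle=\langle\gamma(-\infty),\gamma(+\infty)\rangle=0$, the argument breaks there. (Your $g$ may in fact still satisfy $\langle g,h\rangle=g(x)+h(x)$---in the flat model it does---but that requires a direct computation, not the line trick.) Incidentally, your remark that good metrics ``need not be Gromov hyperbolic'' is off: $\tilde S$ with any $\Gamma$-invariant proper geodesic metric is quasi-isometric to $\Gamma$, which is hyperbolic.

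The paper's route sidesteps this obstacle entirely. It first shows $h_1-h_2$ vanishes at infinity: if $y_i\to[g]$ in $\partial^h_-\tilde S$ with $g$ normalised at a basepoint, then $h_1(y_i)-h_2(y_i)=(h_1(y_i)-d(p,y_i))-(h_2(y_i)-d(p,y_i))\to\langle g,h_1\rangle-\langle g,h_2\rangle=0$ by Lemma~\ref{gromov product and pairing} and the hypothesis. This forces $v_+([h_1])=v_+([h_2])$. Then Lemma~\ref{cyclic well defined} gives that $h_1-h_2$ is monotone along every bi-geodesic with neither endpoint at that common visual point; being monotone and vanishing at both ends, it is identically zero on each such bi-geodesic, and property~(3) of good metrics covers all of $\tilde S$. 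No reconstruction of $h$ as a Busemann function through a prescribed point is required.
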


\begin{proof}
    First we show that $h_1-h_2$ vanishes at infinity. Suppose $y_i$ converges to $[g]\in \del_+^h \tilde{S}$. 
    \[\lim_{i\to \infty} h_1(y_i) - h_2(y_i) = \lim_{i\to\infty} (h_1(y_i) - d(p,y_i)) - (h_2(y_i) - d(p,y_i))\]
    \[ = \langle h_1, g\rangle - \langle h_2, g\rangle = 0\]
    Here, $g$ is the representative of $[g]$ which vanishes on $p$, and we have used lemma \ref{gromov product and pairing}. Clearly we must have $v_+(h_1) = v_+(h_2)$. By lemma \ref{cyclic well defined}, $h_1 - h_2$ must be either increasing or decreasing on every bi-geodesic, meaning that it must be zero on every bi-geodesic with neither endpoint at $v_+(h_1)$. We have assumed that there is such a bi-geodesic through every point, so $h_1-h_2 = 0$.
\end{proof}

By definition, $\mu_d$ is the pushforward to $\mathcal{G}$ of $\mu_d^h$, which is defined using cross ratios of horofunctions. The constraints that $\mu_d$ satisfies, derived simply from local finiteness and $\Gamma$ invariance, imply that many cross ratios of horofunctions have to vanish.
\begin{lemma}
\label{horofunction cross ratio vanish}
    Let $g_1,g_2$ be minus horofunctions, and $h_1, h_2$ be plus horofunctions such that $g_1 < g_2 < h_1 < h_2$ and $v(g_1) = v(g_2)$. The cross ratio $b(g_1,g_2;h_1,h_2)$ is zero unless $v(g_1) = v(g_2) = a^-$ and $h_1^+ \leq a^+ \leq h_2^+$ for some $a\in \Gamma$. 
\end{lemma}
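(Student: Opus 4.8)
The idea is to reduce the vanishing of the cross ratio to the single statement that the Lioville current $\mu_d$ assigns zero mass to any ``vertical segment'' of $\mathcal{G}$ that happens to lie in $\mathcal{G}^\circ$, and then to prove that statement using only $\Gamma$-invariance and local finiteness of $\mu_d$.

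First the reduction. Because $g_1<g_2<h_1<h_2$ are cyclically ordered, the taxi-loop $[g_1,g_2;h_1,h_2]$ bounds the box $B:=[g_1,g_2]\times[h_1,h_2]\subseteq\mathcal{G}^h_d$, so by the construction of $U^h_d$ the cross ratio $b(g_1,g_2;h_1,h_2)$ is exactly the curvature $\mu^h_d(B)$, and in particular $b\ge 0$. The map $v=v_-\times v_+$ is continuous, and on each factor it is weakly order preserving onto the cyclic order of $\del\Gamma$; since $v_-(g_1)=v_-(g_2)=:x$, the map $v_-$ is therefore constant equal to $x$ on the whole arc $[g_1,g_2]$ (the arc compatible with the cyclic order $g_1<g_2<h_1<h_2$). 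Hence $v(B)\subseteq\{x\}\times[h_1^+,h_2^+]$, where $h_i^+:=v_+(h_i)$ and $[h_1^+,h_2^+]$ is the arc from $h_1^+$ to $h_2^+$ compatible with the cyclic order; note $x\notin[h_1^+,h_2^+]$ because each $(g_i,h_j)$ lies in $\mathcal{G}^h_d$ and so projects to two distinct visual points. Using $v_*\mu^h_d=\mu_d$ we obtain $b=\mu^h_d(B)\le\mu^h_d\big(v^{-1}(\{x\}\times[h_1^+,h_2^+])\big)=\mu_d\big(\{x\}\times[h_1^+,h_2^+]\big)$. A point $(x,y)$ of this vertical segment fails to lie in $\mathcal{G}^\circ$ precisely when $(x,y)=(\gamma^-,\gamma^+)$ for some nontrivial $\gamma\in\Gamma$, i.e. when $\gamma^-=x$ and $\gamma^+\in[h_1^+,h_2^+]$; so unless we are in the exceptional case of the lemma, the whole segment lies in $\mathcal{G}^\circ$, and it suffices to prove: \textbf{Claim.} for any geodesic current $\mu$, any $x\in\del\Gamma$, and any compact arc $J$ with $x\notin J$ and $\{x\}\times J\subseteq\mathcal{G}^\circ$, one has $\mu(\{x\}\times J)=0$.

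To prove the Claim, suppose $\mu(\{x\}\times J)=c>0$. By countable additivity we may replace $J$ by a subarc of positive measure, chosen small enough to lie inside a fundamental interval of $\mathrm{Stab}_\Gamma(x)$ in case $x$ is a fixed point. I then exhibit infinitely many pairwise disjoint $\Gamma$-translates of $\{x\}\times J$ contained in a single compact subset of $\mathcal{G}$, contradicting local finiteness of $\mu$. If $x$ is not a fixed point, then $\mathrm{Stab}_\Gamma(x)=\{1\}$, and since $x$ is a conical limit point (every point of $\del\Gamma$ is, the $\Gamma$-action being cocompact) there are a sequence $\gamma_n$ in $\Gamma$ and distinct points $b_1\ne b_2$ with $\gamma_n x\to b_1$ and $\gamma_n z\to b_2$ uniformly for $z$ in compacta of $\del\Gamma\setminus\{x\}$. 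Then $\gamma_n(\{x\}\times J)=\{\gamma_n x\}\times\gamma_n J$ converges to the point $(b_1,b_2)\in\mathcal{G}$, so for large $n$ these sets lie in a fixed compact neighborhood of $(b_1,b_2)$ in $\mathcal{G}$; passing to a subsequence the $\gamma_n x$ are pairwise distinct (a coincidence $\gamma_n x=\gamma_m x$ forces $\gamma_n=\gamma_m$), so the translates are pairwise disjoint, and each carries mass $c$. If instead $x=a_0^-$ for a primitive $a_0$, then $a_0^+\notin J$ (else $(a_0^-,a_0^+)$ would be a fixed-point pair in $\{x\}\times J$), the arcs $a_0^k J$ for $k\ge 0$ are pairwise disjoint and shrink to $\{a_0^+\}$, so the sets $\{x\}\times a_0^k J$ eventually lie in a compact neighborhood in $\mathcal{G}$ of the fixed-point pair $(a_0^-,a_0^+)$, are pairwise disjoint, and each carries mass $c$. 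In either case $\mu$ fails to be locally finite, so $c=0$, proving the Claim.

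The main obstacle is the Claim, and within it the case where $x$ is not fixed: translating by powers of a single element is useless there (the translated segment either collapses onto the diagonal or swells to fill $\del\Gamma$), and the correct input is conicality of boundary points, which supplies a sequence along which the two ends of the translated segment stay a definite distance apart so that the translates pile up in a genuine compact subset of $\mathcal{G}$. Once the Claim is available, the lemma is immediate from the pushforward inequality $b\le\mu_d(\{x\}\times[h_1^+,h_2^+])$ together with $b\ge 0$.
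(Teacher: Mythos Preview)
Your proof is correct and follows exactly the approach the paper intends. The paper does not provide a formal proof of this lemma; it only remarks, just before the statement, that ``the constraints that $\mu_d$ satisfies, derived simply from local finiteness and $\Gamma$ invariance, imply that many cross ratios of horofunctions have to vanish,'' and the same idea appears telegraphically in the proof of Lemma~\ref{positive cocycles are measures} (``This contradicts $\Gamma$ invariance of $c$''). You have supplied the missing details: the reduction via $v_*\mu^h_d=\mu_d$ to the claim that vertical segments in $\mathcal{G}^\circ$ have zero $\mu_d$-mass, and then the dichotomy argument (conical limit points when $x$ is not fixed, iteration by the stabilizer when it is) that produces infinitely many disjoint translates of equal positive mass in a compact set. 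This is precisely what the paper has in mind.
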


Lemmas \ref{horofunction non-degen} and \ref{horofunction cross ratio vanish} immediately tell us something interesting about horofunctions:
\begin{lemma}
\label{unique horofunction}
    There are unique plus and minus horofunction boundary points mapping to each visual boundary point which is not fixed by any group element. 
\end{lemma}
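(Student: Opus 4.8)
The plan is to deduce Lemma~\ref{unique horofunction} directly from Lemmas~\ref{horofunction non-degen} and~\ref{horofunction cross ratio vanish}, exploiting the symmetry of the whole construction under reversal of the metric. Fix $\zeta\in\del\Gamma$ not fixed by any nontrivial element of $\Gamma$. I will show that the fiber $v_+^{-1}(\zeta)\subset\del_+^h\tilde S$ is a single point; the statement for $v_-$ then follows word for word after replacing $d$ by the reversed metric $\bar d(x,y):=d(y,x)$, which interchanges $\del_+^h\tilde S$ and $\del_-^h\tilde S$ and reverses the two cyclic orders while leaving the hypothesis on $\zeta$ intact. (The fiber is nonempty, since a $d$-geodesic ray to $\zeta$ converges to a plus horofunction over $\zeta$.)

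So suppose $h_1,h_2$ are plus horofunctions with $v_+(h_1)=v_+(h_2)=\zeta$, and set $\Phi(g):=\langle g,h_1\rangle-\langle g,h_2\rangle$ for a minus horofunction $g$, so that
\[
b([g_1],[g_2];[h_1],[h_2])=\Phi(g_1)-\Phi(g_2).
\]
The crucial observation is that applying Lemma~\ref{horofunction cross ratio vanish} to the reversed metric $\bar d$ — under which $h_1,h_2$ are minus horofunctions sharing the visual point $\zeta$, and $g_1,g_2$ are plus horofunctions, and the relevant ``$v$-equality'' hypothesis becomes one on the $h_i$ rather than the $g_i$ — forces this cross ratio to vanish: the exceptional case of that lemma would require $\zeta=a^+$ for some $a\in\Gamma$, contrary to our choice of $\zeta$. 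Only the cyclic positions of the four points enter, and given any minus horofunctions $g_1,g_2$ with $v_-(g_i)\neq\zeta$, after possibly interchanging them the four classes $[g_1],[g_2],[h_1],[h_2]$ are positively cyclically ordered, which is all the lemma requires. Hence $\Phi(g_1)=\Phi(g_2)$ whenever $v_-(g_1),v_-(g_2)\neq\zeta$; that is, $\Phi$ takes a constant value $C$ on all minus horofunctions with visual image different from $\zeta$.

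Consequently $\langle g,h_1\rangle=\langle g,h_2+C\rangle$ for every minus horofunction $g$ with $v_-(g)\neq\zeta$, and $h_2+C$ is again a plus horofunction representing $[h_2]$. Since the proof of Lemma~\ref{horofunction non-degen} only invokes minus horofunctions whose visual image differs from $v_+(h_i)=\zeta$ (and in the Gromov hyperbolic case the remaining $g$'s contribute $-\infty$ to both pairings anyway, so the identity holds there trivially), Lemma~\ref{horofunction non-degen} applies and yields $h_1=h_2+C$, i.e.\ $[h_1]=[h_2]$. This proves uniqueness of the plus horofunction over $\zeta$; the minus case is obtained by running the same argument for $\bar d$, completing the lemma.

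I expect the only real friction to lie in the bookkeeping of the second paragraph: one must check carefully that Lemma~\ref{horofunction cross ratio vanish} applies to $\bar d$ with the stated exchange of roles, that its exceptional clause transforms into precisely the condition ``$\zeta=a^+$ for some $a$'', and that an arbitrary pair $g_1,g_2$ can always be relabeled into the required cyclic position relative to $h_1,h_2$. These are routine unwindings of the definitions of the refined cyclic orders on $\del_\pm^h\tilde S$ and of metric reversal, but they are the points where the argument must be made airtight.
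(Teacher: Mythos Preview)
Your argument is correct and is essentially what the paper intends: the paper simply asserts that Lemma~\ref{unique horofunction} follows immediately from Lemmas~\ref{horofunction non-degen} and~\ref{horofunction cross ratio vanish}, and your write-up supplies the missing step of extracting the $v(h_1)=v(h_2)$ version of Lemma~\ref{horofunction cross ratio vanish} (you do this by metric reversal, which is fine; one could equally note that the proof of that lemma---vanishing of $\mu_d$ on a segment $\{x\}\times[y_1,y_2]$ or $[x_1,x_2]\times\{y\}$ away from fixed-point atoms---is manifestly symmetric in the two factors). Your handling of the case $v_-(g)=\zeta$ via $-\infty$ pairings, and of the cyclic-order bookkeeping via relabeling, is also correct.
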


If we start with a good, $\Gamma$-invariant metric $d$ on $\tilde{S}$, we get a Lioville current $\mu$, from which we can construct a metric space $X_\mu$. It is natural to wonder if $(\tilde{S},d)$ is related to $X_\mu$. For negatively curved metrics and triangular Finsler metrics, (and probably some nice class of metrics containing these two examples) there is an embedding $\tilde{S}\to X_\mu$ which is an isometric embedding for the symmetrized metric on $\tilde{S}$. There is also a lift of this embedding to $L_U$, where $U_d\in \mathcal{A}(S)$ is the generalized geodesic flow bundle, which is an isometric embedding for the relative metric on $L_U$. These constructions probably go through for metrics which are negatively curved in a very weak sense (maybe "good" is enough) but for the sake of staying on topic we will just do these constructions for triangular Finsler metrics in the next section.

\section{Tropical rank $3$ currents}
\label{rank 3}
In this section we show that certain paths in $\Hit^3(S)$, called cubic differential rays, converge to particularly nice geodesic currents, namely currents of descending real trajectories of cubic differentials. Then we show that for such a geodesic current $\mu$, the space $X_\mu$ is simply $\tilde{S}$. We don't know of any other geodesic currents for which this is the case.

We use the main theorem 
of \cite{Reid2023} where it was shown that along the cubic differntial ray corresponding to $\alpha$, the $\lambda_1$ spectrum approaches the length spectrum of a Finsler metric $F_\alpha^\Delta$. Here we show that there is a natural trivialization of the relative metric recovering the $\Delta$-Finsler metric $F^\Delta_\alpha$. In \cite{Reid2023} it was conjectured that $\Delta$-Finsler metrics are determined by their length spectra, and that these length spectra form a dense subset of $\del_{\lambda_1}\Hit^3(S)$. The fact that we can directly construct the Finsler surface $(S,F^\Delta_\alpha)$ from its Lioville current, which is in turn determined by its length spectrum affirms that $\Delta$-Finsler surfaces are determined by length spectrum. It is still unknown whether length spectra of $\Delta$-Finsler metrics are open and dense in $\del_{\lambda_1}\Hit^3(S)$. It is also still unknown what $X_\mu$ looks like for the rest of $\del_{\lambda_1}\Hit^3(S)$, though surely it is some combination of $\R$-tree behavior and cubic differential behavior, as observed in a related compactification \cite{OuyangTamburelli21}. 

\subsection{Cubic differential rays}
Let $C$ be a closed Riemann surface. Quadratic and cubic differentials on $C$ are holomorphic sections of $K^2$ and $K^3$ respectively, where $K$ is the cannonical bundle, which for Riemann surfaces is just the holomorphic cotangent bundle. Hitchin \cite{Hitchin92} defined the following family of Higgs bundles $(E,\phi_{\alpha_2,\alpha_3})$ parametrized by $(\alpha_2,\alpha_3)\in H^0(C,K^2)\times H^0(C,K^3)$.
\[E = K\oplus \underline{\C} \oplus K^{-1}  \;\;\;\; \phi_\alpha = \begin{bmatrix}
    0& \alpha_2& \alpha_3 \\
    1& 0& \alpha_2 \\
    0& 1& 0 \\
\end{bmatrix}\] 
Solving the Hitchin equation gives a cannonical flat connection on $E$ which preserves a real structure, giving a diffeomorphism $H^0(C,K^2)\times H^0(C,K^3)\to \Hit^3(C)$. Hitchin's construction works for general split real lie groups, but something special happens in the case of $\Hit^3(C)$. Labourie \cite{labourie_flat_2006} and Loftin \cite{Loftin01} showed that we can set $\alpha_2=0$, and instead range over all complex structures on a smooth surfaces $S$, up to isotopy, and get a parametrization of $\Hit^3(S)$ by the bundle over Teichmuller space whose fibers are cubic differentials. 

The Labourie-Loftin parametrization suggests that to understand the boundary of $\Hit^3(S)$, a good place to start is understanding how holonomies grow along paths in $\Hit^3(S)$ parametrized by a fixed complex structure and a ray of cubic differentials $R\alpha$, where $\alpha$ is a non-zero cubic differential and $R\in \R_{\geq 0}$. This was first investigated in \cite{Loftin07} for the case of loops which are straight lines in the $1/3$ translation structure avoiding zeros, then in \cite{loftin_limits_2022} for general loops, and rephrased and reproved in terms of Finsler metrics in \cite{Reid2023}.

\begin{theorem}
Let $(J_i,\alpha_i)$ be a sequence of pairs of complex structure with cubic differential on a smooth oriented surface $S$ of genus at least $2$, such that $J_i$ converges uniformally to some $J$, and $\alpha_i/R_i^3$ converges uniformally to $\alpha$ for some sequence of positive real numbers $R_i$ tending to $\infty$. Let $\rho_i\in \Hit^3(S)$ be the corresponding sequence of representations. Let $[a]\in [\pi_1(S)]$. Let $F^{\Delta}_\alpha(a)$ denote the infimal length of loops in the free homotopy class $[a]$ with respect to the triangular Finsler metric $F_\alpha^{\Delta}$.
\[\lim_{i \to \infty} \frac{\log|\lambda_1(\rho_i(a))|}{R_i} = F^{\Delta}_\alpha(a)\]
\end{theorem}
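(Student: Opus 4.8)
This is (a restatement of) the main theorem of \cite{Reid2023}; see also \cite{loftin_limits_2022} for the $C^\infty$ convergence of developing maps, and \cite{labourie_flat_2006, Loftin01} for the affine-sphere parametrization of $\Hit^3$ used here. We indicate the main steps. Realize $\rho_i$ as the holonomy of the flat $\mathfrak{sl}_3$-valued connection $\nabla_i = \nabla^{h_i} + \Phi_i + \Phi_i^{\ast}$ attached to the Higgs bundle $(E,\phi_{\alpha_i})$ displayed above, where $\Phi_i = \phi_{\alpha_i}$ is the Higgs field and $h_i$ the harmonic metric. In a local conformal coordinate $z$ with $\alpha_i = U_i\,dz^3$, the corresponding Blaschke metric is $e^{2\psi_i}|dz|^2$ with $\psi_i$ solving Wang's equation, a semilinear elliptic equation schematically of the form $\psi_{i,z\bar z} = \tfrac12 e^{2\psi_i} - |U_i|^2 e^{-4\psi_i}$. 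Writing $\alpha_i = R_i^3\beta_i$ with $\beta_i \to \alpha$ and $\psi_i = \tfrac13\log R_i + \varphi_i$, a sub/supersolution (maximum principle) argument forces $\varphi_i \to \tfrac13\log|\alpha|$ locally uniformly on the complement of the zeros of $\alpha$; hence $h_i/R_i^2$ converges to the singular flat metric $|\alpha|^{2/3}$, which has cone points at the zeros of $\alpha$. A separate local analysis near each zero, using that the zero multiplicities stay bounded along the family, controls $h_i$ there as well, so that after rescaling lengths by $R_i$ the base converges to the flat cone surface $(S,|\alpha|^{2/3})$.

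The heart of the matter is a WKB analysis of the rescaled parallel transport. After conjugating by the harmonic frame and rescaling distances by $R_i$, the connection $\nabla_i$ converges, on the complement of the zeros of $\alpha$ and modulo terms tending to $0$, to the translation-invariant connection $d + \Phi_\alpha + \Phi_\alpha^{\ast}$ of the homogeneous affine sphere. Along a unit-speed straight segment of the flat structure in direction $v$, the generator $\Phi_\alpha(v) + \Phi_\alpha^{\ast}(v)$ has the three real eigenvalues $2\,\mathrm{Re}(\omega^j\alpha_0(v))$ for $j = 0,1,2$, with $\omega = e^{2\pi i/3}$ and $\alpha_0$ a cube root of $\alpha$; its top eigenvalue is exactly $F^\Delta_\alpha(v)$. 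Consequently the parallel transport of $\nabla_i$ along any broken-flat loop $\gamma$ — straight in $|\alpha|^{2/3}$, bending only at zeros of $\alpha$ — has operator norm $\exp(R_i \ell + o(R_i))$, where $\ell$ is the $F^\Delta_\alpha$-length of $\gamma$, with a matching lower bound on its top singular value obtained by propagating the dominant eigenvector and using that along an $F^\Delta_\alpha$-minimizing loop the dominant eigendirection is consistent across the bends — this is precisely the combinatorial content of the triangular unit ball: only bends into the reflex cone preserve maximal growth. Uniformity near the cone points is supplied by the previous step.

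To conclude, $|\lambda_1(\rho_i(a))| \le \|\rho_i(a)^n\|^{1/n}$ for all $n$, and $\rho_i(a)^n$ is the transport around the $n$-fold loop; applying the previous paragraph to an almost-minimizing loop in $[a]$ gives $\limsup_i R_i^{-1}\log|\lambda_1(\rho_i(a))| \le F^\Delta_\alpha(a)$. For the reverse inequality, build from the dominant eigenvector of the limiting generator along a minimizing broken geodesic an approximate eigenvector of $\rho_i(a)$ of growth rate at least $(F^\Delta_\alpha(a) - \varepsilon)R_i$, and use a Birkhoff-cone contraction to keep the WKB errors from accumulating over the $n$-fold iterate, giving $\liminf_i R_i^{-1}\log|\lambda_1(\rho_i(a))| \ge F^\Delta_\alpha(a)$. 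The main obstacle is the WKB step: the rescaled Blaschke metric only detects the conformal flat structure $|\alpha|^{2/3}$ and on its own governs only symmetrized, Hilbert-type lengths, so recovering the asymmetric triangular Finsler metric $F^\Delta_\alpha$ that actually controls $\lambda_1$-growth needs a genuinely $\mathrm{SL}_3(\mathbb{R})$ — not conformal — WKB analysis, performed uniformly up to the cone points, together with the combinatorics of $F^\Delta_\alpha$-geodesics; this is the substance of \cite{Reid2023}.
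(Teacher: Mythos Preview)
The paper does not give a proof of this theorem: it is quoted verbatim as a prior result (the main theorem of \cite{Reid2023}, with antecedents in \cite{Loftin07} and \cite{loftin_limits_2022}), and immediately used as input for the subsequent discussion of cubic differential currents. Your proposal correctly identifies this and supplies a reasonable sketch of the affine-sphere/WKB argument from \cite{Reid2023}, so there is nothing to compare against here beyond noting that the paper itself treats the statement as a black box.
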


Let $\rho_i$ be such a sequence of representations. Let $l_i$, and $\mu_i$ be the $\lambda_1$-spectrum, and cross ratio current of $\rho_i$. Let 
 $l = \lim  (l_i/R_i)$
be the limiting length spectrum. The measures $\mu_i/R_i$ will converge to a limiting current $\mu$ which will be nullhomologous, thus the curvature of an equivariant taxi bundle on $\mathcal{G}^\circ$ with period spectrum $l$. On the other hand $l$ is the length spectrum of $F^\Delta_\alpha$, so it is the period spectrum of the generalized geodesic flow bundle $U_{F^\Delta_\alpha}$. Since equivariant taxi bundles are determined by their periods, it follows that $\mu$ is the curvature of $U_{F^\Delta_\alpha}$. We will show that $\mu$ is proportional to the current of descending real trajectories of $\alpha$. 

\subsection{Cubic differential currents}
To understand the geodesic current associated with the Finsler metric $F^\Delta_{\alpha}$ we need to understand its geodesics. We start by describing geodesics in $\C$ for the metric $F^\Delta_{dz^3}$.

\[F^\Delta_{dz^3} = \max_{\zeta^3 = 1} [2 Re(\zeta dz)]\]
A path $\gamma:\R\to \C$ is a geodesic for $F^\Delta_{dz^3}$ if one of these three one forms is maximal on $\gamma'(t)$ for all $t$. Note that there are infinitely many geodesics from $0$ to $1$, whereas there is a unique geodesic from $0$ to $-1$. More generally, geodesics of the form $\gamma(t) = a - \zeta t$ where $\zeta^3=1$ are ``rigid" in the sense that for any $t_1<t_2$, $\gamma|_{[t_1,t_2]}$ is the unique geodesic segment from $\gamma(t_1)$ to $\gamma(t_1)$. 
\begin{figure}[h]
    \centering
    \includegraphics[width=5cm]{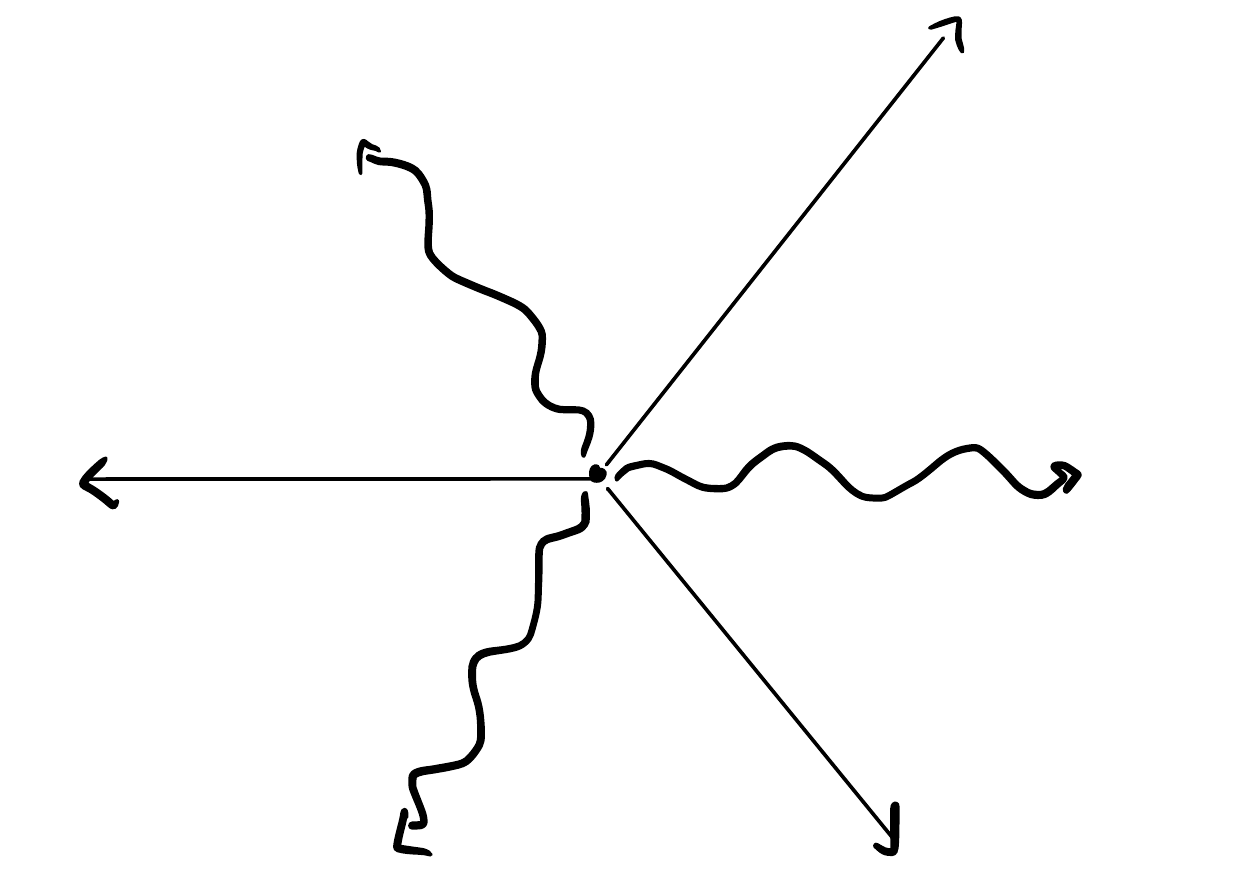}
    \caption{Some $F^\Delta_{dz^3}$ geodesics}
    
\end{figure}

We can understand $F^{\Delta}_{dz^3}$ as the taxicab metric for a city that has three  directions of one way streets. If you can get to some place by taking one of these streets then it is the fastest way to get there, otherwise there are multiple optimal routs. On a Riemann surface, these streets will be called descending real trajectories.

\begin{definition}
    A \textbf{descending real trajectory} of a cubic differential $\alpha$ on a Riemann surface $X$ is a smooth map $\gamma:(-\infty,\infty)\to X$ such that $\alpha(\gamma')=-1$.  A \textbf{generalized descending trajectory} is a non-constant continuous map $\gamma:(-\infty,\infty)\to X$ such that if $\alpha(\gamma(t))\neq 0$ then $\gamma$ is differentiable at $t$ and $\alpha(\gamma'(t))=-1$, and also has angle at least $\pi$ on both sides (in the flat metric) at zeros.
\end{definition}

\begin{lemma}
\label{trajectories are geodesic}
    Generalized descending trajectories are geodesics. They are also rigid: if $\gamma$ is a generalized descending trajectory then for all $t_1 < t_2$ in $\R$, $\gamma|_{[t_1,t_2]}$ is the unique shortest path from $\gamma(t_1)$ to $\gamma(t_2)$.
\end{lemma}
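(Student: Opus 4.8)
The statement asserts two things about a generalized descending trajectory $\gamma$ of a cubic differential $\alpha$: (1) it is a geodesic for $F^\Delta_\alpha$, and (2) it is rigid, i.e. on every subinterval $[t_1,t_2]$ it is the unique shortest path between its endpoints. I would prove rigidity first and deduce "geodesic" as a byproduct, since rigidity is the strong statement. The natural strategy is to produce, for each subsegment of $\gamma$, a closed $1$-form (a calibration) $\omega$ on $X$ such that $\omega$ pairs to $\le F^\Delta_\alpha(v)$ against every tangent vector $v$, with equality exactly along $\gamma'$, and such that $\omega$ is exact on a neighborhood of the segment; then integration of $\omega$ along any competitor path gives a lower bound on length equal to the length of $\gamma$, with equality forcing the competitor to be tangent to the calibrating direction everywhere, hence to coincide with $\gamma$.

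\textbf{Reduction to the flat model.} Away from zeros of $\alpha$, there is a local coordinate $z$ (the $\tfrac13$-translation structure, $z = \int \alpha^{1/3}$) in which $\alpha = dz^3$ and $F^\Delta_\alpha = F^\Delta_{dz^3} = \max_{\zeta^3=1} 2\,\mathrm{Re}(\zeta\,dz)$. A descending real trajectory satisfies $\alpha(\gamma')=-1$, i.e. in this coordinate $\gamma'$ points in a direction $-\zeta$ with $\zeta^3=1$, so $\gamma$ is one of the "rigid" rays $a - \zeta t$ discussed just above the lemma. For such a ray the calibrating form is the constant form $\omega = 2\,\mathrm{Re}(\zeta\,dz)$: by definition of $F^\Delta_{dz^3}$ as a max, $\omega(v)\le F^\Delta_{dz^3}(v)$ for all $v$, and $\omega(\gamma') = 2\,\mathrm{Re}(\zeta\cdot(-\zeta)) = 2\,\mathrm{Re}(-\zeta^2) $... here I need to be a little careful with the normalization — the point is simply that $\omega(\gamma')$ equals $F^\Delta_{dz^3}(\gamma')$ because $\gamma'$ realizes the maximizing root. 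Equality $\omega(v) = F^\Delta_{dz^3}(v)$ holds only on the ray $\R_{\ge 0}\cdot\gamma'$ (for $v\ne 0$), since the three linear functionals $2\,\mathrm{Re}(\zeta\,dz)$ are pairwise distinct away from the three half-lines where a given one strictly dominates. So in the flat chart, any path from $\gamma(t_1)$ to $\gamma(t_2)$ has length $\ge \int\omega = $ length of $\gamma|_{[t_1,t_2]}$, with equality iff it is a reparametrization of $\gamma$.

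\textbf{Handling zeros and global patching.} The honest work is: (a) extending the calibration across zeros of $\alpha$, and (b) turning a local calibration into a genuine lower bound for competitors that may wander far from $\gamma$. For (a), near a zero of order $k$ the flat metric has a cone point of angle $(k+2)\pi > 2\pi$; the hypothesis that a generalized trajectory "has angle at least $\pi$ on both sides at zeros" is exactly what guarantees that the local picture is still calibrated — a broken trajectory that turns by $\ge\pi$ on each side at a cone point can still be calibrated by choosing, on each sheet-sector between consecutive "straight" rays, the appropriate constant form $2\,\mathrm{Re}(\zeta\,dz)$ in a branch of the $\tfrac13$-translation coordinate, and these glue to a well-defined continuous (in fact Lipschitz) closed $1$-form on a neighborhood because the angle condition ensures consecutive sectors overlap in a region where the two relevant linear forms agree on the shared trajectory direction. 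I expect this local analysis at the zeros to be the main obstacle — one must check that the angle-$\ge\pi$ condition is precisely what makes the concatenation of flat calibrations a legitimate (sub)calibration, and that it rules out the competitor sneaking around the cone point on the "short side." For (b), since $[t_1,t_2]$ is compact, $\gamma([t_1,t_2])$ lies in a simply connected neighborhood $V$ on which the calibration $\omega$ is exact, $\omega = df$ with $f$ Lipschitz; then for any competitor $\delta$ from $\gamma(t_1)$ to $\gamma(t_2)$ lying in $V$, $\mathrm{Length}(\delta)\ge \int_\delta\omega = f(\gamma(t_2))-f(\gamma(t_1)) = \mathrm{Length}(\gamma|_{[t_1,t_2]})$; and a competitor leaving $V$ can be cut off at the first and last times it meets $\partial V$ and shown to be even longer, using that $f$ is $1$-Lipschitz-from-the-correct-side for the asymmetric metric and that $\partial V$ can be chosen far enough out. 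Equality throughout forces $\delta'$ to realize the calibrated direction a.e., hence $\delta = \gamma$ up to reparametrization. This proves rigidity, and in particular $\gamma$ is a geodesic, which is statement (1). I would present the flat-chart computation in full and keep the cone-point gluing and the "stay in $V$" argument at the level of the calibration-form bookkeeping, citing the flat-geometry facts about $\tfrac13$-translation structures as standard.
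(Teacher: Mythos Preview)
Your calibration approach is correct in spirit and would eventually work, but the paper's argument is far more direct and avoids precisely the technical points you flagged as obstacles. The key observation you are missing is the pointwise inequality
\[
F^\Delta_\alpha(v) \;\ge\; |v|_{|\alpha|^{2/3}}
\]
between the triangular Finsler metric and the singular Euclidean (flat cone) metric $|\alpha|^{2/3}$, with equality exactly when $v$ points in a descending-trajectory direction. (In the chart $\alpha = dz^3$ this is just $\max_{\zeta^3=1} 2\,\mathrm{Re}(\zeta v)\ge |v|$, equality iff $v^3<0$.) Given this, the whole lemma collapses to a one-line comparison: a generalized descending trajectory is, by the angle-$\ge\pi$ condition, the \emph{unique} Euclidean geodesic between its endpoints in the CAT(0) cone metric $|\alpha|^{2/3}$; hence any other path $\delta$ satisfies
\[
\mathrm{Len}_{F^\Delta}(\delta)\;\ge\;\mathrm{Len}_{\mathrm{Euc}}(\delta)\;>\;\mathrm{Len}_{\mathrm{Euc}}(\gamma)\;=\;\mathrm{Len}_{F^\Delta}(\gamma).
\]
No patching of $1$-forms across cone points, no neighborhood $V$, no ``competitor leaves $V$'' argument is needed.

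Your sketch has two soft spots that the Euclidean comparison bypasses entirely. First, the gluing of the constant forms $2\,\mathrm{Re}(\zeta\,dz)$ across a zero: these are different $1$-forms on different sectors and do not literally agree on an open overlap, so you would need to take a pointwise minimum (producing a non-smooth but still $1$-Lipschitz potential) rather than a closed $1$-form; making this precise is exactly as much work as the Euclidean argument and in fact is equivalent to it, since the Euclidean distance function \emph{is} that Lipschitz potential. Second, the cone-angle formula you quote, $(k+2)\pi$, is off (it is $2\pi + 2\pi k/3$), though this does not affect the logic. Your instinct that ``the angle-$\ge\pi$ condition is precisely what makes it work'' is right, but the clean way to cash that in is through CAT(0) uniqueness of Euclidean geodesics, not through building the calibration by hand.
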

\begin{proof}
     Since $\gamma$ is the unique Euclidean geodesic connecting $\gamma(0)$ to $\gamma(T)$, and the triangle metric is bounded below by the Euclidean metric, any other path from $\gamma(0)$ to $\gamma(T)$ must be longer than $\gamma$ with respect to $F^{\Delta}$. 
\end{proof}

A corollary of Lemma \ref{trajectories are geodesic} is that an $F^\Delta_{\alpha}$-geodesic in $X$ cannot span a bigon with a descending trajectory with both edges oriented the same way. 

We would like a local way of telling whether a path in $\tilde{S}$ is a geodesic. One might expect a result like this because $\tilde{S}$ is Gromov hyperbolic. On the other hand, there is an obstruction to such a result which is the same as for the taxi-cab metric: taking one right turn can be geodesic but two consecutive right turns is not, and the turns can be arbitrarily far apart.

Let $X$ be a Riemann surface with cubic differential $\alpha$. Denote by $\Sigma\subset T^*X$ the triple branched covering of $X$ whose points are cube roots of $\alpha$. This is the spectral curve of the Higgs bundle determined by $\alpha$. Say a path $\gamma:[0,T]\to \tilde{S}$ is liftable if there is a continuous lift $\beta:[0,T] \to \Sigma$ such that $2Re(\beta(\gamma'))$ is always maximal amongst the three square roots. In $(\C,dz^3)$, geodesics are precisely the liftable paths, and descending trajectories are the paths that admit two lifts.

If $\alpha$ has zeros, then a liftable path can use a zero of $\alpha$ to turn straight around, but after ruling this out, we get our desired characterization of geodesics. 

\begin{lemma}
\label{geodesic characterization}
    Let $S$ be equipped with a complex structure and cubic differential $\alpha$. A path $\gamma:[0,T]\to \tilde{S}$ is a geodesic for $F^\Delta_\alpha$ if and only if it is liftable, and is geodesic in some neighborhood of each zero.
\end{lemma}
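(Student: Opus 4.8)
The plan is to prove both directions by reducing everything to the flat model $(\C, dz^3)$, using that $\alpha$ gives $\tilde S$ a flat metric with cone singularities (angle a multiple of $\pi$ at each zero, since a zero of order $k$ contributes angle $(k+3)\pi/3$... more precisely the relevant cone angles are $\geq \pi$ on each side of a trajectory). The forward direction is essentially immediate: if $\gamma$ is an $F^\Delta_\alpha$-geodesic then it is in particular locally length-minimizing, so in any simply connected flat chart away from zeros it must be an $F^\Delta_{dz^3}$-geodesic, hence liftable there by the characterization quoted for $(\C,dz^3)$; near a zero, being globally geodesic forces being locally geodesic, which is the stated condition. The liftability statements patch together to a global continuous lift $\beta:[0,T]\to\Sigma$ because the condition ``$2\mathrm{Re}(\beta(\gamma'))$ is maximal among the three roots'' pins down $\beta$ uniquely wherever $\gamma'\neq 0$ and $\gamma$ is not momentarily tangent to a different root-sheet; one checks the only place the lift could fail to extend continuously is at a zero, and there the ``geodesic near each zero'' hypothesis rules out the bad turn.

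For the converse — the substantive direction — suppose $\gamma:[0,T]\to\tilde S$ is liftable with lift $\beta$, and geodesic near each zero; we want $\gamma$ globally shortest among all paths with the same endpoints. First I would handle the case where $\gamma$ avoids zeros entirely. Then, exactly as in the proof of Lemma~\ref{trajectories are geodesic}, one wants to compare $\gamma$ against an arbitrary competitor $\gamma'$ with the same endpoints. The key computation: $F^\Delta_\alpha(v) \geq 2\mathrm{Re}(\zeta(v))$ for \emph{any} cube root $\zeta$ of $\alpha$, with equality along $\gamma$ for the chosen sheet $\beta$. Integrating $2\mathrm{Re}(\beta(\cdot))$, which is a closed $1$-form on the (simply connected) region swept out, gives that the length of $\gamma$ equals $\int_\gamma 2\mathrm{Re}(\beta)$, a quantity depending only on endpoints once we fix the homotopy class and the sheet; and length of $\gamma' \geq \int_{\gamma'} 2\mathrm{Re}(\beta)$ provided $\beta$ extends to a sheet comparison along $\gamma'$. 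The obstruction is that $\beta$ need not extend over the region between $\gamma$ and $\gamma'$, and the competitor may wind around zeros. The standard fix is to subdivide: cut $[0,T]$ at the (necessarily finitely many, by compactness and local rigidity) parameter values where $\gamma$ is tangent to a trajectory direction that would force a sheet change, reducing to finitely many segments each of which is a genuine descending-trajectory-type piece handled by Lemma~\ref{trajectories are geodesic}, then concatenate. The concatenation step needs that a concatenation of rigid geodesic segments is geodesic \emph{provided} no two consecutive turns are ``both right'' (the taxicab obstruction flagged in the text) — but that is exactly what liftability by a \emph{single} continuous sheet $\beta$ prevents, since a double right turn would force $\beta$ to jump.

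The main obstacle, then, is making the concatenation argument rigorous: precisely formulating ``no forbidden double turn'' in terms of the continuity of the lift $\beta$ on $\Sigma$, and verifying that a liftable path decomposes into finitely many rigid arcs whose junction angles are all $\geq\pi$ on the correct side. I expect to need a compactness argument (the set of ``turning times'' is closed and discrete, hence finite on $[0,T]$) together with a careful local analysis at each turning point and at each zero, checking that the local geodesic hypothesis at zeros is exactly what is needed to glue across a cone point. Once the zero-free, finitely-many-arcs case is done, the general case follows by treating maximal zero-free subarcs and invoking the near-zero geodesic hypothesis to bridge them, using that $\tilde S$ with the flat metric is $\mathrm{CAT}(0)$ away from the negatively-curved cone points so local geodesics through cone points that turn by angle $\geq\pi$ on both sides remain globally minimizing in a neighborhood.
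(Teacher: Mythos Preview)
Your calibration idea is exactly the right endgame and is what the paper uses: once the sheet $\beta$ extends over the disk between $\gamma$ and a competitor, Stokes gives equal lengths. The gap is in your proposed fix for the extension problem. Subdividing $\gamma$ at sheet-tangency points does \emph{not} produce segments to which Lemma~\ref{trajectories are geodesic} applies: that lemma concerns descending trajectories, i.e.\ paths with $\alpha(\gamma')=-1$, whereas a liftable segment with a single maximal sheet only has $\gamma'$ confined to a $120^\circ$ sector and need not be straight at all. So the pieces you obtain are not rigid, and the reduction to Lemma~\ref{trajectories are geodesic} fails. Moreover, even if each piece were individually minimizing, your concatenation principle (``no double right turn'') is only a heuristic for the flat model; you have not shown, and it is not obvious, that it survives the presence of cone points and yields global minimality on $\tilde S$. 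The core difficulty---a competitor can wind around zeros, so $\beta$ genuinely may fail to extend---is not addressed by cutting up $\gamma$.

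The paper resolves this with a single global step that replaces your subdivision entirely. Take an actual $F^\Delta_\alpha$-geodesic $\gamma_0$ with the same endpoints as your candidate $\gamma_1$, and (after reducing to disjoint interiors) let $D$ be the disk they bound. Apply Gauss--Bonnet to the singular flat metric $|\alpha|^{2/3}$. The total boundary turning of $\partial D$ is computed using the lifts $\beta_0,\beta_1$: writing $\theta_i(t)=\mathrm{Arg}[\beta_i(t)(\gamma_i'(t))]\in[-\pi/3,\pi/3]$, one finds the total turning equals $2\pi - (k_0+k_1+k_p+k_q)\cdot 2\pi/3$, where the $k$'s are nonnegative integers recording possible sheet mismatches at the endpoints and extra turning at zeros (the ``geodesic near zeros'' hypothesis is exactly what makes these corrections have the right sign). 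Since the interior curvature is $\leq 0$, Gauss--Bonnet forces all $k$'s to vanish and the interior curvature to be zero---hence $D$ contains no zeros, the lifts agree at the endpoints, and a single sheet $\beta$ extends continuously over $\bar D$. Now your calibration step goes through cleanly: $\int_{\gamma_0}2\mathrm{Re}(\beta_0)=\int_{\gamma_1}2\mathrm{Re}(\beta_1)$, so the lengths agree. The Gauss--Bonnet step is the missing global ingredient that makes the sheet-extension obstruction disappear; no subdivision or concatenation is needed.
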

\begin{proof}
Suppose $\gamma$ is a geodesic. It must be geodesic on the complement of its zeros, thus it is liftable on the complement of the zeros. At the zeros the three cube roots coincide, so $\gamma$ is liftable. Since $\gamma$ is geodesic, it must be geodesic in a neighborhood of each zero.

There must be some geodesic $\gamma_0$ connecting any two points $p$ and $q$ because $\tilde{S}$ is a complete Finsler space. Suppose $\gamma_1$ is another path which is liftable, and geodesic near zeros. We will show that $\gamma_0$ and $\gamma_1$ have the same length. 

We can replace $\gamma_0$ and $\gamma_1$ with piecewise smooth paths which have the same lengths, so we can assume that they are piecewise smooth.  We can further assume that $\gamma_0$ and $\gamma_1$ have disjoint interiors because otherwise we just apply the argument multiple times. Let $D\subset \tilde{S}$ be the disk bounded by $\gamma_0$ and $\gamma_1$. 

The cubic differential $\alpha$ induces a singular Euclidean metric $|\alpha|^{2/3}$ which is flat everywhere except at zeros of $\alpha$ where it has cone points of angles $2\pi + 2\pi k/3$. We will apply the Gauss Bonnet theorem to show that there are no zeros of $\alpha$ in $D$. The Gauss Bonnet formula says
\[\int_D K + \int_{\del D} \kappa  = 2\pi \]
where $K$ is the Gauss curvature, and $\kappa$ is the geodesic curvature of the boundary. In our setting, the integral of Gauss curvature means the sum of cone angles which is $-2\pi/3$ times the number of zeros in $D$ counted with multiplicity, and the integral of geodesic curvature means the total turning angle of the boundary.
\begin{figure}[h]
    \centering
    \includegraphics[width=0.6\linewidth]{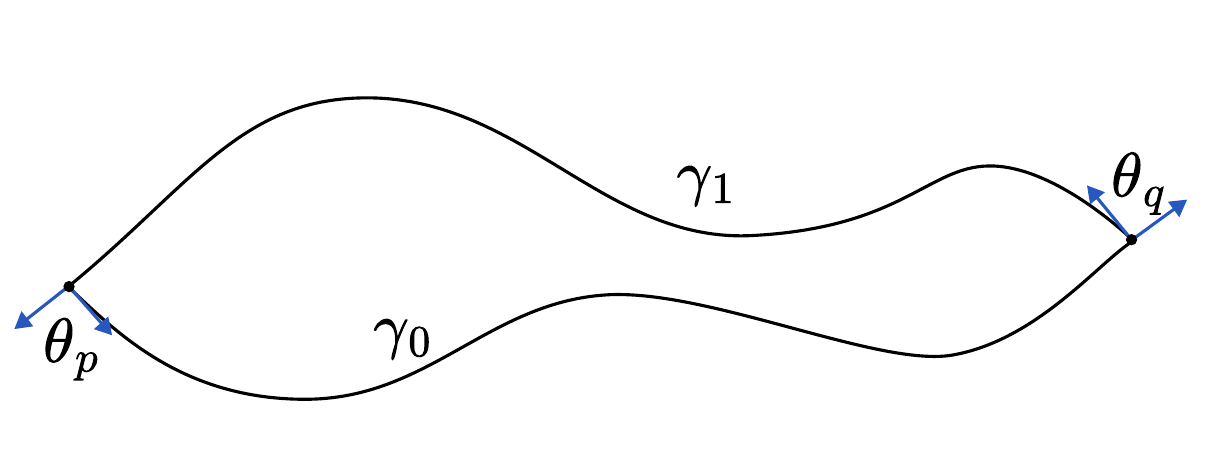}
    \caption{Two liftable paths bounding a disk}
    
\end{figure}

Let $\beta_0$ and $\beta_1$ be lifts of $\gamma_0$ and $\gamma_1$. Let $\theta_i(t) = Arg[\beta_i(t)(\gamma'_i(t))]$. Since $\beta_i$ is a maximal lift, $-\pi/3 \leq \theta_i(t) \leq \pi/3$. The turning contributions from $\gamma_i$ can be expressed using $\theta_i$.
\[T(\gamma_0) = \theta_0(1) - \theta_0(0) - 2\pi k_0/3\]
\[T(\gamma_1) = \theta_1(1) - \theta_1(0) + 2\pi k_1/3\]
Here $k_i$ are non-negative integers counting the extra turning contribution from where $\gamma_i$ passes through zeros. If $\gamma_i(t)$ is a zero of $\alpha$, then sometimes the pair $(\gamma_i,\beta_i)$ cannot be isotoped into the interior of $D$ in such a way that $\beta_i$ is still a maximal lift. In this situation, $\theta_i(t+\epsilon) - \theta_i(t-\epsilon)$ differes from the turning angle of $\gamma_i$ in $[t-\epsilon,t+\epsilon]$ by a multiple of $2\pi/3$. The fact that $\gamma_i$ are geodesic near zeros means that the turning angle corrections from zeros can only be negative for $\gamma_0$ and positive for $\gamma_1$.

The turning angles at $p$ and $q$  can also be expressed using $\beta_i$:
\[\theta_p = \pi - [\theta_0(1)-\theta_1(0)] - 2\pi k_p/3\]
\[\theta_q = \pi - [\theta_0(0)-\theta_1(1)] - 2\pi k_q/3\]
Here the extra factors of $2\pi/3$ come from the possibility that $\beta_0(0)\neq \beta_1(0)$ or $\beta_0(1)\neq \beta_1(1)$. This possibility can only give a negative contribution to $\theta_p$ or $\theta_q$. 

The total turning around $\del D$ is 
\[T(\gamma_0) + \theta_q - T(\gamma_1) + \theta_p = 2\pi - (k_0 + k_1 + k_p + k_q) 2\pi/3.\] The curvature of $D$ is non-positive, so from the Gauss Bonnet formula we conclude that the curvature of $D$ is zero, and $k_0 = k_1 = k_p = k_q = 0$. This means that there are no zeros in $D$, that $\beta_0 = \beta_1$ at $0$ and $1$, and that $\beta_i$ can both be perturbed into the interior of $D$, so there is a continuous lift $\bar{D}\to \Sigma$ which restricts to $\beta_i$ on the boundary. Since $\beta$ is a closed $1$-form, we conclude that
\[\int_{\gamma_0}2Re(\beta_0) = \int_{\gamma_1}2Re(\beta_1)\]
so $\gamma_0$ and $\gamma_1$ have the same length.

\end{proof}

\begin{lemma}
\label{cubic current corridor}
Let $\eta:[0,1]\to \tilde{S}$ be an arc in $\tilde{S}$ on which $\alpha$ is purely imaginary. Assume $\eta$ avoids zeros. Let $\gamma_0$ and $\gamma_1$ be (generalized) descending trajectories through $\eta(0)$, and $\eta(1)$ respectively which are perpendicular to $\eta$. Let $\gamma_i(-\infty)$ and $\gamma_i(\infty)$ be the endpoints in $\del_-^h\tilde{S}$ and $\del_+^h \tilde{S}$.
\[b(\gamma_1(-\infty),\gamma_2(-\infty);\gamma_2(\infty),\gamma_1(\infty)) = 2F^\Delta_\alpha (\eta)\]

\end{lemma}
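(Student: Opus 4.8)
The plan is to evaluate the cross ratio via Lemma~\ref{cross distance becomes cross ratio}, reduce it to two distance computations between far-out points of the two trajectories, and carry those out by a calibration argument in the flat ``corridor'' cut off by the trajectories. (In what follows I write $\gamma_1$ for the trajectory through $\eta(0)$ and $\gamma_2$ for the one through $\eta(1)$, to match the arguments of $b$ in the statement.) Since $(\tilde{S},F^\Delta_\alpha)$ is a good, Gromov hyperbolic space (Section~\ref{sec:currents from metrics}; see also \cite{Reid2023}), and descending trajectories are rigid, $F^\Delta_\alpha$-unit-speed geodesics (Lemma~\ref{trajectories are geodesic}, plus the one-line check that $F^\Delta_\alpha(v)=1$ whenever $\alpha(v)=-1$), the rays $t\mapsto\gamma_j(t)$ and $t\mapsto\gamma_j(-t)$ are geodesic rays for $F^\Delta_\alpha$ and for its reverse, hence converge in $\del_+^h\tilde{S}$ and $\del_-^h\tilde{S}$ to $\gamma_j(\infty)$ and $\gamma_j(-\infty)$. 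Extending $\gamma_1,\gamma_2$ to bi-infinite trajectories, they are disjoint and both cross $\eta$ orthogonally, so their four endpoints in $\del\Gamma$ are distinct and the cross ratio is defined. Applying Lemma~\ref{cross distance becomes cross ratio} with $x_{1,i}=\gamma_1(-i)$, $x_{2,i}=\gamma_2(-i)$, $y_{1,i}=\gamma_2(i)$, $y_{2,i}=\gamma_1(i)$ and using $d(\gamma_j(-i),\gamma_j(i))=2i$ from rigidity, one obtains
\[ b\bigl(\gamma_1(-\infty),\gamma_2(-\infty);\gamma_2(\infty),\gamma_1(\infty)\bigr)=\lim_{i\to\infty}\Bigl[\bigl(d(\gamma_1(-i),\gamma_2(i))-2i\bigr)+\bigl(d(\gamma_2(-i),\gamma_1(i))-2i\bigr)\Bigr], \]
so it suffices to show $d(\gamma_1(-i),\gamma_2(i))=d(\gamma_2(-i),\gamma_1(i))=2i+F^\Delta_\alpha(\eta)$ for all large $i$.

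The upper bounds are immediate. Concatenating $\gamma_1|_{[-i,0]}$, the arc $\eta$ (from $\eta(0)=\gamma_1(0)$ to $\eta(1)=\gamma_2(0)$), and $\gamma_2|_{[0,i]}$ gives a path from $\gamma_1(-i)$ to $\gamma_2(i)$ of length $2i+F^\Delta_\alpha(\eta)$; reversing the middle arc gives a path from $\gamma_2(-i)$ to $\gamma_1(i)$ of length $2i+F^\Delta_\alpha(\bar\eta)$, and $F^\Delta_\alpha(\bar\eta)=F^\Delta_\alpha(\eta)$ because $F^\Delta_\alpha(-v)=F^\Delta_\alpha(v)$ whenever $\alpha_x(v)\in i\R$ — the three numbers $2\,\mathrm{Re}(\beta(v))$, $\beta^3=\alpha_x$, are symmetric under negation precisely then. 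This is exactly where the hypothesis $\alpha|_\eta$ purely imaginary enters. Hence $b\le 2F^\Delta_\alpha(\eta)$.

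For the matching lower bound I would build a calibration. Let $C\subset\tilde{S}$ be the strip bounded by the two bi-infinite rigid trajectories: it is simply connected, it is a flat Euclidean strip near $\eta$, and — the delicate point — it can be arranged to contain no zeros of $\alpha$ in the region reachable by shortest paths (using rigidity of $\gamma_1,\gamma_2$ to rule out length-saving excursions across them, together with the Gauss--Bonnet estimate of Lemma~\ref{geodesic characterization} localized along the corridor). On $C$ choose the branch $\beta$ of $\sqrt[3]{\alpha}$ and, among the three closed $1$-forms $2\,\mathrm{Re}(\beta)$, $2\,\mathrm{Re}(\omega\beta)$, $2\,\mathrm{Re}(\bar\omega\beta)$ with $\omega=e^{2\pi i/3}$, let $\theta$ be the one tight along the direction of $\eta$; a check against the vertices of the triangular unit ball shows $\theta$ is then also tight along the common direction of $\gamma_1,\gamma_2$, while $\theta(v)\le F^\Delta_\alpha(v)$ for all $v$. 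Writing $\theta=d\Phi$ on the simply connected $C$ and integrating along the concatenated path (on which $\theta$ equals the $F^\Delta_\alpha$-length element piecewise, being tight), we get $\Phi(\gamma_2(i))-\Phi(\gamma_1(-i))=2i+F^\Delta_\alpha(\eta)$; hence any path $\sigma\subset C$ from $\gamma_1(-i)$ to $\gamma_2(i)$ has $F^\Delta_\alpha$-length at least $\int_\sigma\theta=2i+F^\Delta_\alpha(\eta)$. The same argument with $\gamma_1,\gamma_2$ interchanged bounds $d(\gamma_2(-i),\gamma_1(i))$ below. Combined with the upper bounds this gives $b=2F^\Delta_\alpha(\eta)$.

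I expect the main obstacle to be exactly that parenthetical step: justifying that the relevant shortest paths stay in a corridor $C$ avoiding the zeros of $\alpha$, and treating carefully the case of generalized trajectories that turn at zeros (where the branch $\beta$ and hence $\theta$ degenerate). The limit reduction, the explicit upper-bound paths, and the calibration bookkeeping are all routine by comparison.
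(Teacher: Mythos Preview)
Your reduction via Lemma~\ref{cross distance becomes cross ratio} and your upper-bound paths are fine, but the calibration step is both the weakest link and unnecessary. The paper's proof recognizes that your concatenated paths are already \emph{geodesics}: set $\gamma_{12}=\gamma_1|_{(-\infty,0]}\cdot\eta\cdot\gamma_2|_{[0,\infty)}$ and $\gamma_{21}=\gamma_2|_{(-\infty,0]}\cdot\eta^{-1}\cdot\gamma_1|_{[0,\infty)}$. These are liftable (one of the two maximal lifts of each trajectory matches the unique maximal lift of $\eta$ at each junction, since $\eta$ is perpendicular to the trajectory direction) and geodesic near any zeros they meet (they coincide with $\gamma_1$ or $\gamma_2$ there), so Lemma~\ref{geodesic characterization} applies. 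This gives $d(\gamma_1(-i),\gamma_2(i))=2i+F^\Delta_\alpha(\eta)$ on the nose, with no corridor, no calibration, and no worry about zeros.

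The paper then goes one step shorter: rather than pass through cross-distances at all, it uses the four geodesics $\gamma_1,\gamma_{12},\gamma_2,\gamma_{21}$ directly as points of $U^h_d$, where consecutive pairs are forward- or backward-asymptotic and hence lie on flat sections. The cross ratio is the holonomy of this taxi-loop, which is the sum of the parametrization shifts $F^\Delta_\alpha(\eta)+F^\Delta_\alpha(\eta^{-1})=2F^\Delta_\alpha(\eta)$, using the same symmetry of $F^\Delta_\alpha$ on imaginary directions that you noted.

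Your calibration route would work if you could guarantee a well-defined branch $\theta$ on the whole corridor, but the corridor can certainly contain zeros of $\alpha$ (only $\eta$ is assumed to avoid them), so $\theta$ need not extend; your parenthetical ``can be arranged'' is not justified and is exactly the place where the argument would break. Replacing that step by the observation that $\gamma_{12},\gamma_{21}$ are geodesics closes the gap and essentially recovers the paper's proof.
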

\begin{proof}
    Make a path $\gamma_{12}$ by concatenating $\gamma_1|_{(-\infty,0]}$, $\eta$, and $\gamma_2|_{[0,\infty)}$, and make a path $\gamma_{21}$ by concatenating $\gamma_2|_{(-\infty,0]}$, $\eta^{-1}$, and $\gamma_1|_{[0,\infty)}$. 
    The paths $\gamma_{12}$ and $\gamma_{21}$ are geodesics by Lemma \ref{geodesic characterization}. 
    Holonomy in $U_h$ of the sequence of geodesics $\gamma_1$, $\gamma_{12}$, $\gamma_2$, $\gamma_{21}$ is $2F^\Delta_\alpha (\eta)$. More immediately it is $F^\Delta_\alpha (\eta) + F^\Delta_\alpha (\eta^{-1})$ but $F^\Delta_\alpha$ is symmetric on $\eta$. 
\end{proof}

\begin{figure}[h]
    \centering
    \includegraphics[width=5cm]{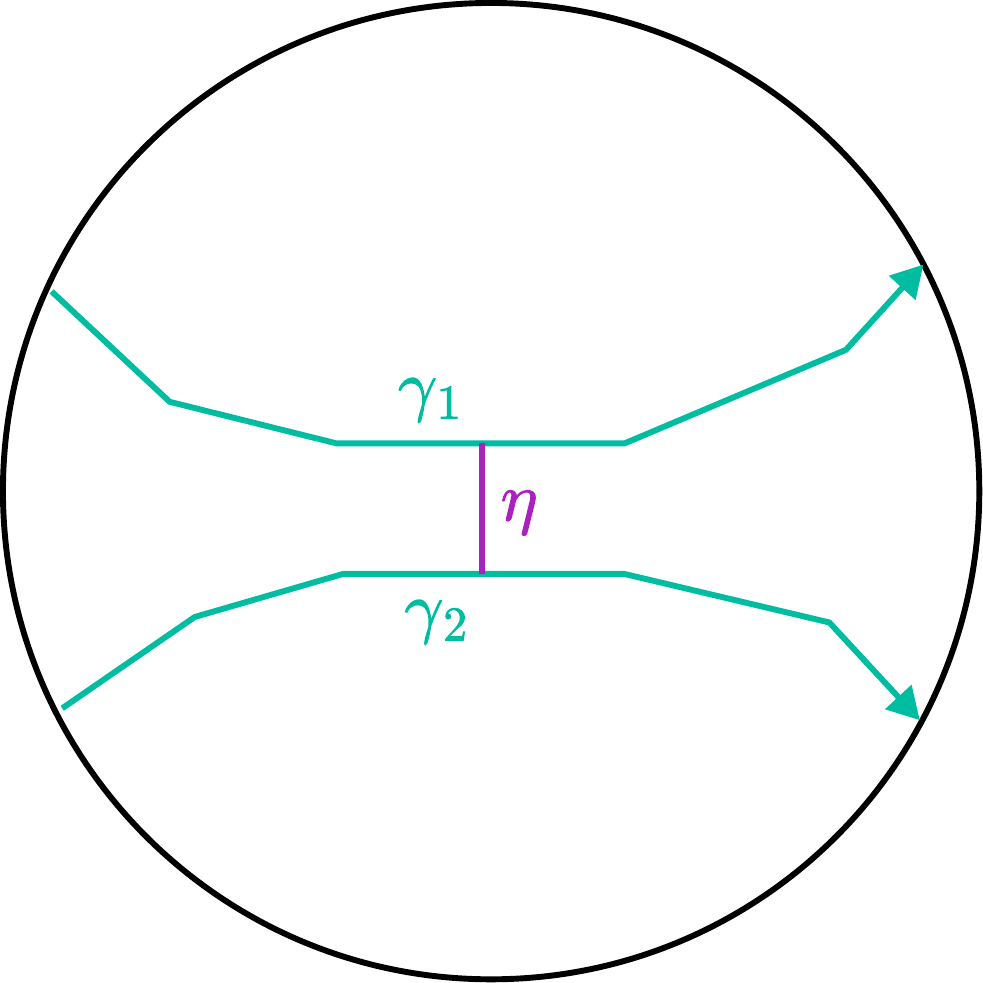}
    \caption{Two real trajectories perpendicular to a segment of imaginary trajectory}
    
\end{figure}

Let $\mathcal{T}'(\alpha)$ be the space of parameterized descending trajectories of $\alpha$ with the topology of uniform convergence on compact sets. Let $\mathcal{T}$ be the quotient by reparameterization. Note that $\mathcal{T}$ maps continuously to $\mathcal{G}^h \subset \del^h_- \tilde{S} \times \del^h_+ \tilde{S}$. Define $\bar{\mathcal{T}}'$ to be the closure of $\mathcal{T}'$ with respect to uniform convergence on compact sets, and $\bar{\mathcal{T}}$ to be its quotient by reparametrization. The closure $\bar{\mathcal{T}}$ will contain trajectories with zeros that either always turn left or always turn right.  A trajectory is determined by its horofunction endpoints $([g],[h])$ as the minimum set of $g+h$, so we may view $\mathcal{T}$ and $\bar{\mathcal{T}}$ as subsets of $\mathcal{G}^h$. 

\begin{lemma}
\label{cubic current support}
The geodesic current associated with $F^\Delta_\alpha$ has support $\bar{\mathcal{T}}(\alpha)$.
\end{lemma}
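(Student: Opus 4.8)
The plan is to prove the two inclusions $v(\bar{\mathcal{T}}(\alpha)) \subseteq \supp \mu$ and $\supp \mu \subseteq v(\bar{\mathcal{T}}(\alpha))$, where I abbreviate $d := F^\Delta_\alpha$, write $\mu := \mu_d$ for its Lioville current, let $v = v_-\times v_+ : \mathcal{G}^h_d \to \mathcal{G}$ be the projection, and recall $\mu = v_*\mu^h_d$ with $\mu^h_d$ the curvature of $U^h_d$. Here $\bar{\mathcal{T}}(\alpha)$ is regarded, as in the text, as a subset of $\mathcal{G}^h_d$ via horofunction endpoints, and the assertion of the lemma is that $\supp \mu$ equals its image $v(\bar{\mathcal{T}}(\alpha)) \subseteq \mathcal{G}$.

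First I would prove $v(\bar{\mathcal{T}}(\alpha)) \subseteq \supp \mu$. Since $v$ is continuous and $\mathcal{T}(\alpha)$ is dense in $\bar{\mathcal{T}}(\alpha)$ by definition, while $\supp \mu$ is closed, it is enough to show $(\gamma^-,\gamma^+) \in \supp \mu$ for every honest descending trajectory $\gamma$; these avoid the zeros of $\alpha$. Fix such a $\gamma$ and a point $\gamma(t_0)$, and choose a coordinate near $\gamma(t_0)$ with $\alpha = dz^3$, so the descending trajectories locally foliate a zero-free disk by parallel segments. Pick a short arc $\eta$ along an imaginary trajectory through $\gamma(t_0)$, contained in that disk. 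Applying Lemma \ref{cubic current corridor} to $\eta$ and the descending trajectories $\gamma_0,\gamma_1$ through its endpoints produces a rectangle $R \subset \mathcal{G}^h_d$ with corners $\gamma_j(\pm\infty)$ and $\mu^h_d(R) = 2F^\Delta_\alpha(\eta)$, which is strictly positive since $\eta$ is a non-constant arc through points where $\alpha \ne 0$. By monotonicity of $v_\pm$ we have $R \subseteq v^{-1}(B)$, where $B \subset \mathcal{G}$ is the box spanned by the visual endpoints of $\gamma_0,\gamma_1$; hence $\mu(B) \ge \mu^h_d(R) > 0$. As $\gamma$ lies between $\gamma_0$ and $\gamma_1$, the box $B$ contains $(\gamma^-,\gamma^+)$, and $B$ shrinks to $(\gamma^-,\gamma^+)$ as $\eta$ shrinks to $\gamma(t_0)$, because the visual endpoints of $\gamma_0,\gamma_1$ converge to $\gamma^\pm$. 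Therefore $(\gamma^-,\gamma^+) \in \supp \mu$.

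For the reverse inclusion I would first record that $v(\bar{\mathcal{T}}(\alpha))$ is closed in $\mathcal{G}$: a sequence in it lifts to (limit) descending trajectories, which are uniformly Lipschitz geodesics in the compact quotient, so after translating basepoints by $\Gamma$ one extracts by Arzel\`a--Ascoli a limit lying in $\bar{\mathcal{T}}'(\alpha)$ with the prescribed endpoint pair. So given $(x,y) \notin v(\bar{\mathcal{T}}(\alpha))$ there is a box $B \ni (x,y)$ disjoint from $v(\bar{\mathcal{T}}(\alpha))$, i.e. no generalized or limit descending trajectory joins the two sides of $B$; it remains to show $\mu(B) = \mu^h_d(v^{-1}(B)) = 0$. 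Covering $v^{-1}(B)$ by rectangles $R = [g_1,g_2]\times[h_1,h_2]$ with $v_-(g_j), v_+(h_k)$ in the sides of $B$, Lemma \ref{cross distance becomes cross ratio} expresses $\mu^h_d(R)$ as a limit of cross-distances $d(x_{1,i},y_{1,i}) + d(x_{2,i},y_{2,i}) - d(x_{1,i},y_{2,i}) - d(x_{2,i},y_{1,i})$ with $x_{j,i}\to g_j$, $y_{k,i}\to h_k$. I would then argue that each such cross-distance can be made exactly $0$ for large $i$: the four geodesics realizing the distances $d(x_{j,i},y_{k,i})$ are non-rigid (none is a descending trajectory, by choice of $B$), and a quadrilateral version of the Gauss--Bonnet argument of Lemma \ref{geodesic characterization} lets one choose them so they bound a disk free of zeros of $\alpha$ and admit a common lift to a flat branch of the spectral curve $\Sigma$; on such a branch each side-length is the integral of one fixed closed $1$-form, so the alternating sum vanishes. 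This gives $\mu^h_d(R) = 0$ for all such $R$, hence $\mu(B)=0$.

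I expect this last point to be the main obstacle. The linear-algebra conclusion, once a common flat lift exists, is immediate; the delicate part is showing that, in the absence of a descending trajectory between the two sides of the box, the four realizing geodesics can be isotoped rel endpoints, without changing their lengths, so as to bound a zero-free disk and lift compatibly to $\Sigma$. This requires the same bookkeeping of turning angles at zeros and of the cone points of $|\alpha|^{2/3}$ as in the proof of Lemma \ref{geodesic characterization}, now for a four-sided rather than two-sided configuration. A slightly softer alternative would be to prove directly that the pairing $\langle g,h\rangle = \inf_X(g+h)$ is locally bi-affine in the horofunction coordinates on the open set of pairs whose minimizing geodesics avoid all rigid directions; this amounts to the same computation but may be easier to organize than the disk argument.
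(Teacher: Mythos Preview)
Your first inclusion is correct and is exactly what the paper does.

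For the reverse inclusion, your plan is workable in principle but you are missing a much simpler trick that the paper uses, and which eliminates the quadrilateral Gauss--Bonnet bookkeeping you flag as the main obstacle. The idea is not to take four arbitrary geodesics realizing the distances $d(x_{j,i},y_{k,i})$ and then argue they bound a zero-free disk, but rather to \emph{choose the box itself} so that its corners are endpoints of trajectories through a single point. Concretely: given $([g],[h])\notin\bar{\mathcal{T}}(\alpha)$, pick a geodesic $\eta$ from $v([g])$ to $v([h])$ and a non-zero point $p$ on it. There are exactly three descending trajectories through $p$; after possibly shifting $p$ slightly (to avoid asymptotics with $\eta$) two of them, say $\gamma_1,\gamma_2$, bracket $([g],[h])$. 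The concatenations $\gamma_{12}=\gamma_1|_{(-\infty,0]}\cdot\gamma_2|_{[0,\infty)}$ and $\gamma_{21}=\gamma_2|_{(-\infty,0]}\cdot\gamma_1|_{[0,\infty)}$ are geodesics by Lemma~\ref{geodesic characterization}, and all four of $\gamma_1,\gamma_{12},\gamma_2,\gamma_{21}$ pass through $p$ at parameter $0$. Hence all four endpoint horofunctions vanish at $p$, so every pairing $\langle\gamma_i(-\infty),\gamma_j(\infty)\rangle$ is exactly $0$, and the cross ratio $b(\gamma_1(-\infty),\gamma_2(-\infty);\gamma_1(\infty),\gamma_2(\infty))$ vanishes on the nose. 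You have produced a zero-measure box containing $([g],[h])$ without any disk or turning-angle argument.

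A minor point: the paper actually proves the statement for $\mu^h_d$ on $\mathcal{G}^h$ (the support is $\bar{\mathcal{T}}(\alpha)\subset\mathcal{G}^h$), not for its pushforward to $\mathcal{G}$; you work downstairs. The two are essentially equivalent here, but staying upstairs makes the ``three trajectories through $p$'' argument cleaner, since you can directly name the box in horofunction coordinates.
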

\begin{proof} 
    Suppose $([g],[h])\in \mathcal{T}(\alpha)$ are the endpoints of a trajectory $\gamma\in \mathcal{T}'(\alpha)$. Choosing a perpendicular segment $\eta$ to a point $p$ on $\gamma$ will determine arbitrarily small boxes containing $([g],[h])$ which have positive measure by Lemma \ref{cubic current corridor}, thus showing $([g],[h])$ is in $\supp(\mu_h)$. Since the support is closed by definition, it contains $\bar{\mathcal{T}}(\alpha)$

    Suppose $([g],[h])$ is not the endpoints of a trajectory. Choose a geodesic $\eta$ connecting $v([g])$ to $v([h])$. Choose any point $p$ on $\eta$ which is not a zero. There are three trajectories going through $p$. If these trajectories run into zeros, meaning that there are choices to make, choose consistant turns so that all three trajectories are in $\bar{\mathcal{T}}(\alpha)$. Since $\eta$ is not a trajectory, or in the closure of trajectories, it cannot coincide with any of these three trajectories. However, it is possible that $\eta$ is asymptotic to (or coincides with) one of the trajectories in the forward or backward direction. If this is the case, shift $p$ slightly to the left or right so that all three trajectories through $p$ cross $\eta$. There will be two trajectories $\gamma_1$ and $\gamma_2$ such that the tip of $\eta$ is between their tips and the tail of $\eta$ is between their tails. The cross ratio $b(\gamma_1(-\infty),\gamma_2(-\infty);\gamma_1(\infty),\gamma_2(\infty))$ vanishes. To see this note that the paths 
    \[\gamma_{12}:= (\gamma_1|_{(-\infty,0]})\circ (\gamma_2|_{[0,\infty)})\]
    \[\gamma_{21}:= (\gamma_2|_{(-\infty,0]})\circ (\gamma_1|_{[0,\infty)})\]
    are geodesics, and also pass through $p$. We have constructed a box of zero measure containing $([g],[h])$.
\end{proof}
\begin{remark}
Lemmas \ref{cubic current corridor} and \ref{cubic current support} completely specify the ``horofunctional" geodesic current $\mu^h$ because they give the measure of arbitrarily small boxes around any point in $\mathcal{G}^h$. Consequently, they uniquely specify the geodesic current $\mu = v_*(\mu^h)$. One could imagine an alternative construction of $\mu$ by first defining a measure on the space of trajectories $\mathcal{T}(\alpha)$, then pushing forward to $\mathcal{G}$. 
\end{remark}


\subsection{Lower submeasures of cubic differential currents}
Let $S$ be equipped with complex structure and cubic differential $\alpha$. In the previous subsection we investigated the Lioville current of the triangular Finsler metric $F^\Delta_\alpha$ and found that it is the current of real trajectories of $\alpha$. Let $\mu^h$ be this Lioville current on $\mathcal{G}^h(X)$, and let $\mu$ be its pushforward to $\mathcal{G}$. 

\begin{lemma}
\label{horofunctional lower submeasures are same}
    Let $v:\mathcal{G}^h\to \mathcal{G}$ be the projection. For every lower submeasure $\nu$ of $\mu$ there is a unique lower submeasure $\nu^h$ of $\mu^h$ such that $v_* (\nu^h) = \nu$.
\end{lemma}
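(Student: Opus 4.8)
The plan is to exploit the fact, established in Lemma \ref{unique horofunction}, that there are unique plus and minus horofunction boundary points over every visual boundary point which is not fixed by any element of $\Gamma$. Since $\mu^h$ is a positive measure on $\mathcal{G}^h$ whose pushforward under $v$ is $\mu$, and since $v$ restricted to the complement of the fixed-point fibers is a bijection onto its image, most of the identification of lower submeasures is automatic: the only place where $v$ fails to be injective is over the countably many fixed points $(\gamma^-,\gamma^+)$, where the fiber $v^{-1}(x) = v_-^{-1}(x)\times v_+^{-1}(y)$ can be a genuine little box in $\del_-^h\tilde S\times \del_+^h\tilde S$.

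First I would set up the correspondence on the level of \emph{all} (not necessarily admissible) measures: given a lower submeasure $\nu$ of $\mu$, I want to produce a measure $\nu^h$ on $\mathcal{G}^h$ with $v_*\nu^h=\nu$ which is itself a lower submeasure of $\mu^h$, i.e. such that the monotone partition $\mu^h=\nu^h+\bar\nu^h$ is monotone for the order on $\mathcal{G}^h$ coming from the refined cyclic orders on $\del_\pm^h\tilde S$. Away from the fixed-point fibers there is literally no choice — $\nu^h$ must agree with the pullback of $\nu$ under the partial inverse of $v$ — so the content is entirely concentrated on the boxes $v^{-1}(\gamma^-,\gamma^+)$. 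On such a box, $\mu^h$ restricted to it is the measure controlled by Lemma \ref{cubic current corridor}/Lemma \ref{cubic current support}: it is carried by the descending-trajectory locus, which inside the box $v_-^{-1}(\gamma^-)\times v_+^{-1}(\gamma^+)$ is totally ordered (the three trajectories through a fixed transverse point at a zero give a ``fan'', and the measure along a transverse arc is a one-dimensional length measure). I would argue that a monotone splitting of a totally ordered one-dimensional measure with prescribed total mass (the mass $\nu$ assigns to the point $(\gamma^-,\gamma^+)$) is unique, giving both existence and uniqueness of $\nu^h$ on each box.

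The key steps, in order: (1) restrict attention to the full-measure open set $\mathcal{G}^h\setminus\bigcup_\gamma v^{-1}(\gamma^-,\gamma^+)$, where $v$ is a homeomorphism onto its image and the pullback $\nu^h_0:=(v^{-1})_*(\nu\restriction\cdots)$ is forced; (2) check that $\nu^h_0$ is monotone there, using that the refined cyclic orders on $\del_\pm^h\tilde S$ are compatible with the cyclic order on $\del\Gamma$ (so the order on $\mathcal{G}^h$ pulls back the order on $\mathcal{G}$ off the bad fibers); (3) on each box $B_\gamma:=v^{-1}(\gamma^-,\gamma^+)$, analyze the structure of $\mu^h\restriction B_\gamma$ via Lemmas \ref{cubic current corridor}–\ref{cubic current support} and Lemma \ref{unique horofunction}, showing its support is a totally ordered arc; (4) observe that $\nu$ being a lower submeasure forces, for each $\gamma$, that $\nu$ and $\mu$ either agree or $\nu$ vanishes on all of $\mathcal{G}$ strictly below/above $(\gamma^-,\gamma^+)$ once $(\gamma^-,\gamma^+)\in\supp\nu$, so the only freedom in $\nu^h\restriction B_\gamma$ is how much of the totally ordered arc to include, and that is pinned down by the single number $\nu(\{(\gamma^-,\gamma^+)\})$; (5) assemble $\nu^h$ from $\nu^h_0$ and the pieces $\nu^h\restriction B_\gamma$ and verify global monotonicity and admissibility (the latter because $\nu-\nu_l$ finite implies $\nu^h-\nu^h_l$ finite, each correction box contributing finite mass).

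The main obstacle I expect is step (3)–(4): making precise that the portion of $\mu^h$ living over a single fixed point is carried by a totally ordered set and that a lower submeasure of $\mu$ determines exactly one monotone splitting of it. This is where one has to use the geometry of the triangular metric near a zero — the ``fan'' of trajectories emanating in a prescribed rotational order — together with Lemma \ref{horofunction cross ratio vanish}, which says precisely that the cross-ratio mass of $\mu^h$ concentrated over a fixed point $a^-$ is carried by boxes of the form $\{v(g)=a^-\}\times\{h^+\in[h_1^+,h_2^+]\}$, i.e. a one-parameter (hence totally orderable) family. Once that one-dimensionality is in hand, the uniqueness of the splitting follows from the same Hahn-decomposition argument used in the proof that $(X_\mu,d)$ is a metric space and in Lemma \ref{measured lamination implies R tree}.
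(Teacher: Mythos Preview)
Your approach is correct but considerably more elaborate than what the paper does. The paper's entire proof is one geometric observation that handles every fiber $v^{-1}(x,y)$ uniformly: two descending real trajectories of $\alpha$ with the same visual endpoints cannot cross (they are rigid geodesics by Lemma \ref{trajectories are geodesic}, so a crossing would give two distinct geodesics between the same pair of points). Hence $\supp(\mu^h)\cap v^{-1}(x,y)$ is totally ordered for \emph{every} $(x,y)\in\mathcal{G}$, and a monotone splitting of a totally ordered measure with prescribed total mass is unique.

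You instead split into generic fibers, where you invoke Lemma \ref{unique horofunction} to get that $v$ is a bijection, and fixed-point fibers, where you argue total ordering via Lemmas \ref{cubic current corridor}, \ref{cubic current support}, and \ref{horofunction cross ratio vanish}. This works, but the case distinction is unnecessary: the non-crossing argument already covers the generic fibers (where the support is a single point, trivially totally ordered) and the fixed-point fibers simultaneously. Your route also imports machinery (Lemma \ref{horofunction cross ratio vanish} is a general fact about good metrics, not specific to the trajectory structure) that is not needed once one sees the rigidity argument. On the other hand, your outline is more explicit about assembling $\nu^h$ globally and checking monotonicity across fibers, which the paper leaves implicit.
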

\begin{proof}
    If two trajectories of $\alpha$ have the same visual endpoints $(x,y)\in \mathcal{G}$ then they cannot cross. This means that the support of $\mu^h$ is totally ordered on the preimage of each $(x,y)\in \mathcal{G}$. 
\end{proof}
Lemma \ref{horofunctional lower submeasures are same} allows us pass freely back and forth between lower submeasures of $\mu$ and $\mu^h$. If $\nu<\mu$ is a lower submeasure, by ``maximal trajectories of $\nu$" we mean the generalized trajectories specified by maximal support points of $\nu^h$.




\begin{lemma}
\label{lower submeasure of cubic differential currents}
    If $\nu$ is an admissible lower submeasure of a cubic differential current $\mu$, then it has finitely many maximal trajectories $T_1,...,T_k$ which can be ordered so that either the boundaries of the open right half spaces bounded by $T_i$ are an open cover of $\del\Gamma$ such that only adjacent intervals intersect, or this is true of the open left half spaces. 
\end{lemma}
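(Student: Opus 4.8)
The plan is the following. We may assume $\nu$ and $\bar\nu:=\mu-\nu$ are both nonzero; otherwise $\supp(\nu)$ has no maximal elements — every support point of a cubic differential current is strictly dominated by nearby larger parallel descending trajectories, which also lie in $\supp(\mu)$ — and the statement is vacuous. First I would prove finiteness. By Theorem~\ref{cubic ray endpoint} the current $\mu$ occurs in $\del_{\lambda_1}\Hit^3(S)$, hence by Lemma~\ref{equivalent tropical rank n condition} it is tropical rank $3$; admissibility of $\nu$ makes $m_\nu$ well defined, and $m_\nu$ is a potential for some holonomy function with curvature $\mu$, which is again tropical rank $3$ since that property depends only on the curvature. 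If $T$ is maximal in $\supp(\nu)$ then $\nu\ne\mu$ on every neighbourhood of $T$ in $\mathcal G$ (otherwise $T$ would be dominated by a slightly larger parallel descending trajectory lying in $\supp(\nu)$), so $T\in\supp(\bar\nu)$ as well, i.e.\ $T$ is a shared point. Running the argument of Lemma~\ref{Xmu is dimension n-1} for $m_\nu$ then bounds the number of shared points, hence of maximal trajectories, by $3$. Since any three open arcs that cover a circle are pairwise adjacent, the clause ``only adjacent intervals intersect'' is automatic once $k\le 3$, so it remains to produce the covering.

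Next I would reduce the covering to a statement about a single geodesic. Viewing a point $g\in\mathcal G$ as a half-space $H_g\subset\tilde S$, write $R_i,L_i\subset\del\Gamma$ for the ideal boundaries of the open right and left half-spaces bounded by $T_i$; these are complementary open arcs, and (swapping left/right if necessary) the oriented trajectory $T_i$ is, as a point of $\mathcal G$, the half-space whose ideal boundary is $L_i$, so that $\mathcal G_{\le T_i}=\{g:\partial_\infty H_g\subseteq L_i\}$ and $\mathcal G_{\ge T_i}=\{g:\partial_\infty H_g\supseteq L_i\}$. Suppose for contradiction that neither $\{L_i\}_i$ nor $\{R_i\}_i$ covers $\del\Gamma$. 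Failure of $\{R_i\}$ forces $\bigcap_i L_i\ne\emptyset$, so the $L_i$ have a common point and $A:=\bigcup_i L_i$ is connected; failure of $\{L_i\}$ forces $A\ne\del\Gamma$. Thus $A$ is a proper open subarc, the ideal boundary of a half-space $H\subset\tilde S$ with ideal endpoints $a,b$; let $c^+\in\mathcal G$ be the corresponding point and $\eta^*\subset\tilde S$ the $F^\Delta_\alpha$-geodesic joining $a$ to $b$. Every $q\in\supp(\bar\nu)$ dominates all the $T_i$ (monotonicity of the partition), so $\partial_\infty H_q\supseteq\bigcup_i L_i=A$, i.e.\ $q\ge c^+$; and every $p\in\supp(\nu)$ is dominated by some $T_i$, so $\partial_\infty H_p\subseteq L_i\subseteq A$, i.e.\ $p\le c^+$. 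Hence $\supp(\mu)=\supp(\nu)\cup\supp(\bar\nu)\subseteq\mathcal G_{\le c^+}\cup\mathcal G_{\ge c^+}$: no descending trajectory of $\alpha$ is linked with $c^+$, i.e.\ none crosses $\eta^*$.

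Then I would derive a contradiction from this. The flat metric $|\alpha|^{2/3}$ is CAT$(0)$ and Gromov hyperbolic, so through a generic non-zero point $p$ of $\eta^*$ the three descending trajectories of $\alpha$ make pairwise angles $2\pi/3$; at least two of them meet $\eta^*$ transversally at $p$ and share no ideal endpoint with $\eta^*$. Such a trajectory crosses $\eta^*$ globally (two geodesics cross at most once, and a transverse local crossing is a global one) and so has ideal endpoints strictly linked with $\{a,b\}$; it lies in $\supp(\mu)$ by Lemma~\ref{cubic current support} yet is linked with $c^+$, a contradiction. (If $\eta^*$ is itself a descending trajectory, use the two trajectories in the remaining two directions.) This shows one of the two arc families covers $\del\Gamma$, which together with Step~1 proves the lemma. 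An alternative, more hands-on finish uses the horofunctional description of $\mu$ from Section~\ref{sec:currents from metrics} in place of the CAT$(0)$ argument, but the structure is the same.

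The main obstacle is the assertion used in the second step that every point of $\supp(\nu)$ is dominated by one of the finitely many maximal trajectories, equivalently $\supp(\nu)\subseteq\bigcup_i\mathcal G_{\le T_i}$. I expect this to follow from admissibility: $\nu$ differs from the standard lower submeasure $\nu_l$ by a finite measure; for $\nu_l$ every support point is a trajectory on or below the monotone loop $l$ which, when enlarged within its parallel family, first meets $l$ at a support point of $\mu$ lying on $l$ — a maximal element of $\supp(\nu_l)$ — and the finite correction alters the maximal elements and the domination structure only in a bounded, finite way. Making this precise, together with the routine handling of the degenerate cases $k\le 2$ (where $\nu$ or its complement corresponds to a strip between two parallel trajectories) and of trajectories passing through zeros of $\alpha$, is where the real work lies; the remainder is soft combinatorics of arcs plus the geometry of $|\alpha|^{2/3}$.
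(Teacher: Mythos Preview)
Your covering argument in the second step rests on a false reading of monotonicity. You assert that ``every $q\in\supp(\bar\nu)$ dominates all the $T_i$ (monotonicity of the partition)'', but the definition of a monotone partition (equivalently, of a lower submeasure) only gives that no point of $\supp(\bar\nu)$ lies \emph{strictly below} a point of $\supp(\nu)$. It does not force comparability: a support point of $\bar\nu$ is perfectly allowed to cross, or be anti-parallel to, any or all of the $T_i$. Indeed, if your reading were correct there could be at most one shared point and the whole discussion of higher-dimensional dual spaces would collapse. Once this step fails, the inclusion $\supp(\bar\nu)\subseteq\mathcal G_{\ge c^+}$ is unjustified, so you cannot conclude that no descending trajectory is linked with $c^+$, and the geometric contradiction via $|\alpha|^{2/3}$ never gets off the ground.

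The paper's argument is structurally different and avoids this trap. It does not first bound $k$ and then prove covering; rather, it first shows combinatorially (from the no-$3$-intersection property of tropical rank $3$ currents and the impossibility of two parallel maximals) that each maximal trajectory crosses at most two others, which is exactly the ``only adjacent intervals intersect'' clause. It then chooses a consistent half-space for each maximal trajectory and shows that if these open arcs failed to cover $\del\Gamma$, one could pick $p\in\tilde S$ outside all of them and exhibit an \emph{infinite} measure of trajectories in the symmetric difference of $\nu$ and $\nu(p)$, contradicting admissibility. Finiteness of $k$ then drops out of compactness of $\del\Gamma$. A residual case of four maximal trajectories with alternating orientations is ruled out by Gauss--Bonnet. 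Your shortcut of invoking the argument of Lemma~\ref{Xmu is dimension n-1} to get $k\le 3$ directly is a legitimate idea (tropical rank depends only on the curvature, so any potential $m_\nu$ for an admissible $\nu$ still witnesses tropical rank $3$), but it does not by itself produce the covering, and your proposed route to the covering is broken by the monotonicity error above.
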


\begin{center}
    \includegraphics[width=\linewidth]{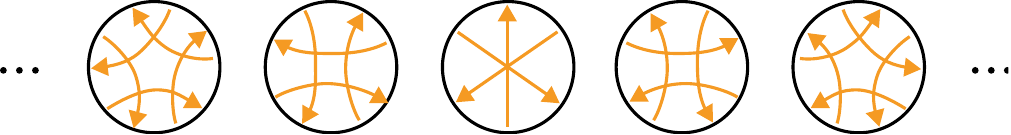}
\end{center}
\begin{proof}
The support of $\nu^h$ is a closed order ideal of $\bar{\mathcal{T}}(\alpha)$. 
Let $\max(\nu^h)$ denote the set of maximal support points of $\nu^h$. Since $\supp(\nu^h)$ is closed, it is generated as an order ideal by $\max(\nu^h)$. 

First we show that each trajectory in $\max(\nu^h)$ crosses at most two others. Note that $\max(\nu^h)$ has no $3$-intersection because this is true of $\mu^h$ by Lemma \ref{no n-intersection}, and has no pairs of parallel trajectories as that would render one of the trajectories non-maximal. 
If three trajectories $T_1,T_2,T_3\in \max(\nu^h)$ all cross $T_0\in \max(\nu^h)$ then at least two out of the three must cross in the same direction. Without loss of generality suppose $T_1$ and $T_2$ cross in the same direction. If $T_1,T_2$ cross, then they give $3$-intersection with $T_0$, but if they don't cross they are parallel, giving a contradiction. 

Next, assuming that $\max(\nu^h)$ has at least 4 elements, we choose a half-space bounded by each $T\in \max(\nu^h)$. There is at least one element of $\max(\nu^h)$ which doesn't intersect $T$, and all such elements must be on the same side of $T$. Let $H_T$ denote the half-space bounded by $T$ not containing any other trajectories of $\max(\nu^h)$ which don't intersect $T$.

Call a maximal trajectory $T$ positive if it is oriented such that $H_T$ is its right half space and negative otherwise. If $T$ and $T'$ are maximal trajectories which don't intersect then they must have the same sign, otherwise they are parallel. This means that if $\max(\nu^h)$ has at least five trajectories, they all have the same sign.

Suppose all trajectories have positive sign. Then the boundaries of the right half spaces $H_T$ are a collection of intervals in $\del\Gamma$ such that only adjacent intervals intersect. If the interiors of these intervals failed to cover $\del\Gamma$, then the complement of the half spaces would be unbounded. Choosing $p$ in the complement of the interiors of these half spaces, there would be an infinite measure of trajectories in $\nu(p)$ but not in $\nu$, so $\nu$ would not be admissible. 
The intervals thus must cover. 
Since $\del\Gamma$ is compact, the number of intervals must be finite. 

Suppose all trajectories have a negative sign. Then left half spaces are a collection of intervals in $\del\Gamma$ such that only adjacent intervals intersect. If these failed to cover, then, choosing $p$ in the complement of the interiors of these half spaces, there would be infinite measure of trajectories not in $\nu(p)$ but in $\nu$. 
This again would contradict admissibility of $\nu$.

Suppose there are only four maximal trajectories of $\nu^h$, and they have alternating signs. Then they bound a quadrilateral which has alternating oriented edges, and whose interior angles are all $2\pi/3$ with respect to the Euclidean cone metric. This is forbidden by Gauss-Bonnet.
\end{proof}

A point $p\in \tilde{S}$ determines a partition of the set of trajectories $\mathcal{T}$ into those which go counter-clockwise around $p$ and those which go clockwise, (and the measure zero set of trajectories going through $p$).
The set of trajectories going counter-clockwise is an order ideal, and thus gives a lower submeasure $\nu^h(p)$ of $\mu^h$. Let $\nu(p)$ denote the pushforward $v_*(\nu^h(p))$. 

\begin{lemma}
\label{holonomy zero means comes from point}
    An admissible partition $\mu = \nu + \bar{\nu}$ of a cubic differential current $\mu$ is holonomy zero if and only if its maximal trajectories all intersect at a common point $p$, in which case $\nu = \nu(p)$.
\end{lemma}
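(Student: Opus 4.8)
The plan is to work entirely in the horofunctional picture, moving freely between lower submeasures of $\mu$ and of $\mu^h$ via Lemma \ref{horofunctional lower submeasures are same}, and to read the holonomy off of the combinatorial data supplied by Lemma \ref{lower submeasure of cubic differential currents}. Given an admissible partition $\mu=\nu+\bar\nu$, let $T_1,\dots,T_k$ be the maximal trajectories of $\nu^h$. Assume (the other case being symmetric) that it is the open right half-spaces $H_{T_i}$ that cyclically cover $\del\Gamma$ with only adjacent overlaps; then, cyclically ordering the $T_i$, consecutive $T_i,T_{i+1}$ must cross, since their boundary arcs in $\del\Gamma$ overlap without nesting, so their endpoints interleave. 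Let $p_i=T_i\cap T_{i+1}$ and let $R=\tilde S\setminus\bigcup_i H_{T_i}$ be the compact ``core region'', a polygon whose vertices are the $p_i$ and whose edges are the trajectory arcs $T_i\cap\partial R$ (the cases $k\le 4$, including the forbidden alternating quadrilateral, are handled exactly as in Lemma \ref{lower submeasure of cubic differential currents}). Since $\supp(\nu^h)$ is the order ideal generated by $\{T_i\}$, the partition $\nu$ is completely determined by this polygon, and $R$ degenerates to a single point $p$ precisely when $p_1=\dots=p_k$, which is precisely when the $T_i$ share the point $p$; in that case $\supp(\nu^h)$ is exactly the set of trajectories passing counter-clockwise of $p$, so $\nu=\nu(p)$. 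Thus both directions of the lemma reduce to the single claim that
\[ h(\nu) \;=\; \sum_i F^\Delta_\alpha(T_i\cap\partial R), \]
a sum of Finsler lengths of trajectory arcs, each of which is $\ge 0$ and vanishes iff the arc is a point.

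To establish this identity I would use that $h(\nu)=h(l_\nu)$, where $l_\nu\subset\mathcal G^h$ is the monotone staircase winding once around that traces the boundary of the order ideal $\supp(\nu^h)$, so that $\nu_{l_\nu}=\nu$ and the correction term $(\nu-\nu_{l_\nu})(\mathcal G)$ vanishes. Then I would compute $h(l_\nu)$ as the total discontinuity of a piecewise-flat lift of $l_\nu$ to $U^h_d$. The outer corners of $l_\nu$ are the points $[T_i]\in U^h_d$, and each $[T_i]$ lies on flat sections because a descending trajectory lifts flatly; between two consecutive corners the staircase crosses a corridor of trajectories whose total $\mu^h$-measure is, by Lemma \ref{cubic current corridor} applied to a transverse segment perpendicular to $T_i$ that slides from $p_{i-1}$ to $p_i$, the $F^\Delta_\alpha$-length of the arc of $T_i$ between those two crossing points (the factor $2$ in Lemma \ref{cubic current corridor} not appearing here because the winding loop $l_\nu$ sweeps each corridor once rather than doubling back). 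Assembling these contributions around $l_\nu$ yields the displayed formula, and then $h(\nu)=0$ forces every arc to be a point, i.e.\ $R=\{p\}$; conversely if the $T_i$ all meet at $p$ then $R=\{p\}$ and every summand is $0$, so $h(\nu(p))=0$ and $\nu=\nu(p)$ as above. This also shows the ``constant'' by which $m_\nu$ fails to be a potential for $h$ is exactly $\sum_i F^\Delta_\alpha(T_i\cap\partial R)$, which is independently reassuring since $X_\mu$ is nonempty.

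The main obstacle is the bookkeeping in the second step: I must verify that $l_\nu$ can be organized so that its outer corners are exactly the $[T_i]$ and each inter-corner stretch contributes precisely one arc length with the correct sign. This requires care because the maximal trajectories need not be monotonically ordered as points of $\mathcal G$, so the staircase may need to ``turn around'' between some corners, and because zeros of $\alpha$ may lie on $\partial R$, where trajectories bend. The zero issue is controlled by the Gauss--Bonnet argument already used in Lemma \ref{lower submeasure of cubic differential currents}: a degenerate or nearly-degenerate core region encloses no zeros, so in the $\Leftarrow$ direction the local model near the common point $p$ is exactly $(\mathbb C,dz^3)$ with its three straight trajectory directions through $p$, and the computation localizes. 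For the $\Rightarrow$ direction one need not avoid zeros at all: it suffices to know, as in Lemma \ref{no n-intersection}, that refining the staircase only increases $h$ along a reverse-Bruhat-type order, so none of the positive arc-length contributions can cancel. Finally I would carefully fix orientation conventions so that the corridor measures enter with a consistent sign, which is where the hypothesis from Lemma \ref{lower submeasure of cubic differential currents} that all the $T_i$ have the same sign gets used.
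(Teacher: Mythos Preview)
Your overall strategy coincides with the paper's: identify the holonomy of $\nu$ with the total $F^\Delta_\alpha$-length of the boundary of the polygon $R$ cut out by the maximal trajectories, and then use the sign consistency from Lemma~\ref{lower submeasure of cubic differential currents} to conclude that holonomy zero forces the polygon to collapse to a point. The identification $\nu=\nu(p)$ once all $T_i$ pass through $p$ is also handled the same way (the paper is slightly more careful here: it argues $\nu^h\le\nu^h(p)$ and then uses that both are holonomy zero to force equality, rather than asserting directly that $\supp(\nu^h)$ equals the counter-clockwise set).

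Where you diverge is in the actual computation of the holonomy, and this is where your proposal is shakier than it needs to be. You want to invoke Lemma~\ref{cubic current corridor}, but that lemma measures a corridor between two \emph{parallel} trajectories joined by a \emph{perpendicular} (imaginary) arc, whereas here consecutive $T_i$, $T_{i+1}$ \emph{cross}, and the edge of $R$ is a segment \emph{of} a trajectory, not transverse to one. Your parenthetical about the factor of $2$ ``not appearing because the loop sweeps each corridor once'' is a symptom of this mismatch. The paper bypasses this entirely: it builds auxiliary geodesics $\eta_i$ by concatenating the backward half of $\gamma_i$ with the forward half of $\gamma_{i+1}$ at the crossing point, checks via Lemma~\ref{geodesic characterization} that each $\eta_i$ is a genuine $F^\Delta_\alpha$-geodesic, and then observes that $\gamma_1,\eta_1,\gamma_2,\eta_2,\dots$ is literally a monotone taxi-loop in $\mathcal{G}^h$ through all the $[T_i]$. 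The holonomy is read off immediately as the sum of parametrization shifts $l_1+\cdots+l_k$, with no corridor measures or transversals needed. This also makes the sign bookkeeping you were worried about automatic: the $l_i$ are the signed edge lengths of $\partial R$, and consistent orientation of the sides forces them all to share a sign.
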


\begin{proof}
Recall that $\nu_h(p)$ is defined as the lower submeasure of $\mu_h$ consisting of the closure of all trajectories which pass to the right of $p$. The maximal support points of $\nu_h(p)$ must pass through $p$ because if a trajectory doesn't pass through $p$ it can be shifted to the left and still go to the right of $p$. Conversely, suppose we have another lower submeasure $\nu'_h$, all of whose maximal trajectories pass through $p$. $\nu'_h$ must be a submeasure of $\nu_h(p)$, otherwise it would have a trajectory, thus a maximal trajectory, passing to the left of $p$. If $\nu'_h$ is also holonomy zero, then it must coincide with $\nu_h(p)$. Now all we have to show is that if $\nu_h$ is holonomy zero then all of its maximal trajectories pass through a common point.

Denote by $\gamma_1,...,\gamma_k$ the unit speed parametrizations of the maximal trajectories of $\nu$, ordered as in Lemma \ref{lower submeasure of cubic differential currents} such that $\gamma_i(0)$ is the intersection point of $\gamma_i$ with $\gamma_{i-1}$.
Let $l_i\in \R$ be defined by $\gamma_i(l_i) = \gamma_{i+1}(0)$. The segments $\gamma_i|_{[0,l_i]}$ fit together in to a closed loop. We will show that the length of this loop, $l_1 + ... + l_k$, is the holonomy of $\nu$. Let $\eta_i$ be as follows.
\[
    \eta_i(t) = 
    \begin{cases}
        \gamma_i(t) & t\leq l_i \\
        \gamma_{i+1}(t-l_{i}) & t\geq l_i
    \end{cases}
\]
The paths $\eta_i$ are geodesic by lemma \ref{geodesic characterization}. Since $\gamma_i$ parameterize the maximal trajectories of $\nu^h$, the holonomy of $\nu^h$ is the holonomy of any monotonic taxi-path in $\mathcal{G}^h$ passing through each $\gamma_i$, so in particular the taxi-path given by the sequence of geodesics $\gamma_1,\eta_1,...,\gamma_n,\eta_n$. The holonomy of this loop is easily seen to be $l_1 + ... + l_k$.

By Lemma \ref{lower submeasure of cubic differential currents}, we know that $l_i$ together bound a $k$-gon in $\tilde{S}$ with consistently oriented sides (some of which may have zero length).  We conclude that $l_i$ are either all non-positive or non-negative. Consequently, if $\nu$ is holonomy zero, then all the $l_i$ vanish, and all the $\gamma_i$ must pass through a common point $p$. 
\end{proof}

The map $p\mapsto \nu(p)$ is thus a $\Gamma$-equivariant bijection from $\tilde{S}$ to $X_\mu$. 
We will show this map is an isometry for the symmetrized metric on $\tilde{S}$ by first lifting it to the relative metric space $L_U$ over $X_\mu$. 
First we define sections of $U^h_{F^{\Delta}_\alpha}$ associated with $p\in \tilde{S}$. 
Let $u^h(p)$ denote the horizontally flat section of $U^h_d$ which takes a pair $([g],[h])\in U^h_d$ to the pair $(g,h)$ such that $g(p)=0$, and let $v^h(p)$ denote the vertically flat section of $U^h_d$ which takes $([g],[h])\in U^h_d$ to the pair $(g,h)$ such that $h(p) = 0$.

\begin{lemma}
\label{horofunctional legendrian from point}
\[u^h(p) - v^h(p) = - \nu^h(\mathcal{G}^h_{\geq ([g],[h])}) - \bar{\nu}^h(\mathcal{G}^h_{\leq ([g],[h])})\]
\end{lemma}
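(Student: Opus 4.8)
The plan is to interpret both sides of the claimed identity as functions on $U^h_{F^\Delta_\alpha}$ (equivalently, via the projection $v$, on $\mathcal{G}^\circ$) and show they agree by checking them on flat sections along horizontal and vertical segments, using the characterization of potentials in Lemma \ref{potentials come from sections}. The right-hand side is precisely the potential $m_{\nu^h}$ associated to the monotone partition $\mu^h = \nu^h + \bar\nu^h$ by the lemma that $m_\nu(x,y) = -\nu(\mathcal{G}_{\geq(x,y)}) - \bar\nu(\mathcal{G}_{\leq(x,y)})$ is a potential — applied here on $\mathcal{G}^h$ rather than $\mathcal{G}$. So the content of the statement is that $u^h(p) - v^h(p)$, the difference of the two canonical flat sections determined by $p$, realizes this particular potential, i.e. that the decomposition of $m_{\nu^h}$ into a horizontally flat part minus a vertically flat part (unique up to a common additive constant, by Lemma \ref{potentials come from sections}) is exactly $u^h(p)$ minus $v^h(p)$, with the constant pinned down by the normalization $g(p)=0$ (resp. $h(p)=0$).

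First I would verify that $u^h(p)$ is flat along horizontal segments and $v^h(p)$ is flat along vertical segments: this is immediate from the definition of the taxi connection on $U^h_X$, whose horizontal/vertical flat sections are exactly those holding one horofunction coordinate fixed, and $u^h(p)$ fixes the minus-horofunction (normalized by $g(p)=0$) while $v^h(p)$ fixes the plus-horofunction (normalized by $h(p)=0$). Next, given a point $([g],[h]) \in U^h_X$, write $u^h(p)(g,h) - v^h(p)(g,h)$ as a single real number: if $(g_0,h_0)$ is the representative with $\langle g_0,h_0\rangle=0$, then $u^h(p)$ shifts it to $(g_0 - g_0(p),\ h_0 + g_0(p))$ and $v^h(p)$ shifts it to $(g_0 + h_0(p),\ h_0 - h_0(p))$, so the difference in the $\R$-coordinate is $-g_0(p) - h_0(p) = -(g_0+h_0)(p)$. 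Thus the claim reduces to the pointwise identity
\[
-(g+h)(p) \;=\; -\,\nu^h\big(\mathcal{G}^h_{\geq([g],[h])}\big) \;-\; \bar\nu^h\big(\mathcal{G}^h_{\leq([g],[h])}\big),
\]
for $(g,h)$ the balanced representative of a point of $U^h_X$.

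The heart of the argument is this last identity, and I expect it to be the main obstacle. The idea is that $(g+h)(p)$ is the value at $p$ of the sum of horofunctions whose infimum is $0$; by Lemma \ref{cubic current corridor} and its proof, the infimum is attained along the trajectory $\gamma$ with endpoints $([g],[h])$, and the value $(g+h)(p)$ measures — via cross-ratios of the form $b$ in Lemma \ref{cubic current corridor} — the total mass of trajectories "separating" $\gamma$ from $p$, weighted by which side of $p$ they pass. Concretely, trajectories in $\supp(\mu^h)$ that pass to the right of $p$ and are $\geq \gamma$ in the order on $\mathcal{G}^h$ contribute to $\nu^h(\mathcal{G}^h_{\geq([g],[h])})$ (since $\nu^h = \nu^h(p)$ is the lower submeasure of trajectories passing right of $p$, by Lemma \ref{holonomy zero means comes from point}), while trajectories that pass left of $p$ and are $\leq \gamma$ contribute to $\bar\nu^h(\mathcal{G}^h_{\leq([g],[h])})$; trajectories on the "far side" contribute $0$ because the relevant cross-ratio boxes have zero measure (Lemma \ref{cubic current support}). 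To make this rigorous I would take a perpendicular segment $\eta$ from a point of $\gamma$ toward $p$, subdivide it into small arcs, apply Lemma \ref{cubic current corridor} to each to express the increments of $g+h$ along $\eta$ as measures of thin boxes, and sum — exactly the telescoping computation that already appears in the proof that $m_\nu$ is a potential, now transported to $\mathcal{G}^h$ via Lemma \ref{horofunctional lower submeasures are same}. The one subtlety is the behavior at zeros of $\alpha$ and the choice of generalized trajectory through $p$, which is handled as in Lemma \ref{holonomy zero means comes from point}: the maximal trajectories of $\nu^h(p)$ all pass through $p$, so the order ideal $\mathcal{G}^h_{\geq([g],[h])} \cap \supp(\nu^h)$ and its complement decompose cleanly with no boundary mass.
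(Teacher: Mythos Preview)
Your approach differs from the paper's. The paper's proof is a one-paragraph holonomy argument: at $([g],[h])$ the quantity $u^h(p)-v^h(p)$ equals the holonomy of the taxi-loop which goes vertically from $([g],[h])$ to a pair $([g],[h'])$ corresponding to a geodesic through $p$, then along the locus of geodesics through $p$ to a pair $([g'],[h])$, then horizontally back. The point is that $u^h(p)$ and $v^h(p)$ coincide on the locus of geodesics through $p$ (there both $g(p)=0$ and $h(p)=0$), so the difference of sections equals the holonomy of that loop; the enclosed region is exactly the set of trajectories parallel to $([g],[h])$ lying between it and $p$, whose signed $\mu^h$-measure is the right-hand side.

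Your route via the explicit pointwise formula $u^h(p)-v^h(p)=\pm(g_0+h_0)(p)$ is legitimate and the intermediate identity is a nice observation not made explicit in the paper, but two things need attention. First, your sign is wrong: with the action $r\cdot(g,h)=(g-r,h+r)$, $u^h(p)$ sits at fiber coordinate $g_0(p)$ and $v^h(p)$ at $-h_0(p)$ relative to the balanced representative, giving $u^h(p)-v^h(p)=(g_0+h_0)(p)$, not its negative. Since $(g_0+h_0)(p)\geq 0$ while the right-hand side of the lemma is $\leq 0$, there is a sign convention to reconcile (the paper's proof handles this by saying ``with a sign depending on the orientation of the loop''). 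Second, your step~4 is only sketched: the integration along a perpendicular arc via Lemma~\ref{cubic current corridor} would work, but the claim that trajectories ``on the far side contribute zero'' is really the statement that trajectories $\geq([g],[h])$ all lie on one side of the geodesic $([g],[h])$, which is what the paper encodes in the phrase ``parallel to $([g],[h])$ and pass between $([g],[h])$ and $p$''; this deserves a sentence of justification. The paper's holonomy argument is shorter precisely because it packages your integration and the side-condition into the single observation that the loop encloses exactly the relevant set of trajectories.
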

\begin{proof}
The left hand side is the holonomy of a loop in $\mathcal{G}^h$ which starts at $([g],[h])$ moves vertically to a pair $([g],[h'])$ which passes through $p$, then rotates through pairs passing through $p$ to a pair $([g'],[h])$ passing through $p$, then moves horizontally back to $([g],[h])$. This loop will precisely enclose the collection of trajectories which are parallel to $([g],[h])$ and pass between $([g],[h])$ and $p$. The holonomy of a loop must be $\mu^h$ evaluated on its interior, with a sign depending on the orientation of the loop. This is the right hand side of the formula.
\end{proof}

To turn the sections $u^h(p)$ and $v^h(p)$ into sections of $U$, we pull back via a particular section of $v:\mathcal{G}^h\to \mathcal{G}^\circ$. 

By lemma \ref{unique horofunction} the maps from the horofunction boundaries $\del^h_+ \tilde{S}$ and $\del^h_-\tilde{S}$ to the Gromov boundary $\del \Gamma$ are bijective except over fixed points in $\del\Gamma$ of group elements. Preimages over fixed points are possibly non-trivial closed intervals in $\del^h_+ \tilde{S}$ and $\del^h_-\tilde{S}$. The cyclic orders on horofunction boundaries induce orders on these intervals. 
If $\gamma\in \Gamma$, let $\gamma^+_r$ and $\gamma^+_l$ denote the greatest and least lifts of $\gamma^+$ to $\del^h_+\tilde{S}$, and let $\gamma^-_r$ and $\gamma^-_l$ denote the greatest and least lifts of $\gamma^-$ to $\del^h_-\tilde{S}$. 

Let $\eta: \mathcal{G}^\circ \to \mathcal{G}^h$ be the section of which sends $(x,y)$ to $([g],[h])$ according to the rule that for each $\gamma\in \Gamma$:
\begin{itemize}
    \item If $x = \gamma^-$ and $\gamma^- < y < \gamma^+$ then $[g] = \gamma^-_r$.
    \item If $x = \gamma^-$ and $\gamma^+ < y < \gamma^-$, then $[g] = \gamma^-_l$.
    \item If $y = \gamma^+$ and $\gamma^- < x < \gamma^+$ then $[h] = \gamma^+_l$.
    \item If $y = \gamma^+$ and $\gamma^+ < x < \gamma^-$, then $[h] = \gamma^+_r$.
\end{itemize}

\begin{lemma}
\label{section order preserving}
    $(x,y) < (x',y')$ if and only if $\eta(x,y) < \eta(x',y')$.
\end{lemma}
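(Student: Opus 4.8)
The plan is to reduce the statement to a finite case analysis organized around whether the coordinates $x, y, x', y'$ are fixed points of group elements or not. Recall that $(x,y) < (x',y')$ in $\mathcal{G}$ means $x < x' < y' < y$ in the cyclic order on $\del\Gamma$, and the analogous statement for $\mathcal{G}^h$ is the cyclic order coming from the cyclic orders on $\del^h_-\tilde{S}$ and $\del^h_+\tilde{S}$ constructed in the previous subsection. First I would observe that the projection $v = v_-\times v_+$ is weakly order preserving: by Lemma \ref{unique horofunction} the horofunction-to-visual-boundary maps are monotone and are bijective away from fixed points, and the cyclic orders on the horofunction boundaries were defined precisely to refine the cyclic order on $\del\Gamma$. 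So $(x,y) < (x',y')$ follows immediately from $\eta(x,y) < \eta(x',y')$ (the ``if'' direction), and the content is the ``only if'' direction, where the issue is that $v$ collapses the fiber-intervals over fixed points, so one must check that $\eta$ makes the correct choice of lift to stay strictly monotone.

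Next I would set up the ``only if'' direction. Assume $(x,y) < (x',y')$, i.e. $x < x' < y' < y$ cyclically, and write $\eta(x,y) = ([g],[h])$, $\eta(x',y') = ([g'],[h'])$. Since $v_-([g]) = x \leq x' = v_-([g'])$ and $v_+([h]) = y \geq y' = v_+([h'])$, monotonicity of the horofunction cyclic orders already gives $[g] \leq [g']$ and $[h'] \leq [h]$ weakly; the only way $\eta(x,y) < \eta(x',y')$ could fail is if equality holds in one of these, i.e. $x = x'$ is a fixed point $\gamma^-$ and $[g] = [g']$, or $y = y'$ is a fixed point $\gamma^+$ and $[h] = [h']$. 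So it suffices to rule these out. Consider the first case: $x = x' = \gamma^-$ for some $\gamma\in\Gamma$, and $y \neq y'$ with $y' < y$ on the arc strictly between the two occurrences of $\gamma^-$ — more precisely, since $x < x' < y' < y$ cyclically with $x = x'$, both $y$ and $y'$ lie in one component of $\del\Gamma\setminus\{\gamma^-\}$ and $y'$ is encountered before $y$. By the defining rule for $\eta$, whether $[g] = \gamma^-_r$ or $[g] = \gamma^-_l$ is determined by which side of $\gamma^-$ the second coordinate lies; since $y$ and $y'$ are on the same side, $\eta$ assigns $[g]$ and $[g']$ the \emph{same} extreme lift ($\gamma^-_r$ or $\gamma^-_l$), so indeed $[g] = [g']$ in this degenerate configuration — but then we must instead get strict inequality from the second coordinates, i.e. $[h'] < [h]$ strictly.

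The main obstacle, and the heart of the argument, is exactly this: showing that when the first coordinates agree (both equal some fixed-point lift), the second coordinates are strictly ordered, and symmetrically. Here I would use the structure results on horofunction boundaries from the previous subsection. If $y' < y$ with $y, y'$ distinct and neither equal to a fixed point of a group element, then by Lemma \ref{unique horofunction} they have unique horofunction lifts, and these are strictly ordered because $y' < y$; so $[h'] < [h]$ and we are done. If one of $y, y'$ is itself a fixed point $\delta^+$, then $[h]$ or $[h']$ is the designated extreme lift $\delta^+_l$ or $\delta^+_r$, and I would check, using the cyclic-order definition of the horofunction boundaries and Lemma \ref{cyclic well defined}, that the designated extreme lift still lies strictly on the correct side of the (unique or extreme) lift of the other point — the rule in the definition of $\eta$ was chosen precisely so that ``$\gamma^-$ approached from the $(\gamma^-,\gamma^+)$-side'' picks the lift $\gamma^-_r$ that is extremal in the direction consistent with the ambient order. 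The remaining subcase, where both $x$ and $y$ (or both $x'$ and $y'$) are fixed points, is handled the same way coordinate by coordinate. Since there are only finitely many such configurations (determined by which of the four coordinates are fixed points and the relative position of the corresponding elements, with the three topological cases of Figure \ref{quotient surfaces} as a guide), the case analysis terminates, and in each case strict monotonicity of at least one coordinate forces $\eta(x,y) < \eta(x',y')$.
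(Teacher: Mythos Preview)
The paper states this lemma without proof, so there is no argument to compare against directly; I will simply assess your proposal on its own.

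Your ``only if'' direction is essentially right but the case analysis is misplaced. You worry about the situation $x = x'$ (or $y = y'$), but the hypothesis $(x,y) < (x',y')$ in $\mathcal{G}$ means $x < x' < y' < y$ \emph{strictly} in the cyclic order on $\del\Gamma$, so these equalities never occur. Since $v_-([g]) = x \neq x' = v_-([g'])$, the lifts $[g]$ and $[g']$ lie in distinct fibers of $v_-$ and hence are distinct; monotonicity of the cyclic order on $\del^h_-\tilde{S}$ (which refines that of $\del\Gamma$) then gives $[g] < [g']$ strictly, and likewise $[h'] < [h]$. The detailed case work in your third paragraph is therefore unnecessary for this direction.

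The direction that actually exercises the specific rule defining $\eta$ is the ``if'' direction, which you dismiss too quickly. Monotonicity of $v_\pm$ only gives that $\eta(x,y) < \eta(x',y')$ implies $x \leq x' \leq y' \leq y$ weakly. To upgrade to strict inequalities you must rule out, for instance, $x = x' = \gamma^-$ with $\eta$ assigning different extreme lifts $\gamma^-_l$ and $\gamma^-_r$ because $y$ and $y'$ lie on opposite sides of $\gamma^+$. Here the design of $\eta$ matters: the rule sends $(\gamma^-, y)$ to $\gamma^-_l$ exactly when $y \in (\gamma^+,\gamma^-)$ and to $\gamma^-_r$ when $y \in (\gamma^-,\gamma^+)$, so if $[g] = \gamma^-_l < \gamma^-_r = [g']$ then $y$ and $y'$ are separated by $\gamma^+$, and one checks that $[h']$ cannot then sit strictly between $[g']$ and $[h]$ in the required cyclic configuration. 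This is the verification your proof should contain; the fixed-point cases you do analyze belong here rather than in the ``only if'' direction.
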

Let $m_\nu^h = - \nu^h(\mathcal{G}^h_{\geq ([g],[h])}) - \bar{\nu}^h(\mathcal{G}^h_{\leq ([g],[h])})$ be the right hand side of lemma \ref{horofunctional legendrian from point}. Lemma \ref{section order preserving} implies that the potential $m_\nu$ is the same as the pullback $m_\nu^h\circ \eta$. Let $v(p) = v^h(p) \circ \eta$, and let $u(p) = u^h(p) \circ \eta$. These are vertically and horizontally flat sections of $U_{F^\Delta_\alpha}$. By Lemma \ref{horofunctional legendrian from point}, $m_\nu = u - v$. Define $\mathcal{L}(p) \in L_U$ to be the triple $(\nu(p),v(p),u(p))$. 

\begin{lemma}
    The map $p$ is an isometric embedding $\tilde{S}\to L_U$. 
\end{lemma}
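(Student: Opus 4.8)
The plan is to prove the map is an isometric embedding by computing the relative metric $d(\mathcal{L}(p),\mathcal{L}(q))$ directly and identifying it with the $F^\Delta_\alpha$-distance from $p$ to $q$. Injectivity is already in hand: $p\mapsto\nu(p)$ is a bijection $\tilde S\to X_\mu$ by Lemma~\ref{holonomy zero means comes from point}, and $\mathcal{L}(p)$ lies over $\nu(p)$, so $p\mapsto\mathcal{L}(p)$ is injective. By definition $d(\mathcal{L}(p),\mathcal{L}(q))=\sup_{x\in\del\Gamma}f_{\mathcal{L}(p),\mathcal{L}(q)}(x)$ with $f_{\mathcal{L}(p),\mathcal{L}(q)}(x)=\lim_{y\to x^+}\bigl[v(q)(x,y)-v(p)(x,y)\bigr]$, so the first task is to unwind the integrand. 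Recall $v(p)=v^h(p)\circ\eta$, and for generic $(x,y)$ one has $\eta(x,y)=(v_-^{-1}(x),v_+^{-1}(y))$; write $h$ for the plus horofunction $v_+^{-1}(y)$. Chasing the principal $\R$-action $r\cdot(g,h)=(g-r,h+r)$ on $U^h_d$ through the definitions of the vertically flat sections $v^h(p)$ and $v^h(q)$ (normalized by $h(p)=0$, resp. $h(q)=0$), the $\R$-coordinate difference works out to $v(q)(x,y)-v(p)(x,y)=h(p)-h(q)$.

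Next I would take the limit $y\to x^+$. By Lemma~\ref{unique horofunction}, for every $x\in\del\Gamma$ not fixed by a nontrivial group element there is a unique plus horofunction $h_x$ over $x$, and $y\mapsto v_+^{-1}(y)$ is continuous at such $x$; hence $v_+^{-1}(y)\to h_x$ as $y\to x$ (the one-sided limit in the definition of $f$ absorbs the mild failure of $v(q)-v(p)$ to be exactly independent of $y$). Therefore $f_{\mathcal{L}(p),\mathcal{L}(q)}(x)=h_x(p)-h_x(q)$ for all non-fixed $x$. This step rests on Lemmas~\ref{horofunctional legendrian from point} and~\ref{section order preserving}, which guarantee that $\mathcal{L}(p)=(\nu(p),v(p),u(p))$ is a genuine point of $L_U$ (i.e.\ $m_{\nu(p)}=u(p)-v(p)$), so the formulas above are meaningful.

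Finally I would compute the supremum. For any plus horofunction $h$, written as a normalized limit of $d(\,\cdot\,,z_i)$, the triangle inequality $d(p,z_i)\le d(p,q)+d(q,z_i)$ gives $h(p)-h(q)\le d_{F^\Delta_\alpha}(p,q)$, the infimal $F^\Delta_\alpha$-length of paths from $p$ to $q$; since $f_{\mathcal{L}(p),\mathcal{L}(q)}(x)$ is $h(p)-h(q)$ for some plus horofunction at every $x$ (fixed or not), this yields $d(\mathcal{L}(p),\mathcal{L}(q))\le d_{F^\Delta_\alpha}(p,q)$. For the reverse bound, extend a geodesic segment from $p$ to $q$ to a geodesic ray $\gamma\colon[0,\infty)\to\tilde S$ with $\gamma(0)=p$ and $\gamma(d_{F^\Delta_\alpha}(p,q))=q$; its plus horofunction $h_\gamma(z)=\lim_t[d(z,\gamma(t))-t]$ satisfies $h_\gamma(p)=0$ and $h_\gamma(q)=-d_{F^\Delta_\alpha}(p,q)$, so $h_\gamma(p)-h_\gamma(q)=d_{F^\Delta_\alpha}(p,q)$. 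If the visual endpoint $v_+([h_\gamma])$ is non-fixed this value is attained by $f$; otherwise, since the fixed points are countable, a small perturbation of $q$ (or a different geodesic extension, using non-uniqueness of geodesics for $F^\Delta_\alpha$) moves the endpoint off the fixed set, so the value is approached over non-fixed $x$. Hence $d(\mathcal{L}(p),\mathcal{L}(q))=d_{F^\Delta_\alpha}(p,q)$, and with injectivity this gives an isometric embedding $\tilde S\to L_U$ for the relative metric; symmetrizing recovers the fact that $p\mapsto\nu(p)$ is an isometry $\tilde S\to X_\mu$ for $F^\Delta_\alpha+F^\Delta_{-\alpha}$.

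The main obstacle is the middle step: pinning down $v(q)(x,y)-v(p)(x,y)=h(p)-h(q)$ requires careful bookkeeping with the principal-$\R$-bundle conventions on $U^h_d$ and with the horofunction normalizations defining $v^h(p)$, and the passage $y\to x^+$ relies on the convergence of plus horofunctions over $y$ to $h_x$, which in turn uses the near-bijectivity of $v_+$ from Lemma~\ref{unique horofunction}. The fixed-point perturbation in the supremum computation is a routine but necessary technicality, and the extension of a geodesic segment to a geodesic ray should be justified via completeness and properness of $(\tilde S,F^\Delta_\alpha)$.
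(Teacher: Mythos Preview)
Your overall strategy is the paper's: unwind $v(q)-v(p)$ as a difference of horofunction values, bound the supremum above by the triangle inequality and below by choosing a geodesic ray through $p$ and $q$, and then argue that the supremum over $\del\Gamma$ agrees with the supremum over all horofunctions despite the fixed-point fibers. Two points of divergence are worth flagging.

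First, your bookkeeping lands on the wrong horofunction type. You compute $v(q)(x,y)-v(p)(x,y)=h(p)-h(q)$ with $h$ the \emph{plus} horofunction over $y$; but $v(p),v(q)$ are \emph{vertically} flat, so their difference must be constant along vertical segments and hence depend only on $x$, not on $y$. That your formula varies with $y$ is a red flag. The paper's proof works with \emph{minus} horofunctions: it identifies $v^h(q)-v^h(p)$ with $g(q)-g(p)$ for $[g]\in\del_-^h\tilde S$, and obtains the lower bound from a geodesic ray $\gamma:(-\infty,0]\to\tilde S$ passing through $p$ and ending at $q$ (for instance the $|\alpha|^{2/3}$-Euclidean geodesic), taking $[g]=\gamma(-\infty)$. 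Recheck which normalization makes the section genuinely vertically flat; once you do, the computation will land on $g(q)-g(p)$ and the rest of your argument carries over verbatim.

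Second, for the fixed-point technicality the paper gives a sharper argument than perturbation. It shows that $[g]\mapsto g(q)-g(p)$ is \emph{monotone} on each fiber $v_-^{-1}(\gamma^-)$: the increment along the fiber is a signed count of trajectories passing between $p$ and $q$, and two trajectories emanating from the same visual boundary point $\gamma^-$ cannot pass between $p$ and $q$ in opposite directions. Monotonicity on fibers immediately gives that the supremum over all of $\del_-^h\tilde S$ equals the supremum over non-fixed points. Your ``perturb $q$'' is not permissible since $q$ is given; your alternative ``different geodesic extension'' can be made to work, but the monotonicity argument is both cleaner and more robust.
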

\begin{proof}
    Let $p,q\in \tilde{S}$. The distance from $\mathcal{L}(p)$ to $\mathcal{L}(q)$ is defined to be the supremum of $v(q) - v(p)$. If we instead took the supremum of $v^h(q) - v^h(p)$ this would mean the supremum over all minus horofunctions $g$.
    \[\sup_{[g]\in \del^h_- \tilde{S}} g(q) - g(p)\]
    This is bounded above by $d(p,q)$ by the triangle inequality. 
    Letting $\gamma:(-\infty,0]$ be any geodesic ray which passes through $p$ and ends at $q$, (which exists, take for instance a geodesic for the Euclidean metric $|\alpha|^{2/3}$,) and letting $[g] = \gamma(-\infty)$ shows that indeed the supremum
    is equal to $d(p,q)$.

    Let $f_{p,q}:\del^h_- \tilde{S} \to \R$ denote the function $[g]\mapsto g(q) - g(p)$. It will suffice to show that $f_{p,q}$ is monotonic on $v_-^{-1}(\gamma^-)$ for all $\gamma\in \Gamma$ because then it suffices to take the supremum over the points of $\del^h_- \tilde{S}$ not fixed by group elements, which is the same as the supremum of $v(q) - u(p)$, which is $d(\mathcal{L}(p),\mathcal{L}(q))$. 

    Suppose for contradiction that $f_{p,q}$ is not monotonic on $v_-^{-1}(\gamma^-)$. For any $[g],[g']\in \del^h_-\tilde{S}$, $f_{p,q}([g']) - f_{p,q}([g])$ is the measure of set of trajectories which start between $[g]$ and $[g']$ and pass between $p$ and $q$, minus the measure of the set of trajectories which start between $[g]$ and $[g']$ and pass the other way between $p$ and $q$. It is impossible to have two trajectories starting at the same Gromov boundary point $\gamma^-$ and passing opposite directions between $p$ and $q$.
\end{proof}

Now we have lifted the bijection $\tilde{S}\to X_\mu$ to an isometric embedding $\tilde{S}\to L_U$ for the Finsler metric $F^\Delta_\alpha$. It follows that the bijection $\tilde{S}\to X_\mu$ is an isometry for the symmetrized metric $F^\Delta_\alpha + F^\Delta_{-\alpha}$.

\appendix
\section{Appendix: A symplectic perspective on negative curvature}
Many of the constructions of this paper, in particular the definition of $X_\mu$ are motivated by the symplectic perspective on negative curvature pioneered by Otal \cite{Otal92}, and with roots going back to Arnold and Hilbert. In much of this paper, issues of regularity, and the fact that we worked in two dimensions might have obscured the symplectic geometry, so we describe the picture here. The passage from geometry to symplectic geometry is the same as always: instead of looking at $X$ we look at $T^*X$, but the story plays out in a particular way when $X$ is a negatively curved manifold.

Let $X$ be a Hadamard manifold: a simply connected complete Riemannian manifold with sectional curvature bounded above by $\epsilon < 0$. The exponential map is a diffeomorphism $T_x X\to X$ for any $x\in X$. The visual boundary $\del\! X$ is the set of geodesic rays $\gamma:[0,\infty)\to X$ modulo the equivalence relation $\gamma_1 \sim \gamma_2$ if $d(\gamma_1(t),\gamma_2(t))$ is bounded. The visual boundary is naturally identified with the unit tangent sphere at any point. 

Let $\mathcal{G}$ be the space of oriented, unparametrized geodesics in $X$. Since $X$ is Hadamard, a geodesic is encoded by its visual endpoints.
\[\mathcal{G} = \del X\times \del X \backslash \Delta\]
We will see that $\mathcal{G}$ has a natural symplectic structure. The cotangent bundle $T^*\!X$ has a cannonical symplectic structure. Under the identification of $TX$ with $T^*\!X$ by the metric, geodesic flow becomes Hamiltonian flow of the inverse metric. The space of geodesics $\mathcal{G}$ is thus symplectic reduction of $T^*X$. 

Let $U$ be the unit cotangent bundle of $X$. It is useful to view $U$ as a principal $\R$ bundle over $\mathcal{G}$ where the $\R$ action is geodesic flow. The tautological $1$-form $\lambda$ on $T^*X$ restricts to a contact form on $U$, which can also be viewed as a connection for this $\R$ bundle. This connection has a simple geometric origin back on $X$: a path of unit cotangent vectors $(x_s,\alpha_s)$ for $s\in \R$ is a flat section of $U$ if $\alpha_s(x'_s) = 0$. It is also useful to view $U$ as the space of parametrized geodesics. The connection $\alpha$ declares a path of parametrized geodesics $\gamma_s(t)$ to be flat if $\del_s (\gamma_s(t))$ is perpendicular to $\del_t(\gamma_s(t))$. 
\begin{figure}[h]
    \centering
    \includegraphics[width=0.3\linewidth]{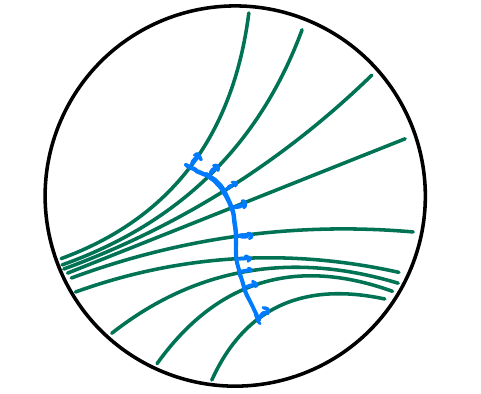}
    \caption{A path of geodesics, with a flat section to the unit tangent bundle}
    
\end{figure}
In particular, flat sections over a path of geodesics which all start (or end) at the same point are outward (inward) unit normal bundles to horosoheres.

Points in $X$ correspond to Legendrian spheres in $U$, namely a point corresponds to its unit cotangent sphere. 
More generally, the unit conormal bundle of a submanifold $Y\subset X$ is a Legendrian submanifold of $U$. 
Legendrians in $U$ project to (possibly singular) Lagrangians in $\mathcal{G}$. Lagrangians in $\mathcal{G}$ which admit Legendrian lifts to $U$ are called exact. The Lagrangian corresponding to a point $p\in X$ is the sphere of geodesics passing through $p$, and has the special property that it projects homeomorphically to both factors of $\del X$. We call any such Lagrangian sphere monotonic, and call a Legendrian sphere in $U$ monotonic if it projects to a monotonic Lagrangian sphere. 

Suppose we only have the boundary $\del X$, the symplectic form on $\mathcal{G} = \del X\times \del X$, and the principal $\R$ bundle $U\to \mathcal{G}$ whose curvature is the symplectic form, and we want to reconstruct $X$. This situation first arose in relation to questions of marked length spectrum rigidity \cite{Otal90}, but for us the motivation was finding a geometric incarnation of geodesic currents. 
It seems difficult to reconstruct $X$, but as a replacement we could consider the space $L_U$ of all monotonic Legendrian spheres in $U$. Perhaps better, we can quotient $L_U$ by the $\R$ action of geodesic flow and get the space $X_U$ of exact monotonic Lagrangian spheres. The idea behind definition \ref{X definition} is to, in the case $\dim(X)=2$, make a definition of exact monotonic Lagrangian sphere which is robust enough that it makes sense even when the ``symplectic structure" on $\mathcal{G}$ is very singular. 

In the special case when $\dim(X) = 2$, the symplectic structure on $\mathcal{G}$ is just a measure. A monotonic Lagrangian in $\mathcal{G}$ is just a path which is the graph of a fixed-point free monotonic function $\del X\to \del X$. Such a path determines a partition of $\mathcal{G}$, and thus the measure. This partition will be holonomy zero if and only if the Lagrangian was exact.

Everything in this section works more generally for Finsler manifolds of negative curvature. The identification of unit tangent bundle with unit cotangent bundle is achieved by the Legendre transform which identifies $v$ with $\alpha$ if $\alpha(v)=1$. An important subtlety to mention is that for asymmetric Finsler metrics there are two types of horospheres, one for backward endpoints of geodesics and one for forward endpoints of geodesics. In section \ref{sec:currents from metrics}, we generalized this picture, in the two dimensional case, to a class of Finsler metrics whose unit balls are not necessarily strictly convex. In this case the symplectic structure on $\mathcal{G}$ can become quite degenerate, and even concentrate onto a discrete or cantor subset of $\mathcal{G}$. 

\printbibliography

\end{document}